\documentclass[12pt,twoside]{amsart}

\usepackage[pagebackref]{hyperref}

\usepackage[english]{babel}
\usepackage[all]{xy}
\usepackage{amssymb,amsmath,bbm,url,xr,a4wide,cleveref,color}
\usepackage[mathscr]{euscript}
\usepackage{mathrsfs}
\usepackage[margin=1.2in]{geometry}
\usepackage{amsmath}
\usepackage{amsthm}
\usepackage{amssymb}
\usepackage{amscd}
\usepackage{mathrsfs}
\usepackage{graphicx}
\usepackage{rotating, graphicx}
\usepackage{multirow}
\usepackage{mathtools}
\usepackage{mathrsfs}
\usepackage{changepage} 
\usepackage{paralist}

\usepackage{kantlipsum}
\usepackage{enumitem}
\usepackage{times}
\usepackage{subfig}
\usepackage{color}
\calclayout

\usepackage{enumitem}
\makeatletter
\def\namedlabel#1#2{\begingroup
    #2%
    \def\@currentlabel{#2}%
    \phantomsection\label{#1}\endgroup
}
\makeatother

\title{Perverse homotopy heart and MW-modules}

\author{Fr\'ed\'eric D\'eglise, Niels Feld, and Fangzhou Jin}

\date{\number\day-\number\month-\number\year}

\newtheorem{thm}[subsubsection]{Theorem}
\newtheorem{prop}[subsubsection]{Proposition}
\newtheorem{lm}[subsubsection]{Lemma}
\newtheorem{cor}[subsubsection]{Corollary}
\newtheorem*{conj}{Conjecture}

\theoremstyle{remark}
\newtheorem{rem}[subsubsection]{Remark}

\newtheorem{ex}[subsubsection]{Example}

\theoremstyle{definition}
\newtheorem{df}[subsubsection]{Definition}
\newtheorem{num}[subsubsection]{}
\newtheorem{paragr}[subsubsection]{}

\numberwithin{equation}{subsubsection}

\newcommand{\T}{\mathscr T} 



\newcommand{\cO}{\mathcal O}
\newcommand{\cM}{\mathcal M} 

\DeclareMathOperator{\Der}{D}
\DeclareMathOperator{\DF}{DF}


\newcommand{\smod}[1]{#1\!-\!\operatorname{Mod}}
\newcommand{\MGL}{\mathbf{MGL}}
\newcommand{\HH}{\mathbf H}



\newcommand{\gr}{\mathrm{gr}}


\newcommand{\CechH}{{\check{\mathrm{H}}^0}}

\newcommand{\SH}{\mathrm{SH}}

\newcommand{\Mod}{\text{-}\mathrm{Mod}}
\newcommand{\ab}{\mathscr Ab}

\newcommand{\Sany}{\Sigma} 
\newcommand{\sft}{\mathscr S^{ft}} 
\newcommand{\seft}{\mathscr S^{eft}} 

\newcommand{\pts}{\mathscr F}

\DeclareMathOperator{\Ker}{Ker}

\DeclareMathOperator{\uK}{\underline K} 
\DeclareMathOperator{\uPic}{\underline{\Pic}} 
\newcommand{\virt}{\mathscr K} 
\newcommand{\lb}{\mathscr L} 
\newcommand{\virtlci}{\mathscr K^{lci}} 

\newcommand{\cotg}{\mathrm L}
\newcommand{\cotgb}{\tau} 
\newcommand{\detcotgb}{\omega} 

\newcommand{\cL}{\mathcal{L}} 


\newcommand{\DM}{\mathrm{DM}}

\DeclareMathOperator{\Sp}{Sp}

\DeclareMathOperator\GW{GW}

\usepackage{mathtools}
\DeclarePairedDelimiter\ev{\langle}{\rangle}

\DeclareMathOperator{\Hom}{Hom}

\DeclareMathOperator{\uHom}{\underline{Hom}} 
\DeclareMathOperator{\Map}{Map}
\DeclareMathOperator{\Aut}{Aut}
\DeclareMathOperator{\Tor}{Tor}
\DeclareMathOperator{\Pic}{Pic}
\DeclareMathOperator{\rk}{rk}
\DeclareMathOperator{\spec}{Spec}

\DeclareMathOperator{\CH}{CH}
\newcommand{\CHW}{\widetilde{\mathrm{CH}}{}}

\newcommand{\cohtp}{\Pi_{\AA^1}} 

\newcommand{\ilim} { \varinjlim }
\newcommand{\plim} { \varprojlim }

\newcommand{\pro}[1] {\operatorname{pro}-#1}


\DeclareMathOperator{\zdiv}{div}
\DeclareMathOperator{\tdiv}{\widetilde{div}}

\DeclareMathOperator{\Comp}{C}

\newcommand{\derR}{\mathbf{R}}

\newcommand{\FF} {\mathbf F}
\newcommand{\NN} {\mathbf N}
\newcommand{\ZZ} {\mathbf Z}
\newcommand{\QQ} {\mathbf Q}

\newcommand{\CC} {\mathbf C}
\renewcommand{\AA} {\mathbf A}
\newcommand{\PP} {\mathbf P}
\newcommand{\GG} {\mathbf{G}_m}

\newcommand{\GGx}[1] {\mathbf{G}_{m,#1}}
\newcommand{\C}{\mathcal C} 
\newcommand{\E}{\mathbb E}

\newcommand{\un}{\mathbbm 1} 


\newcommand{\A}{\mathscr A}

\newcommand{\zar}{{\mathrm{Zar}}}
\newcommand{\nis}{{\mathrm{Nis}}}
\newcommand{\et}{\mathrm{\acute{e}t}}

\DeclareMathOperator{\uG}{\underline \Gamma}
\DeclareMathOperator{\Sh}{Sh}
\DeclareMathOperator{\PSh}{PSh}


\DeclareMathOperator{\Cortilde}{\tilde{\operatorname{Cor}}}

\newcommand{\Lsp}  {{\mathrm{L}^{sp}_S}}

\newcommand{\DA}{\Der_{\AA^1}}
\newcommand{\Sm}{\mathit{Sm}}
\newcommand{\Smsm}{\mathit{Sm}^{sm}}

\newcommand{\Smsp}{\mathit{Sm}^{sp}}

\newcommand{\Gtw}[1]{\{#1\}} 
\newcommand{\tw}[1]{\langle#1\rangle} 
\DeclareMathOperator{\Th}{Th} 

\DeclareMathOperator{\Spec}{Spec}

\newcommand{\KGL}{\mathbf{KGL}}


\newcommand{\Flag}{\operatorname{Flag}}

\newcommand{\colim}{\operatorname{colim}}

\newcommand{\Id}{\operatorname{Id}}

\newcommand\res{\operatorname{res}}
\newcommand\cores{\operatorname{cores}}
\newcommand\KMW{\underline{\operatorname{K}}^{MW}}

\newcommand\kMW{\operatorname{K}^{MW}}
\newcommand{\CatMW}{\operatorname{MW-Mod}}



\usepackage{pdftexcmds}

\DeclareFontFamily{U}{cbgreek}{}
\DeclareFontShape{U}{cbgreek}{m}{n}{
	<-6>    grmn0500
	<6-7>   grmn0600
	<7-8>   grmn0700
	<8-9>   grmn0800
	<9-10>  grmn0900
	<10-12> grmn1000
	<12-17> grmn1200
	<17->   grmn1728
}{}
\DeclareFontShape{U}{cbgreek}{bx}{n}{
	<-6>    grxn0500
	<6-7>   grxn0600
	<7-8>   grxn0700
	<8-9>   grxn0800
	<9-10>  grxn0900
	<10-12> grxn1000
	<12-17> grxn1200
	<17->   grxn1728
}{}

\makeatletter
\newcommand{\normalorbold}{%
	\ifnum\pdf@strcmp{\math@version}{bold}=\z@ bx\else m\fi
}
\makeatother

\newcommand{\dH}{{}^{\delta}\mathbf{H}}

\newcommand{\dR}{{}^{\delta}\mathcal{R}}

\newcommand{\CatM}{\operatorname{M-Mod}}

\newcommand{\CoCatMW}{\operatorname{MW-Mod}^{coh}}
\newcommand{\CoCatM}{\operatorname{M-Mod}^{coh}}

\newcommand{\hM}{\mathcal{M}}
\newcommand{\cohM}{M}


\newcommand{\hC}{{}^{\delta}\widetilde{C}}

\newcommand\bulleto{\bullet\!\!\! \to}

\begin{document}

\begin{abstract}
We compute the perverse $\delta$-homotopy heart of the motivic stable homotopy category over a base scheme with a dimension function $\delta$, rationally or after inverting the exponential characteristic in the equicharacteristic case. In order to do that, we define the notion of homological Milnor-Witt cycle modules and construct a homotopy-invariant Rost-Schmid cycle complex. Moreover, we define the category of cohomological Milnor-Witt cycle modules and show a duality result in the smooth case.
\end{abstract}

\maketitle

\tableofcontents

\section*{Introduction}

\subsection*{Historical approach}

The category of motivic complexes was described 
conjecturally by Beilinson in the form of a program
almost thirty years ago. A first construction was proposed
by Voevodsky in his thesis using the $h$-topology, which already
contained in germs what he will defined later as the $\AA^1$-homotopy
theory. Though that category was defined over an arbitrary base scheme,
it was too coarse to fulfill the program of Beilinson. It was realized
slightly later that this theory corresponds to what is called now
\emph{\'etale motives}, or sometimes \emph{Lichtenbaum motives} as the
corresponding formalism was conjectured by Lichtenbaum - almost at the same 
as Beilinson did: see \cite{Ayoub1}, \cite{CD15}.

Voevodsky refined his construction to get the right integral
category of motivic complexes over a perfect field: the main indication
that the later category is the correct one is constituted by the fact
the morphisms from the (homological) motive of a smooth scheme $X$
to the Tate object $\ZZ(n)[2n]$ is the Chow group of $n$-codimensional
cycles in $X$.
The search for an extension of his definition to an arbitrary base,
satisfying the program of Beilinson, has been an boiling question
since then. A satisfactory answer to that question was obtained,
after the fundamental works of \cite{Ayoub1,Ayoub07b} and \cite{CD3}
by M.~Spitzweck in \cite{Spitzweck01}. More recently, a complete picture
merging \cite{CD3} and \cite{Spitzweck01} was obtained in the equi-characteristic
case in \cite{CD5}. In short, in the original construction
of Voevodsky one had only to replace the Nisnevich topology by 
the cdh-topology if one invert the residue characteristic $p$.

However, one fundamental construction is still lacking
on the category of motivic complexes, even if one restricts
to equi-characteristics schemes: the motivic $t$-structure,
supposed to be the $t$-structure which is realized to the perverse
$t$-structure after taking $l$-adic realization.
On the other hand, one has at our disposal a well defined
$t$-structure on $\DM(k)$, for a perfect field $k$,
whose existence was a corollary of the theory of Voevodsky.
It was later extended to arbitrary base by Ayoub in \cite{Ayoub1}.
Actually, Ayoub constructed two extensions: the one we will be
interested in is the so-called perverse homotopy $t$-structure.
It has the distinctive feature to be obtained by gluing: this
roughly means it is determined by restriction to the points
of the base scheme. This allows the study of that category
by restriction to the case of fields.

Some time ago, the heart of the homotopy $t$-structure over a perfect
base field $k$ was described in the thesis of the first author, as the
category of cycle modules over $k$, previously defined by Rost.
This is a far-fetched generalization of the Gersten resolution
of homotopy sheaf with transfers, proved by Voevodsky.
Thus, it is natural to ask if this theorem extends to
an arbitrary base scheme. This is what we will prove here,
up to inverting the exponential characteristic of the base field.

The motivation of this work is the following conjecture first due to Ayoub:
\begin{conj}
	Let $S$ be a base scheme of characteristic $0$.
	Then the heart of the perverse homotopy t-structure of \cite{Ayoub1} relative to $S$
	is equivalent to the category of Rost cycles modules over $S$.
\end{conj}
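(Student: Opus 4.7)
The plan is to exploit the key structural property of Ayoub's perverse homotopy $t$-structure, namely that it is obtained by gluing from its restrictions to the points of $S$. Concretely, an object of the heart is determined by the data of, for each $x \in S$, an object of the heart over the residue field $\kappa(x)$, together with specialization morphisms encoding how these objects glue across generization. Since by the first author's theorem the heart over a perfect field is canonically equivalent to Rost's category of cycle modules, and since a Rost cycle module over $S$ is itself defined as a collection of cycle modules at each residue field together with residue data satisfying axioms (R1a)--(R4a), (FD) and (C), one obtains a tight formal parallel between both sides of the desired equivalence, suggesting the strategy of constructing the equivalence pointwise and then matching the gluing data.

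First I would construct a functor $\Phi$ from Rost cycle modules over $S$ to the heart by sending a cycle module $M$ to the homotopy-invariant Rost-Schmid cycle complex associated to $M$ that is built earlier in the paper. The first step is to verify that $\Phi(M)$ actually lies in the heart: this amounts to checking the vanishing of positive and negative perverse homotopy sheaves, which by the gluing description can be tested after restriction to each point $x \in S$, where it reduces to the known field case. The second step is to construct a quasi-inverse: given $\mathcal{F}$ in the heart, the fiber $x^* \mathcal{F}$ lies in the heart over $\kappa(x)$, and hence defines a cycle module at $x$; the gluing data of the $t$-structure then supplies the residue morphisms $\partial \colon M(\kappa(x)) \to M(\kappa(y))$ for immediate specializations $y \rightsquigarrow x$. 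One must then check that the collection of cycle modules equipped with these residues satisfies Rost's axioms, and that the two functors are mutually quasi-inverse.

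The main obstacle is twofold. First, one must show that the Rost-Schmid complex faithfully represents its cycle module in $\DM(S)$ with respect to the perverse homotopy $t$-structure; this is where the homotopy-invariance of the complex and the use of a dimension function $\delta$ become essential, since the gluing of the $t$-structure is organized along the $\delta$-stratification of the base rather than by codimension alone. Second, and more seriously, one must verify that the abstract gluing data extracted from the $t$-structure matches Rost's concrete residue axioms; the delicate compatibility axioms (R3b)--(R3d) correspond to non-trivial motivic identities involving deformation to the normal cone and transfers along finite field extensions, for which the characteristic zero hypothesis (giving access to resolution of singularities), or more generally the inversion of the exponential characteristic in the equicharacteristic case, is crucial.
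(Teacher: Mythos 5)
Your proposal captures the broad two-step strategy — build a functor $\Phi$ from cycle modules to the heart via the Rost–Schmid complex, then build a quasi-inverse by extracting pointwise data from an object in the heart — and this does match the skeleton of the paper's Theorem~\ref{thm_main_adjunction_theorem}, where $\dH$ plays the role of your $\Phi$ and $\dR$ is the quasi-inverse. However, there are two substantive discrepancies with the paper's actual argument.

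First, you frame the inverse direction as ``extracting'' a cycle module from the gluing data of the $t$-structure by restriction to the points of $S$. The paper does not proceed by gluing over points. The Rost transform $\dR$ (Theorem~\ref{thm_Rost_transform}) is constructed globally from the twisted Borel--Moore homology theory $\hat\E_*(-/S,-)$ extended by continuity to essentially-finite-type $S$-schemes, using the $\delta$-niveau spectral sequence. The data~\ref{itm:D1'}--\ref{itm:D4'} come from the covariant and exceptional-contravariant functoriality of BM-homology, the action of stable $\mathbb{A}^1$-cohomotopy (via Morel's computation of $\cohtp^*$), and the localization triangle; the rules \ref{itm:R3a'}--\ref{itm:R3e'} follow from formal compatibilities of these structures (Proposition~\ref{prop:Gysin_divisor}, formulas~\eqref{eq:GW_PF},~\eqref{eq:action_cohtp_on_BMhlg}). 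In particular, verifying Rost's axioms does \emph{not} need resolution of singularities or deformation to the normal cone as you suggest; it is a formal consequence of the six-functor formalism. Resolution of singularities (the (Resol) hypothesis from~\cite{BD1}) enters only in the second half of the theorem, where it is needed for the non-degeneracy of the perverse $\delta$-homotopy $t$-structure.

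Second, and more seriously, your proposal is silent on the hardest step: showing that the unit $\mathrm{K}\to\dH(\dR(\mathrm{K}))$ is an isomorphism for $\mathrm{K}$ in the heart. This is where the real work lies, and the paper devotes all of Sections~\ref{subsec_filtrations}--\ref{subsec_sheaves_with_specializations} to it. One needs to promote the pointwise $\delta$-coniveau filtration on $\mathrm{K}$ to a well-defined filtered object globally on the smooth site of $S$; the paper does this by passing through the auxiliary category $\Smsp_S$ of smooth schemes with specializations, showing $\Der_{\mathbb{A}^1}(\Smsp_S,R)\simeq\Der_{\mathbb{A}^1}(\Sm_S,R)$, and then invoking the abstract Beilinson equivalence (Theorem~\ref{thm:BBD}, an unbounded extension of~\cite[3.1.8]{BBD}) between the heart of a filtered derived category and the category of complexes, together with Suominen's identification of Cohen--Macaulay objects with Cousin complexes (Theorem~\ref{thm_Suominen}). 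Without something playing the role of this machinery — a way to functorially reconstruct an object in the heart from its Gersten/Cousin complex, compatibly with the specialization structure — the ``check that the two functors are mutually quasi-inverse'' step of your outline cannot be completed. Finally, note that the paper's equivalence is with homological \emph{Milnor--Witt} cycle modules $\CatMW_S$, not Rost (i.e.\ oriented Milnor) cycle modules; since the $t$-structure lives on $\DA(S)$ (an $\eta$-nontrivial category), the heart carries a genuine $\KMW$-action, and your proposal should distinguish these.
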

This conjecture was attacked in \cite{Deg17}. The reason to work in characteristic $0$
is to get the non-degeneration of the perverse homotopy $t$-structure.
Thanks to \cite{BD1}, we now have such a t-structure for schemes over a field $k$ of arbitrary exponential
characteristic $p$, provided one takes $\ZZ[1/p]$-coefficients.

The purpose of the present work is to prove this conjecture.
Actually, we can do much better. First we can work with characteristic $p>0$ fields up to inverting $p$ in the coefficients. Second,
if we work with rational coefficients, we can work with arbitrary
noetherian finite dimensional base schemes equipped with a dimension
function. This requires using the $\delta$-homotopical version
Ayoub's perverse homotopy $t$-structure, as developed in
\cite{BD1}.

In this paper, we follow \cite{Deg17bis, Feld1, Feld2, Fel21, Fel21b, Fel21c}: we want to be careful about Thom spaces.

\subsection*{Main results}

The notion of \emph{Milnor-Witt cycle modules} is introduced by the second-named author in \cite{Feld1} over a perfect field which, after slight changes, can be generalized to more general base schemes (see \cite{BHP22} for the case of a regular base scheme). In the spirit of the general formalism of \emph{bivariant theories} (see \cite{Deg16}), we call these objects \emph{cohomological Milnor-Witt cycle modules}, to distinguish them from the notion introduced in Section~\ref{sec:homMW}, considered as the \emph{homological} variants. 
\par 
In Section \ref{sec:homMW}, we define the notion of \textit{homological Milnor-Witt cycle module} $\hM$ and its associated Rost-Schmid cycle complex ${}^{\delta}C_*(X,\hM,*)$, and prove basic functoriality properties.

\par In Section \ref{sec:Cycle_to_homotopy_modules}, we construct the \textit{homotopy cycle complex} $\hC_*(X,\hM,*)$ associated to a homological Milnor-Witt cycle module, and prove that it satisfies a strong property of homotopy invariance:
\begin{thm}[Strong Homotopy invariance] (see Corollary \ref{HomotopyInvarianceDeformedComplex})
	Let $\hM$ be a homological MW-cycle module. Let $X$ be a scheme over $S$, $q$ an integer, $*$ a line bundle over $X$ and $\pi:V\to X$ a vector bundle. Then the pullback morphism
	\begin{center}
		$\pi^!:\hC^*(X,\hM_q,*) \to \hC^*(V,\hM_q,*)$
	\end{center}
	is an isomorphism.
	
\end{thm}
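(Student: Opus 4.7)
My plan is to follow the classical Rost strategy for homotopy invariance of cycle complexes, adapted to the Milnor--Witt setting and to the homotopy cycle complex $\hC$ constructed in Section~\ref{sec:Cycle_to_homotopy_modules}. The argument should break into three steps: reduction to a trivial bundle, reduction to rank one, and an explicit contracting homotopy.

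First, since vector bundles are Zariski-locally trivial, I would choose a trivialising Zariski cover of $X$ and invoke a Mayer--Vietoris sequence for $\hC^*(-,\hM_q,*)$. The underlying Rost--Schmid type complex is built point-by-point, so Zariski (indeed Nisnevich) descent should be automatic from the very construction, and the problem reduces to the case $V=X\times\AA^n$. Factoring the projection $X\times\AA^n\to X\times\AA^{n-1}\to X$ and iterating reduces further to the rank-one case $\pi:X\times\AA^1\to X$.

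For the core case I would follow Rost's original recipe and produce an explicit chain homotopy through the multiplication map $\mu:\AA^1\times\AA^1\to\AA^1$, $(s,t)\mapsto st$. Given a class on $X\times\AA^1$, pulling back along $\Id_X\times\mu$ and then taking the residue of the resulting class with respect to the parameter $s$ at $s=0$ and $s=1$ produces an operator $H$ satisfying $\partial H+H\partial=\Id-\pi^!\,i_0^*$, where $i_0:X\to X\times\AA^1$ is the zero section. Together with the obvious relation $i_0^*\,\pi^!=\Id$, this exhibits $\pi^!$ as a homotopy equivalence of complexes, and hence at the very least a quasi-isomorphism.

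The hard part is upgrading this quasi-isomorphism into an honest isomorphism on the \emph{deformed} complex $\hC$. The whole purpose of the deformation introduced in Section~\ref{sec:Cycle_to_homotopy_modules} should be that the boundary terms produced by $\mu$ vanish on the nose in $\hC$, so that the chain homotopy above becomes a strict contraction. Checking this requires careful bookkeeping of the Milnor--Witt twists attached to the transition data of the $\AA^1$-bundle, of the graded signs appearing in the axioms for a homological MW-cycle module, and of the compatibility of the geometric residues with pullback along $\mu$ and with the line-bundle twist $*$. Once this compatibility is in hand for the rank-one projection with an arbitrary pulled-back twist, the inductive reduction above delivers the result for any vector bundle.
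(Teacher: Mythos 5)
Your overall skeleton — reduce by local triviality via a sheaf-theoretic argument, reduce further to rank one trivial bundles, then handle the trivial case — has the same shape as the paper's argument, but the final and crucial step is wrong.

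The paper's proof is almost tautological in the trivial case: $\hC^*(X,\hM_q,*)$ is \emph{defined} as $\colim_n {}^{\delta}C^*(X\times\AA^n,\hM_q,*)$ along the maps $\pi^!_{X,n,n-1}$, so for $V=X\times_S\AA^n_S$ the pullback $\pi^!:\hC^*(X,\hM_q,*)\to\hC^*(V,\hM_q,*)$ is simply a reindexing of the same cofinal diagram and hence an isomorphism with no homotopy needed. The paragraph just above the corollary in the paper records exactly this. Consequently there is no need to reduce to rank one or to invoke Rost's $\mu$-homotopy at all. Your Step 3 is where the plan goes astray: you claim that the deformation makes the boundary terms produced by $\mu$ ``vanish on the nose,'' so the chain homotopy $H$ becomes a strict contraction. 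That does not happen; $H$ is a nontrivial operator even on $\hC$, and the relation $\partial H + H\partial = \Id - \pi^! i_0^*$ only ever establishes a quasi-isomorphism. The paper does construct such an $H$ (Theorem~\ref{thm_h_data}), but for a different purpose — comparing $\hC$ back to ${}^{\delta}C$ — not to prove $\AA^1$-invariance of $\hC$, which it gets for free from the colimit definition.

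On the reduction step: you invoke a Mayer--Vietoris sequence, whereas the paper uses the stronger fact, established in \ref{HomotopyComplexIsNisnevich}, that both $U\mapsto\hC^*(U,\hM_q,*)$ and $U\mapsto\hC^*(V_U,\hM_q,*)$ are actual Nisnevich sheaves on $X_\nis$ (this follows directly from the pointwise direct sum decomposition of the Rost--Schmid complex). A morphism of Nisnevich sheaves of abelian groups is an isomorphism if it is so locally, which is the precise reduction needed. A Mayer--Vietoris long exact sequence would also work but introduces connecting maps and a five-lemma that are simply unnecessary here; and you should in any case say Nisnevich rather than Zariski, matching how descent is actually proved in the paper.

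In short: the reduction to the trivial case is essentially right (modulo the sheaf versus Mayer--Vietoris distinction), but the handling of the trivial case should be one line appealing to the colimit definition of $\hC$, not a chain-homotopy argument. The justification you give for the ``strict contraction'' is not a real mechanism and would not close the gap between quasi-isomorphism and isomorphism.
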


The strong homotopy invariance of $\hC$ allows us to define Gysin maps for projective lci maps. In particular, one can see that the complex $\hC$ is a presheaf of smooth $S$-schemes, Nisnevich-local and $\AA^1$-invariant. Moreover, it computes the homology groups of the Rost-Schmid complex:

\begin{thm}
	The exist maps 
	\begin{center}
		$\widetilde{\alpha}_X: {}^{\delta}C^*(X,\hM_q, *) \to \hC_*(X,\hM_q,*)$
	\end{center}
	
	\begin{center}
		$\widetilde{H}_X: \hC^*(X,\hM_q,*)
		\to \hC^*(X,\hM_q,*)$.
	\end{center}	
	
	\begin{center}
		$\widetilde{r}_X: \hC^*(X,\hM_q,*) \to {}^{\delta}C^*(X,\hM_q,*)$.
	\end{center}
	that form an $h$-data (in the sense of \cite[§9.1]{Rost96}), i.e. they satisfy the following properties:
	\begin{itemize}
		\item $\widetilde{H}_X \circ \widetilde{\alpha}_X = 0$,
		\item $\widetilde{r}_X \circ \widetilde{\alpha}_X=\Id$,
		\item $\delta(\widetilde{H}_X)=\Id-\widetilde{\alpha}_X *\circ \widetilde{r}_X$
	\end{itemize}
	where $\delta(\widetilde{H}_X)=\partial^{n-1}\circ \widetilde{H}_X + \widetilde{H}_X\circ \partial^{n+1}$.
	
\end{thm}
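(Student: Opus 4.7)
The plan is to adapt Rost's construction of $h$-data from \cite[§9.1]{Rost96} to the Milnor--Witt and $\delta$-dimensional setting. The three maps should be defined explicitly on generators of the cycle complexes, and the three identities then reduce to term-by-term verifications, with the first two essentially formal and the last encoding the core homotopy content.

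First, I would exploit the construction of $\hC$ from Section~\ref{sec:Cycle_to_homotopy_modules} as a deformation of ${}^{\delta}C$. Concretely, $\hC$ is built by adjoining an auxiliary $\AA^1$-parameter at each stage, so there is a canonical subcomplex identification of ${}^{\delta}C$ with the fiber over a chosen base point of the deformation (say $t=0$). The map $\widetilde{\alpha}_X$ is then the inclusion of this subcomplex, and $\widetilde{r}_X$ is the projection/specialization at the base point. By construction the composite $\widetilde{r}_X\circ\widetilde{\alpha}_X$ is the identity, so the second relation is tautological. Moreover, if $\widetilde{H}_X$ is built out of constructions that vanish when evaluated on elements pulled back from ${}^{\delta}C$, then $\widetilde{H}_X\circ\widetilde{\alpha}_X=0$ will also follow by design.

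Second, I would construct $\widetilde{H}_X$ as a simplicial-type contracting homotopy in the style of Rost. The formula would insert the $\AA^1$-parameter $t$ and take a residue along the divisor $t=0$, producing a map that shifts the filtration by one and provides the contraction from the full deformed complex to its subcomplex. The \emph{strong homotopy invariance} of $\hC$ (Corollary~\ref{HomotopyInvarianceDeformedComplex}) is what guarantees that this $\AA^1$-insertion is invertible on cohomology, so that the contracting homotopy exists at the chain level. The twists by line bundles and the MW-coefficients require a careful choice of trivializations along $\AA^1$ compatible with the cotangent complex, but the naturality of the Rost--Schmid differential with respect to smooth morphisms, established in Section~\ref{sec:homMW}, provides the needed functoriality.

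The main obstacle is the verification of the third identity $\delta(\widetilde{H}_X)=\Id-\widetilde{\alpha}_X\circ\widetilde{r}_X$. Unwinding the boundary of $\widetilde{H}_X$ produces a sum indexed by strata of the relevant $\AA^1$-insertion, and one must match it, piece by piece, with the difference between the identity and the restriction to $\{t=0\}$. This requires controlling three sources of signs and twists simultaneously: (i) the quadratic twists $\tw{u}$ coming from the MW-structure of the residue maps, (ii) the line-bundle twists $*$ which get altered along the vector bundle pullback, and (iii) the dimension shifts governed by $\delta$. The key computational input is the compatibility between the specialization-at-$0$ map along an $\AA^1$-family and the Rost--Schmid differential, which follows from the functoriality properties of $\hM$ established earlier. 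Once this localization/specialization identity is in place, the homotopy formula is obtained by summing over the positions of the $\AA^1$-insertion, with the middle terms cancelling in telescoping fashion and the two boundary contributions producing exactly $\Id$ and $-\widetilde{\alpha}_X\circ\widetilde{r}_X$.
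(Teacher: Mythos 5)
Your plan correctly identifies the overall strategy — adapting Rost's $h$-data formulas to the $\delta$-dimensional Milnor--Witt setting, with the third identity carried by telescoping over the $\AA^1$-insertions — and this is indeed what the paper does. But two points of your description are off, and the first is a genuine gap.

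\emph{Mischaracterization of $\widetilde r_X$, and the ``tautological'' claim.} You describe $\widetilde r_X$ as a ``projection/specialization at the base point $t=0$,'' from which you conclude $\widetilde r_X\circ\widetilde\alpha_X=\Id$ is tautological. Neither is right. In the construction, $\hC^*(X,\hM_q,*)=\colim_n {}^{\delta}C^*(\AA^n_X,\hM_q,*)$ along the pullbacks $\pi^!_{X,n,n-1}$, and $\widetilde\alpha_X$ is the structure map from the $n=0$ term into the colimit. A specialization to $t=0$ is not a map of Rost--Schmid complexes ${}^{\delta}C^*(\AA^1_X)\to {}^{\delta}C^*(X)$ (it does not commute with the residue differentials), so a retraction cannot be built this way. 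What is actually used is the generalized correspondence
$$
r_{X,1}\colon \AA^1_X \xrightarrow{\ j^!\ } (\AA^1-\{0\})\times X \xrightarrow{\ [-1/t]\ } (\AA^1-\{0\})\times X \xrightarrow{\ \partial_\infty\ } X,
$$
i.e.\ restriction to the punctured line, multiplication by $[-1/t]$, then residue at $\infty$ in the $\PP^1$-compactification, followed by an inductive definition of $r_{X,n}$. The identity $\widetilde r_X\circ\widetilde\alpha_X=\Id$ is therefore \emph{not} formal: it is the normalization statement $\partial\circ[t]\circ\tilde g^!=(\Id_X)_*$ of Lemma~\ref{RostLem4.5}, which in turn rests on rule \ref{itm:R3d'}. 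Without this input your argument for the second relation collapses.

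\emph{The third identity.} Your description of the third relation is in the right spirit but stays at the level of intentions. The mechanism in the paper is to prove first the $n=1$ case
$\delta(H_{X,1})=\Id_{\AA^1_X}-\pi^!_{X,1}\circ r_{X,1}$
by factoring $p_{1*}$ through the proper compactification $X\times\AA^1\times\PP^1$, so that $\delta(H_{X,1})=\bar p_{1*}\circ\delta(q_*)\circ[s-t]\circ p_2^!$ with $\delta(q_*)=(i_\Delta)_*\partial_\Delta+(i_\infty)_*\partial_\infty$. The $\Delta$-summand is collapsed to $\Id$ using Lemma~\ref{RostLem4.5} (with parameter $s-t$), while the $\infty$-summand is rewritten, via the unit $(s-t)/(-t)$ constant equal to $1$ on the divisor at $\infty$, into $-\pi^!_{X,1}\circ r_{X,1}$. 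Induction and passage to colimits then gives the general statement. Your paragraph gestures at the telescoping but does not supply the compactification step nor the decomposition of $\delta(q_*)$ along the two boundary divisors, which is where the actual twists and signs are resolved; as written, it cannot be checked. Supplying the $\PP^1$-compactification and the normalization lemma would complete your argument and would essentially reproduce the paper's proof.
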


\par In Section \ref{sec:Rost_transform}, we define the notion of \textit{twisted Borel-Moore homology theory} and shows how to obtain a homological Milnor-Witt cycle module from a motivic spectrum.

\begin{thm}[see Theorem \ref{thm_Rost_transform}]
	Let $\E$ be a motivic spectrum. For any $p \in \ZZ$, there exists an object $\hat H_p^\delta(\E)$ which is a (homological) MW-cycle module over $S$ (and functorial in $\E$).
	In particular, we get a canonical functor, called the \emph{Rost transform}, from the stable motivic homotopy category to the category of homological Milnor-Witt cycle modules:
	\begin{align*}
		\dR: \SH(S) &\rightarrow \CatMW_S\\
		\E &\mapsto \hat \E=\hat H_0^\delta(\E).
	\end{align*}
	Finally, if $v$ is a virtual bundle over $X$, of rank $0$ and determinant $\cL$,
	one has a canonical isomorphism of graded complexes, natural in $\E$:
	\begin{equation}\label{eq:Gersten&RS-complexes}
		{}^{\delta}C_*(X,\E\tw v) \simeq 
		{}^{\delta}C_*(X,\hat  \E,\cL).
	\end{equation}
\end{thm}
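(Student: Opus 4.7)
The plan is to define the value of $\hat H_p^\delta(\E)$ on a residue field $E$ at a point of a finite-type $S$-scheme, equipped with a rank-zero virtual bundle $v$ on $\Spec E$, as the twisted Borel-Moore homology group of the $S$-point $\Spec E \to S$ with coefficients in $\E$, twist $v$, and homological degree $\delta(E)+p$, in the sense of the twisted Borel-Moore theory introduced in Section \ref{sec:Rost_transform}. First I would extract the four structural operations of a homological MW-cycle module (as axiomatized in Section \ref{sec:homMW}) from the six-functor formalism applied to $\E$: pullback along a separable field extension $E \hookrightarrow F$ comes from smooth base change; corestriction along a finite extension comes from the proper pushforward $f_*$; residues at a discrete valuation ring come from the boundary in the localization triangle $i_* i^! \to \Id \to j_* j^*$ associated to the closed point; and the action of $\KMW_*$ is induced by the identification $\KMW_n(E) \simeq \pi_{n,n}(\un_E)$ together with the canonical $\un_S$-module structure on $\E$. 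Functoriality of these operations is then formal from composition, base change, and the projection formula in $\SH$.

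Next I would verify Rost's axioms as adapted to the Milnor-Witt setting. The functoriality and projection-formula axioms are immediate consequences of the six-functor formalism. The delicate residue-compatibility axioms reduce, by deformation to the normal cone and base change, to model computations on $\AA^1$ and $\PP^1$, where they follow from $\AA^1$-invariance of twisted Borel-Moore homology and from the homotopy purity theorem. For the final comparison isomorphism, I would unwind both complexes: the left-hand side ${}^{\delta}C_*(X,\E\tw v)$ is, by construction, the $E_1$-term of the niveau spectral sequence for $\E$-Borel-Moore homology on $X$ filtered by $\delta$-dimension of support. A continuity argument identifies the summand indexed by a point $x \in X$ with the twisted Borel-Moore homology of $\Spec \kappa(x) \to S$, which is exactly $\hat H_0^\delta(\E)(\kappa(x), v|_x)$; since $v$ has rank zero and determinant $\cL$, this agrees up to canonical isomorphism with $\hat H_0^\delta(\E)(\kappa(x), \cL|_x)$. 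The boundary maps of the niveau spectral sequence match the Rost-Schmid differentials of ${}^{\delta}C_*(X,\hat\E,\cL)$ by the very way the residue operation of $\hat\E$ was defined, and naturality in $\E$ is inherited from functoriality of the niveau filtration.

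The main obstacle is the verification of the residue compatibility axioms — in particular the analogue of Rost's axiom (C) for pairs of valuations in a codimension-$2$ chain — in this relative setting over $S$ with twists by virtual bundles. In Rost's original work these axioms are proved by surface computations over a perfect field; here they must be executed compatibly with the dimension function $\delta$ and with the $\KMW$-twist data, both of which interact nontrivially with the purity isomorphisms used in the reduction to $\AA^1$/$\PP^1$. A secondary technical difficulty is matching the sign and twist conventions between the residues produced by the localization triangle and those entering the definition of the Rost-Schmid differential in ${}^{\delta}C_*(X,\hM,\cL)$; this is handled by the structural results of Sections \ref{sec:homMW} and \ref{sec:Cycle_to_homotopy_modules} together with the strong homotopy invariance already established.
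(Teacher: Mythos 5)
Your overall architecture matches the paper: define $\hat H_p^\delta(\E)$ via twisted Borel-Moore homology of $S$-points, extract the structure maps from the six-functor formalism (finite pushforward for D2', localization boundary for D4', the cohomotopy action plus Morel's theorem for D3', exceptional contravariance for D1'), and match the Gersten complex ${}^{\delta}C_*(X,\E\tw v)$ with the Rost--Schmid complex via the $\delta$-niveau spectral sequence. The verification of the \emph{premodule} rules is also roughly right, though the paper is more structural than you suggest: R3d' comes from \Cref{prop:Gysin_divisor} and R3e' from the formulas~\eqref{eq:GW_PF}, not from bespoke $\AA^1$/$\PP^1$ model computations.

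Where you go astray is in treating axiom (C') (Closedness) as the ``main obstacle,'' to be verified by surface computations in the style of Rost. The paper does not do this, and does not need to. The order of the argument is reversed: one first proves the identification~\eqref{eq:Gersten&RS-complexes} as an isomorphism of \emph{graded abelian groups with the same differential maps}, and then (FD') and (C') come for free, because the left-hand side ${}^{\delta}C_*(X,\E\tw v)$ is the $E_1$-page of the $\delta$-niveau spectral sequence and is therefore automatically a complex (with $d\circ d=0$ by the exact-couple machinery). In other words, once you know the Rost--Schmid differentials of $\hat\E$ coincide with the $d_1$ of the niveau spectral sequence, closedness is a formal consequence, with no codimension-$2$ local computation required. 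Your write-up presents the comparison isomorphism as a final step to be carried out \emph{after} verifying the module axioms; in the paper's proof it is the mechanism \emph{by which} (FD') and (C') are established. If you insist on checking (C') by hand first, you would be re-proving $d^2=0$ for the niveau exact couple by bare hands, which is both needlessly hard and not what is done here.

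A smaller point: your D1' is described as coming from ``smooth base change'' for separable extensions, but in the paper D1' holds for arbitrary morphisms of $S$-fields (not just separable ones) and arises from the exceptional (lci) contravariance $f^!$ of Borel--Moore homology, with the cotangent twist $\detcotgb_{F/E}$ appearing precisely because one uses $f^!$ rather than $f^*$. This is worth keeping straight, as the twist bookkeeping is central to matching the two sides of~\eqref{eq:Gersten&RS-complexes}.
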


Building from the results in Section \ref{sec:Cycle_to_homotopy_modules}, we can also construct of functor from the category of Milnor-Witt cycle modules to the derived homotopy category:

\begin{thm}[see Theorem \ref{thm:functor_KMW->DA}]
	Let $\hM$ be a homology MW-cycle modules.
	Then $\dH  \hM(X)$ is an $\AA^1$--local, Nisnevich fibrant, $\Omega$-fibrant element of $\DA(S)$. Moreover, we get a functor $$
	\dH:\CatMW_S \rightarrow \DA(S).
	$$

\end{thm}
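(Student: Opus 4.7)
The plan is to set $\dH\hM$ to be the presheaf on $\Sm/S$ (twisted by line bundles, or equivalently by virtual rank-zero bundles) sending a pair $(X,*)$ to the homotopy cycle complex $\hC_*(X,\hM,*)$ constructed in Section \ref{sec:Cycle_to_homotopy_modules}, viewed as a $\Gm$-spectrum via the MW-grading $q\mapsto\hM_q$. The three fibrancy conditions are then checked one by one using the properties of $\hC$ already established, and functoriality is inherited from functoriality of $\hC$ in $\hM$.

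First, $\AA^1$-locality is immediate from the Strong Homotopy Invariance Theorem recalled above: applied to the trivial line bundle $p\colon X\times\AA^1\to X$, Corollary \ref{HomotopyInvarianceDeformedComplex} gives that $p^!\colon\hC^*(X,\hM_q,*)\to\hC^*(X\times\AA^1,\hM_q,p^**)$ is an isomorphism for every twist and every $q$, which is exactly the $\AA^1$-local condition on the presheaf. Second, Nisnevich fibrancy will follow from the $h$-data $(\widetilde\alpha_X,\widetilde H_X,\widetilde r_X)$ of the theorem immediately preceding: this $h$-data produces a chain-homotopy equivalence between $\hC_*(X,\hM,*)$ and the Rost-Schmid complex ${}^\delta C_*(X,\hM,*)$, and the latter satisfies Mayer-Vietoris for Nisnevich distinguished squares by the standard argument on residue maps going back to \cite{Rost96} (a property that is already a central feature of the basic functoriality in Section \ref{sec:homMW}). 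Since $h$-data is preserved by the contravariant Nisnevich restriction functors, Nisnevich descent transports to $\hC_*$, and this is precisely Nisnevich fibrancy in the projective model structure on complexes of presheaves underlying $\DA(S)$.

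Third, $\Omega$-fibrancy requires showing that the structure maps of the $\Gm$-spectrum $q\mapsto\hC(-,\hM_q,*)$ are quasi-isomorphisms, equivalently that the natural map $\hC(X,\hM_q,*)\to\derR\uHom\bigl(\Gm,\hC(X,\hM_{q+1},*)\bigr)$ is an equivalence. This is a direct consequence of the MW-grading: the defining data of a Milnor-Witt cycle module includes the contraction/residue isomorphism $\hM_{q+1}(F(t))/\hM_{q+1}(F)\simeq\hM_q(F)$ (with the appropriate twist by the tangent of $\Gm$), and this isomorphism is compatible with the differentials on both complexes. Passing to the Rost-Schmid cycle complex and using the standard splitting $X\wedge\Gm=(X\times\Gm)/X$, one identifies $\hC(X\wedge\Gm,\hM_{q+1},*)$ with the shifted/twisted piece of $\hC(X,\hM_q,*)$, giving the desired equivalence; the transfer from ${}^\delta C$ to $\hC$ is again painless using the $h$-data.

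For the functorial statement, a morphism $\phi\colon\hM\to\hM'$ of homological MW-cycle modules induces, by direct inspection of the construction of the Rost-Schmid differential and of the homotopy deformation of Section \ref{sec:Cycle_to_homotopy_modules}, a morphism of complexes $\hC(-,\hM,*)\to\hC(-,\hM',*)$ compatible with the $\Gm$-spectrum structure, producing $\dH\phi\colon\dH\hM\to\dH\hM'$ in $\DA(S)$; composition and identities are preserved on the nose. The main obstacle I anticipate is the $\Omega$-fibrancy step, because it is the place where one has to genuinely combine the MW-grading with the $\Gm$-suspension structure of $\DA(S)$ and keep careful track of the Thom-twist conventions used throughout the paper; the other two conditions are essentially formal consequences of results already quoted in the excerpt.
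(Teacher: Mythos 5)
Your argument reaches the same conclusions as the paper, but takes a slightly different route for the Nisnevich property and supplies detail where the paper is sparse. For $\AA^1$-locality and functoriality you match the paper exactly: Corollary \ref{HomotopyInvarianceDeformedComplex} gives $\AA^1$-invariance of $\hC$, and the functoriality is built as in \ref{MWmod_change_of_coefficients} by inducing $\widetilde{\alpha}_{\#}$ on the homotopy cycle complex compatibly with the basic maps and the lci Gysin morphisms of Section \ref{sec:Cycle_to_homotopy_modules}. For Nisnevich fibrancy, however, you route the argument through the $h$-data of Theorem \ref{thm_h_data} to transport Mayer-Vietoris from ${}^\delta C$ to $\hC$; the paper instead verifies the sheaf property for $\hC$ \emph{directly} in \ref{HomotopyComplexIsNisnevich}, by observing that the direct-sum decomposition along a Nisnevich square already holds termwise for each ${}^\delta C_*(X\times\AA^n,\hM_q,*)$ and passes to the filtered colimit. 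The direct argument is slightly cleaner (it does not invoke the explicit chain homotopy $\widetilde H_X$ at all), but your route is correct because the $h$-data is indeed natural with respect to the smooth pullbacks appearing in a Nisnevich square. For $\Omega$-fibrancy, the paper's proof is only a pointer to ``the results in Section \ref{sec:Cycle_to_homotopy_modules},'' so your sketch — combining the localization sequence of Proposition \ref{MWmod-localization} with homotopy invariance to identify $\hC(X,\hM_q,*)$ with the $\Gm$-loops of $\hC(-,\hM_{q+1},*)$, following the splitting pattern of \ref{num:A1invariance&splitting} — is a legitimate filling-in rather than a deviation, though you should phrase the contraction statement in terms of the boundary map $\partial_t$ of the split short exact sequence rather than as ``$\hM_{q+1}(F(t))/\hM_{q+1}(F)\simeq\hM_q(F)$'' (which mixes up (H') with what is actually needed and elides the $\tw{1}$-twist and the determinant of the cotangent bundle). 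Your anticipated obstacle is well placed: the twist bookkeeping is the only place that requires genuine care.
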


\par In Section \ref{sec:Equivalence}, we prove the main theorem of the present paper: there is an adjunction between the category of homological Milnor-Witt cycle modules and the category of homotopy derived complexes which are non-negative for the t-structure of \cite{BD1}. Moreover, this adjunction restricts to an equivalence to the heart of this t-structure.

In this version, we give the full proof of the first part of the following theorem, and gives a detailed argument for the second part, based on the existence of a theory of sheaves with specialization that we sketch.
\begin{thm}[see Theorem \ref{thm_main_adjunction_theorem}]
	Consider the above notation.
	\begin{enumerate}
		\item There exists an additive functors:
		$$
		\dR:\DA(S,R) \rightarrow \CatMW_S
		$$
		which is a right inverse of $\dH$: $\dR \circ \dH \simeq \Id$.
		Thus the functor $\dH$ is faithful.
		\item If the category of $S$-schemes satisfies (Resol) and $\DA(-,R)$ is homotopically compatible,
		the previous adjunction induces an equivalence of (additive) categories:
		$$
		\dH:\CatMW_S \leftrightarrows \DA(S,R)^\heartsuit:\dR
		$$
		where the right hand-side is the heart for the perverse $\delta$-homotopy $t$-structure.
	\end{enumerate}
\end{thm}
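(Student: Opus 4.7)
The plan is to first promote the Rost transform from $\SH(S)$ to $\DA(S,R)$ and establish the right-inverse property, then to upgrade this into an equivalence on the heart by exploiting the pointwise description of the perverse $\delta$-homotopy $t$-structure.

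For part (1), I would extend $\dR$ to $\DA(S,R)$ by precomposing with the canonical realization $\DA(S,R) \to \SH(S)$ suitably rationalized (or $p$-inverted), so that the definition $\hat{\E} = \hat H_0^\delta(\E)$ still makes sense. The verification $\dR \circ \dH \simeq \Id$ would proceed by applying $\dR$ to the spectrum $\dH\hM$ associated to a homological MW-cycle module $\hM$ and invoking the natural isomorphism of Rost-Schmid complexes from the Rost transform theorem,
\[
{}^{\delta}C_*(X, \dH\hM \tw v) \simeq {}^{\delta}C_*(X, \widehat{\dH\hM}, \cL).
\]
By construction $\hC$ is designed so that its Borel-Moore homology at a field point $x$ with twist $\cL$ recovers $\hM(\kappa(x), \cL)$ in degree zero; matching the residue maps, transfers, and Milnor-Witt multiplications uses the $h$-data property established in Section \ref{sec:Cycle_to_homotopy_modules}, which controls the Rost-Schmid differential. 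This identifies $\widehat{\dH\hM}$ with $\hM$ as an MW-cycle module, and faithfulness of $\dH$ then follows formally from any triangle identity together with the unit being an isomorphism.

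For part (2), I would first check that $\dH\hM$ lies in the heart of the $\delta$-homotopy $t$-structure: nonnegativity is a consequence of $\hC$ being $\AA^1$-local and Nisnevich fibrant, while nonpositivity follows from the Rost-Schmid complex being concentrated in the correct $\delta$-weights at each point. Given part (1), it remains to show essential surjectivity, i.e. that the counit $\dH\dR\E \to \E$ is an isomorphism for every $\E$ in $\DA(S,R)^\heartsuit$. The key leverage is the gluing description from \cite{BD1}: an object of the heart is controlled by its fibers $i_x^*\E$ at the $\delta$-codimension $n$ points of $S$, together with specialization data. Over a perfect residue field, Déglise's original theorem identifies the heart with Rost cycle modules, so each fiber $i_x^*\E$ furnishes the same data as $\dR\E$ evaluated at $\kappa(x)$, and the counit is a pointwise isomorphism in the graded pieces of the $\delta$-filtration.

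The hard step is upgrading this pointwise agreement into an actual isomorphism in $\DA(S,R)$, and this is precisely where the hypotheses \emph{(Resol)} and homotopical compatibility enter through the sketched theory of sheaves with specialization. Under \emph{(Resol)} there are enough regular models of closures of points for geometric specialization morphisms between fibers at generizations to be well defined, and homotopical compatibility guarantees that $\DA(-,R)$ commutes with the pro-system of Nisnevich neighborhoods of a point, so that pointwise data with coherent specializations reassemble to a genuine object of $\DA(S,R)$. The plan is to package the cycle-module datum $\dR\E$ together with its induced specialization maps into a sheaf with specialization, show that the counit $\dH\dR\E \to \E$ realizes an isomorphism of such sheaves, and conclude by the reconstruction property that the map was already an isomorphism in $\DA(S,R)$. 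Combined with part (1) this yields the required equivalence $\dH: \CatMW_S \leftrightarrows \DA(S,R)^\heartsuit : \dR$.
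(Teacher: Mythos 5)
Part (1) of your proposal matches the paper's approach: the Rost transform theorem, the identification \eqref{eq:Gersten\&RS-complexes} of Gersten and Rost--Schmid complexes, and the fact (via the $h$-data of Theorem~\ref{thm_h_data}) that $\hC$ and ${}^{\delta}C$ have the same (co)homology together yield $\dR\circ\dH \simeq \Id$ by evaluating at $S$-fields.

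For part (2), however, you are taking a genuinely different route and the crucial step is underspecified. The real work is to \emph{construct} the natural transformation $\mathrm K \to \dH(\dR\mathrm K)$ in $\DA(S,R)$; checking that it is a pointwise isomorphism is comparatively easy once the map exists. The paper builds this map by equipping every $\mathrm K$ in the heart with a \emph{functorial} $\delta$-coniveau filtration (this is what Sections~\ref{subsec_filtrations} and~\ref{subsec_sheaves_with_specializations} are for: the category $\Smsp_S$ and sheaves with specialization exist precisely to make the $\delta$-coniveau filtration commute with the lci and specialization maps that underlie a homotopy module), then applies the generalization of \cite[3.1.8]{BBD} (Theorem~\ref{thm:BBD}) to identify the filtered complex lying in $\DF(\A)^{t=0}$ with the $E_1$-page complex $E^*(K_*,F)$, which is recognized as ${}^{\delta}C^*(\dR\mathrm K)$. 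Under (Resol) and homotopical compatibility the filtration is "Cohen--Macaulay", so this $E_1$-page is a Cousin complex, and Suominen's theorem (Theorem~\ref{thm_Suominen}) gives the canonical quasi-isomorphism.

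Your proposal instead suggests checking the (unconstructed) map fiberwise at points of $S$, invoking Déglise's theorem over "perfect residue fields", and then "reassembling" using sheaves with specialization. Two problems. First, the residue fields $\kappa(x)$ are in general imperfect (finitely generated over $k$, or of mixed characteristic in the (H1) case), so the blanket invocation of the perfect-field result is not available; the paper avoids any such reduction. Second, and more fundamentally, you cite homotopical compatibility as giving some "commutes with pro-systems of Nisnevich neighborhoods" principle, but that is not what it asserts --- it is a vanishing condition on Borel--Moore groups used to prove non-degeneracy of $t_\delta$. The reassembly step, which is where you would actually define the morphism $\mathrm K \to \dH\dR\mathrm K$, remains unspecified: without the Beilinson $f$-category machinery (or something functionally equivalent) one does not get a morphism of $\DA(S,R)$ from fiberwise comparisons plus "coherent specializations". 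The sheaves with specialization are not used in the paper to reassemble pointwise data; they are used to make the $\delta$-coniveau filtration functor $\Xi\circ\mathfrak J_{sp}$ land in the filtered derived category $F\Der_{\AA^1}(\Sm_S,R)$, which is what lets the filtered-complex formalism apply at all.
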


\par In Section \ref{sec:MWmodcoh}, we define the notion of \textit{cohomological Milnor-Witt cycle module} and \textit{oriented homological Milnor-Witt cycle module } (also known as \textit{Milnor cycle module}).

Our main result on cohomological Milnor-Witt cycle modules will be a duality theorem relating these objects to their homological counterparts, which hold for possibly singular schemes, see Theorems~\ref{eq:cohdualori} and~\ref{thm:eqpin} below.

\begin{thm}[see Theorem \ref{thm:eqpin}]
	Let $(S,\lambda)$ be an excellent scheme with a pinning. Given a cohomological MW-cycle module $M$ over $S$, for every $S$-field $\operatorname{Spec}E$, we let
	\begin{align}
		D(M)(E,n)=M_n(E,\lambda_{E}).
	\end{align}
	Reciprocally given a homological MW-cycle module $\mathcal{M}$ over $S$, we let 
	\begin{align}
		\mathcal{D}(\mathcal{M})_n(E)=\mathcal{M}(E,\lambda_{E}^\vee,n).
	\end{align}
	Then these two constructions establish two functors that are equivalences of categories inverse to each other
	\begin{align}
		D:\CoCatMW_S\simeq\CatMW_S:\mathcal{D}.
	\end{align}
\end{thm}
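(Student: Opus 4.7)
The plan is to check that $D$ and $\mathcal{D}$ are well defined, mutually inverse, and functorial on morphisms. The entire argument is driven by the defining compatibility of a pinning: for every $S$-field $\Spec E$, the pinning datum $\lambda_E$ is a canonical twist whose rank/determinant is controlled by $\delta(E)$, and for any geometric valuation $v$ on an $S$-field $F$ with residue field $\kappa(v)$ there is a canonical identification
\[
\lambda_{\kappa(v)} \simeq \lambda_{F}|_{\kappa(v)} \otimes N_v^{\vee},
\]
where $N_v$ is the normal bundle of $\Spec\kappa(v)\hookrightarrow \Spec\mathcal{O}_v$. This identification is exactly the translation mechanism between the codimension convention of a cohomological MW-cycle module and the dimension convention of a homological one.

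To check that $D(M)$ is a homological MW-cycle module for every cohomological $M$, I would go through the axioms (pullback and corestriction along $S$-field extensions, action of $\GW$, residues, and the compatibility relations between these, including the ones indexed by squares of valuations) one at a time. Every axiom that does not involve the residue transfers directly from the cohomological axioms, because each structural operation of $M$ is natural in the twist and the pinning is functorial. The residue axiom is the crux: the cohomological residue
\[
\partial_v : M_n(F,\lambda_F) \longrightarrow M_{n-1}\bigl(\kappa(v),\lambda_F|_{\kappa(v)} \otimes N_v^{\vee}\bigr),
\]
after substitution via the pinning identification, becomes a map $D(M)(F,n)\to D(M)(\kappa(v),n-1)$ with precisely the twist shift required in the homological framework. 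The construction of $\mathcal{D}$ is entirely symmetric, using $\lambda_E^{\vee}$ in place of $\lambda_E$.

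For the inverse property, the composition $\mathcal{D}\circ D$ sends $M$ to $(E,\mathcal{L},n)\mapsto M_n(E,\lambda_E\otimes\lambda_E^{\vee}\otimes\mathcal{L})\simeq M_n(E,\mathcal{L})$ through the canonical trivialisation $\lambda_E\otimes\lambda_E^{\vee}\simeq\mathcal{O}_E$, and the opposite composition is identical. A morphism of (co)homological MW-cycle modules is by definition a family of maps between their underlying twisted objects that commutes with the structural operations, so $D$ and $\mathcal{D}$ act on morphisms tautologically, and the previous paragraph guarantees that compatibility with the structural operations is preserved. The only genuinely delicate verification is therefore the residue-twist matching, which relies crucially on the full pinning axioms and on the excellence of $S$ (needed to guarantee that the pinning is well behaved on all valuations entering the axiom checks); once that single identification is secured, the rest of the proof is formal transport of axioms.
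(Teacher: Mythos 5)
Your proposal contains a genuine gap at exactly the step you identify as "the crux."  You treat the residue axiom as a twist-bookkeeping exercise: the cohomological residue $\partial_v$ along a valuation $v$, after the pinning identification $\lambda_{\kappa(v)}\simeq \lambda_F|_{\kappa(v)}\otimes N_v^\vee$, lands in the right twist for the homological convention.  That is correct as far as it goes, but it only produces \ref{itm:D4'} for \emph{discrete valuation rings}.  The homological axiom \ref{itm:D4'} asks for a residue map for every $1$-dimensional local domain $\cO$ essentially of finite type over $S$, and such a ring need not be regular.  The cohomological axiom \ref{itm:D4} gives you nothing at all for a non-regular $\cO$, so direct transport simply does not produce the required datum — this is precisely the obstruction the paper flags in \ref{num:difres} before the theorem.

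What is actually needed is the normalization construction of \ref{num:respin}: for a $1$-dimensional local domain $\cO$ with fraction field $F$ and residue field $\kappa$, pass to the integral closure $A$ of $\cO$, which is regular, semi-local, and — crucially because $S$ is excellent — \emph{finite} over $\cO$; apply the DVR residues \ref{itm:D4} at the finitely many closed points $z_1,\dots,z_n$ of $A$; then sum the finite pushforwards \ref{itm:D2} along the finite extensions $\kappa(z_i)/\kappa$, all of this with the twists controlled by the pinning as in \eqref{eq:pinres} and \eqref{eq:pinfinpf}.  Only after this composite does one have the candidate for \ref{itm:D4'}, and one must then verify \ref{itm:FD'} and \ref{itm:C'} for this construction.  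Your mention of excellence as making "the pinning well behaved on all valuations" gestures in the right direction but misidentifies its role; excellence is used to guarantee finiteness of the normalization, not a property of the pinning itself.  Without the normalization step the functor $D$ is not even defined on objects, so the rest of your argument (inverse property, functoriality on morphisms) cannot get off the ground.

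The remaining parts of your proposal — that the non-residue axioms transport formally via naturality of the structural operations in the twist, and that $\mathcal D\circ D\simeq \Id$ follows from $\lambda_E\otimes\lambda_E^\vee\simeq\cO_E$ — do match the paper's "proceed line by line" strategy and are fine, once the residue datum is in place.
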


\par Finally, in Section \ref{sec:Computations}, we discuss the Gersten conjecture and improve the result of \cite[Section 8]{Feld1}. Moreover, we compute the Chow-Witt group of zero cycles of a number ring:

\begin{thm}[see Theorem \ref{thm_Chow_Witt_number_ring}]
	Let $\cO_K$ be a number ring. Then, one has the following exact sequence
	$$
	\CH^1(\cO_K) 
	\rightarrow 
	\CHW^1(\cO_K) 
	\rightarrow 
	\CH^1(\cO_K)/2 
	\rightarrow 0
	$$
	where the first map is induced by the hyperbolic map $h$ and the second one is induced by modding out by $h$.
	In particular, $\CHW^1(X)$ is finite.
\end{thm}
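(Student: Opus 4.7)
The plan is to deduce the exact sequence from the fundamental short exact sequence of sheaves
$$0 \to I^2 \to \KMW_1 \to K^M_1 \to 0$$
on the one-dimensional Dedekind scheme $\cO_K$, tracking the interplay between the rank map $\pi\colon \CHW^1 \to \CH^1$ and the hyperbolic map $h\colon \CH^1 \to \CHW^1$, which satisfy $\pi \circ h = 2 \cdot \mathrm{id}$ (since $h \in \GW$ has rank $2$, and equivalently $\eta h = 0$ in $\KMW_*$).

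First, since $\dim \cO_K = 1$, the groups $H^i(\cO_K,-)$ vanish for $i \geq 2$, so the long exact cohomology sequence attached to the above short exact sequence of sheaves yields the surjection $\pi\colon \CHW^1(\cO_K) \twoheadrightarrow \CH^1(\cO_K)$ and identifies its kernel with the image of the connecting map $\iota\colon H^1(\cO_K,I^2) \to \CHW^1(\cO_K)$. Defining $q$ as $\pi$ followed by reduction modulo $2$ then immediately gives the surjectivity of $q\colon \CHW^1(\cO_K) \twoheadrightarrow \CH^1(\cO_K)/2$, and the relation $q \circ h = 0$ (since $\pi h = 2$ becomes $0 \bmod 2$).

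For exactness at $\CHW^1(\cO_K)$: given $\alpha \in \ker q$, write $\pi(\alpha) = 2\beta$ for some $\beta \in \CH^1(\cO_K)$; then $\alpha - h(\beta)$ has vanishing rank, hence lies in the image of $\iota$. It therefore suffices to show that $\operatorname{Im}(\iota) \subseteq \operatorname{Im}(h)$. To do so, I would compare Rost--Schmid complexes explicitly: for $I^2$, the complex $I^2(K) \to \bigoplus_{\mathfrak p} I(k(\mathfrak p))$; and for $\KMW_1$, the complex $\KMW_1(K) \to \bigoplus_{\mathfrak p} \GW(k(\mathfrak p),\omega_{\mathfrak p})$. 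Using the cartesian decomposition $\GW = \ZZ \times_{\ZZ/2} W$, the inclusion $I(k(\mathfrak p)) \hookrightarrow \GW(k(\mathfrak p))$ is $w \mapsto (0,w)$, while the hyperbolic class is $h \cdot n = (2n,0)$. The task is then, given a tuple $(w_{\mathfrak p})_{\mathfrak p}$ representing a class in $H^1(\cO_K,I^2)$, to exhibit a suitable symbol in $\KMW_1(K)$ whose residues correct the Witt-part at each place, leaving a representative with all residues of the form $(2n_{\mathfrak p},0)$, i.e.\ a hyperbolic class. This uses the explicit structure of $\GW(\mathbb F_q)$ for finite residue fields and the arithmetic of $K$, together with the fact that at primes above $2$ one has $I(k(\mathfrak p)) = 0$.

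Finally, finiteness of $\CHW^1(\cO_K)$ follows from the exact sequence: $\CH^1(\cO_K) = \Pic(\cO_K)$ is finite by the classical finiteness of the ideal class group of a number ring, hence so is $\CH^1(\cO_K)/2$, and the image of $h$ is a quotient of the finite group $\CH^1(\cO_K)$; the group $\CHW^1(\cO_K)$ is thereby sandwiched between finite groups and is itself finite. The main obstacle in this plan is the residue computation establishing $\operatorname{Im}(\iota) \subseteq \operatorname{Im}(h)$, which requires an honest adjustment by boundaries in the Rost--Schmid complex rather than purely formal sheaf-theoretic manipulations (since at the sheaf level $(0,w) \notin hK^M_1$ for $w \neq 0$).
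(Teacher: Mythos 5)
Your argument takes a genuinely different route from the paper's, and it leaves the essential step unproved. The paper's proof is a direct snake lemma applied to the ``mod-$h$'' presentation: it sets up the commutative diagram with exact rows
$K^\times \xrightarrow{\times h} \KMW_1(K) \to I(K) \to 0$
and
$0 \to Z_0(X) \xrightarrow{\times h} \oplus_x \GW(\kappa(x)) \to \oplus_x I(\kappa(x)) \to 0$,
with vertical divisor maps, where injectivity of the bottom-left $\times h$ holds because the residue fields of $\cO_K$ are finite. Combined with the Milnor--Husemoller sequence $0 \to I(\cO_K) \to I(K) \to \oplus_x I(\kappa(x)) \to \CH^1(\cO_K)/2 \to 0$, the snake lemma then yields the theorem outright, and the two maps in the conclusion are automatically identified as those induced by $\times h$ and reduction modulo $h$. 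You instead use the ``rank'' sequence $0 \to I^2 \to \KMW_1 \to K^M_1 \to 0$, obtaining the surjection $\pi$ with $\ker\pi = \operatorname{Im}\iota$, and correctly reduce exactness at $\CHW^1$ to the inclusion $\operatorname{Im}\iota \subseteq \operatorname{Im} h$.

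However, given the facts you have already established ($\operatorname{Im} h \subseteq \ker q$ trivially, and $\ker q = \operatorname{Im}\iota + \operatorname{Im} h$ from your decomposition), the inclusion $\operatorname{Im}\iota \subseteq \operatorname{Im} h$ is \emph{equivalent} to the exactness you are trying to prove --- so the reduction is a reformulation, not a simplification. Concretely, $A^1(\cO_K,I^2)$ is the cokernel of $I^2(K) \to \oplus_x I(\kappa(x))$, which is a priori strictly larger than $\CH^1(\cO_K)/2$ (the latter being the cokernel of $I^1(K) \to \oplus_x I(\kappa(x))$, to which $\KMW_1(K)$ surjects). Showing that the class in $\CHW^1$ of a representative of an arbitrary element of $A^1(\cO_K,I^2)$ lies in $\operatorname{Im} h$ is exactly the explicit boundary manipulation you sketch but do not carry out; the paper avoids this issue entirely by choosing the short exact sequence whose quotient term is $I(K)$ rather than $K_1^M(K)$, so that the snake lemma lands directly on $\CH^1(\cO_K)/2$. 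Your surjectivity claims, the relation $q\circ h = 0$ from $\pi\circ h = 2\cdot\mathrm{id}$, and the finiteness deduction at the end are all correct.
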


\subsection*{Outline of the paper}
\par In Section \ref{sec:homMW}, we define the notion of \textit{homological Milnor-Witt cycle module} and its associated Rost-Schmid cycle complex, and prove basic functoriality properties.

\par In Section \ref{sec:Cycle_to_homotopy_modules}, we construct the \textit{homotopy cycle complex} associated to a homological Milnor-Witt cycle module, and prove that it satisfies a strong property of homotopy invariance, it computes the homology groups of the Rost-Schmid complex, and it has functorial Gysin morphisms for lci maps.

\par In Section \ref{sec:Rost_transform}, we define the notion of \textit{twisted Borel-Moore homology theory} and shows how to obtain a homological Milnor-Witt cycle module from a motivic spectrum. 

\par In Section \ref{sec:Equivalence}, we prove that there is an adjunction between the category of homological Milnor-Witt cycle modules and the category of homotopy derived complexes which are non-negative for the t-structure of \cite{BD1}. Moreover, this adjunction restricts to an equivalence to the heart of this t-structure.

\par In Section \ref{sec:MWmodcoh}, we define the notion of \textit{cohomological Milnor-Witt cycle module} and \textit{oriented homological Milnor-Witt cycle module } (also known as \textit{Milnor cycle module}). We prove that there exists a \textit{Poincaré duality} between homological and cohomological Milnor-Witt cycle modules, provided some conditions on the base scheme.

\par Finally, in Section \ref{sec:Computations}, we compute the Chow-Witt group of zero cycles of a number ring. Moreover, we discuss the Gersten conjecture and improve the result of \cite[Section 8]{Feld1}.

\subsubsection*{Future work}
In a subsequent paper, we will define and compute \textit{quadratic lengths}, and construct pullback maps associated to flat morphisms. This will be used to prove ramification formula and a stronger version of the base change theorem present in this article.
\par Moreover, the formalism of the present paper is used in \cite{Fel22a} to study birational invariants. More similar results should follow.

\section*{Notations and conventions}

In this paper, schemes are noetherian and finite dimensional.
We fix a base scheme $S$ and ring of coefficients $R$. If not stated otherwise, all schemes and morphisms of schemes are defined over $S$.
A \emph{point} (resp. \emph{trait}, \emph{singular trait}) of $S$ will be a morphism of schemes $\Spec(k) \rightarrow S$ essentially of finite type and such
that $k$ is a field (resp. local valuation ring, local ring of dimension $1$). Morphisms of such are morphisms of $S$-schemes.

\par List of categories used in this paper: 

\begin{itemize}
	\item $\ab$ the category of abelian groups
	\item $\smod R$ the category of $R$-modules
	\item $\Sp$ the $\infty$-category of spectra
	\item $\sft_S$ the category of $S$-schemes of finite type
	\item $\seft_S$ the category of $S$-schemes of essentially finite type
	\item $\Sm_S$ the category of smooth $S$-schemes of finite type
	\item $\pts_S$ the category of points of $S$

	\item $\virt$ the fibred category of virtual vector bundles (see \ref{df:virt})
	\item $\virtlci$ the lci-fibred category of virtual vector bundles
	\item $\SH_S$ the motivic stable homotopy category over $S$
	\item $\DA(S)$ the $\AA^1$-derived category over $S$
	\item $\Der^b_{\mathrm{coh}}(S)$ the {bounded derived category of quasi-coherent sheaves over $S$ with coherent cohomology}
	\item $\CatMW_S$ the category of homological Milnor-Witt cycle modules over $S$ (see \ref{MWCyleAbelian})
	\item $\CoCatMW_S$ the category of cohomological Milnor-Witt cycle modules over $S$ (see \ref{CoCatMW})
	\item $\Comp(\A)$ the category of complexes of an abelian category $\A$
	\item $\Der(\A)$ the derived category of complexes of an abelian category $\A$
	\item $F\A$ the category of filtered objects
	of an abelian category $\A$ (see \ref{notation_filtered_category})
	\item $\Comp^b(\Sh(X_t,R))_{Cous}$ the full subcategory of $\Comp^b(Sh(X_t,R))$ consisting of Cousin complexes (see \ref{def_Cousin_complex})
	\item $\Der(\Sh(X_t,R))_{CM}$ the full subcategory of $\Der(\Sh(X_t,R))$ consisting of Cohen-Macaulay complexes (see \ref{def_Cohen_Macaulay_complex})
	\item $\Smsm_S$ the category of smooth affine $S$-schemes, with morphisms the smooth maps, equipped with the induced Nisnevich topology (see \ref{def_cat_smsm})
	\item $\Smsp_S$ the category of smooth schemes with smooth maps and specializations (see \ref{def_cat_smsp}) 
	\item $\PSh^{sp}(S,R)$ (resp. $\Sh^{sp}(S,R)$) the category of presheaves (resp. Nisnevich sheaves) with specializations (see \ref{def_sheaves_with_specializations}).
	
	\item $\mathcal{G}_S$ the full subcategory of the category of $S$-schemes essentially of finite type, whose objects consist of $S$-fields and $S$-DVRs (see \ref{S_DVR}).
\end{itemize}

All schemes in this paper are assumed to
have a \emph{bounded dimension function} (see \cite[1.1]{BD1} for recall).
Pairs $(X,\delta)$ such that $X$ is a scheme and $\delta$ is a dimension function on $X$
are called \emph{dimensional schemes}. As usual, $\delta(S)$ is the maximum of the $\delta(\eta)$
for $\eta$ running over generic points of $S$.

Conventions: a morphism $f:X \rightarrow S$ (sometime denoted by $X/S$) is:
\begin{itemize}
	\item essentially of finite type
	if $f$ is the projective limit of a cofiltered system $(f_i)_{i \in I}$ of morphisms of finite type
	with affine and \'etale transition maps;  
		\item lci if it is smoothable and a local complete intersection (\emph{i.e.} admits a global factorization $f=p \circ i$,
		$p$ smooth and $i$ a regular closed immersion);
		\item essentially lci if it is a limit of lci morphisms with étale transition maps.

	\end{itemize}
	If $(S,\delta)$ is a dimensional scheme, and $X/S$ is essentially of finite type
	then $\delta$ can be extended canonically to $X$ (see e.g. \cite[1.1.7]{BD1}).
	We will simply denote by $\delta$ this extension so that $\delta(X)$ is defined.
	We put $X_{(\delta=p)}=\{x \in X\mid \delta(x)=p\}$, or simply $X_{(p)}$ when $\delta$ is clear.
	
	A point $x$ of $S$ is a map $x:\Spec(E) \rightarrow S$ essentially of finite type and such $E$ is a field.
	We also say that $E$ is a field over $S$.
	
	Given a morphism of schemes $f:Y \rightarrow X$, we let $\cotg_f$ be its cotangent complex,
	an object of $\Der^b_{\mathrm{coh}}(Y)$,
	and when the latter is perfect (e.g. if $f$ is essentially lci), we let $\cotgb_f$ be its associated
	virtual vector bundle over $Y$.
	
	We will use the axiomatic of motivic triangulated categories (\cite{CD3})
	and its extension to motivic $\infty$-categories (as in \cite{CisCoh}, \cite{DJK}).
	The main examples used in the paper are the universal (resp. rational universal)
	version the (rational) stable homotopy category $\SH$ (resp. $\SH_\QQ$) but several variants
	are of interest.
	
	We use the following notations for twists: $\Th_X(v)$ is the Thom space of $v$, $\Gtw 1=(1)[1]$ and
	$\tw 1=(1)[2]$.

	\par Let $E$ be a field. We denote by $\GW(E)$ the Grothendieck-Witt ring of non-degenerate symmetric bilinear forms on $E$. For any $a\in E^*$, we denote by $\ev{a}$ the class of the symmetric bilinear form on $E$ defined by $(X,Y)\mapsto aXY$ and, for any natural number $n$, we put $n_{\epsilon}=\sum_{i=1}^n \ev{-1}^{i-1}$. Recall that, if $n$ and $m$ are two natural numbers, then $(nm)_{\epsilon}=n_{\epsilon}m_{\epsilon}$.

	\begin{df} \label{S_DVR}
		\begin{enumerate}
			\item
			We call an \textbf{$S$-DVR} the spectrum of a discrete valuation ring which is essentially of finite type over $S$. 
			\item
			We denote by $\mathcal{G}_S$ the full subcategory of the category of $S$-schemes essentially of finite type, whose
			
			objects consist of $S$-fields and $S$-DVRs.
		\end{enumerate}
	\end{df}

	\par 
	Some conventions:
	\begin{itemize}
		\item $\cohM$ cohomological MW-modules
		\item $\hM$ homological MW-modules
		\item $\cL$ or simply $*$ line bundle
		\item $v,w$ virtual vector bundles
		\item $f,g$ scheme morphisms; $\phi, \psi$ ring morphisms
		\item $\E$ motivic spectrum
	\end{itemize}

	One considers cohomology and BM-homology theories:
	\begin{enumerate}
		\item \textit{\cite{DJK} conventions}:
		\begin{align*}
			\E^p(X,v)&=[\un_X,f^*\E \otimes \Th(v)[p]]_X, \\
			\E_p^{BM}(X/S,v)&=[\Th(v)[p],f^!\E ]_X.
		\end{align*}
		The last one is also called the bivariant theory with coefficients in $\E$,
		and the super-script "BM" is erased.
		\item \textit{Classical conventions, following algebraic topology and mixed motives}:
		\begin{align*}
			H^{p,q}(\Th(v),\E)&=[\Th(v),f^*\E(q)[p]]_X, \\
			H^{p,\Gtw q}(\Th(v),\E)&=[\Th(v),f^*\E\Gtw q[p]]_X, \\
			H^{p,\tw q}(\Th(v),\E)&=[\Th(v),f^*\E\tw q[p]]_X, \\
			H_{p,q}^{BM}(\Th(v)/S,\E)&=[\Th(v)(q)[p],f^!\E]_X, \\
			H_{p,\Gtw q}^{BM}(\Th(v)/S,\E)&=[\Th(v)\Gtw q[p],f^!\E]_X, \\
			H_{p,\tw q}^{BM}(\Th(v)/S,\E)&=[\Th(v)\tw q[p],f^!\E]_X.
		\end{align*}
		When $v=0$, we use $X$ in place of $\Th(0)$.
		
		\par 
		In particular, we have when $X$ is smooth:
		\begin{center}
			$H^{BM}_{-p, \{ -q \} }
			(\Th (+\cotg_f + v), \E)
			=
			H^{p, \{ q \} }
			(\Th ( v), \E)$
		\end{center}
		\par 
		This notation is closer to that of \cite{BD1}.
		Of course, these conventions are redundant but it seems that in our paper,
		the $\GG$-twist $\Gtw q$ is the most relevant.
	\end{enumerate}
	
	\subsubsection*{Acknowledgments}
	The authors deeply thanks Aravind Asok, Joseph Ayoub, Mikhail Bondarko, Baptiste Calmès, Jean Fasel, Marc Levine, Johannes Nagel, Paul Arne \O stv\ae r, Bertrand Toën for conversations, exchanges and ideas that led to the present paper.

	The work of the first two named authors are supported by the ANR HQDIAG project no ANR-21-CE40-0015. The second named author is supported by the ANR LabEx CIMI within the French State Programme “Investissement d’Avenir”.
	The third-named is supported by the National Key Research and Development Program of China Grant Nr.2021YFA1001400, the National Natural Science Foundation of China Grant Nr.12101455 and the Fundamental Research Funds for the Central Universities.

\section{Homological Milnor-Witt cycle modules}

\label{sec:homMW}

\subsection{Homological Milnor-Witt cycle premodules}

Fix $R$ a commutative unitary ring. The following definition is a generalization of \cite[Definition 3.1]{Feld1}.
\begin{df} \label{defMWmodules}
	A homological Milnor-Witt cycle premodule $\hM$ (also written: MW-cycle premodule) over the base scheme $S$ and with coefficients in the ring $R$ is an $R$-module $\hM_n(E)$ for any field $E$ over $S$ and any integer $n$ along with the following data \ref{itm:D1'},\dots, \ref{itm:D4'} and the following rules \ref{itm:R1a'},\dots, \ref{itm:R3e'}.
	\begin{description}
		\item [\namedlabel{itm:D2'}{(D2')}] Let $\phi:E\to F$ be a finite field extension over $S$ and $n$ an integer. There exists a morphism  $\phi^*:\hM_n(F) \to \hM_n(E)$.

		\item [\namedlabel{itm:D3'}{(D3')}] Let $E$ be a field over $S$ and $n,m$ two integers. For any element $x$  of $\kMW_n(E)$, there is a morphism 
		\begin{center}
			$\gamma_x : \hM_n(E)\to \hM_{m+n}(E)$
		\end{center}
		so that the functor $\hM_{?}(E):\ZZ\to R\Mod$ is a left module over the lax monoidal functor 
		
		\begin{center}
			${\kMW_?(E):\ZZ\to R\Mod}$.
		\end{center}
		
		\item [\namedlabel{itm:D4'}{(D4')}] Let $\cO$ be a $1$-dimensional local domain essentially of finite type over $S$ with fraction field $F$ and residue field $\kappa$.
		
		There exists a morphism of abelian groups
		\begin{align}
			\label{eq:D4'}
			\partial_\mathcal O : \hM_n(F) \to \hM_{n-1}(\kappa).
		\end{align}
		
		Let $E$ be a field over $S$, $n$ an integer and $\cL$ a 
		$1$-dimensional vector space over $E$, denote by
		\begin{equation}\label{eq:df_twists_preMW}
			\hM_n(E,\cL):=\hM_n(E)\otimes_{R[E^\times]}R[\cL^\times]
		\end{equation}
		where $R[\cL^\times]$ is the free $R$-module generated by the nonzero elements of $\cL$ and where the action of $R[E^\times]$ on $\hM_n(E)$ is given by $u\mapsto \langle u \rangle$ thanks to \ref{itm:D3'}.
		\item[\namedlabel{itm:D1'}{(D1')}] Let $\phi:E\to F$ be a field extension and $n$ an integer. There exists a morphism of abelian groups
		\begin{center}
			$\phi_!:\hM_n(E) \to \hM_n(F,\detcotgb_{F/E})$.
		\end{center}
		
		\item [\namedlabel{itm:R1a'}{(R1a')}] Let $\phi$ and $\psi$ be two composable morphisms of fields. We has
		\begin{center}
			$(\psi\circ \phi)_*=\psi_*\circ \phi_*$.
		\end{center}
		\item [\namedlabel{itm:R1b'}{(R1b')}]  Let $\phi$ and $\psi$ be two composable finite morphisms of fields. We has
		\begin{center}
			$(\psi\circ \phi)^*=\phi^*\circ \psi^*$.
		\end{center}
		\item [\namedlabel{itm:R1c'}{(R1c')}] Consider $\phi:E,\to F$ and $\psi:E\to L$ with $\phi$ finite and $\psi$ separable. Let $R$ be the ring $F\otimes_E L$. For each $p\in \Spec R$, let $\phi_p:L\to R/p$ and $\psi_p:F\to R/p$ be the morphisms induced by $\phi$ and $\psi$. One has
		\begin{center}
			$\psi_*\circ \phi^*=\displaystyle \sum_{p\in \Spec R} (\phi_p)^*\circ (\psi_p)_*$.
		\end{center}

		\item [\namedlabel{itm:R2'}{(R2')}] Let $\phi : E\to F$ be a field extension, let $x$ be in $\kMW_n (E)$ and $y$ be in $\kMW_{n'} (F)$.
		\item [\namedlabel{itm:R2a'}{(R2a')}] We have $\phi_* \circ \gamma_x= \gamma_{\phi_*(x)}\circ \phi_*$.
		\item [\namedlabel{itm:R2b'}{(R2b')}] Suppose $\phi$ finite. We have $\phi_*\circ \gamma_{\phi^*(x)}=\gamma_x \circ \phi_*$.
		\item [\namedlabel{itm:R2c'}{(R2c')}] Suppose $\phi$ finite. We have $\phi_*\circ \gamma_y \circ \phi^*= \gamma_{\phi_*(y)}$.
		\item [\namedlabel{itm:R3a'}{(R3a')}] 
		Let $E\to F$ be a field extension and $w$ be a valuation on $F$ which restricts to a non trivial valuation $v$ on $E$ with ramification $1$. Let $n$ be a virtual $\mathcal{O}_v$-module so that we have a morphism $\phi:(E, n)\to (F,n)$ which induces a morphism
		${\overline{\phi}:(\kappa(v),-\cotgb_{v}+n)\to
			(\kappa(w),n) }$. 
		We have 
		
		\begin{center}
			
			$\partial_w \circ \phi_*= \overline{\phi}_* \circ \partial_v$. 
		\end{center}
		\item [\namedlabel{itm:R3b'}{(R3b')}] Let $E\to F$ be a finite extension of fields over $k$, let $v$ be a valuation on $E$ and let $n$ be a $\mathcal{O}_v$-module. For each extension $w$ of $v$, we denote by 
		${\phi_w: (\kappa(v), n) \to ( \kappa(w), n)}$ the morphism 
		induced by ${\phi:(E,n)\to (F,n)}$. We have
		\begin{center}
			$\partial_v \circ \phi^*= \sum_w (\phi_w)^* \circ \partial_w$.
		\end{center}
		\item [\namedlabel{itm:R3c'}{(R3c')}] Let $\phi : (E,n)\to (F,n)$ be a morphism in $\mathfrak{F}_k$ and let $w$ be a valuation on $F$ which restricts to the trivial valuation on $E$. Then 
		\begin{center}
			$\partial_w \circ \phi_* =0$.
		\end{center}
		\item [\namedlabel{itm:R3d'}{(R3d')}] Let $\phi$ and $w$ be as in \ref{itm:R3c'}, and let $\overline{\phi}:(E,n)\to (\kappa(w),n)$ be the induced morphism. For any uniformizer $\pi$ of $v$, we have
		\begin{center}
			$\partial_w \circ \gamma_{[\pi]}\circ \phi_*= \overline{\phi}_*$.
		\end{center}
		
		\item [\namedlabel{itm:R3e'}{(R3e')}] Let $E$ be a field over $k$, $v$ be a valuation on $E$ and $u$ be a unit of $v$. Then
		\begin{center}
			$\partial_v \circ \gamma_{[u]}=\gamma_{-[\overline{u}]} \circ \partial_v$ and
			\\ $\partial_v \circ \gamma_\eta =\gamma_{-\eta} \circ \partial_v$.
		\end{center}

	\end{description}
\end{df}

\begin{rem}
	Note that axiom \ref{itm:R3a'} above is weaker than its counterpart in \cite[§2, Rule R3a]{Feld1} because
	we only consider the case without ramification. A stronger result with ramification will be proved later.
\end{rem}

\subsection{Homological Milnor-Witt cycle modules} \label{Modules}

\begin{paragr}\label{2.0.1}

Throughout this section, $\hM$ denotes a homological Milnor-Witt cycle premodule over $S$.
\par Let $X$ be a scheme over $S$.

\par Throughout this paper, we write for any integer $n$ and any line bundle $*$,
\begin{center}
	$\hM_n(x, *)=\hM_n(\kappa(x), *)$.
\end{center}
\par  If $X$ is irreducible, we write $\xi_X$ or $\xi$ for its generic point. 
\par Now suppose $X$ is an arbitrary scheme over $k$ and let $x,y$ be two points in $X$. We define a map
\begin{center}
	$\partial^x_y:\hM_{n}(x,*) \to \hM_{n-1}(y,*)$
\end{center}
as follows. Let $Z=\overline{ \{x\}}$. If $y\not \in Z^{(1)}$, then put $\partial^x_y=0$. If $y\in Z^{(1)}$, then the local ring of $Z$ at $y$ gives us a map
\begin{align}
	\label{eq:reshom}
	\partial^x_y: \hM_n(x, *) \to \hM_{n-1}(y, *),
\end{align}
according to \ref{itm:D4'}.
\end{paragr}

\begin{df}
\label{df:homcycmod}
A homological Milnor-Witt cycle module $\hM$ over $S$ is a homological Milnor-Witt cycle premodule $\hM$ which satisfies the following conditions \ref{itm:FD'} and \ref{itm:C'}.
\begin{description}
	\item [\namedlabel{itm:FD'}{(FD')}] {\sc Finite support of divisors.} Let $X$ be a scheme, $n$ an integer, $*$ a line bundle over $X$, and $\rho$ be an element of $\hM_n(\xi_X,*)$. Then $\partial_x(\rho)=0$ for all but finitely many $x\in X^{(1)}$.
	
	\item [\namedlabel{itm:C'}{(C')}] {\sc Closedness.} Let $X$ be integral and local of dimension 2, $n$ an integer, and $*$ a line bundle over $X$. Then
	\begin{center}
		$0=\displaystyle \sum_{x\in X_{(\delta=1)}} \partial^x_{x_0} \circ \partial^{\xi}_x: 
		\hM_n(\xi_X,*)\to \hM_{n-2}(x_0, *)$
	\end{center}
	where $\xi$ is the generic point and $x_0$ the closed point of $X$.
\end{description}
\end{df}
\begin{paragr}

Of course \ref{itm:C'} makes sense only under presence of \ref{itm:FD'} which guarantees finiteness in the sum.
More generally, note that if \ref{itm:FD'} holds, then for any scheme $X$, any $x\in X$ and any $\rho\in \hM_n(x, *)$ one has $\partial^x_y(\rho)=0$ for all but finitely many $y\in X$.
\end{paragr}

\begin{paragr} \label{DefDifferential}
If $X$ is irreducible and \ref{itm:FD'} holds for $X_{\mathtt{red}}$, we put for any integer $n$ and any line bundle $*$:
\begin{center}
	$d=(\partial^\xi_x)_{x\in X_{(\delta=1)}}:\hM_n(\xi, *) \to \displaystyle \bigoplus_{x\in X_{(\delta=1)}} \hM_n(x, *)$.
\end{center}
\end{paragr}

\begin{df} \label{DefMWmorphisms}
\begin{description}
	\item 
	A morphism $\omega:\hM\to \hM'$ of homological Milnor-Witt cycle modules over $k$ is a natural 
	transformation which commutes with the data \ref{itm:D1'},\dots, \ref{itm:D4'}.
\end{description}
\end{df}

\begin{rem}\label{MWCyleAbelian}
We denote by $\CatMW_S$ the category of Milnor-Witt cycle modules (where arrows are given by morphisms of MW-cycle modules). This is an abelian category.
\end{rem}

\begin{paragr}

In the following, let $F$ be a field over $S$ and ${\AA_F^1=\Spec F[t]}$ be the affine line over $\Spec F$ with function field $F(t)$. Moreover, we fix $n$ an integer and $*$ a line bundle over $S$.
\end{paragr}

\begin{prop} \label{Prop2.2}

Let $\hM$ be a homological Milnor-Witt cycle module over $S$. With the previous notations, the following properties hold.
\begin{description}
	\item [\namedlabel{itm:(H')}{(H')}]  {\sc  Homotopy
		property for $\AA^1$}. We have a short exact sequence

	\begin{center}
		$\xymatrix@C=10pt@R=20pt{
			0 \ar[r] &  \hM_n(F,*) \ar[r]^-{\res}   
			& \hM_n(F(t),*) \ar[r]^-d  &
			\bigoplus_{x\in {(\AA_F^1)}_{(\delta = 1)}} \hM_{n-1}(\kappa(x),*) \ar[r] & 0
		}
		$
	\end{center}
	where the map $d$ is defined in \ref{DefDifferential}.

	\item [\namedlabel{itm:(RC')}{(RC')}] {\sc Reciprocity for curves}. Let $X$ be a proper curve over $F$. Then
	\begin{center}
		
		$\xymatrix{
			\hM_n(\xi_X, *) \ar[r]^-d 
			& \displaystyle \bigoplus_{x\in X_{(\delta = 1)}} \hM_{n-1}(x, *) \ar[r]^-c 
			& \hM_{n-1}(F, *)
		}$
		
	\end{center}
	is a complex, that is $c\circ d=0$ (where $c={\sum_x \cores_{\kappa(x)/F}}$).
\end{description}
\end{prop}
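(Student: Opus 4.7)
The plan is to adapt the classical arguments of Rost \cite[Thm.~2.3]{Rost96} and of the second-named author \cite{Feld1} to the present homological Milnor--Witt setting, treating (H') first as a stepping stone to (RC').

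For (H'), I would proceed in three steps. The relation $d \circ \res = 0$ is immediate from (R3c'): for every closed point $x \in \AA^1_F$, the valuation $v_x$ on $F(t)$ restricts to the trivial valuation on $F$, so $\partial_{v_x} \circ \res = 0$. For the injectivity of $\res$, set $\pi = t^{-1}$, a uniformizer at the point $\infty \in \PP^1_F$, and define the specialization $s_\infty = \partial_\infty \circ \gamma_{[\pi]} \colon \hM_n(F(t), *) \to \hM_n(F, *)$; axiom (R3d') applied to the trivial-valuation extension $F \hookrightarrow F(t)$ yields $s_\infty \circ \res = \Id$, so $\res$ is split injective. For exactness at the middle and surjectivity of $d$, I would introduce the support filtration $F^d \hM_n(F(t), *) = \{ \sigma \mid \partial_x \sigma = 0 \text{ whenever } [\kappa(x) : F] > d \}$ and prove inductively that the residue identifies $F^d / F^{d-1}$ with $\bigoplus_{[\kappa(x) : F] = d} \hM_{n-1}(\kappa(x), *)$. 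This reduces to producing, for each closed point $x$ with monic uniformizer $\pi_x \in F[t]$, a section $s_x$ of $\partial_x$ of the form $\rho \mapsto \gamma_{[\pi_x]} \cdot \tilde\rho$, where $\tilde\rho$ is a Hensel lift of $\rho$, and verifying the orthogonality $\partial_{x'} \circ s_x = \delta_{x, x'} \cdot \Id$ through repeated use of axioms (R3a')--(R3d').

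For (RC'), I would first settle the case $X = \PP^1_F$, namely the identity $\partial_\infty + \sum_{x \in (\AA^1_F)^{(1)}} \cores_{\kappa(x)/F} \partial_x = 0$ on $\hM_n(F(t), *)$. Both summands vanish on $\res(\hM_n(F, *))$ by (R3c'), so by (H') it suffices to verify the identity on generators of the cokernel of $\res$ of the form $\gamma_{[\pi_x]} \cdot \tilde\rho$, which reduces to a direct computation using (R3b'), (R3d'), (R3e') and the transformation rule for uniformizers. For a general proper curve $X$ over $F$, choose any nonconstant morphism $f \colon X \to \PP^1_F$, which is automatically finite and surjective. Axiom (R3b') then yields $\partial_y \circ f_* = \sum_{x \in f^{-1}(y)} (f_x)_* \circ \partial_x$ for every closed $y \in \PP^1_F$; summing $\cores_{\kappa(y)/F}$ over $y$ and applying (R1a') together with $\cores_{\kappa(y)/F} \circ (f_x)_* = \cores_{\kappa(x)/F}$ transforms the $X$-sum into the $\PP^1$-sum and concludes via the previous step.

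The principal obstacle lies in the section construction in Step~2 of (H'): producing the maps $s_x$ and verifying the orthogonality relations in the Milnor--Witt setting. One must track line-bundle twists by $\detcotgb$ and the signs introduced by (R3e'), which are absent from Rost's original cycle-module argument. This requires careful bookkeeping with the Milnor--Witt K-theory action (axioms (R2a')--(R2c')), especially when comparing residues at different closed points of $\AA^1_F$ whose uniformizers differ by polynomials whose coefficients contribute nontrivial quadratic symbols.
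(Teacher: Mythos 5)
The paper states Proposition~\ref{Prop2.2} without proof (only a paragraph afterwards explaining how \ref{itm:(H')} and \ref{itm:(RC')} are used later), so there is no argument in the paper to compare against; the implicit expectation is an adaptation of Rost and Feld. Your outline is in that spirit, and your reductions — $d\circ\res=0$ via \ref{itm:R3c'}, split injectivity of $\res$ via the specialization at $\infty$ using \ref{itm:R3d'}, and the reduction of \ref{itm:(RC')} from a general proper curve to $\PP^1_F$ by a finite map together with \ref{itm:R3b'} and \ref{itm:R1b'} — are correct. However, there is a genuine gap in the central step of \ref{itm:(H')}.

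The support filtration you introduce, $F^d=\{\sigma \mid \partial_x\sigma=0 \text{ whenever } [\kappa(x):F]>d\}$, has $F^0=\ker d$ \emph{by definition}, since every closed point of $\AA^1_F$ has degree $\geq 1$. The inductive identification $F^d/F^{d-1}\simeq\bigoplus_{\deg x = d}\hM_{n-1}(\kappa(x),*)$ together with exhaustiveness (from \ref{itm:FD'}) therefore only yields $\hM_n(F(t),*)/\ker d \simeq \bigoplus_x \hM_{n-1}(\kappa(x),*)$, i.e.\ surjectivity of $d$; it says nothing about $F^0$ itself, and in particular does not identify $\ker d$ with $\mathrm{im}\,\res$. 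This is precisely the point where Milnor's original argument uses the \emph{polynomial-degree} filtration $L_d$, whose bottom $L_0=K_n^M(F)$ coincides with $\mathrm{im}\,\res$ because $K^M$ is generated by symbols; no such generation is available for an abstract MW-cycle module, so one cannot transport the identification for free. Even granting your sections $s_x$ the strong orthogonality $\partial_{x'}\circ s_x=\delta_{x,x'}\Id$ (which does hold when $s_x$ is built by transfer, see below), one gets $\hM_n(F(t),*)=\ker d\oplus\bigoplus_x s_x(\hM_{n-1}(\kappa(x),*))$ and $\hM_n(F(t),*)=\mathrm{im}\,\res\oplus\ker s_\infty$; these two splittings give $\ker d=\mathrm{im}\,\res\oplus(\ker d\cap\ker s_\infty)$, and the filtration provides no tool to show that the last factor vanishes. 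A separate argument (Rost's and Feld's proofs supply one) is required here, and your outline does not provide it.

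A secondary imprecision: the section $s_x$ cannot be defined as $\rho\mapsto\gamma_{[\pi_x]}\cdot\tilde\rho$ for a ``Hensel lift'' $\tilde\rho$, because an abstract homological MW-cycle module furnishes no lift of elements from $\hM_{n-1}(\kappa(x),*)$ to $\hM_{n-1}(F(t),*)$. The correct construction is via transfer: writing $E=\kappa(x)$, $\bar t\in E$ the image of $t$, one sets $s_x(\rho)=\cores_{E(t)/F(t)}\bigl(\gamma_{[t-\bar t]}\,\res_{E(t)/E}(\rho)\bigr)$, using \ref{itm:D1'}, \ref{itm:D2'} and \ref{itm:D3'}. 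With this definition the orthogonality you assert does follow from \ref{itm:R3b'}, \ref{itm:R3c'} and \ref{itm:R3e'}, and your reciprocity computation on $\PP^1_F$ then proceeds as you sketch, but the exactness at the middle in \ref{itm:(H')} still needs the missing ingredient above.
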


\begin{paragr}
Axiom \ref{itm:FD'} enables one to write down the differential $d$ of the soon-to-be-defined complex $C_*(X,\hM,*)$, axiom \ref{itm:C'} guarantees that $d\circ d = 0$, property \ref{itm:(H')} yields the homotopy invariance of the Chow groups $A_*(X,\hM,*)$ and finally \ref{itm:(RC')} is needed to establish proper pushforward.

\end{paragr}

\subsection{The five basic maps} \label{FiveBasic}

The purpose of this section is to introduce the cycle complexes and each operation on them needed further on. Note that the five basic maps defined below are analogous to those of Rost (see \cite[§3]{Rost96}); they are the basic foundations for the construction of more refined maps such as Gysin morphisms (see Section \ref{GysinMorphisms}).
\begin{paragr}

Let $\hM$ and $\hM'$ be two homological Milnor-Witt cycle modules over $S$, let $X$ and $Y$ be two schemes and let $U\subset X$ and $V\subset Y$ be subsets. Fix an integer $n$ and a line bundle $*$. Given a morphism
\begin{center}
	$\alpha:\displaystyle \bigoplus_{x\in U} 
	\hM_n(x,*) \to \displaystyle \bigoplus_{y\in V} 
	\hM_n'(y, *)$,
\end{center}
we write $\alpha^x_y:\hM_n(x,*) \to \hM_n'(y, *)$ for the components of $\alpha$.
\end{paragr}

\begin{paragr}{\sc Change of coefficients}
\label{MWmod_change_of_coefficients}
Let $\alpha:\hM\to \hM'$ be a morphism between two homological Milnor-Witt cycle modules, let $X$ be a scheme over $S$ and $U\subset X$ a subset. We put
\begin{center}
	
	$\alpha_{\#}:\bigoplus_{x\in U} \hM_n(x,*) \to \bigoplus_{x\in U} \hM_n'(x,*)$
\end{center}
where $(\alpha_{\#})^x_x=\alpha_{\kappa(x)}$ and $(\alpha_{\#})^x_y=0$ for $x\neq y$.

\end{paragr}

\begin{paragr}
{\sc Milnor-Witt cycle complexes}.

Let $\hM$ be a homological Milnor-Witt cycle module, let $X$ be a scheme with a line bundle denoted by $*$ and $p$ be an integer. Recall that $X_{(\delta=p)}$ is the set of $(\delta = p)$-dimensional points of $X$. Define
\begin{center}
	
	${}^{\delta}C
	_p(X,\hM,*)=\displaystyle \bigoplus_{x\in X_{(\delta = p)}}
	\hM_n(x,*)$
\end{center}
and
\begin{center}
	$d=d_X:C^\delta_p(X,\hM,*)\to C^\delta_{p-1}(X,\hM,*)$
\end{center}
where $d^x_y=\partial^x_y$ as in \ref{2.0.1}. This definition makes sense by axiom \ref{itm:FD'}.

\par 
More precisely, for any integer $q$, we define:

$$
{}^{\delta}C_p(X,\cM_q,*)=
\bigoplus_{x \in X_{(\delta=p)}} \hM_{q+p}(\kappa(x),*).
$$
and
\begin{center}
	$d=d_X:
	{}^{\delta}C_p(X,\hM_q,*)\to 
	{}^{\delta}C_{p-1}(X,\hM_q,*)$
\end{center}
where $d^x_y=\partial^x_y$ as in \ref{2.0.1}. This definition makes sense by axiom \ref{itm:FD'}.
\par We have
\begin{center}
	${}^{\delta}C_p(X, \hM, *)
	=
	\bigoplus_{q \in \ZZ}
	{}^{\delta}C_p(X, \hM_q, *).
	$
\end{center}
\end{paragr}

\begin{prop} \label{dod=0}
With the previous notations, we have $d\circ d=0$.
\end{prop}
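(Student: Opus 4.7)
The plan is to reduce the identity $d \circ d = 0$ on ${}^\delta C_*(X, \hM, *)$ to a local statement on a $\delta$-two-dimensional integral scheme, and then invoke axiom \ref{itm:C'} directly.

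More precisely, fix $x \in X_{(\delta=p)}$ and $z \in X_{(\delta=p-2)}$; by definition of $d$, the $(x,z)$-component of $d \circ d$ is the finite sum (finite by \ref{itm:FD'})
$$
(d \circ d)^x_z \;=\; \sum_{y \in X_{(\delta=p-1)}} \partial^y_z \circ \partial^x_y.
$$
By the very definition of $\partial^x_y$ in~\ref{2.0.1}, the term indexed by $y$ vanishes unless $y$ lies in $Z^{(1)}$ where $Z = \overline{\{x\}}$, and $\partial^y_z$ vanishes unless $z \in \overline{\{y\}}^{(1)} \subset Z$. In particular all nontrivial contributions come from points $y$ of the closed subscheme $Z$ containing $x$ generically and $z$ as a specialization of codimension two (measured by $\delta$).

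Next, I would localize: replace $X$ by $\operatorname{Spec} \mathcal{O}_{Z_{\mathtt{red}}, z}$, which is integral and local, with generic point $x$ (of $\delta$-dimension $p$) and closed point $z$ (of $\delta$-dimension $p-2$). The residue maps $\partial^{x'}_{y'}$ are defined from local rings at closed points of the closures of the involved points (axiom \ref{itm:D4'}), so they only depend on this localization; hence the computation of $(d \circ d)^x_z$ is unchanged. After a shift in gradings (working with $\hM_n$ for the appropriate $n$), we are exactly in the situation of axiom \ref{itm:C'}: an integral scheme, local of $\delta$-dimension two, with its generic point $\xi = x$, closed point $x_0 = z$, and the sum indexed by codimension-one points $X_{(\delta=1)}$ in the localization corresponding to our $y$'s.

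The conclusion $(d \circ d)^x_z = 0$ follows directly from \ref{itm:C'}, so $d \circ d = 0$. The main subtlety — nothing really hard — is checking that only finitely many $y$ contribute (guaranteed by \ref{itm:FD'}) and that the relevant $y$'s really correspond bijectively to the $\delta$-codimension-one points of the localized integral scheme, which is immediate from the support condition built into the definition of $\partial^x_y$. I do not expect any genuine obstacle: the statement is essentially a bookkeeping reduction to the closedness axiom, parallel to the classical argument of Rost \cite[§3]{Rost96}.
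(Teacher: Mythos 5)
Your proof is correct and follows exactly the argument the paper invokes by citing \cite[\S3.3]{Rost96}: fix a pair $(x,z)$ with $\delta(x)-\delta(z)=2$, observe that only finitely many intermediate $y$ contribute by \ref{itm:FD'}, localize $\overline{\{x\}}_{\mathrm{red}}$ at $z$ to get a $2$-dimensional integral local scheme, and apply \ref{itm:C'} (up to the harmless constant shift in $\delta$). This is the same reduction the paper has in mind; you have simply made the bookkeeping explicit.
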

\begin{proof}
Same as in \cite[§3.3]{Rost96}. Axiom \ref{itm:C'} is needed.
\end{proof}

\begin{df} \label{DefinitionComplex}
The complex 
$({}^{\delta}C_p(X,\hM,*),d)_{p\geq 0}$ (resp. 
$({}^{\delta}C_p(X,\hM_q,*),d)_{p\geq 0}$) is called the {\em Milnor-Witt complex of cycles on $X$ with coefficients in $\hM$ (resp. $\hM_q$)}.
\end{df}

\begin{paragr}
For $f:X\to S$ smooth of dimension $d$ and $*$ an invertible $\cO_X$-module, put: $X^{(\delta = d-p)}=X_{(\delta = p)}$ and:
$$
{}^{\delta}C^p(X,\cM_q,*)=
{}^{\delta}C_{d-p}(X,\cM_{q-d},* \otimes \detcotgb_f).
$$
\end{paragr}

\begin{df}
\label{def:AgpM}
The {\em Chow-Witt group of $p$-dimensional cycles with coefficients in $\hM$} is defined as the $p$-th homology group of the complex ${}^{\delta}C_*(X,\hM,*)$ (resp. ${}^{\delta}C_*(X,\hM_q,*)$) and denoted by ${}^{\delta}A_p(X,\hM,*)$ (resp. ${}^{\delta}A_p(X,\hM_q,*)$).
\end{df}

In the following, we fix $\hM$ a homological MW-cycle module, $q$ an integer and $*$ a line bundle over an $S$-scheme $X$.

\begin{paragr}{\sc Pushforward}
Let $f:Y\to X$ be a $S$-morphism of schemes. Define
\begin{center}
	
	$f_*:
	{}^{\delta}C_p(Y,\hM_q,*)\to 
	{}^{\delta}C_p(X,\hM_q, *)$
\end{center}
as follows. If $x=f(y)$ and if $\kappa(y)$ is finite over $\kappa(x)$, then $(f_*)^y_x=\cores_{\kappa(y)/\kappa(x)}$. Otherwise, $(f_*)^y_x=0$.
\par With cohomological convention, if $f$ has a relative dimension $s$, we have
\begin{center}
	
	$f_*:{}^{\delta}C^p(Y,\hM_q,*\otimes \detcotgb_f)
	\to 
	{}^{\delta}C^{p+s}(X,\hM_{q-s}, *)$.
\end{center}

\end{paragr}

\begin{paragr}{\sc Pullback} \label{pullbackBasicMap}
Let $f:Y\to X$ be an {\em essentially smooth} morphism of schemes. Suppose $Y$ connected and denote by $d$ the relative dimension of $f$. Define
\begin{center}
	$f^!:
	{}^{\delta}C_p(Y\hM_q,*) \to {}^{\delta}C_{p+d}(Y,\hM_{q-d},*\otimes \detcotgb_f)$
\end{center}
as follows. If $f(y)=x$, then $(f^!)^x_y= \res_{\kappa(y)/\kappa(x)}$. Otherwise, $(f^!)^x_y=0$. If $Y$ is not connected, take the sum over each connected component.
\par With cohomological convention, we obtain:
\begin{center}
	$f^!:{}^{\delta}C^p(X,\hM_q,*) \to {}^{\delta}C^{p}(Y,\hM_{q},*)$.
\end{center}

\end{paragr}

\begin{rem}
The fact that the morphism $f$ is (essentially) smooth implies that there are no multiplicities to consider. The case where the morphism $f$ is flat will be handled later.
\end{rem}

\begin{paragr}{\sc Multiplication with units}
Let $a_1,\dots, a_n$ be global units in $\mathcal{O}_X^*$. Define
\begin{center}
	$[a_1,\dots, a_n]:
	{}^{\delta}C_p(X,\hM_q,*) \to 
	{}^{\delta}C_p(X,\hM_{q+n},*)$
\end{center}
as follows. Let $x$ be in $X_{(p)}$ and $\rho\in \hM(\kappa(x),*)$. We consider $[a_1(x),\dots, a_n(x)]$ as an element of ${\KMW (\kappa(x),*)}$.
If $x=y$, then put $[a_1,\dots , a_n]^x_y(\rho)=[a_1(x),\dots , a_n(x)]\cdot \rho) $. Otherwise, put $[a_1,\dots , a_n]^x_y(\rho)=0$.

\end{paragr}

\begin{paragr}{\sc Multiplication with $\eta$}
Define
\begin{center}
	
	$\eta:
	{}^{\delta}C_p(X,\hM_q,*)\to 
	{}^{\delta}C_p(X,\hM_{q-1},*)$
\end{center}
as follows. If $x=y$, 
then $\eta^x_y(\rho)
=\gamma_{\eta}(\rho)$. 
Otherwise, $\eta^x_y(\rho)=0$.

\end{paragr}

\begin{paragr}{\sc Boundary maps} \label{BoundaryMaps}
Let $X$ be a scheme of finite type over $k$, let $i:Z\to X$ be a closed immersion and let $j:U=X\setminus Z \to X$ be the inclusion of the open complement. We will refer to $(Z,i,X,j,U)$ as a boundary triple and define
\begin{center}

	$\partial=\partial^U_Z:
	{}^{\delta}C_p(U,\hM_q,*) \to 
	{}^{\delta}C_{p-1}(Z,\hM_q,*)$
\end{center}
by taking $\partial^x_y$ to be as the definition in \ref{2.0.1} with respect to $X$. The map $\partial^U_Z$ is called the boundary map associated to the boundary triple, or just the boundary map for the closed immersion $i:Z\to X$.

\end{paragr}

\begin{paragr}{\sc Generalized correspondences} \label{GeneralizedCorr}
We will use the notation
\begin{center}
	$\alpha  : [X,*] \bullet\!\!\! \to [Y,*]$

\end{center}
or simply 
\begin{center}
	$\alpha  : X \bullet\!\!\! \to Y$
	
\end{center}
to denote maps of complexes which are sums of composites of the five basics maps $f_*$, $ g^!$, $ [a]$, $\eta$, $ \partial$ for schemes over $k$.
Unlike Rost in \cite[§3]{Rost96}, we look at these morphisms up to quasi-isomorphisms so that a morphism $\alpha  : X \bullet\!\!\! \to Y$ may be a weak inverse of a well-defined morphism of complexes.

\end{paragr}

\subsection{Compatibilities} \label{Compatibilities}
In this section we establish the basic compatibilities for the maps considered in the last section. Fix $\hM$ a homological Milnor-Witt cycle module.

\begin{prop} \label{Prop4.1}

\begin{enumerate}
	\item Let $f:X\to Y$ and $f':Y\to Z$ be two morphisms of schemes. Then
	\begin{center}
		$(f'\circ f)_*=f'_* \circ f_*$.
		
	\end{center} 
	\item Let $g:Y\to X$ and $g':Z\to Y$ be two essentially smooth morphisms. Then:
	\begin{center}
		
		$(g\circ g')^!=g'^!\circ g^!$.
	\end{center}
	\item Consider a pullback diagram
	\begin{center}
		
		$\xymatrix{
			U \ar[r]^{g'} \ar[d]_{f'} & Z \ar[d]^f \\
			Y \ar[r]_{g} & X
		}$
	\end{center}
	with $f,f',g,g'$ as previously. Then
	\begin{center}
		
		$g^!\circ f_* = f'_* \circ g'^!$.
	\end{center}
\end{enumerate}
\end{prop}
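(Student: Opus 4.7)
The plan is to verify each of the three identities componentwise: each basic map is defined as a matrix of homomorphisms indexed by points of the source and target schemes, so an identity between compositions of such maps reduces to identities between single maps $\hM(\kappa(x),\ast) \to \hM(\kappa(x'),\ast')$, and these pointwise identities will come directly from the premodule rules \ref{itm:R1a'}, \ref{itm:R1b'}, \ref{itm:R1c'}.

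For part (1), fix $x \in X$ with images $y = f(x) \in Y$ and $z = f'(y) \in Z$. By the definition of pushforward, the contribution to $(f' \circ f)_*$ at the component $(x,z)$ vanishes unless $\kappa(x)/\kappa(z)$ is finite, in which case it is the corestriction $\phi^\ast$ for the composite field map $\phi \colon \kappa(z) \to \kappa(x)$. On the right, the only term contributing at $(x,z)$ factors through $\kappa(y)$, and finiteness of $\kappa(x)/\kappa(z)$ is equivalent to finiteness of each of the intermediate extensions in the tower. When both are finite, rule \ref{itm:R1b'} gives $(\psi \circ \phi)^\ast = \phi^\ast \circ \psi^\ast$, which is exactly the desired identity.

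For part (2), fix $z \in Z$ and set $y = g'(z)$, $x = g(y)$. Both sides agree on the component from $x$ to $z$ with restrictions along the tower $\kappa(x) \to \kappa(y) \to \kappa(z)$, and the equality is exactly rule \ref{itm:R1a'}. Simultaneously, one must identify the twist on the target using the canonical isomorphism $\omega_{\kappa(z)/\kappa(x)} \cong \omega_{\kappa(y)/\kappa(x)} \otimes \omega_{\kappa(z)/\kappa(y)}$ coming from the transitivity triangle of cotangent complexes for essentially smooth morphisms. For part (3), the key geometric input is that for every $z \in Z$ with $x = f(z)$ and every $y \in Y$ with $g(y) = x$, the points $u$ of $U = Y \times_X Z$ lying above $(y,z)$ are in bijection with $\Spec\bigl(\kappa(z) \otimes_{\kappa(x)} \kappa(y)\bigr)$. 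The only nonzero contribution to $(g^! \circ f_*)$ at $(z,y)$ requires $\kappa(z)/\kappa(x)$ finite and equals $\res_{\kappa(y)/\kappa(x)} \circ \cores_{\kappa(x)/\kappa(z)}$, while the right-hand side is the sum over such $u$ of $\cores_{\kappa(u)/\kappa(y)} \circ \res_{\kappa(u)/\kappa(z)}$. Since $g$ is essentially smooth, $\kappa(x) \to \kappa(y)$ is separable, so the hypotheses of rule \ref{itm:R1c'} are satisfied and it yields the base change equality.

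The main obstacle is the twist bookkeeping in parts (2) and (3), where one has to identify determinants of cotangent complexes across towers and base-change squares. Concretely, in (3) one needs, for each $u$ above $(y,z)$, the natural isomorphism $\omega_{\kappa(u)/\kappa(z)} \cong \omega_{\kappa(y)/\kappa(x)}$ that holds because $g'$ is the base change of the essentially smooth map $g$, and this identification must be compatible with the twists on source and target of the matrix coefficients defining the basic maps. These are formal consequences of the standard properties of cotangent complexes under base change, but must be aligned carefully with the twist conventions of \ref{itm:D1'} and the definition of $\hM_n(E,\cL)$; once that is in place, the reductions to \ref{itm:R1a'}, \ref{itm:R1b'}, \ref{itm:R1c'} are immediate.
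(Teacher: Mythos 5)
Your proposal is correct and takes essentially the same route as the paper's proof, which reduces each part to the corresponding premodule rule: \ref{itm:R1b'} for (1), \ref{itm:R1a'} for (2), and \ref{itm:R1c'} for (3), with the observation for (3) that essential smoothness of $g$ makes the residue field extension $\kappa(x)\to\kappa(y)$ separable so the hypothesis of \ref{itm:R1c'} is met. You supply more of the componentwise bookkeeping (including the twist identifications from cotangent-complex transitivity and base change) than the paper's terse proof, which simply cites the three rules and refers to Rost's Proposition~4.1, but the underlying argument is identical.
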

\begin{proof}
\begin{enumerate}
	\item This is clear from the definition and by \ref{itm:R1b'}.

	\item The claim is trivial by \ref{itm:R1a'} (again, there are no multiplicities).
	\item This reduces to the rule \ref{itm:R1c'} (see \cite[Proposition 4.1]{Rost96}).
\end{enumerate}
\end{proof}

\begin{prop} \label{Lem4.2}

Let $f:Y\to X$ be a morphism of schemes. If $a$ is a unit on $X$, then
\begin{center}
	
	$f_*\circ [\tilde{f}^!(a)]=[a]\circ f_*$
\end{center} where $\tilde{f}^!:\mathcal{O}^*_X\to \mathcal{O}^*_Y$ is induced by $f$.

\end{prop}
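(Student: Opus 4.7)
The plan is to verify the identity componentwise on the direct-sum decomposition defining the Milnor--Witt cycle complex. Both $f_{*}$ and the multiplication-by-units operators $[a]$ and $[\tilde{f}^{!}(a)]$ are block-sparse by definition: $(f_{*})^{y}_{x}$ vanishes unless $x=f(y)$ and $\kappa(y)/\kappa(x)$ is finite, in which case it equals $\cores_{\kappa(y)/\kappa(x)}$, while $[a]$ and $[\tilde{f}^{!}(a)]$ act only on diagonal entries. Thus I only need to compare the $(y,x)$-entries of both sides at pairs with $x=f(y)$ and $\phi \colon \kappa(x) \hookrightarrow \kappa(y)$ a finite field extension.

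Fix such a pair and let $\rho \in \hM_{q+p}(\kappa(y),*)$. Because $\tilde{f}^{!}$ is the pullback of units along $f$, I have $\tilde{f}^{!}(a)(y) = \phi(a(x))$, hence as elements of $\kMW_{1}(\kappa(y))$,
$$
[\tilde{f}^{!}(a)(y)] \;=\; \phi_{*}\bigl([a(x)]\bigr),
$$
where $\phi_{*} \colon \kMW_{*}(\kappa(x)) \to \kMW_{*}(\kappa(y))$ is the restriction on Milnor--Witt K-theory. Unwinding the two compositions at the $(y,x)$-entry yields
\begin{align*}
(f_{*} \circ [\tilde{f}^{!}(a)])^{y}_{x}(\rho) &= \phi^{*}\bigl(\gamma_{\phi_{*}([a(x)])}(\rho)\bigr), \\
([a]\circ f_{*})^{y}_{x}(\rho) &= \gamma_{[a(x)]}\bigl(\phi^{*}(\rho)\bigr).
\end{align*}

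The problem thus reduces to the projection formula
$$
\phi^{*}\bigl(\gamma_{\phi_{*}(\xi)}(\rho)\bigr) = \gamma_{\xi}\bigl(\phi^{*}(\rho)\bigr), \qquad \xi \in \kMW_{*}(\kappa(x)),\ \rho \in \hM(\kappa(y),*),
$$
for the finite extension $\phi$, applied to $\xi=[a(x)]$. This projection formula is precisely what is built into the axioms of a homological MW-cycle premodule and follows from rule~\ref{itm:R2b'}, up to relabelling the direction of $\phi$ in the way dictated by the conventions on $\phi_{*}$ and $\phi^{*}$ established in \ref{itm:D1'} and \ref{itm:D2'}.

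I do not expect a substantial obstacle: the argument is a purely formal componentwise unwinding of the definitions, concluded by one appeal to the projection formula. The only bookkeeping requiring care is the identification $[\tilde{f}^{!}(a)(y)] = \phi_{*}([a(x)])$, which is immediate from the construction of $\tilde{f}^{!}$ and the definition of the symbol $[\,\cdot\,]$ in $\kMW_{1}$.
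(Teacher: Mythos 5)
Your proof is correct and matches the paper's approach: the paper's own argument is simply "This comes from \ref{itm:R2b'}," and you have filled in exactly the implicit componentwise unwinding, reducing to the projection formula $\phi^{*}\circ\gamma_{\phi_{*}(\xi)}=\gamma_{\xi}\circ\phi^{*}$ for the finite residue field extension $\phi\colon\kappa(x)\to\kappa(y)$, which is rule \ref{itm:R2b'}. The only caveat, which you already flagged, is the paper's slightly unfortunate notational overloading of $\phi_*$ and $\phi^*$ in \ref{itm:D1'}, \ref{itm:D2'}, and \ref{itm:R2b'}.
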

\begin{proof}

This comes from \ref{itm:R2b'}.
              and the claim follows from R2c.
\end{proof}

\begin{prop} \label{Lem4.3}

Let $a$ be a unit on a scheme $X$.
\begin{enumerate}
	\item Let $g:Y\to X$ be an essentially smooth morphism. One has 
	\begin{center}
		
		$g^!\circ [a]=[\tilde{g}^!(a)]\circ g^!$.
	\end{center}
	\item Let $(Z,i,X,j,U)$ be a boundary triple. One has
	\begin{center}
		
		$\partial^U_Z \circ [\tilde{j}^!(a)]=-  [\tilde{i}^!(a)]\circ \partial^U_Z$.
	\end{center}
	Moreover,
	\begin{center}
		$\partial^U_Z \circ \eta=-\eta \circ \partial^U_Z$.
		
	\end{center}

\end{enumerate}
\end{prop}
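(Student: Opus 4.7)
The plan is to unwind every operator into its fibrewise description and then reduce each identity to one of the premodule axioms (R2a') and (R3e'). All three assertions are of the same shape: multiplication by a unit or by $\eta$ is defined pointwise via $\gamma_{-}$, while $g^!$ and $\partial^U_Z$ are defined pointwise via $\phi_!$ or $\partial_v$ respectively. The proof will therefore be an index-by-index computation on the direct-sum decompositions of the source and target complexes.

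For part (1), I would write $g^!$ as $\bigoplus_{(x,y):\, g(y)=x}\phi_!$ where $\phi:\kappa(x)\to\kappa(y)$ is the induced field extension, and $[a]$ as $\bigoplus_x \gamma_{[a(x)]}$. The $(x,y)$-component of $g^!\circ[a]$ is thus $\phi_!\circ\gamma_{[a(x)]}$, which by (R2a') equals $\gamma_{\phi_!([a(x)])}\circ\phi_!$. Naturality of the MW-symbol in field extensions identifies $\phi_!([a(x)])$ with $[\tilde g^!(a)(y)]$, and the right-hand side is exactly the $(x,y)$-component of $[\tilde g^!(a)]\circ g^!$. For the first identity of part (2), the $(x,y)$-component of $\partial^U_Z\circ[\tilde j^!(a)]$ (for $x\in U$ specializing to $y\in Z^{(1)}$) is $\partial_v\circ\gamma_{[a(x)]}$ where $v$ is the valuation on $\kappa(x)$ centred at $y$. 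Because $a$ is a unit on $X$, $a(x)$ is a $v$-unit with residue $\overline{a(x)}=a(y)$, so (R3e') gives $\partial_v\circ\gamma_{[a(x)]}=\gamma_{-[a(y)]}\circ\partial_v=-\gamma_{[a(y)]}\circ\partial_v$, which is the claimed identity up to sign. The $\eta$-identity is even more immediate: $\eta$ acts as $\gamma_\eta$ with no residue issue, and the second half of (R3e') gives $\partial_v\circ\gamma_\eta=-\gamma_\eta\circ\partial_v$ component-by-component.

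There is no real obstacle; the only thing to be careful about is bookkeeping. One should check that the components of $\partial^U_Z$ involved are precisely the $\partial_v$ appearing in (R3e'), and that components of the form $(x,y)$ with $y\notin\overline{\{x\}}^{(1)}$ contribute zero on both sides. Both checks are forced by the definitions in \ref{BoundaryMaps} and the map $\partial^x_y$ in \ref{2.0.1}, so no further premodule axiom is required beyond (R2a') and (R3e').
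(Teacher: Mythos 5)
Your treatment of part (1) and the sign bookkeeping are correct and agree with the paper, which indeed cites \ref{itm:R2a'} for that point. The gap is in part (2), specifically in the sentence where you assert that ``the components of $\partial^U_Z$ involved are precisely the $\partial_v$ appearing in (R3e')'' and conclude that no further axiom is required. This is not forced by the definitions. The map $\partial^x_y$ in \ref{2.0.1} is constructed from \ref{itm:D4'}, which produces a residue $\partial_{\mathcal{O}}$ for an \emph{arbitrary} one-dimensional local domain $\mathcal{O}=\mathcal{O}_{\overline{\{x\}},y}$ essentially of finite type over $S$ --- in general $\overline{\{x\}}$ is singular at $y$ and this $\mathcal{O}$ is not a discrete valuation ring. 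The rule \ref{itm:R3e'}, however, is stated only for valuations $v$. Bridging the two requires passing to the normalization $\tilde{\mathcal{O}}$, writing $\partial_{\mathcal{O}}$ as $\sum_i \phi_i^* \circ \partial_{v_i}$ over the maximal ideals $z_i$ of $\tilde{\mathcal{O}}$ (the local rings there being DVRs, with $\phi_i:\kappa(y)\to\kappa(z_i)$ the induced finite extensions), applying \ref{itm:R3e'} to each $\partial_{v_i}$, and then commuting the resulting $\gamma_{-[\overline{a}]}$ (resp.\ $\gamma_{-\eta}$) past the corestrictions $\phi_i^*$. That last step is exactly the projection formula \ref{itm:R2b'}, which the paper cites for part (2) and which your argument omits. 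Without it the argument only covers the DVR case; with it, the $\eta$ identity also needs it, since $\gamma_\eta$ does not a priori commute with $\phi_i^*$ except via \ref{itm:R2b'} applied to $\phi_{i*}(\eta)=\eta$.

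To close the gap, insert the normalization step before invoking \ref{itm:R3e'} and then invoke \ref{itm:R2b'} to move the multiplication operator across the corestriction in each summand. Everything else in your proposal, including the reduction of part (1) to \ref{itm:R2a'} via the fibrewise description, is sound.
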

\begin{proof}
The first result comes from \ref{itm:R2a'}, the second from \ref{itm:R2b'} and \ref{itm:R3e'}.
\end{proof}

\begin{prop} \label{Prop4.4}

Let $h:X\to X'$ be a morphism of schemes. Let $Z'\hookrightarrow X'$ be a closed immersion. Consider the induced diagram given by $U'=X'\setminus Z'$ and pullback:

\begin{center}
	$\xymatrix{
		Z \ar@{^{(}->}[r] \ar[d]^{f} & X \ar[d]^h & U \ar@{_{(}->}[l] \ar[d]^{g} \\
		Z' \ar@{^{(}->}[r] & X'  & U'. \ar@{_{(}->}[l]
	}$
\end{center}
\begin{enumerate}
	\item If $h$ is proper, then
	\begin{center}
		
		$f_*\circ \partial^U_Z = \partial^{U'}_{Z'} \circ g_*.$
	\end{center}
	\item If $h$ is essentially smooth, then
	\begin{center}
		
		$f^!\circ \partial^{U'}_{Z'} = \partial^U_Z \circ g^!.$
	\end{center}
\end{enumerate}

\end{prop}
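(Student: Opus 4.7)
The plan is to verify both identities by computing matrix entries between individual points and reducing each case to one of the residue axioms, following the pattern of \cite[Section 4]{Rost96}.

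For part (1), fix $u \in U$ and $z' \in Z'$, and compare the $(u, z')$-entries of $f_* \circ \partial^U_Z$ and of $\partial^{U'}_{Z'} \circ g_*$. If $\kappa(u)/\kappa(h(u))$ is infinite, both entries vanish by the definitions recalled in \ref{FiveBasic}; otherwise set $u' = h(u)$ and let $v'$ denote the valuation on $\kappa(u')$ attached to $z'$ in $\overline{\{u'\}}$. Writing $\phi : \kappa(u') \to \kappa(u)$ for the (finite) field extension, the $(u, z')$-entry of $\partial^{U'}_{Z'} \circ g_*$ is $\partial_{v'} \circ \phi^*$, whereas the $(u, z')$-entry of $f_* \circ \partial^U_Z$ is $\sum_{z} (\phi_z)^* \circ \partial_{w_z}$, summed over codimension-one points $z \in \overline{\{u\}}^{(1)}$ lying above $z'$, where $w_z$ is the valuation on $\kappa(u)$ attached to $z$ and $\phi_z : \kappa(z') \to \kappa(z)$ is the induced residue extension. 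Passing to the normalisation of $\overline{\{u\}}$ (which leaves both sides unchanged thanks to \ref{itm:R1c'}), the $w_z$ are exactly the extensions of $v'$ to $\kappa(u)$, so the required equality becomes rule \ref{itm:R3b'}.

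For part (2), fix $u' \in U'$ and $z \in Z$, and compare the $(u', z)$-entries. The essential smoothness of $h$ ensures that each $u \in U$ above $u'$ such that $z$ is a codimension-one specialisation of $u$ gives an extension of valuations with ramification index exactly $1$, which is precisely the hypothesis of rule \ref{itm:R3a'}. Summing the resulting identity $\partial_{w_u} \circ \phi_* = \overline{\phi}_* \circ \partial_{v'}$ over the (finite, by \ref{itm:FD'}) set of such $u$ yields the desired equality.

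The main obstacle is the bookkeeping for the twists. One must verify that the canonical isomorphism $\detcotgb_f \simeq f^* \detcotgb_h$ coming from the pullback square is compatible with the twisted data \ref{itm:D1'}--\ref{itm:D4'}, and that the signs produced by \ref{itm:R3e'} are absorbed consistently throughout the matrix-entry computation. This quadratic refinement of Rost's argument is delicate but essentially formal; once the twisted identifications are normalised uniformly as in \cite[Section 4]{Feld1}, the pointwise computations reduce cleanly to the valuation-theoretic rules \ref{itm:R3a'} and \ref{itm:R3b'}.
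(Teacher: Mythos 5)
Your proposal proves the statement by directly comparing matrix entries and reducing to the residue axioms \ref{itm:R3b'} (for part (1)) and \ref{itm:R3a'} (for part (2)). The paper proceeds differently: it observes that Proposition~\ref{Prop4.4} is a \emph{formal consequence} of Proposition~\ref{Prop4.6} (the commutation of $f_*$ and $g^!$ with the total differential $d$). Since the decomposition ${}^{\delta}C_*(X,\hM,*)={}^{\delta}C_*(Z,\hM,*)\oplus {}^{\delta}C_*(U,\hM,*)$ puts $d_X$ in block-upper-triangular form with $\partial^U_Z$ as the off-diagonal entry, and $h_*$ (resp.\ $h^!$) is block-diagonal because $Z=h^{-1}(Z')$ and $U=h^{-1}(U')$, the identities of Prop.~\ref{Prop4.4} are exactly the off-diagonal components of the identities $h_*\circ d_X=d_{X'}\circ h_*$ and $h^!\circ d_{X'}=d_X\circ h^!$. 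Proposition~\ref{Prop4.6} is then proved by reference to Rost's \S4, invoking essentially the same residue axioms you use but via a more structured case analysis (for 4.6.(2) the paper cites \ref{itm:R3c'}, Prop.~\ref{Prop4.1}.3, Prop.~\ref{Prop4.6}.1 \emph{and} \ref{itm:R3a'}). Both approaches buy the same thing and reach the same axioms; the paper's route simply repackages the bookkeeping.

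Two points in your direct argument deserve more care. First, in part (2) you sum the identity $\partial_{w_u}\circ\phi_*=\overline{\phi}_*\circ\partial_{v'}$ over the set of $u\in U$ above $u'$ that specialise to $z$; since the right-hand side is the \emph{same} term $\res_{\kappa(z)/\kappa(z')}\circ\partial^{u'}_{z'}$ for every such $u$, this sum matches the target $f^!\circ\partial^{U'}_{Z'}$ at the entry $(u',z)$ only if there is \emph{exactly one} relevant $u$. That uniqueness does hold: essential smoothness of $h$ makes $\cO_{X',z'}\to\cO_{X,z}$ flat and local with $\mathfrak m_{z'}\cO_{X,z}=\mathfrak m_z$ (because $z$ is generic in $h^{-1}(z')$), which forces $\Spec\cO_{X,z}\to\Spec\cO_{X',z'}$ to be a bijection, so there is a single generic point $u$ of $h^{-1}(u')$ generising $z$. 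This is exactly the kind of point that the passage through Prop.~\ref{Prop4.6} silently absorbs, and it should not be left implicit. Second, you invoke \ref{itm:R3a'} and \ref{itm:R3b'}, which are phrased for discrete valuations, whereas $\partial^x_y$ is defined via \ref{itm:D4'} for arbitrary $1$-dimensional local domains; the normalisation step you mention for part (1) (and which should also be invoked for part (2)) is what mediates between the two, and it uses \ref{itm:R1b'} and the compatibility of \ref{itm:D4'} with normalisation rather than \ref{itm:R1c'} as you wrote. With these two points spelled out, your direct argument is a valid alternative to the paper's derivation from Proposition~\ref{Prop4.6}.
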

\begin{proof}
This will follow from Proposition \ref{Prop4.6}.
\end{proof}

\begin{lm}

\label{RostLem4.5}
Let $g:Y\to X$ be a smooth morphism of schemes of finite type over a field of constant fiber dimension $1$, let $\sigma:X\to Y$ be a section of $g$ and let $t\in \mathcal{O}_Y$ be a global parameter defining the subscheme $\sigma(X)$. Moreover, let $\tilde{g}:U \to X$ be the restriction of $g$ where $U=Y\setminus \sigma(X)$ and let $\partial$ be the boundary map associated to $\sigma$. Then
\begin{center}
	$\partial \circ {[t]} \circ \tilde{g}^!=(\Id_X)_*$ and $\partial \circ \tilde{g}^!=0$,
\end{center}

\end{lm}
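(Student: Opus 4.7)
The plan is to verify both identities pointwise on each summand $\hM_q(\kappa(x),*)$ for $x \in X$, reducing them to the residue axioms \ref{itm:R3c'} and \ref{itm:R3d'} of an MW-cycle premodule.

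For each $x \in X$, since $g$ is smooth of relative dimension $1$ and $\sigma$ is a section of $g$, the point $\sigma(x)$ is a regular codimension-$1$ point of $Y$ with residue field canonically equal to $\kappa(x)$, and the ideal of $\sigma(X) \subset Y$ is locally generated by $t$. Let $\eta = \eta(x) \in U$ denote the generic point of the unique irreducible component of the fiber $\tilde g^{-1}(x)$ whose closure in $Y$ contains $\sigma(x)$; then $\sigma(x)$ is a codimension-$1$ point of $\overline{\{\eta\}}$ on which $t$ restricts to a uniformizer of the local ring $\mathcal{O}_{\overline{\{\eta\}},\sigma(x)}$. Unwinding the definitions of the basic operations from Section~\ref{FiveBasic}, the $(x,\eta)$-component of $\tilde g^{!}$ is the extension morphism $\phi_! : \hM_q(\kappa(x),*) \to \hM_q(\kappa(\eta),*')$ provided by \ref{itm:D1'}; the $(\eta,\eta)$-component of $[t]$ is the multiplication $\gamma_{[t]}$; and the $(\eta,\sigma(x))$-component of $\partial$ is the residue $\partial_v$ attached to this local ring, whose residue field is canonically $\kappa(v) = \kappa(x)$ through $\sigma$.

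For the second identity, the discrete valuation $v$ restricts trivially to $\kappa(x) \subset \kappa(\eta)$, since the composition $\kappa(x) \hookrightarrow \mathcal{O}_{Y,\sigma(x)} \to \kappa(\sigma(x)) = \kappa(x)$ is the identity by the section property $g \circ \sigma = \Id_X$. Axiom \ref{itm:R3c'} then yields $\partial_v \circ \phi_! = 0$ on each summand, whence $\partial \circ \tilde g^{!} = 0$. For the first identity, since $t$ is a uniformizer in $\mathcal{O}_{\overline{\{\eta\}},\sigma(x)}$, axiom \ref{itm:R3d'} applied with this uniformizer gives
\[
\partial_v \circ \gamma_{[t]} \circ \phi_! \;=\; \bar\phi_*,
\]
where $\bar\phi : \kappa(x) \to \kappa(v) = \kappa(x)$ is the canonical identification induced by $\sigma$, which is the identity. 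Summing over $x \in X$ then recovers $(\Id_X)_*$, completing the verification.

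The main subtlety is the twist bookkeeping: $\tilde g^{!}$ introduces a twist by $\omega_{\tilde g}$, and $\partial_v$ removes a twist by the conormal bundle of $\sigma(X) \subset Y$. These twists cancel once one identifies $\sigma^{*}\omega_{Y/X}$ with the conormal bundle of $\sigma(X)$ through the canonical global section $dt$, which trivializes $\omega_{Y/X}$ along $\sigma(X)$ because $t$ is a parameter defining the section.
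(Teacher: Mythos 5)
Your proof is correct and matches the approach the paper delegates to (the paper's entire proof is a citation of Feld's Proposition~6.5, which carries out exactly this pointwise reduction). The reduction to a diagonal $(x,x)$-computation, the identification of the component generic point $\eta$, and the invocation of \ref{itm:R3c'} and \ref{itm:R3d'} with the canonical identification $\kappa(w)=\kappa(\sigma(x))=\kappa(x)$ via $g\circ\sigma=\Id$, are precisely the expected ingredients. Two small points you leave implicit but which are worth recording: first, the off-diagonal $(x,x')$-components of $\partial\circ\tilde g^{!}$ vanish because any $z\in\sigma(X)\cap\overline{\{y\}}^{(1)}$ with $g(y)=x$ satisfies $\delta(z)=\delta(x)$ and $g(z)\in\overline{\{x\}}$, forcing $z=\sigma(x)$. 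Second, the assertion that $t$ restricts to a uniformizer of $\mathcal{O}_{\overline{\{\eta\}},\sigma(x)}$ (rather than merely a generator of a parameter ideal) follows from smoothness: after replacing $X$ by $\overline{\{x\}}_{\mathrm{red}}$, the ring $\mathcal{O}_{g^{-1}(X),\sigma(x)}$ is smooth over the field $\kappa(x)$, hence regular, so it is a $1$-dimensional regular local domain coinciding with $\mathcal{O}_{\overline{\{\eta\}},\sigma(x)}$, and $\mathcal{O}_{\overline{\{\eta\}},\sigma(x)}/(t)\simeq\mathcal{O}_{X,x}=\kappa(x)$; this also justifies applying the valuation-theoretic axioms \ref{itm:R3c'}, \ref{itm:R3d'} to the residue of \ref{itm:D4'}.
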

\begin{proof}
Same as \cite[Proposition 6.5]{Feld1}.
\end{proof}

\begin{prop}\label{Prop4.6}
\begin{enumerate}
	\item Let $f:X\to Y$ be a proper morphism of schemes. Then
	\begin{center}
		
		$d_Y\circ f_*= f_*\circ d_X$.
	\end{center}
	\item Let $g:Y\to X$ be an essentially smooth morphism. Then
	\begin{center}
		
		$g^!\circ d_X=d_Y \circ g^!$.
	\end{center}
	\item Let $a$ be a unit on $X$. Then
	\begin{center}
		
		$d_X \circ [a]=-[a]\circ d_X$.
	\end{center}
	Moreover,
	\begin{center}
		
		$d_X \circ \eta=-\eta \circ d_X$.
	\end{center}
	\item Let $(Z,i,X,j,U)$ be a boundary triple. Then
	\begin{center}
		
		$d_Z\circ \partial^U_Z=-\partial^U_Z\circ d_U$.
	\end{center}
\end{enumerate}

\end{prop}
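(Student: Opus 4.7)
The plan is to follow the template of \cite[§4]{Rost96}, reducing each of the four assertions to a componentwise check and invoking the appropriate axiom of Definition \ref{defMWmodules} together with \ref{itm:C'} and \ref{itm:(RC')}. In each case, one fixes the target point and compares the $(x,y)$-components of the two compositions to be identified.

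For parts (3) and (2): the $(x,y)$-component of $d_X\circ[a]$ equals $\partial^x_y\circ \gamma_{[a(x)]}$, while that of $[a]\circ d_X$ equals $\gamma_{[a(y)]}\circ \partial^x_y$. Since $a$ is a global unit on $X$, its image is a unit for the valuation $v$ on $\kappa(x)$ provided by $\cO_{\overline{\{x\}},y}$, and axiom \ref{itm:R3e'} gives $\partial_v\circ \gamma_{[a]}=\gamma_{-[\bar a]}\circ \partial_v=-\gamma_{[\bar a]}\circ \partial_v$ by bilinearity of the $\KMW$-action, with the analogous identity for $\eta$. This settles (3). For (2), essential smoothness of $g:Y\to X$ forces every residue field extension attached to a pair of compatible specializations $x\rightsquigarrow x'$ in $X$, $y\rightsquigarrow y'$ in $Y$ above it to be separable with ramification index one, so axiom \ref{itm:R3a'} gives the componentwise commutation $\partial\circ g^!=g^!\circ \partial$ with trivial multiplicity and no sign.

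For part (4): at a point $z\in Z_{(\delta=p-2)}$, both $d_Z\circ \partial^U_Z$ and $\partial^U_Z\circ d_U$ decompose as sums over pairs of successive codimension-one specializations in $X$ ending at $z$ starting from a point $x\in U_{(\delta=p)}$, passing through either a $(\delta=p-1)$-point of $U$ (for $\partial^U_Z\circ d_U$) or of $Z$ (for $d_Z\circ \partial^U_Z$). Together they exhaust all $(\delta=p-1)$-intermediate points of such chains on the localization $\Spec(\cO_{X,z})$, which is a two-dimensional integral local scheme; the total sum then vanishes by the closedness axiom \ref{itm:C'}, and isolating the $U$- and $Z$-contributions yields the minus sign.

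Part (1) is the main obstacle. Given $y'\in Y_{(\delta=p-1)}$, I would localize $Y$ at $y'$ and pass to the generic fiber of $f$ over $\overline{\{y'\}}$; properness of $f$ makes this fiber a proper curve $C$ over $\kappa(y')$. Using \ref{itm:R1c'} to rewrite the base change of the corestrictions defining $f_*$, and \ref{itm:R3b'} to relate the residue of a finite extension to the pullback of residues, the $(-,y')$-component of $d_Y\circ f_*-f_*\circ d_X$ regroups into a single expression of the form $c\circ d$ on ${}^{\delta}C_*(C,\hM,*)$, whose vanishing is exactly reciprocity for curves \ref{itm:(RC')}. The delicate bookkeeping lies in matching the points of $X_{(\delta=p-1)}$ lying above $y'$ with the closed points of the fiber curve $C$, and verifying that the twisted residue data and line bundles transport correctly under the various base changes so that the sign and the corestriction factors align with (RC').
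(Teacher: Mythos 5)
Your componentwise strategy is the right one and matches the paper's and Rost's approach. Parts (3) and (4) are fine: for (3) you go straight to \ref{itm:R3e'} where the paper routes through Lemma~\ref{Lem4.3} (which itself rests on \ref{itm:R2b'} and \ref{itm:R3e'}); for (4) your localization at $z$ and invocation of \ref{itm:C'} on the two-dimensional local ring is exactly the unwinding of the paper's citation of ``$d\circ d=0$'' (Proposition~\ref{dod=0}), which decomposes $d_X$ in block form on $C_*(Z)\oplus C_*(U)$.

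For part (1) you correctly name \ref{itm:R3b'} and \ref{itm:(RC')} as the essential axioms, which is precisely what Rost's proof uses. But the argument is not one uniform ``$c\circ d$ on a curve'' computation: after using Proposition~\ref{Prop4.1}(1) to replace $X$ by $\overline{\{x\}}$ and $Y$ by $\overline{\{f(x)\}}$, the proof bifurcates. When $\dim X=\dim Y$, the induced morphism is finite and \ref{itm:R3b'} alone closes the case, with no curve appearing; the reciprocity axiom \ref{itm:(RC')} only enters in the complementary case $\dim X=\dim Y+1$, where $y'$ is forced to be the generic point of $Y$ and the proper $\kappa(y')$-curve is the generic fibre of $\overline{\{x\}}\to\overline{\{y'\}}$. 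Keep these two cases separate; also, \ref{itm:R1c'} plays no role in Rost's argument for this part.

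Part (2) has a real gap. You account only for specializations $y\rightsquigarrow y'$ in $Y$ lying over a specialization $x\rightsquigarrow x'$ in $X$, and handle these by \ref{itm:R3a'} (correctly noting ramification index $1$ for essentially smooth $g$). But there is a second, independent case: vertical specializations, where $g(y)=g(y')=x$, so the valuation on $\kappa(y)$ determined by $y'$ restricts to the trivial valuation on $\kappa(x)$. Here the contribution $\partial^y_{y'}\circ \mathrm{res}_{\kappa(y)/\kappa(x)}$ to $(d_Y\circ g^!)^x_{y'}$ must vanish, which is exactly \ref{itm:R3c'}; without it the identity is simply unproven on the fibrewise components. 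The paper's proof also uses Proposition~\ref{Prop4.1}(3) and part (1) of the present proposition to organize the reduction to this dichotomy; your proposal glosses over that bookkeeping, but the genuinely missing ingredient is \ref{itm:R3c'}.
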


\begin{proof} Same as \cite[Proposition 4.6]{Rost96}. The first assertion comes from Proposition \ref{Prop4.1}.1 and \ref{itm:R3b'}, the second  from \ref{itm:R3c'}, Proposition \ref{Prop4.1}.3, Proposition \ref{Prop4.6}.1 and \ref{itm:R3a'} (note that the proof is actually much easier in our case since there are no multiplicities to consider). The third assertion follows from the definitions and Proposition \ref{Lem4.3}.2, the fourth from the fact that $d\circ d=0$.

\end{proof}

\begin{paragr}

According to the previous section (see Proposition \ref{Prop4.6}), the morphisms $f_*$ for $f$ proper, $g^*$ for $g$ essentially smooth, multiplication by $[a_1,\dots , a_n]$ or $\eta$, $\partial^U_Y$ (anti)commute with the differentials, and thus define maps on the homology groups.

\end{paragr}
\begin{paragr}\label{LocalizationSequence}

Let $(Z,i,X,j,U)$ be a boundary triple. We can split the complex ${}^{\delta}C_*(X,\hM_q,*)$ as
\begin{center}
	
	${}^{\delta}C_*(X,\hM_q,*)={}^{\delta}C_*(Z,\hM_q,*)\oplus {}^{\delta}C_*(U,\hM_q,*)$
\end{center}so that there is a long exact sequence
\begin{center}
	
	$\xymatrix{
		\dots \ar[r]^-\partial &
		{}^{\delta}A_p(Z,\hM_q,*) \ar[r]^{i_*} 
		& {}^{\delta}A_p(X,\hM_q,*) \ar[r]^{j^!} 
		& {}^{\delta}A_p(U,\hM_q,*) \ar[r]^{\partial} & {}^{\delta}A_{p-1}(Z,\hM_q,*) \ar[r]^-{i_*} 
		& \dots.
	}$
\end{center}

\end{paragr}

\section{Main properties}
\label{sec:Cycle_to_homotopy_modules}

\subsection{Homotopy invariance} \label{HomotopyInvarianceSection}
As in \cite[Section 9]{Feld1}, we define a coniveau spectral sequence that will help us reduce the homotopy invariance property to the known case \ref{itm:(H')}.

\par Let $(X,\delta)$ be a dimensional scheme and $\pi:V\to X$ an essentially smooth morphism of schemes. Fix $\hM$ a homological MW-cycle module.
A $\delta$-flag on $X$ is a sequence $(Z_p)_{p\in \ZZ}$ of reduced closed subschemes of $X$ such that, for all $p$, we have $Z_p\subset  Z_{p+1}$ and $\delta(Z_p) \leq p$. The set of $\delta$-flags $\Flag_{\delta}(X)$ on $X$ is non-empty and cofiltered (i.e. two elements admit a lower bound).
\par Let $\mathfrak{Z}=(Z_p)_{p\in \ZZ}$ be a $\delta$-flag of $X$ and define $\pi^!\mathfrak{Z}=(\pi^!Z^p)_{p\in \ZZ}$ a $\delta$-	flag over $V$ by ${\pi^!Z^p=V\times_X Z^p}$. For $p,q\in \ZZ$, define
\begin{center}
	
	$D^{\mathfrak{Z}}_{p,q}={}^{\delta}A_{p+q}(\pi^*Z_p, \hM, *)$,
	\\ $E^{\mathfrak{Z}}_{p,q}={}^{\delta}A_{p+q}(\pi^*Z_p- \pi^*Z_{p-1},\hM,*)$.
\end{center}
We have a long exact sequence
\begin{center}
	
	$\xymatrix{
		\dots \ar[r] 
		& D^{\mathfrak{Z}}_{p-1,q+1} \ar[r]^-{i_{p,*}} 
		& D^{\mathfrak{Z}}_{p,q} \ar[r]^{j^!_p} 
		& E^{\mathfrak{Z}}_{p,q} \ar[r]^{\partial_p} 
		& D^{\mathfrak{Z}}_{p-1,q} \ar[r] & \dots
	}$
\end{center}

so that $(D_{p,q}^\mathfrak{Z},E_{p,q}^\mathfrak{Z})_{p,q\in \ZZ}$ is an exact couple where $j_p^*$ and $i_{p,*}$ are induced by the canonical immersions. By the general theory (see \cite[Chapter 3]{McCleary01}), this defines a spectral sequence that converges to $A_{p+q}(V,\hM,*)$ because the $E_{p,q}^1$-term is bounded (since the dimension of $V$ is finite).
\par For $p,q\in \ZZ$, denote by
\begin{center}
	$D_{p,q}^{1,\pi}=\displaystyle \lim_{\mathfrak{Z}\in \Flag_{\delta}(X)} D_{p,q} ^{\mathfrak{Z}}$,\\
	$E_{p,q}^{1,\pi}=\displaystyle \lim_{\mathfrak{Z}\in \Flag_{\delta}(X)} E^{\mathfrak{Z}}_{p,q}$
\end{center}
where the colimit is taken over the flags $\mathfrak{Z}$ of $X$ (see Proposition \ref{Prop4.4} for functoriality). Since filtered direct limits are exact in the derived category of abelian groups, the previous spectral sequences give the following theorem.
\begin{thm} \label{SpectralSequenceHmtpInvariance}
	
	We have the convergent spectral sequence
	\begin{center}
		
		$E_{p,q}^{1,\pi} \Rightarrow {}^{\delta}A_{p+q}(V,\hM,*)$.
	\end{center}
	
\end{thm}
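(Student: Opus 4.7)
The plan is to construct the spectral sequence first for a single $\delta$-flag $\mathfrak{Z}$, verify its convergence there, and then pass to the colimit over $\Flag_\delta(X)$, using exactness of filtered colimits of abelian groups to preserve the exact couple structure.

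First, for a fixed $\delta$-flag $\mathfrak{Z}=(Z_p)$ on $X$, I would note that $\pi^!\mathfrak{Z}=(\pi^!Z_p)$ is again a $\delta$-flag on $V$, since $\pi$ is essentially smooth and $\delta$ extends canonically along essentially-of-finite-type morphisms by the conventions. The long exact sequence in the statement is then nothing but the localization sequence of paragraph \ref{LocalizationSequence} applied to the boundary triple
\[
(\pi^!Z_{p-1},\ \pi^!Z_p,\ \pi^!Z_p\setminus \pi^!Z_{p-1}),
\]
in which $i_{p,*}$ comes from the closed immersion and $j_p^!$ from the open complement. Consequently $(D^{\mathfrak{Z}}_{p,q},E^{\mathfrak{Z}}_{p,q})$ is a bona fide exact couple in the sense of \cite[Chapter 3]{McCleary01}, producing a spectral sequence with $E^1$-page $E^{\mathfrak{Z}}_{p,q}$ and differential induced by $\partial_p\circ j^!_{p-1}$.

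Next, I would verify convergence at a fixed flag. Because $V$ has finite Krull dimension and $\delta$ is a bounded dimension function, there exist integers $p_-\le p_+$, depending only on $V$, such that $Z_p=\emptyset$ for $p<p_-$ and $Z_p=X_{\mathrm{red}}$ for $p\ge p_+$. Hence $D^{\mathfrak{Z}}_{p,q}=0$ for $p<p_-$ and $D^{\mathfrak{Z}}_{p,q}={}^{\delta}A_{p+q}(V,\hM,*)$ for $p\ge p_+$, which gives finite convergence of the associated spectral sequence to ${}^{\delta}A_{p+q}(V,\hM,*)$ by the standard theory of bounded exact couples.

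Finally, I would take the colimit over the filtered system $\Flag_\delta(X)$ ordered by refinement (any two flags $\mathfrak{Z},\mathfrak{Z}'$ admit a common refinement $(Z_p\cup Z'_p)_p$, which is still a $\delta$-flag). For a refinement $\mathfrak{Z}\le \mathfrak{Z}'$ the componentwise closed immersions induce morphisms of exact couples, by the functoriality of the localization sequence recorded in Propositions \ref{Prop4.4} and \ref{Prop4.6}. Since filtered colimits of abelian groups are exact, the colimit of this system is again an exact couple, whose $D^1$- and $E^1$-terms are exactly $D^{1,\pi}_{p,q}$ and $E^{1,\pi}_{p,q}$, giving the desired spectral sequence. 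The main subtlety — and the step I would treat most carefully — is that the vanishing range $[p_-,p_+]$ is independent of the flag, so it survives the colimit and guarantees that the limiting spectral sequence converges to ${}^{\delta}A_{p+q}(V,\hM,*)$ itself (and not merely to a colimit of copies of it).
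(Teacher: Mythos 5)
Your proof follows essentially the same route as the paper's paragraph preceding the theorem: each $\delta$-flag $\mathfrak{Z}$ yields an exact couple via the localization sequence, boundedness gives convergence, and one passes to the filtered colimit using exactness of filtered colimits of abelian groups. One refinement worth making explicit: you claim there is $p_+$, depending only on $V$, with $Z_p=X_{\mathrm{red}}$ for $p\ge p_+$ and for \emph{every} $\delta$-flag $(Z_p)$. As the paper defines $\delta$-flags (only $Z_p\subset Z_{p+1}$ and $\delta(Z_p)\le p$), this is not forced — the constant flag $Z_p=\emptyset$ is admissible, and for it $D^{\mathfrak{Z}}_{p,q}=0$ for all $(p,q)$, so the abutment would be $0$ rather than ${}^{\delta}A_{p+q}(V,\hM,*)$. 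The correct justification is cofinality in the colimit: the flags with $Z_p=X_{\mathrm{red}}$ for all $p\ge\delta(X)$ form a cofinal subsystem of $\Flag_\delta(X)$, hence compute the same $D^{1,\pi}_{p,q}$ and $E^{1,\pi}_{p,q}$, and for these flags $D^{\mathfrak{Z}}_{p,q}$ does stabilize to ${}^{\delta}A_{p+q}(V,\hM,*)$ for $p\gg 0$ (and vanishes for $p$ small by the bounded dimension function). Substituting this cofinality step for your claim that ``the vanishing range $[p_-,p_+]$ is independent of the flag'' closes the argument; with that change, your proof matches the paper's.
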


We need to compute this spectral sequence. This is done in the following theorem.
\begin{thm} \label{SpectralSequenceComputation}
	
	For $p,q\in \ZZ$, we have a isomorphism
	\begin{center}
		
		$E_{p,q}^{1,\pi}\simeq \displaystyle \bigoplus_{x\in X_{(p)}}{}^{\delta}A_q(V_x,\hM,*)$.
	\end{center}
\end{thm}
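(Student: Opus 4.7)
The plan is to analyze the filtered colimit $E^{1,\pi}_{p,q} = \varinjlim_{\mathfrak{Z}} E^{\mathfrak{Z}}_{p,q}$ by identifying the cofiltered limit of the schemes $\pi^* Z_p - \pi^* Z_{p-1}$ and then invoking continuity of the Rost-Schmid complex along open-immersion transition maps. First I would fix a $\delta$-flag $\mathfrak{Z}=(Z_p)$ of $X$ and pass to a cofinal refinement --- which is possible since one can always replace $Z_p$ by $Z_p \cup \overline{\{x\}}$ for any $x \in X_{(p)}$ --- so that each $Z_p$ is the union of closures of points in $X_{(\delta \leq p)}$. Then the generic points of $Z_p \setminus Z_{p-1}$ form a finite subset $T(\mathfrak{Z},p) \subset X_{(p)}$, and one has a decomposition $Z_p \setminus Z_{p-1} = \bigsqcup_{x \in T(\mathfrak{Z},p)} W_{\mathfrak{Z},x}$ into irreducible locally closed components $W_{\mathfrak{Z},x}$ with generic point $x$. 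Pulling back along $\pi$ then gives
\[
E^{\mathfrak{Z}}_{p,q} = \bigoplus_{x \in T(\mathfrak{Z},p)} {}^{\delta}A_{p+q}(\pi^* W_{\mathfrak{Z},x}, \hM, *).
\]

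Next, for each fixed $x \in X_{(p)}$, I would restrict to the cofinal subsystem of flags containing $x$ in $T(\mathfrak{Z},p)$. Along this subsystem the $W_{\mathfrak{Z},x}$'s form a cofiltered family of open subschemes of $\overline{\{x\}}$ with projective limit $\Spec \kappa(x)$, and since fibre products of schemes commute with cofiltered limits, the pullbacks satisfy $\varprojlim_{\mathfrak{Z} \ni x} \pi^* W_{\mathfrak{Z},x} = V_x$. Appealing to continuity of the Rost-Schmid complex along such limits --- each term ${}^{\delta}C_k$ being a direct sum indexed by points, and axiom \ref{itm:FD'} ensuring finite support of the boundary maps --- one obtains
\[
\varinjlim_{\mathfrak{Z} \ni x} {}^{\delta}A_{p+q}(\pi^* W_{\mathfrak{Z},x}, \hM, *) \cong {}^{\delta}A_{p+q}(V_x, \hM, *).
\]
Commuting the finite direct sums indexed by $T(\mathfrak{Z},p)$ past the filtered colimit, and then reindexing the dimension function on $V_x$ (normalising so that $\Spec \kappa(x)$ has $\delta = 0$, which shifts the grading by $-p$) yields the desired isomorphism $E^{1,\pi}_{p,q} \cong \bigoplus_{x \in X_{(p)}} {}^{\delta}A_q(V_x, \hM, *)$.

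The main obstacle will be the continuity step: verifying that the Rost-Schmid complex commutes with cofiltered limits of schemes whose transition maps are open immersions, and that this compatibility descends to homology. This should follow formally from the pointwise description of ${}^{\delta}C_*$ together with the compatibility of residues and transfers (axioms \ref{itm:R3a'}--\ref{itm:R3e'}) under restriction to open subschemes, but requires careful bookkeeping --- in particular one must check that boundary contributions arising from the complement of $W_{\mathfrak{Z},x}$ in $\overline{\{x\}}$ get eliminated in the colimit, which is the reason for taking a cofinal system of shrinking opens rather than working with a single flag. A secondary technical point is cofinality of the subsystem of flags through a prescribed $x \in X_{(p)}$, which is handled by the closure operation described above.
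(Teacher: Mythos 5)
Your proof takes the same route as the paper: decompose $Z_p\setminus Z_{p-1}$ into disjoint irreducible pieces $W_{\mathfrak{Z},x}$ after passing to a cofinal subsystem of flags, commute the finite direct sum past the filtered colimit, and conclude by continuity of the Rost-Schmid complex under cofiltered limits of shrinking opens (the paper cites the same continuity in Definition \ref{df:continuity} for exactly these four identifications). The one point worth tightening is your cofinal condition: merely asking that $Z_p$ be a union of closures of points is automatic for a reduced noetherian scheme and does not by itself make the $W_{\mathfrak{Z},x}$ disjoint; what is needed (and what the paper encodes by requiring $Z_{p-1}$ to contain the singular locus of $Z_p$) is that $Z_{p-1}$ contain the pairwise intersections of the irreducible components of $Z_p$, as well as any component of $Z_p$ of $\delta$-dimension $<p$ — this is what gives $Z_p\setminus Z_{p-1}=\bigsqcup_x W_{\mathfrak{Z},x}$ and hence the direct-sum splitting of ${}^{\delta}C_*$.
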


\begin{proof} Denote by $\mathcal{I}_p$ the set of pairs $(Z,Z')$ where $Z$ is a reduced closed subscheme of $X$ of $\delta$-dimension $p$ and $Z'\subset Z$ is a closed subset containing the singular locus of $Z$. Notice that any such pair $(Z,Z')$ can be (functorially) extended into a $\delta$-flag of $X$. Moreover, for any $x$ in $X$, consider $\overline{ \{x\}}$ the reduced closure of $x$ in $X$ and $\mathfrak{F}(x)$ be the set of closed subschemes $Z'$ of $\overline{ \{x\}}$ containing its singular locus. The following equalities are all canonical isomorphisms by continuity (see Definition \ref{df:continuity}):
	\begin{center}
		\begin{tabular}{lll}
			
			$E_{p,q}^{1,\pi}$ &$ \simeq$ & $\displaystyle \lim_{{\mathfrak{Z}}\in \Flag_{\delta}(X)} {}^{\delta}A_q(V\times_X(Z_p-Z_{p-1}),\hM,*)$ \\
			&    $\simeq$  
			& $\displaystyle \lim_{(Z,Z')\in \mathcal{I}_p} {}^{\delta}A_q(V\times_X(Z-Z'),\hM,*)$ \\
			&   $\simeq$   &
			$\displaystyle \bigoplus_{x\in X_{(p)}} \displaystyle \lim_{Z'\in \mathfrak{F}(x)} {}^{\delta}A_q(V\times_X(\overline{ \{x\} }-Z'),\hM,*)$ \\
			&  $\simeq$    
			& $\displaystyle \bigoplus_{x\in X_{(p)}} {}^{\delta}A_q(V_x,\hM,*).$
		\end{tabular}
		
	\end{center}
	Note that the proof gives an explicit construction of the isomorphism so that we may call it \textit{canonical}.
	
\end{proof}

\begin{thm}[Homotopy Invariance] \label{HomotopyInvariance}
	Let $X$ be a scheme, $V$ a vector bundle of rank $n$ over $X$, $\pi:V\to X$ the canonical projection. Then, for every $q\in \ZZ$, the canonical morphism
	\begin{center}
		$\pi^!:{}^{\delta}A_q(X,\hM,*)\to {}^{\delta}A_{q+n}(V,\hM,*)$
		
	\end{center}
	is an isomorphism.
\end{thm}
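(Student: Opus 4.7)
The plan is to compare the coniveau spectral sequences of Theorem~\ref{SpectralSequenceHmtpInvariance} for $\Id_X$ and for $\pi : V \to X$. Pullback along $\pi$ sends a $\delta$-flag $\mathfrak{Z} = (Z_p)$ on $X$ to the $\delta$-flag $\pi^!\mathfrak{Z} = (V \times_X Z_p)$ on $V$ already used to build the second spectral sequence. Combining this with the functoriality of $\pi^!$ (Proposition~\ref{Prop4.1}), its compatibility with proper pushforwards (Proposition~\ref{Prop4.4}), and its commutation with the Rost--Schmid differential (Proposition~\ref{Prop4.6}), one obtains a morphism of the associated exact couples and hence a morphism of convergent spectral sequences from $E^{1,\Id}$ to $E^{1,\pi}$, shifted by $n=\operatorname{rk}(V)$, whose abutment is precisely the map $\pi^!$ of the theorem.

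By Theorem~\ref{SpectralSequenceComputation}, on the $E^1$-page this morphism decomposes as a direct sum over $x \in X_{(p)}$ of the fiberwise pullbacks
\[
\pi_x^!:\ {}^\delta A_q(\Spec \kappa(x), \hM, *) \longrightarrow {}^\delta A_{q+n}(V_x, \hM, *),
\]
where $V_x \cong \AA^n_{\kappa(x)}$ is the fiber of $\pi$ over $x$. By the standard comparison theorem for convergent spectral sequences, it therefore suffices to show that each $\pi_x^!$ is an isomorphism; in other words, the theorem reduces to the special case $X = \Spec E$ for $E$ a field over $S$ and $V = \AA^n_E$.

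This local case is handled by induction on $n$. The base case $n=1$ is exactly the $\AA^1$-homotopy property \ref{itm:(H')}, which asserts that the two-term Rost--Schmid complex of $\AA^1_E$ is a resolution of $\hM(E,*)$ via $\res = \pi^!$. For the inductive step, factor $\AA^n_E \to \Spec E$ as $\AA^n_E \xrightarrow{p_1} \AA^{n-1}_E \xrightarrow{p_2} \Spec E$, so that $\pi^! = p_1^! \circ p_2^!$ by Proposition~\ref{Prop4.1}(2). Here $p_2^!$ is an isomorphism by the inductive hypothesis, while $p_1^!$ is the pullback along a trivial $\AA^1$-bundle; reapplying the spectral sequence machinery to $p_1$ reduces its fibers to copies of $\AA^1$ over residue fields of $\AA^{n-1}_E$, each handled by \ref{itm:(H')}. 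The main obstacle I expect is the bookkeeping: verifying that $\pi^!$ genuinely lifts to a morphism of spectral sequences compatibly with the identifications of Theorem~\ref{SpectralSequenceComputation} and with the twist through $\detcotgb_\pi$. All the needed compatibilities appear in Section~\ref{Compatibilities}, but assembling them coherently, particularly the naturality of the limit identification $\lim_{Z'} {}^\delta A_q(V \times_X (\overline{\{x\}} - Z'), \hM, *) \simeq {}^\delta A_q(V_x, \hM, *)$ under $\pi^!$, requires care.
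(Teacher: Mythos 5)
Your proof is correct and follows essentially the same strategy as the paper: both rely on the niveau spectral sequence of Theorems~\ref{SpectralSequenceHmtpInvariance} and~\ref{SpectralSequenceComputation} to reduce to the fibers $V_x \cong \AA^n_{\kappa(x)}$, and both invoke the homotopy property \ref{itm:(H')} as the engine. The differences are organizational. The paper first performs a noetherian induction plus localization (\ref{LocalizationSequence}) to reduce to the trivial bundle $V = \AA^n_X$ and then further to $n=1$ \emph{before} invoking the spectral sequence, so that \ref{itm:(H')} is applied directly on the $E^1$-page. You instead apply the spectral sequence reduction once to an arbitrary $V$ (observing, correctly, that a vector bundle over a point is automatically trivial, so the localization step is not strictly needed), and then handle $\AA^n_E$ by induction on $n$. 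Your inductive step, for $p_1 : \AA^n_E \to \AA^{n-1}_E$, requires rerunning the spectral sequence over the non-field base $\AA^{n-1}_E$ to land on fibers $\AA^1_{\kappa(y)}$; this is sound, though it means the spectral-sequence machinery is invoked twice, whereas the paper's ordering (reduce to $n=1$ first) uses it only once. Your concluding caveat about verifying that $\pi^!$ lifts to a morphism of exact couples compatibly with the limit identifications and with the twist by $\detcotgb_\pi$ is exactly what the paper has to address too; the compatibilities of Propositions~\ref{Prop4.1}, \ref{Prop4.4}, and~\ref{Prop4.6} supply what is needed, and the paper likewise elides the twist-bookkeeping with a parenthetical ``twisted accordingly.'' No gap.
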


\begin{proof}
	
	From a noetherian induction and the localization sequence \ref{LocalizationSequence}, we can reduce to the case where $V=\AA^n_X$ is the affine trivial vector bundle of rank $n$. Moreover, we may assume $n=1$ by induction. With the previous notations, Theorem \ref{SpectralSequenceHmtpInvariance} gives the spectral sequence
	\begin{center}
		
		$E_{p,q}^{1,\pi} \Rightarrow A^{\delta}_{p+q}(V,\hM,*)$
	\end{center}
	where $E_{p,q}^{1,\pi} $ is (abusively) defined as previously, but twisted accordingly.
	By Theorem \ref{SpectralSequenceComputation}, the page $E_{p,q}^{1,\pi}$ is isomorphic to $\bigoplus_{x\in X_{(p)}}{}^{\delta}A_q(V_x,\hM,*)$. According to the property \ref{itm:(H')}, this last expression is isomorphic (via the map $\pi$) to $\bigoplus_{x\in X_{(p)}}{}^{\delta}A_q(\Spec \kappa(x),\hM,*)$. Using again \ref{SpectralSequenceComputation}, this group is isomorphic to $E_{p,q}^{1,\Id_X}$, which converge to ${}^{\delta}A_{p+q}(X,\hM,*)$. By Proposition \ref{Prop4.4}, the map $\pi$ induces a morphism of exact couples
	\begin{center}
		$(D_{p,q}^{1,\Id_X},E_{p,q}^{1,\Id_X})\to (D_{p,q+n}^{1,\pi},E_{p,q+n}^{1,\pi})$
	\end{center} 
	hence we have compatible isomorphisms on the pages which induce the pullback
	\begin{center}
		${\pi^!:{}^{\delta}A_q(X,\hM,*)\to {}^{\delta}A_{q+n}(V,\hM,*)}$.
	\end{center}
\end{proof}

\subsection{Homotopy cycle complex}
\label{subsec_homotopy_cycle_complex}

Fix $\hM$ a homological MW-cycle module. If $X$ is a scheme and $\pi:V\to X$ a vector bundle over $X$, we have proved \ref{HomotopyInvarianceSection} that the canonical map
\begin{center}
	$\pi^!:{}^{\delta}C_p(X,\hM_q,*)\to {}^{\delta}C_{p}(V,\hM_{q},*)$
	
\end{center}
is a \underline{quasi}-isomorphism of complexes. 
\par Unfortunately, we would rather have a true isomorphism. In the following, we take some ideas from \cite{Ivorra14} and construct a new cycle complex associated to the Rost-Schmid complex which do have this property.

\begin{paragr}{\sc Notation}
	Denote by $\pi_{n,n-1}: \AA^n \to \AA^{n-1}$ the canonical projection induced by the inclusion of rings $\ZZ[t_1,\dots , t_{n-1}] \to \ZZ[t_1,\dots , t_{n}]$. For $X$ a scheme, we denote by $\pi_{X,n,n-1}$ the map $\Id_X \times \pi_{n,n-1}$ and by 
	\begin{center}
		$\pi_{X,n}: X \times_S \AA^n_S \to X$
	\end{center}
	the projection.
	
\end{paragr}
\begin{df}\label{df:functorial_complex}
	Let $X$ be a scheme, $*$ be a line bundle over $X$, $q$ an integer, and $\hM$ be a homological MW-cycle module. We denote by $\hC_*(X,\hM, \cL)$ the graded complex of abelian groups 
	\begin{center}
		$\hC^*(X,\hM_q, *)
		:=
		\colim_{n \in \NN} {}^{\delta}C^*(X\times \AA^n,\hM_q, * )$
	\end{center}
	with structural maps given by the smooth pullbacks $\pi^!_{X,n,n-1}$.
\end{df}

\begin{rem}
	Note that $\hC$ is naturally equipped with differential maps and is indeed a complex. Moreover, we have a graded map
	\begin{center}
		$\widetilde{\alpha}_X: {}^{\delta}C^*(X,\hM_q, *) \to \hC^*(X,\hM_q, *)$.
	\end{center}
\end{rem}

\begin{paragr}{\sc Basic maps}
	\label{Basic_maps_homotopy_RS_complex}
	
	The basics maps defined in \ref{FiveBasic} extend naturally to the complex $\hC^*(X,\hM,*)$.
	\par Let $X,Y$ be two schemes over $S$, and $f:Y\to X$ a smooth morphism of relative dimension $r$. By functoriality of pullbacks, we have a commutative diagram
	
	\begin{center}
		
		$
		\xymatrixcolsep{14mm}
		\xymatrix{
			{}^{\delta}C^*(X,\hM_q, *)
			\ar[r]
			\ar[d]^{f^!}
			&
			\dots
			\ar[r]
			&
			{}^{\delta}C^*(\AA_X^{n-1},\hM_q, *)
			\ar[r]^-{\pi^!_{X,n,n-1}}
			\ar[d]^{(f\times_S \Id_{\AA^{n-1}})^!}
			&
			{}^{\delta}C^*(\AA_X^{n},\hM_q, *)
			\ar[d]^{(f\times_S \Id_{\AA^{n}})^!}
			&
			\\
			{}^{\delta}C^*(Y,\hM_q, *)
			\ar[r]
			&
			\dots
			\ar[r]
			&
			{}^{\delta}C^*(\AA_Y^{n-1},\hM_q, *)
			\ar[r]_-{\pi^!_{Y,n,n-1}}
			&
			{}^{\delta}C^*(\AA_Y^{n},\hM_q, *)
			&
		}$
	\end{center}

	and thus, by taking colimits, we obtain a pullback
	\begin{center}
		$f^!:\hC^*(X,\hM_q,*) \to \hC^*(Y,\hM_q,*)$.
	\end{center}
	Similarly, if $f:Y\to X$ is a proper morphism of relative dimension $s$, we have a pushforward map:
	\begin{center}
		
		$f_*:\hC^p(Y,\hM_q,*\otimes \detcotgb_f)
		\to 
		\hC^{p+s}(X,\hM_{q-s}, *)$.
	\end{center}

	Let $a\in \cO^{\times}_X$ be a global unit. We have a commutative diagram 
	
	\begin{center}
		
		$\xymatrix{
			{}^{\delta}C^*(X,\hM_q, *)
			\ar[r]
			\ar[d]^{[a]}
			&
			\dots
			\ar[r]
			&
			{}^{\delta}C^*(X\times_S \AA^{n-1},\hM_{q+1}, *)
			\ar[r]^{\pi^!_{Y,n,n-1}}
			\ar[d]^{[\pi^*_{n-1}(a)]}
			&
			{}^{\delta}C^*(X\times_S \AA^{n},\hM_{q+1}, *)
			\ar[r]
			\ar[d]^{[\pi^*_{n}(a)]}
			&
			\dots
			\\
			{}^{\delta}C^*(X,\hM_{q+1}, *)
			\ar[r]
			&
			\dots
			\ar[r]
			&
			{}^{\delta}C^*(X\times_S \AA^{n-1},\hM_{q+1}, *)
			\ar[r]_{\pi^!_{X,n,n-1}}
			&
			{}^{\delta}C^*(X\times_S \AA^{n},\hM_{q+1}, *)
			\ar[r]
			&
			\dots
		}$
	\end{center}
	
	and thus an induced morphism
	\begin{center}
		$[a]:\hC^p(X,\hM_q,*) \to \hC^p(X,\hM_{q+1},*)$.
	\end{center}
	Finally, let $i:Z\to X$ be a closed immersion and let $j:U=X\setminus Z \to X$ be the inclusion of the open complement. We have a \textit{boundary map}
	\begin{center}

		$\partial=\partial^U_Z:\hC^p(U,\hM_q,*) \to \hC^{p+1}(Z,\hM_q,*)$.
	\end{center}
	\begin{cor}
		All compatibilities between the basic maps proved in \ref{Compatibilities} hold true for $\hC^*(X,\hM_q, *)$.
		
	\end{cor}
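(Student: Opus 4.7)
The plan is a straightforward colimit argument. By Definition \ref{df:functorial_complex} together with paragraph \ref{Basic_maps_homotopy_RS_complex}, each basic map on $\hC^*(X,\hM_q,*)$ is defined as the colimit over $n$ of the corresponding basic map on ${}^{\delta}C^*(X \times \AA^n,\hM_q,*)$, with transition morphisms the smooth pullbacks $\pi^!_{X,n,n-1}$. Since filtered colimits of abelian groups preserve compositions and finite sums, any identity of the form $\alpha \circ \beta = \gamma \circ \varepsilon$ between basic maps that holds at every level $n$ will descend to the colimit.

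First I would observe that each compatibility from Propositions \ref{Prop4.1}, \ref{Lem4.2}, \ref{Lem4.3}, \ref{Prop4.4}, and \ref{Prop4.6} applies verbatim to the schemes $X \times \AA^n$, $Y \times \AA^n$, $Z \times \AA^n$, because properness, essential smoothness, closed immersions, and the structure of a boundary triple are all stable under base change along the projection $\AA^n \to \operatorname{Spec} \ZZ$. Hence for every fixed $n$, the corresponding identity between the level-$n$ maps $\alpha_n$, $\beta_n$, $\gamma_n$, $\varepsilon_n$ holds on the nose. Second, in order for these level-$n$ identities to lift to the colimit, each basic map $\alpha_n$ must commute with the transition smooth pullbacks $\pi^!_{X,n,n-1}$, so that $\alpha := \colim_n \alpha_n$ is a well-defined morphism. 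This commutation is exactly what is displayed by the commutative diagrams in paragraph \ref{Basic_maps_homotopy_RS_complex}, and it is itself an instance of the compatibility of smooth pullback with each of the other four basic operations (Propositions \ref{Prop4.1}(2), \ref{Prop4.1}(3), \ref{Lem4.3}(1), \ref{Prop4.4}(2)). Once every basic map has been checked to define a morphism of the directed system in $n$, applying $\colim_n$ to an identity of such morphisms yields the corresponding identity on $\hC^*$.

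No genuine obstacle is expected: the argument is purely formal, and the only thing to monitor is the bookkeeping of line-bundle twists, degree shifts, and pullbacks of the line bundles along the projections, which is handled in exactly the same way as at the level of ${}^{\delta}C^*$. In particular, the corollary is a direct consequence of the compatible family structure established in paragraph \ref{Basic_maps_homotopy_RS_complex}, and every compatibility from Section \ref{Compatibilities} transfers to $\hC^*$ via passage to the colimit.
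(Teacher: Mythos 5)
Your proof is correct and fills in precisely the (implicit) colimit argument the paper relies on: the paper states the corollary without a separate proof because, as you observe, the diagrams in \ref{Basic_maps_homotopy_RS_complex} already show each basic map is a morphism of the directed system, and filtered colimits preserve commutativity of the relevant diagrams. Your bookkeeping of how Propositions \ref{Prop4.1}--\ref{Prop4.6} apply at each level $n$ and descend to the colimit matches the intended argument.
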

\end{paragr}

\begin{paragr}
	Note that, by definition, the canonical pullback map 
	\begin{center}
		$\hC^*(X,\hM_q,*) \to \hC^*(X\times_S \AA^n_S,\hM_q,*)$
	\end{center}
	is an isomorphism. We are going to prove that this homotopy invariance property hold true in general.

\end{paragr}

\begin{paragr}
	\label{HomotopyComplexIsNisnevich}
	Fix $X$ a scheme over $S$, $q$ an integer, and $*$ a line bundle over $X$. We denote by
	\begin{center}
		$\HH: U \mapsto \hC^*(U,\hM_q,*)$
	\end{center}  the presheaf on the small Nisnevich site $X_{\text{Nis}}$. We prove that this is in fact a Nisnevich sheaf. Indeed, consider a Nisnevich square, i.e. a cartesian diagram
	\begin{center}
		
		$\xymatrix{
			U_V
			\ar[r]
			\ar[d]
			&
			V
			\ar[d]^{\phi}
			\\
			U
			\ar[r]_{j}
			&
			X
		}$
	\end{center}
	with $j$ an open immersion and $\phi$ an étale map such that the induced morphism $V\setminus U_V \to X\setminus U$ is an isomorphism. We have decompositions as a direct sum of abelian groups
	\begin{center}
		${}^{\delta}C_p(X,\hM_q,*)=
		C^{\delta}_p(U,\hM_q,*)
		\oplus
		C^{\delta}_p(Z,\hM_q,*)$
	\end{center}
	\begin{center}
		$C^{\delta}_p(V,\hM_q,*)=
		C^{\delta}_p(U_V,\hM_q,*)
		\oplus
		C^{\delta}_p(Z_V,\hM_q,*)$
	\end{center}
	where $Z$ (resp. $Z_V$) is the closed complement of $U$ in $X$ ($U_V$ in $V$) with its reduced scheme structure. Since the squares

	\begin{center}
		
		$\xymatrix{
			U_V \times_S \AA^n
			\ar[r]
			\ar[d]
			&
			V\times_S \AA^n
			\ar[d]^{\phi}
			\\
			U\times_S \AA^n
			\ar[r]_{j}
			&
			X\times_S \AA^n
		}$
	\end{center}
	are also Nisnevich squares, we have a similar decompositions
	
	\begin{center}
		$C^{\delta}_{p+n}(X\times_S \AA^n,\hM_q,*)=
		C^{\delta}_{p+n}(U\times_S \AA^n,\hM_q,*)
		\oplus
		C^{\delta}_{p+n}(Z\times_S \AA^n,\hM_q,*)$,
	\end{center}
	\begin{center}
		$C^{\delta}_{p+n}(V\times_S \AA^n,\hM_q,*)=
		C^{\delta}_{p+n}(U_V\times_S \AA^n,\hM_q,*)
		\oplus
		C^{\delta}_{p+n}(Z_V\times_S \AA^n,\hM_q,*)$.
	\end{center}
	Taking the colimit over $n$, we get decompositions as a direct sum of abelian groups
	\begin{center}
		$\hC^p(X,\hM_q,*)=
		\hC^p(U,\hM_q,*)
		\oplus
		\hC^p(Z,\hM_q,*)$
	\end{center}
	\begin{center}
		$\hC^p(V,\hM_q,*)=
		\hC^p(U_V,\hM_q,*)
		\oplus
		\hC^p(Z_V,\hM_q,*)$
	\end{center}
	and it follows that the presheaf $\HH$ on $X_{\text{Nis}}$ is a Nisnevich sheaf.
	
\end{paragr}

\begin{cor}[Strong Homotopy invariance]
	\label{HomotopyInvarianceDeformedComplex}
	Let $X$ be a scheme over $S$, $q$ an integer, $*$ a line bundle over $X$ and $\pi:V\to X$ a vector bundle. Then the pullback morphism
	\begin{center}
		$\pi^!:\hC^*(X,\hM_q,*) \to \hC^*(V,\hM_q,*)$
	\end{center}
	is an isomorphism.
	
\end{cor}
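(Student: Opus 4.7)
The plan is to reduce to the trivial bundle case by Nisnevich descent, since the trivial case is noted immediately before the statement to be an isomorphism by the very construction of $\hC$. Specifically, I would upgrade the problem to a comparison of Nisnevich sheaves on the small site $X_{\text{Nis}}$. Define two presheaves on $X_\text{Nis}$ by
\[
\mathcal F(U) := \hC^*(U, \hM_q, *|_U), \qquad \mathcal G(U) := \hC^*(V \times_X U, \hM_q, \pi^*(*|_U)).
\]
The argument of paragraph \ref{HomotopyComplexIsNisnevich} shows that $\mathcal F$ is a Nisnevich sheaf on $X_\text{Nis}$; it applies verbatim to $\mathcal G$ after observing that the smooth morphism $\pi:V\to X$ pulls back a Nisnevich square over $X$ to a Nisnevich square over $V$, so the key decomposition into open-plus-closed parts is preserved. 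By the functoriality of smooth pullbacks (Proposition \ref{Prop4.1}), the maps $\pi_U^!$ assemble into a morphism of Nisnevich sheaves $\varphi:\mathcal F \to \mathcal G$ on $X_\text{Nis}$ whose value at $X$ is precisely the morphism we wish to show is an isomorphism.

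Next I would exploit Zariski-local triviality of vector bundles: there exists a Zariski (and therefore Nisnevich) cover $\{U_i \to X\}_{i\in I}$ such that $V\times_X U_i \simeq U_i \times_S \AA^n_S$ as vector bundles over $U_i$. Over each such $U_i$, the morphism $\varphi(U_i)$ coincides, under the chosen trivialization, with the canonical pullback
\[
\hC^*(U_i, \hM_q, *|_{U_i}) \longrightarrow \hC^*(U_i \times_S \AA^n_S, \hM_q, *|_{U_i}),
\]
which is an isomorphism by the ``by definition'' remark placed immediately before the corollary. Since a morphism of Nisnevich sheaves that is an isomorphism on a cover is an isomorphism, $\varphi$ is an isomorphism of sheaves; evaluating at $X$ then gives the claim.

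The one nonroutine point will be justifying the identification just invoked, namely that under the trivialization $V\times_X U_i \simeq U_i \times_S \AA^n_S$ the map $\varphi(U_i)$ really agrees with the tautological isomorphism from the colimit definition of $\hC$. This is a compatibility statement: the smooth pullback $\pi^!$ that defines $\varphi$ must commute with the structural maps $\pi^!_{-,m,m-1}$ indexing the colimit defining $\hC^*$. Unwinding this from Proposition \ref{Prop4.1} (transitivity of smooth pullbacks and their compatibility with products) is the only genuine verification; everything else is formal sheaf-theoretic descent.
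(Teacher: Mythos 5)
Your proof is correct and follows essentially the same route as the paper: show that $U\mapsto\hC^*(V_U,\hM_q,*)$ is a Nisnevich sheaf on $X_{\mathrm{Nis}}$ (by the argument of \ref{HomotopyComplexIsNisnevich}), then reduce by descent to the trivial-bundle case, which holds by construction of $\hC$. Your write-up is more explicit than the paper's one-line proof in naming the comparison morphism of sheaves and in flagging the compatibility check needed to identify $\pi^!$ with the tautological colimit isomorphism over a trivializing chart, but the underlying idea is identical.
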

\begin{proof}
	One can deduce from the previous paragraph that the presheaf on $X_{\text{Nis}}$:
	\begin{center}
		$U\mapsto \hC^*(V_U,\hM_q,*)$
	\end{center}
	is also a Nisnevich sheaf. Thus we are reduced to proving the result locally in the case of a trivial vector bundle, which is true by definition. 
\end{proof}

\begin{paragr}
	We now prove that there is a strong homotopy equivalence between the Rost-Schmid complex $C$ and its associated homotopy complex $\hC$.
	The trick is to use the formulas of \cite[§9.1]{Rost96}, conveniently adapted, to define an explicit homotopy of complexes (these ideas also appear in \cite[§3]{Ivorra14} and \cite[Theorem 5.38]{MorelLNM}).
	
\end{paragr}

\begin{paragr}
	In the following, we recall that we use the notation of Rost
	\begin{center}
		$X \bulleto Y $
	\end{center}
	to denote maps of complexes
	\begin{center}
		${}^{\delta}C^*(X,\hM_q,*) \to {}^{\delta}C^*(Y,\hM_q,*)$
	\end{center}
	which are sums of composites of the usual maps $f_*, g^*, [a], \eta, \partial$ defined previously. 
	\par For instance, if $X$ is a scheme over $S$, we define the composites
	\begin{center}
		$\xymatrixcolsep{3pc}\xymatrix{
			r_{X,1}:X\times \AA^1
			\ar@{{*}->}[r]^-{j^!}
			&
			X\times(\AA^1-\{0\})
			\ar@{{*}->}[r]^{[-1/t]}
			&
			X\times(\AA^1-\{0\})
			\ar@{{*}->}[r]^-{\partial_{\infty}}
			&
			X,
		}$
	\end{center}
	\begin{center}
		$\xymatrixcolsep{3pc}\xymatrix{
			H_{X,1}:X\times \AA^1
			\ar@{{*}->}[r]^-{p_2^!}
			&
			X\times(\AA^1\times \AA^1-\Delta)
			\ar@{{*}->}[r]^{[s-t]}
			&
			X\times(\AA^1\times \AA^1-\Delta)
			\ar@{{*}->}[r]^-{p_{1*}}
			&
			X\times \AA^1.
		}$
	\end{center}
	Here $t$ is the coordinate of $\AA^1=\Spec \ZZ[t]$ and $s,t$ are the coordinates of $\AA^1\times \AA^1=\Spec \ZZ[s]\times \Spec \ZZ[t]$. Moreover $\Delta=\{s-t=0\}$ is the diagonal, $j$ is the standard inclusion, $p_1$ and $p_2$ are given by the standard projections and $\partial_{\infty}$ is induced by $X=X\times \infty\subset X\times \PP^1-\{0\})$ with open complement $X\times (\AA^1-\{0\})$. 
	
	\par By induction, we define
	\begin{center}
		$r_{X,n}= 
		r_{X,n-1}\circ r_{X\times_S \AA^{n-1}}$
	\end{center}
	and
	\begin{center}
		$H_{X,n}=
		H_{X\times_S \AA^{n-1}}
		+
		\pi^!_{X,n,n-1} \circ H_{X,n-1} \circ  r_{X\times_S \AA^{n-1}}$.
	\end{center}
	Note that we have
	
	\[\begin{array}{rcl}
		H_{X,n} \circ \pi^!_{X,n,n-1}
		&
		=
		&
		H_{X\times_S \AA^{n-1}}
		\circ
		\pi^!_{X,n,n-1}
		+
		\pi^!_{X,n,n-1}
		\circ
		H_{X,n-1}
		\circ
		r_{X\times_S \AA^{n-1}}
		\circ
		\pi^!_{X,n,n-1}
		\\
		{}
		&
		=
		&
		0
		+
		\pi^!_{X,n,n-1}
		\circ
		H_{X,n-1}
		\\
		{}
		&
		=
		&
		\pi^!_{X,n,n-1}
		\circ
		H_{X,n-1}	
	\end{array}\]
	and thus we obtain a graded map of degree $-1$
	\begin{center}
		$\widetilde{H}_X =
		\colim_n H_{X,n}: \hC(X,\hM_q,*)
		\to \hC(X,\hM_q,*)$.
	\end{center}	
	Since $r_{X,n}\circ \pi^!_{X,n,n-1}=r_{X,n-1}$, we also have a map
	\begin{center}
		$\widetilde{r}_X: \hC^*(X,\hM_q,*) \to C_{\delta}^*(X,\hM_q,*)$.
	\end{center}

\end{paragr}
\begin{thm}
	\label{thm_h_data}
	Keeping the previous notations, the maps 
	\begin{center}
		$\widetilde{\alpha}_X: {}^{\delta}C^*(X,\hM_q, *) \to \hC_*(X,\hM_q,*)$
	\end{center}
	
	\begin{center}
		$\widetilde{H}_X: \hC^*(X,\hM_q,*)
		\to \hC^*(X,\hM_q,*)$.
	\end{center}	
	
	\begin{center}
		$\widetilde{r}_X: \hC^*(X,\hM_q,*) \to {}^{\delta}C^*(X,\hM_q,*)$.
	\end{center}
	form an $h$-data (in the sense of \cite[§9.1]{Rost96}), i.e. they satisfy the following properties:
	\begin{itemize}
		\item $\widetilde{H}_X \circ \widetilde{\alpha}_X = 0$,
		\item $\widetilde{r}_X \circ \widetilde{\alpha}_X=\Id$,
		\item $\delta(\widetilde{H}_X)=\Id-\widetilde{\alpha}_X *\circ \widetilde{r}_X$
	\end{itemize}
	where $\delta(\widetilde{H}_X)=\partial^{n-1}\circ \widetilde{H}_X + \widetilde{H}_X\circ \partial^{n+1}$.
	
\end{thm}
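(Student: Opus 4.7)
The plan is to verify at every finite stage $n\geq 1$ the three identities
\[
H_{X,n}\circ\pi^!_{X,n}=0,\quad r_{X,n}\circ\pi^!_{X,n}=\mathrm{Id},\quad \partial\circ H_{X,n}+H_{X,n}\circ\partial=\mathrm{Id}-\pi^!_{X,n}\circ r_{X,n},
\]
where $\pi^!_{X,n}\colon {}^{\delta}C^*(X,\hM_q,*)\to {}^{\delta}C^*(X\times_S\AA^n,\hM_q,*)$ is the canonical smooth pullback. Since the three families of maps are compatible with the transition maps $\pi^!_{X,n,n-1}$ of the colimit defining $\hC^*$, passing to the colimit over $n$ will then yield the three identities of the statement.

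First I would settle the base case $n=1$, which is the heart of the argument. The identity $r_{X,1}\circ\pi^!_{X,1}=\mathrm{Id}$ is a direct application of Lemma~\ref{RostLem4.5} to the open immersion $X\times(\AA^1\setminus\{0\})\hookrightarrow X\times(\PP^1\setminus\{0\})$ equipped with the section at infinity and uniformizer $-1/t$; the sign $[-1/t]$ in the definition of $r_{X,1}$ is chosen so that the lemma produces the identity rather than its negative. The vanishing $H_{X,1}\circ\pi^!_{X,1}=0$ follows by the base change formula of Proposition~\ref{Prop4.1}.3: pulling back first through $\pi_{X,1}$, then $p_2^!$, multiplying by $[s-t]$, and pushing along $p_{1*}$ reduces to verifying that $p_{1*}([s-t]\cdot p_2^!)$ vanishes on a class constant along the second $\AA^1$-factor, which is a direct residue computation. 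The key homological identity $\partial H_{X,1}+H_{X,1}\partial=\mathrm{Id}-\pi^!_{X,1}\circ r_{X,1}$ is obtained by computing all residues on $X\times\PP^1\times\PP^1$ arising from the divisors $\{s=0\}$, $\{s=\infty\}$, $\{t=0\}$, $\{t=\infty\}$ and the diagonal $\Delta$, and collecting terms via the compatibilities of Propositions~\ref{Prop4.1}, \ref{Lem4.3}, \ref{Prop4.4}, \ref{Prop4.6}, together with the premodule axioms \ref{itm:R3a'}, \ref{itm:R3d'}, \ref{itm:R3e'} governing the behaviour of $\partial$ on elements of the form $[s-t]\cdot\rho$. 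This is the MW-transposition of Rost's argument in \cite[\S 9]{Rost96}, in the spirit of \cite[Theorem 5.38]{MorelLNM} and \cite[\S 3]{Ivorra14}.

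The inductive step is formal. Assume the three identities at stage $n-1$ hold for every base, in particular for $X\times_S\AA^{n-1}$ in place of $X$. Using the recursions $r_{X,n}=r_{X,n-1}\circ r_{X\times\AA^{n-1},1}$ and $\pi^!_{X,n}=\pi^!_{X,n,n-1}\circ\pi^!_{X,n-1}$, the identity $r_{X,n}\circ\pi^!_{X,n}=\mathrm{Id}$ follows by applying the base case over $X\times\AA^{n-1}$ and then the inductive hypothesis over $X$. The vanishing $H_{X,n}\circ\pi^!_{X,n}=0$ follows from the identity $H_{X,n}\circ\pi^!_{X,n,n-1}=\pi^!_{X,n,n-1}\circ H_{X,n-1}$ established in the discussion just before the statement, combined with the inductive hypothesis. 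The homotopy identity at stage $n$ is obtained by expanding $\partial H_{X,n}+H_{X,n}\partial$ via the recursive definition of $H_{X,n}$, using the (anti)commutation of $\partial$ with $\pi^!$ and $r$ (Proposition~\ref{Prop4.6}) to move the differential past the outer factors; the result decomposes as the base-case formula over $X\times\AA^{n-1}$ plus $\pi^!_{X,n,n-1}$ post-composed with the inductive formula for stage $n-1$ and then $r_{X\times\AA^{n-1},1}$. The two occurrences of $\pi^!_{X,n,n-1}\circ r_{X\times\AA^{n-1},1}$ cancel with opposite signs, leaving exactly $\mathrm{Id}-\pi^!_{X,n}\circ r_{X,n}$.

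The main obstacle is the base case $n=1$, specifically the homotopy identity: it requires explicit bookkeeping of the five basic operations on $X\times\PP^1\times\PP^1$, together with a precise control, via the rules \ref{itm:R3a'}--\ref{itm:R3e'}, of the action of residues on products $[s-t]\cdot\rho$. The signs introduced by the MW-graded commutativity (captured by the $\gamma_{-\eta}$ term in \ref{itm:R3e'}) must be tracked carefully. Once the base case is done, the induction and the passage to the colimit $\hC^*$ are purely formal.
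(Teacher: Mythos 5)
Your plan follows essentially the same route as the paper: verify the three $h$-data identities at each finite stage $n$, with the base case $n=1$ being the substantive step, then argue by induction and pass to the colimit. Your verification of the inductive step (the two cross-terms $\pi^!_{X,n,n-1}\circ r_{X\times\AA^{n-1},1}$ cancelling) is correct, and matches what the paper asserts tersely by ``by induction.''

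The one place your sketch diverges from the paper, and where it is imprecise, is the base case for the homotopy identity. The paper does \emph{not} compactify both $\AA^1$-factors. It factors $p_{1*}$ through $X\times\AA^1\times\PP^1$, compactifying only the $s$-coordinate (the homotopy variable). With that choice, the complement of $X\times(\AA^1\times\AA^1-\Delta)$ consists of exactly two pieces, $X\times\Delta$ and $X\times\AA^1\times\{\infty\}$, and the identity $\delta(H_{X,1})=\Id-\pi^!_{X,1}\circ r_{X,1}$ drops out of $\delta(q_*)=(i_\Delta)_*\partial_\Delta+(i_\infty)_*\partial_\infty$: the $\Delta$-term gives $\Id$ via Lemma~\ref{RostLem4.5} (since $s-t$ is a parameter for $\Delta$), and the $\infty$-term gives $-\pi^!_{X,1}\circ r_{X,1}$ after rewriting $[s-t]$ as $[-t]$ on a neighbourhood of $\{s=\infty\}$. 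Your proposal to compactify to $X\times\PP^1\times\PP^1$ would introduce the extra divisor $\{t=\infty\}$ to control, and more importantly your list of contributing divisors $\{s=0\},\{s=\infty\},\{t=0\},\{t=\infty\},\Delta$ is off: only $\Delta$ is removed from $\AA^1\times\AA^1$, so $\{s=0\}$ and $\{t=0\}$ are \emph{not} boundary components and cannot produce residue contributions in $\delta(q_*)$. (You may have conflated the setup for $H_{X,1}$, which lives over $\AA^1\times\AA^1-\Delta$, with the setup for $r_{X,1}$, which lives over $\AA^1-\{0\}$.) Also, the role you assign to the sign in $[-1/t]$ is overstated: by Rule~\ref{itm:R3d'} the residue formula $\partial_w\circ\gamma_{[\pi]}\circ\phi_*=\overline{\phi}_*$ holds for any uniformizer $\pi$, so the particular choice $-1/t$ is not forced by Lemma~\ref{RostLem4.5}. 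None of this changes the correctness of the overall plan, but if you write out the base case you should work with the single compactification $X\times\AA^1\times\PP^1$ and the two boundary divisors $\Delta$ and $\{s=\infty\}$ only.
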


\begin{proof}

	The first point is easy to check by definition. The second point follows from \ref{RostLem4.5}. For the last point, we prove that
	\begin{equation}
		\label{delta=1-a*r}
		\delta(H_{X,1})=\Id_{X\times \AA^1}-\pi^!_{X,1} *\circ r_{X,1}
	\end{equation}
	where $\delta(H_{X,1})=\partial^{n-1}\circ H_{X,1} + H_{X,1}\circ \partial^{n+1}$. 
	Consider the decomposition
	\begin{center}
		$\xymatrix{
			p_{1*}:X
			\ar@{{*}->}[r]^-{q_*}
			&
			X\times \AA^1 \times \PP^1
			\ar@{{*}->}[r]^-{\bar{p}_{1*}}
			&
			X\times \AA^1
		}$
	\end{center}
	where $q$ is the inclusion and $\bar{p}_1$ is the projection. Since $\bar{p}_{1*}$ is a morphism of complexes, we have
	\begin{center}
		$\delta(H_{X,1})=\bar{p}_{1*}\circ \delta(q_*)\circ [s-t]\circ p_2^*$.
	\end{center}
	Moreover,
	\begin{center}
		$\delta(q_*)=(i_{\Delta})_*\circ \partial_{\Delta}+ (i_{\infty})_*\circ \partial_{\infty}$
	\end{center}
	where $i_{\Delta}:X\times \Delta \to X\times \AA^1\times \PP^1$, $i_{\infty}:X\times \AA^1 \times \infty \to X \times \AA^1\times \PP^1$ are the inclusions and $\partial_{\Delta},\partial_{\infty}$ are the boundary maps for 
	$X\times \Delta\to X \times \AA^1 \times \AA^1$,
	$X\times \AA^1 \times \infty \to X \times (\AA^1 \times \PP^1-\Delta)$, respectively.
	\par Since $s-t$ is a parameter for $\Delta$, one finds,
	\begin{center}
		$\bar{p}_{1*}\circ (i_{\Delta})_*\circ \partial_{\Delta} \circ [s-t] \circ p_2^*=\Id$
	\end{center}
	according to Lemma \ref{RostLem4.5}.
	\par Let $W=\AA^1 \times \PP^1- (\Delta \cup \AA^1\times 0)$. Moreover, let $\tilde{p}_2$ be the restriction of $p_2$ to $U=X\times (W-\AA^1\times \infty)$ and let $\tilde{\partial}_\infty$ be the boundary map corresponding to the inclusion $X\times \AA^1 \times \infty \to X \times W$. Then
	\begin{center}
		$\partial_\infty \circ [s-t] \circ p^*_2 = \tilde{\partial}_\infty \circ [s-t] \circ \tilde{p}_2^*$.
	\end{center}
	Since $(s-t)/(-t)$ is a unit on $W$ with constant value $1$ on $X\times \AA^1 \times \infty$, one has
	\begin{center}
		$\tilde{\partial}_\infty \circ [s-t]\circ p_2^*=\tilde{\partial}_\infty \circ [-t] \circ \tilde{p}^*_2$
	\end{center}
	and thus
	\begin{center}
		$\bar{p}_{1*}\circ (i_\infty)_*\circ \partial_\infty \circ [s-t]\circ p^*_2
		=
		-\pi^!_{X,1} *\circ r_{X,1}$
	\end{center}
	Putting things together yields \ref{delta=1-a*r}. By induction, we obtain, for any number $n$,
	\begin{center}
		$\delta(H_{X,n})=\Id_{X\times \AA^1}-\pi^!_{X,n} *\circ r_{X,n}$
	\end{center}
	from which we can easily conclude by taking colimits.

\end{proof}

\begin{rem}
	The proof of Theorem \ref{thm_h_data} can also be used to give a second proof of the homotopy invariance property (see Theorem \ref{HomotopyInvariance}) which does not use spectral sequences and construct an explicit section of the quasi-isomorphism
	\begin{center}
		$\pi^!:{}^{\delta}C^*(X,\hM_q,*) 
		\to
		{}^{\delta}C^*(\AA^1_X, \hM_q, *)$.
	\end{center}
\end{rem}

\begin{rem}
	\label{homology_rost_schmid_coincide}
	By Theorem \ref{thm_h_data}, the (co)homology of $\hC(X,\hM_q, *)$ coincides with the (co)homology of $C(X,\hM_q,*)$.
\end{rem}

\subsection{Gysin morphisms for regular embeddings}\label{GysinMorphisms}

\begin{paragr}
	\label{GyMoRegEmbedding}
	We define Gysin morphisms for regular closed immersions and prove functoriality theorems. As always, the main tool is the deformation to the normal cone. We fix once a for all $\hM$ a homological MW-cycle module, $q$ an integer and $*$ a line bundle over $S$.
	
\end{paragr}

\begin{paragr}
	Let $i:Z\to X$ be a closed immersion. Let $t$ be a parameter of $\AA^1$ and let 
	\begin{center}
		${q:X\times_S (\AA^1\setminus \{0\}) \to X}$
	\end{center}
	be the canonical projection. Denote by $D=D_ZX$ the deformation space such that ${D=U \sqcup N_ZX}$ where $U=X\times_S(\AA^1\setminus \{0\})$ (see \cite[§10]{Rost96} for more details).
	
	Consider the morphism 
	\begin{center}
		
		$J(X,Z)=J_{Z/X}:\hC^*(X,\hM_q,*)\to \hC^*(N_ZX,\hM_q,*)$
		
	\end{center} defined by the composition:
	\begin{center}
		
		$
		\xymatrix{
			\hC^{p}(X,\hM_q,*) \ar[r]^-{q^!} \ar@{-->}[d]^{J_{Z/X}} &
			\hC^{p}(U,\hM_q,*) \ar[d]^{[t]} \\ \hC^{p}(N_ZX,\hM_q,*)  &
			\hC^{p}(U,\hM_{q+1},*) \ar[l]^-\partial 
		}
		$
		
	\end{center}
	where the multiplication with $t$ is twisted by the isomorphism $\cotgb_{U/X}\simeq \AA^1_U$ (which only depends on $t$) and $\partial$ is the boundary map as in \ref{BoundaryMaps}. This defines a morphism (also denoted by $J(X,Z)$ or $J_{Z/X}$) by passing to homology.
	\par 
	Assume moreover that $i:Z\to X$ is regular of codimension $m$, the map $\pi : N_ZX \to Z$ is a vector bundle over $X$ of dimension $m$. By homotopy invariance (Theorem \ref{HomotopyInvarianceDeformedComplex}), we have an isomorphism
	\begin{center}
		
		$\pi^!:\hC^p(Z,\hM_q,*) \to \hC^{p}(N_ZX,\hM_q,*)$
	\end{center}
	where we have used the canonical isomorphism $\pi^!(N_ZX)=\cotgb_{N_ZX/Z}$. Denote by $r_{Z/X}=(\pi^!)^{-1}$ its inverse.
	
\end{paragr}
\begin{df}
	
	With the previous notations, we define the map
	\begin{center}
		$i^!:\hC^p(X,\hM_q,*)\to \hC^{p}(Z,\hM_q,*)$
		
	\end{center}
	by putting $i^!=r_{Z/X}\circ J_{Z/X} $ and call it the {\em Gysin morphism of $i$}.
\end{df}

The following lemmas are needed to prove functoriality of the previous construction (see Theorem \ref{GysinFunctoriality}).

\begin{lm}\label{Lem11.3}
	
	Let $i:Z\to X$ be a regular closed immersion and $g:V\to X$ be an essentially smooth morphism. Denote by $N(g)$ the projection from $N(V,{V\times_X Z}) =N_Z(X)\times_X V$ to $N_ZX$. Then
	\begin{center}
		
		$J(V,V\times_X Z)\circ g^!=N(g)^!\circ J(X,Z)$,
	\end{center}
	(up to the canonical isomorphism induced by $\cotgb_{N(V,Y\times_X V)/N(X,Y)}\simeq \cotgb_{V/X}\times_V N(V,Y\times_X V)$).
\end{lm}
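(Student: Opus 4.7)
The plan is to decompose both sides of the claimed identity using the definition $J_{Z/X} = \partial \circ [t] \circ q^!$ (and its analogue over $V$), and then verify that each of the three basic maps commutes with essentially smooth pullback via the compatibilities already established. The starting point is the observation that the deformation to the normal cone is stable under essentially smooth base change: pulling back along $g$ gives $D_{V\times_X Z}(V)\cong D_ZX \times_X V$, with decomposition $(U\times_X V)\sqcup (N_ZX\times_X V)$, where $U\times_X V = V\times_S(\AA^1\setminus\{0\})$ is the deformation's open part over $V$. In particular, all three squares involved in the definition of $J$ over $X$ and over $V$ fit into Cartesian diagrams with essentially smooth vertical arrows.

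The three steps are then as follows. First, the compatibility of $q^!$ with $g^!$ is immediate from functoriality of smooth pullbacks (Proposition~\ref{Prop4.1}(2)) applied to the commutative square
\[
\xymatrix{
V\times_S(\AA^1\setminus\{0\}) \ar[r] \ar[d] & V \ar[d]^{g} \\
X\times_S(\AA^1\setminus\{0\}) \ar[r]^-{q} & X.
}
\]
Second, the parameter $t$ of $\AA^1$ pulls back under $g\times\Id$ to the analogous parameter on $V\times_S(\AA^1\setminus\{0\})$, so Proposition~\ref{Lem4.3}(1) yields $(g\times\Id)^!\circ [t] = [t]\circ (g\times\Id)^!$. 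Third, the boundary map $\partial$ for the inclusion $N_ZX\hookrightarrow D_ZX$ with open complement $U$ commutes with the essentially smooth pullback $N(g)^!$ by Proposition~\ref{Prop4.4}(2), applied to the Cartesian diagram
\[
\xymatrix{
N(V,V\times_X Z) \ar@{^{(}->}[r] \ar[d]_{N(g)} & D_{V\times_X Z}(V) \ar[d] & U\times_X V \ar@{_{(}->}[l] \ar[d] \\
N_ZX \ar@{^{(}->}[r] & D_ZX & U. \ar@{_{(}->}[l]
}
\]
Composing these three commutations gives $J(V, V\times_X Z)\circ g^! = N(g)^!\circ J(X,Z)$.

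The only delicate point — and the step which deserves care rather than being genuinely hard — is tracking the twists by the virtual cotangent bundle that accompany the essentially smooth pullbacks. At the level of $q^!$, the twist by $\cotgb_{V/X}$ appears on the line-bundle argument; when one transports this through $[t]$ (which does not affect the twist) and then through $\partial$, the identification $\cotgb_{N(V,V\times_X Z)/N_ZX}\simeq \cotgb_{V/X}\times_V N(V,V\times_X Z)$ is precisely the natural isomorphism needed to make the twists on the two ends of the comparison agree. Once this bookkeeping is written out, the equality is a formal consequence of the compatibilities in Section~\ref{Compatibilities}, so no new computation with axioms \ref{itm:R1a'}--\ref{itm:R3e'} is required.
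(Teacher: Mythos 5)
Your proof is correct and follows the same route the paper takes: write $J_{Z/X}=\partial\circ[t]\circ q^!$, observe that the deformation to the normal cone is stable under essentially smooth base change, and push $g^!$ through the three basic maps using Proposition~\ref{Prop4.1}(2), the unit-compatibility of pullback, and Proposition~\ref{Prop4.4}(2). One small remark: for the middle step you correctly invoke Proposition~\ref{Lem4.3}(1) (smooth pullback commutes with multiplication by a unit), whereas the paper's proof cites Proposition~\ref{Lem4.3}(2), which is the anti-commutation of the boundary map with $[a]$ and $\eta$ and is not what is needed here; your citation is the right one.
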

\begin{proof}
	See \cite[Lemma 11.3]{Rost96}. This follows from our Proposition \ref{Prop4.1}.2, Proposition \ref{Prop4.4}.2 and Proposition \ref{Lem4.3}.2.
\end{proof}

\begin{lm}\label{Lem11.4}
	Let $Z\to X$ be a closed immersion and let $p:X\to Y$ be essentially smooth. Suppose that the composite 
	\begin{center}
		
		$q:N_ZX\to Z \to X \to Y$
	\end{center}
	is essentially smooth of same relative dimension as $p$. Then 
	\begin{center}
		
		$ J(X,Z)\circ p^!=q^!$
	\end{center}
	(up to the canonical isomorphism induced by $\cotgb_{N_ZX/Y}\simeq  \cotgb_{V/Y}\times_X N_ZX$).
\end{lm}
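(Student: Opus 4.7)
The plan is to unfold the definition of $J(X,Z)$ and commute the essentially smooth pullback $p^!$ through each of its three ingredients (a smooth pullback, multiplication by a unit, and a boundary map), reducing at the end to the trivial case of the zero section of a line bundle over $Y$.

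First, I would expand
\begin{equation*}
J(X,Z) \circ p^! \;=\; \partial \circ [t] \circ q_X^! \circ p^!,
\end{equation*}
where $q_X \colon U_X := X \times (\AA^1 \setminus \{0\}) \to X$ is the projection, and $\partial$ is the boundary of the localization triple $(N_ZX, D_ZX, U_X)$. Set $U_Y := Y \times (\AA^1 \setminus \{0\})$, denote by $q_Y \colon U_Y \to Y$ the projection, and let $p_U := p \times \Id \colon U_X \to U_Y$, which is essentially smooth of the same relative dimension as $p$. The cartesian square $p \circ q_X = q_Y \circ p_U$ combined with Proposition~\ref{Prop4.1}.3 yields $q_X^! \circ p^! = p_U^! \circ q_Y^!$. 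Since the unit $t$ on $U_X$ is the pullback via $p_U$ of the unit $t$ on $U_Y$, Proposition~\ref{Lem4.3}.1 gives $[t] \circ p_U^! = p_U^! \circ [t]$.

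Next comes the key step: the treatment of $\partial \circ p_U^!$. The hypothesis that the composite $q \colon N_ZX \to Z \to X \to Y$ is essentially smooth of the same relative dimension as $p$ allows one to promote $p_U$ to an essentially smooth morphism
\begin{equation*}
\tilde p \colon D_ZX \longrightarrow Y \times \AA^1
\end{equation*}
of the same relative dimension, extending both $p_U$ over $\AA^1 \setminus \{0\}$ and $q$ over $\{0\}$. One argues by the fibrewise smoothness criterion, using the flatness of $D_ZX \to \AA^1$ together with the essential smoothness of both fibers ($X$ and $N_ZX$) over $Y$, of the common relative dimension. Under $\tilde p$, the closed immersion $N_ZX \hookrightarrow D_ZX$ is the pullback of the zero section $Y \hookrightarrow Y \times \AA^1$, and $\tilde p$ restricts to $q$ on $N_ZX$. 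Proposition~\ref{Prop4.4}.2 applied to this cartesian square then yields
\begin{equation*}
\partial \circ p_U^! \;=\; q^! \circ \partial_Y,
\end{equation*}
where $\partial_Y$ denotes the boundary for $(Y, Y\times\AA^1, U_Y)$.

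Assembling everything, one obtains
\begin{equation*}
J(X,Z) \circ p^! \;=\; q^! \circ \bigl(\partial_Y \circ [t] \circ q_Y^!\bigr) \;=\; q^! \circ J(Y \times \AA^1, Y).
\end{equation*}
It then remains to observe that $J(Y \times \AA^1, Y)$ is the identity under the canonical identification $N_Y(Y\times\AA^1) \simeq \AA^1_Y$: the deformation space of the zero section inside $Y \times \AA^1$ is tautological, and the composition $\partial_Y \circ [t] \circ q_Y^!$ reduces via Lemma~\ref{RostLem4.5} (applied to the section of the projection $\AA^1_Y \to Y$) to the canonical pullback. This gives the desired equality $J(X,Z) \circ p^! = q^!$, modulo the twist $\cotgb_{N_ZX/Y} \simeq \cotgb_{X/Y} \times_X N_ZX$ stated in the lemma.

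The main obstacle will be establishing rigorously that $\tilde p \colon D_ZX \to Y \times \AA^1$ is essentially smooth of the expected relative dimension. The equidimensionality of the two fibers over $\AA^1$ as schemes over $Y$, together with the regularity of the embedding $Z \hookrightarrow X$ and the flatness of $D_ZX$ over $\AA^1$, is exactly what the fibrewise smoothness criterion needs; nevertheless, one must handle the transition across the special fiber carefully, since $D_ZX \to X$ is not itself smooth along the normal cone.
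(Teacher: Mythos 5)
Your proof is correct and unpacks exactly the argument the paper invokes by deferring to Rost's Lemma~11.4, with the one extra observation (also made in the paper) that the composite $D_ZX\to Y\times_S\AA^1$ must be \emph{essentially smooth} rather than merely flat, which holds because it is flat with essentially smooth fibers. Two cosmetic slips that do not affect correctness: the step $q_X^!\circ p^!=p_U^!\circ q_Y^!$ is a consequence of functoriality of pullback (Proposition~\ref{Prop4.1}.2) rather than the base change formula of \ref{Prop4.1}.3, and $\partial_Y\circ[t]\circ q_Y^!$ is an endomorphism of $\hC^*(Y,\cdot)$ and hence not literally $J(Y\times\AA^1,Y)$, although it does equal the identity by Lemma~\ref{RostLem4.5} exactly as you conclude.
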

\begin{proof} Same as \cite[Lemma 11.4]{Rost96} except that Rost only needs the composite morphism
	\begin{center}
		$\xymatrix{
			f:D(X,Z) \ar[r] &  X\times_S \AA^1 \ar[r]^{p\times \Id} & Y\times_S \AA^1
		}$ 
	\end{center} to be flat. We need moreover (in order to use our Proposition \ref{RostLem4.5}) the fact that $f$ is essentially smooth which is true because it is flat and its fibers are essentially smooth.
\end{proof}
\begin{lm}\label{Lem11.6}
	Let $\rho:T\to T'$ be a morphism, let $T_1',T_2'\subset T'$ be closed subschemes and let $T_i=T\times_{T'} T_i'$ for $i=1,2$. \par Put $T_3=T\setminus (T_1\cup T_2)$, $T_0=T_1 \cap  T_2$, $\tilde{T_1}=T_1\setminus T_0$, $\tilde{T_2}=T_2\setminus T_0$ and let $\partial^3_1, \partial^1_0, \partial^3_2, \partial^2_0$ be the boundary maps for the closed immersions
	\begin{center}
		
		$\tilde{T_1}\to T \setminus T_2$, $T_0\to T_1$, $\tilde{T_2}\to T\setminus T_1$, $T_0\to T_2$,
	\end{center}
	respectively. Then
	\begin{center}
		
		$0= \partial^1_0\circ \partial^3_1+\partial^2_0\circ \partial^3_2:[T_3,*]\bullet\!\!\! \to [T_0,*]$
	\end{center}
	at the homology level (recall the notation introduced in \ref{GeneralizedCorr}).
\end{lm}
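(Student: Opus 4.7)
The plan is to deduce the vanishing from the identity $d \circ d = 0$ on the Rost-Schmid complex ${}^{\delta}C_*(T, \hM_q, *)$ (Proposition~\ref{dod=0}), interpreted as a matrix identity via the block decomposition induced by the four locally closed pieces $T_3$, $\tilde{T_1}$, $\tilde{T_2}$, $T_0$ of $T$.

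First I would observe that $T = T_3 \sqcup \tilde{T_1} \sqcup \tilde{T_2} \sqcup T_0$ as a set, which induces a direct sum decomposition of graded modules
\[
{}^{\delta}C_*(T, \hM_q, *) = {}^{\delta}C_*(T_3, \hM_q, *) \oplus {}^{\delta}C_*(\tilde{T_1}, \hM_q, *) \oplus {}^{\delta}C_*(\tilde{T_2}, \hM_q, *) \oplus {}^{\delta}C_*(T_0, \hM_q, *).
\]
I would write the total differential $d$ as a block matrix $(d_{ij})$ indexed by $\{3,1,2,0\}$. The crucial point is that $T_1$, $T_2$, and $T_0$ are closed in $T$: any codimension-one specialization of a point of $\tilde{T_1}$ (resp.\ $\tilde{T_2}$, resp.\ $T_0$) stays in $T_1 = \tilde{T_1} \sqcup T_0$ (resp.\ $T_2 = \tilde{T_2} \sqcup T_0$, resp.\ $T_0$). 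This forces $d_{1j} = 0$ for $j \in \{3,2\}$, $d_{2j} = 0$ for $j \in \{3,1\}$, and $d_{0j} = 0$ for $j \neq 0$. A direct comparison of definitions then identifies the remaining off-diagonal blocks with the boundary maps from the statement: $d_{31} = \partial^3_1$, $d_{32} = \partial^3_2$, $d_{10} = \partial^1_0$, $d_{20} = \partial^2_0$, while $d_{33}$ and $d_{00}$ are the internal Rost-Schmid differentials of ${}^{\delta}C_*(T_3)$ and ${}^{\delta}C_*(T_0)$.

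Extracting the $(T_3, T_0)$-component of $d \circ d = 0$ then yields
\[
\partial^1_0 \circ \partial^3_1 + \partial^2_0 \circ \partial^3_2 = -\bigl(d_{00} \circ d_{30} + d_{30} \circ d_{33}\bigr).
\]
The right-hand side is precisely the obstruction to $-d_{30}$ being a chain map between the complexes $({}^{\delta}C_*(T_3), d_{33})$ and $({}^{\delta}C_*(T_0), d_{00})$. Hence $-d_{30}$ is an explicit chain homotopy from $\partial^1_0 \circ \partial^3_1 + \partial^2_0 \circ \partial^3_2$ to zero, and the induced map on homology vanishes, as required.

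The only subtle step is the identification of the off-diagonal blocks in the previous paragraph. For instance, for $x \in T_3$ and $y \in \tilde{T_1} \cap \overline{\{x\}}^{(1)}$, one must check that the corresponding component of $d$ on ${}^{\delta}C_*(T)$ coincides with the matching component of $\partial^3_1$ built from the boundary triple $(\tilde{T_1}, T \setminus T_2, T_3)$. Both are given by axiom~\ref{itm:D4'} applied to the one-dimensional local ring $\mathcal{O}_{\overline{\{x\}}, y}$, which is insensitive to whether the ambient scheme is $T$ or $T \setminus T_2$ (since $y \in \tilde{T_1} \subset T \setminus T_2$); the three remaining blocks are handled in exactly the same way, so no additional ingredient is needed.
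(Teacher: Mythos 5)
Your proof is correct and follows the same idea as the paper's (which simply states that the set-theoretic decomposition of $T$ into $T_0 \sqcup \tilde T_1 \sqcup \tilde T_2 \sqcup T_3$ splits the cycle complex into four summands, and $d_T \circ d_T = 0$ gives the result). You have spelled out the block decomposition of the differential, identified the relevant off-diagonal blocks with the four boundary maps, and exhibited $-d_{30}$ as the explicit null-homotopy, which is exactly what the paper's terse proof leaves implicit.
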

\begin{proof}
	
	Corresponding to the set theoretic decomposition of $T$ we have
	\begin{center}
		
		$\hC_*(T,\hM_q,*)=\hC_*(T_0,\hM_q,*)\oplus
		\hC_*(\tilde{T_1},\hM_q,*) \oplus 
		\hC_*(\tilde{T_2},\hM_q,*)\oplus \hC_*(T_3,\hM_q,*).$
	\end{center}
	Then $d_T\circ d_T=0$ gives the result.
\end{proof}

\begin{lm}\label{Lem11.7}
	Let $T=\overline{ D}=\overline{ D}(X,Y,Z)$, $T_1=\overline{ D}|(\{0\}\times \AA^1)$, $T_2=\overline{ D}(\AA^1\times \{0\})$ where $\overline{ D}$ is the double deformation cone (see \cite[§10.5]{Rost96}). We keep the notations of Lemma \ref{Lem11.6}. Then ${T_3=X\times_k (\AA^1\setminus \{0\})\times (\AA^1\setminus \{0\})}$ and $T_0=\overline{ D}|\{0,0\}$. Let $\pi:T_3\to X$ be the projection and let $t,s$ be the coordinates of $\AA^2$, so that $T_1=\{t=0\}$, $T_2=\{s=0\}$.
	\par Let $Z\to Y\to X$ be regular closed immersions. Then 
	\begin{center}
		
		$\partial^1_0\circ \partial^3_1\circ [t,s]\circ \pi^! = J(N_YX,N_YX|Z)\circ J(X,Y)$,\\
		$\partial^2_0\circ \partial^3_2\circ [s,t] \circ \pi^! = J(N_ZX,N_ZY)\circ J(X,Z)$.
	\end{center}
\end{lm}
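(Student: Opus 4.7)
The plan is to follow directly the strategy of Rost's original Lemma 11.7 in \cite[§11]{Rost96}, verifying at each step that our MW-valued setup supports the same manipulations. The two identities are interchanged by swapping $(t,Y)\leftrightarrow(s,Z)$, so it suffices to treat the first one; the second is formally identical.

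First I would unwind both sides into composites of the five basic maps. By definition of $J(X,Y)$, the right-hand side equals $J(N_YX,N_YX|Z)\circ \partial\circ[t]\circ q_X^!$, where $q_X:X\times(\AA^1\setminus\{0\})\to X$ is the projection; expanding $J(N_YX,N_YX|Z)$ analogously gives a four-fold composite $\partial\circ[s]\circ q^!_{N_YX}\circ\partial\circ[t]\circ q_X^!$. On the left-hand side, I would use Proposition~\ref{Prop4.1}.2 to factor $\pi^! = p_s^!\circ p_t^!$ and note that $[t,s]=[t]\cdot[s]$ so the composite becomes $\partial^1_0\circ\partial^3_1\circ[t]\circ[s]\circ p_s^!\circ p_t^!$ after rearrangement.

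Next I would identify the geometry of the double deformation $\overline{D}(X,Y,Z)$. The restriction to the open set $\{t\neq 0\}$ is canonically $D(X,Y)\times_k\AA^1_s$, so $T_3$ sits inside this as the locus $\{s\neq 0\}$, and $T_1=\overline{D}|(\{0\}\times\AA^1)$ is canonically $D(N_YX,N_YX|Z)$, with $T_0$ corresponding to $N_{N_YX|Z}(N_YX)$ inside it. Under this identification, the boundary $\partial^3_1$ from $T_3$ into $T_1$ becomes the boundary attached to the regular immersion $Y\times\AA^1\hookrightarrow X\times\AA^1$ at $t=0$, precisely the boundary appearing in $J(X,Y)$, while $\partial^1_0$ becomes the boundary in $J(N_YX,N_YX|Z)$. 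At this point, one commutes the $[s]$, $p_s^!$ past $\partial^3_1$ using the compatibility results proved in Section~\ref{Compatibilities}: Proposition~\ref{Prop4.4}.2 (base change between smooth pullback and boundary) and Proposition~\ref{Lem4.3}.2 (how $\partial$ interacts with $[\cdot]$), which together produce the claimed factorization.

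The main obstacle will be bookkeeping for twists and signs. Each application of $\partial$ comes with a twist by the conormal line of the corresponding regular immersion, and the line bundle $*$ on the right-hand side is $\cotgb_{N_YX/X}^\vee\otimes\cotgb_{N_{N_YX|Z}(N_YX)/N_YX}^\vee\otimes *$, which must be matched with the successive twists introduced on the left-hand side through $[t,s]$ and the two boundaries. Likewise, Proposition~\ref{Lem4.3}.2 introduces a sign when moving $[\cdot]$ past $\partial$, and the anti-commutation in \ref{itm:R3e'} controls whether the convention $[t,s]$ versus $[s,t]$ appears in each of the two identities; this is exactly the origin of the swap $[t,s]\leftrightarrow[s,t]$ between the first and second identity. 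Once these twists and signs have been collected carefully, the two sides coincide by Lemma~\ref{Lem11.3} and Lemma~\ref{Lem11.4} applied to the smooth projection $\overline{D}\setminus T_2\to D(X,Y)$, which together give the required compatibility between the $J$ operation on $X$ and its restriction to the fiber $T_1$.
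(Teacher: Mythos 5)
Your overall strategy is correct and matches the paper's, which is simply to transcribe Rost's proof of \cite[Lemma 11.7]{Rost96} while tracking the MW-twists via Lemma~\ref{Lem11.3}: unwind both sides, factor $\pi^!$ by Proposition~\ref{Prop4.1}, commute $[s]$ and the smooth pullback past $\partial^3_1$ using Propositions~\ref{Prop4.4} and~\ref{Lem4.3}, and invoke Lemma~\ref{Lem11.3} for the resulting twist identifications.

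However, there is a geometric slip in your middle paragraph. You write that $\overline D|_{\{t\neq 0\}}$ is canonically $D(X,Y)\times_k\AA^1_s$; this is incorrect. Over $\{t\neq 0\}$ the fibres of $\overline D$ are $X$ for $s\neq 0$ and $N_ZX$ for $s=0$, so $\overline D|_{\{t\neq 0\}}\simeq D(X,Z)\times_k(\AA^1\setminus\{0\})_t$. What you actually need is $\overline D|_{\{s\neq 0\}}=T\setminus T_2\simeq D(X,Y)\times_k(\AA^1\setminus\{0\})_s$, because, by Lemma~\ref{Lem11.6}, the boundary triple defining $\partial^3_1$ is $\tilde T_1\hookrightarrow T\setminus T_2$, so $\partial^3_1$ must be interpreted inside \emph{that} ambient scheme. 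Under the corrected identification, $\partial^3_1$ is precisely the boundary $\partial$ appearing in $J(X,Y)$, base-changed along the smooth projection that kills the $s$-coordinate (Proposition~\ref{Prop4.4}.2); as written, the boundary at $\{t=0\}$ cannot even be computed inside $\{t\neq 0\}$. (Note also the factor is $\AA^1\setminus\{0\}$, not $\AA^1$, since one removes $T_2=\{s=0\}$.) Finally, the paper cites only Lemma~\ref{Lem11.3} for this lemma; Lemma~\ref{Lem11.4} enters instead in the proof of Theorem~\ref{Thm13.1}, so your final invocation of it appears superfluous.
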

\begin{proof}
	
	Same as \cite[Lemma 11.7]{Rost96} except one has to be careful with the twists (this uses Lemma \ref{Lem11.3}).
\end{proof}

\begin{thm} \label{Thm13.1} \label{GysinFunctoriality}
	Let $l:Z\to Y$ and $i:Y\to X$ be regular closed immersions of respective codimension $n$ and $m$. Then $i\circ l$ is a regular closed immersion of codimension $m+n$ and (up to a canonical isomorphism)
	\begin{center}
		
		$(i\circ l)^!= l^!\circ i^!$ 
	\end{center}
	as morphism $\hC^p(X,\hM_q,*)\to \hC^{p}(Z,\hM_q,*)$.
\end{thm}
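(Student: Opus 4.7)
The plan is to mimic the classical argument of Rost \cite[Theorem 13.1]{Rost96} using the double deformation cone, but with careful bookkeeping of the twists and the $\epsilon$-signs coming from the Milnor--Witt multiplication. Set $T=\overline D(X,Y,Z)$, and use the notation of Lemma \ref{Lem11.7}: $T_1=\{t=0\}$, $T_2=\{s=0\}$, $T_3=X\times(\AA^1\setminus\{0\})^2$, $T_0=\overline D|\{0,0\}$, and $\pi:T_3\to X$ the canonical projection. The strategy is to run the double deformation symmetrically in the two variables $t$ and $s$, and then invoke Lemma \ref{Lem11.6}, which is a reformulation of $d\circ d=0$ on the Rost--Schmid complex of $T$.

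First I would apply Lemma \ref{Lem11.6} to $T$ to obtain the identity
\[
\partial^1_0\circ\partial^3_1+\partial^2_0\circ\partial^3_2=0
\qquad [T_3,*]\bulleto [T_0,*],
\]
and then pre-compose this with the operator $\pi^!$ followed by multiplication by the symbols $[t,s]$ (resp.\ $[s,t]$) on the two summands. Lemma \ref{Lem11.7} identifies the resulting composites with $J(N_YX,N_YX|Z)\circ J(X,Y)$ and $J(N_ZX,N_ZY)\circ J(X,Z)$, respectively. The one subtle point is that the two multiplications involve the \emph{reversed} Milnor--Witt symbols $[t,s]$ versus $[s,t]$; using the fundamental relation $[a,b]=-\epsilon\, [b,a]$ in $\kMW$ together with axiom \ref{itm:R2a'}, the swap produces a factor $-\epsilon$. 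Combined with the sign in Lemma \ref{Lem11.6}, this yields the identification
\[
J(N_YX,N_YX|Z)\circ J(X,Y)\;=\;J(N_ZX,N_ZY)\circ J(X,Z)
\]
in $\hC^*$, up to the canonical isomorphism of twists induced by $\cotgb_{N_ZX/X}\simeq\cotgb_{N_YX/X}|_{N_ZY}\oplus\cotgb_{N_ZY/Z}$.

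Next I would descend through the retraction maps. Applying $r_{Z/N_YX}$, $r_{Y/X}$, $r_{Z/N_ZY}$ and $r_{Z/X}$ provided by the strong homotopy invariance of $\hC$ (Corollary \ref{HomotopyInvarianceDeformedComplex}), the left-hand composite becomes $l^!\circ i^!$ while the right-hand composite becomes $(i\circ l)^!$. That the retractions commute with the four Gysin-type operators appearing above is a direct consequence of the compatibility of $\pi^!$ for vector bundles with pullbacks and boundary maps, which is built into the basic maps of Section \ref{Basic_maps_homotopy_RS_complex}; more concretely, one uses Lemmas \ref{Lem11.3} and \ref{Lem11.4} applied to the smooth projections $N_ZX\to N_YX$ and $N_YX\to X$, plus the vector-bundle structure $N_ZX|_{N_ZY}\to N_ZY$.

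I expect the main obstacle to be the careful matching of the twists and the verification that the $\epsilon$-sign coming from the swap $[t,s]\leftrightarrow[s,t]$ exactly cancels the sign in Lemma \ref{Lem11.6}, so that no extra factor $\ev{-1}$ survives in the final equality. This is the place where our setting genuinely differs from the Milnor cycle module case: one must combine \ref{itm:R2a'}, \ref{itm:R3e'} and the identification $\cotgb_{N_ZX/X}\simeq\cotgb_{N_YX/X}\oplus\cotgb_{N_ZY/Z}$ carefully. Once this sign bookkeeping is in place, the rest of the argument is formal and runs in parallel with \cite[Theorem 13.1]{Rost96}, giving $(i\circ l)^!=l^!\circ i^!$ as morphisms $\hC^p(X,\hM_q,*)\to\hC^p(Z,\hM_q,*)$.
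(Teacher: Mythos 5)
Your proof proposal takes exactly the same route as the paper's, which cites Lemmas \ref{Lem11.3}, \ref{Lem11.6}, \ref{Lem11.7} and \ref{Lem11.4} and runs the argument of \cite[Theorem 13.1]{Rost96} with twist bookkeeping. The $\epsilon$-sign you correctly flag as the Milnor--Witt-specific subtlety is absorbed by the $\langle-1\rangle$ that arises from swapping the order of the two normal directions in $\det(N_{T_0}T)$ (going through $T_1$ versus through $T_2$), so the identification of twists is indeed the place where the factor from $[t,s]=\epsilon[s,t]$ gets cancelled.
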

\begin{proof}
	
	The assertion follows from \ref{Lem11.3}, \ref{Lem11.6}, \ref{Lem11.7} and \ref{Lem11.4} as in \cite[Theorem 13.1]{Rost96}.
\end{proof}

We conclude with one interesting property.

\begin{prop}[Base change for regular closed immersions]
	\label{MWmodBaseChangeRegular}
	Consider the cartesian square
	\begin{center}
		$\xymatrix{
			Z \ar[r]^i \ar[d]^g & X \ar[d]^f \\
			Z'\ar[r]^{i'} & X'
		}$
	\end{center}
	where $i,i'$ are regular closed immersions and $f,g$ are proper morphisms. Suppose moreover that we have a canonical isomorphism of virtual vector spaces $N_{Z'}X' \simeq N_ZX\times_X X'$. Then (up to the canonical isomorphism induced by the previous isomorphism)
	\begin{center}
		
		$g_*\circ i^! = i'^!\circ f_*$.
	\end{center}
\end{prop}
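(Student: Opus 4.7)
The plan is to decompose the Gysin maps as $i^{!}=r_{Z/X}\circ J_{Z/X}$ and $i'^{!}=r_{Z'/X'}\circ J_{Z'/X'}$ and to verify compatibility with proper pushforward in two separate steps. The key auxiliary object is the induced map $N(f):N_ZX\to N_{Z'}X'$ on normal bundles, which is proper as a base change of $g$ and, under the transversality hypothesis, fits into a cartesian square with the vertical vector-bundle projections $\pi,\pi'$.

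First, I would establish the ``deformation-to-the-normal-cone'' compatibility
\[
J_{Z'/X'}\circ f_* \;=\; N(f)_*\circ J_{Z/X}.
\]
The morphism $f$ extends to a proper morphism $D(f):D_ZX\to D_{Z'}X'$ of deformation spaces whose restriction to the open part $X\times(\AA^1\setminus\{0\})$ is $f\times\Id$ and whose restriction to the special fiber is exactly $N(f)$. Since $J$ is the composition $\partial\circ[t]\circ q^!$, the equality reduces to three pieces already established in Section~\ref{Compatibilities} and transferred to $\hC^{*}$ via the colimit construction of~\ref{Basic_maps_homotopy_RS_complex}: Proposition~\ref{Prop4.1}(3) handles the smooth pullback $q^{!}$ (base change between smooth pullback and proper pushforward); Proposition~\ref{Lem4.2} handles multiplication by $[t]$, noting that the parameter $t$ is pulled back to $t$ along $f\times\Id$; and Proposition~\ref{Prop4.4}(1) handles the boundary map $\partial$ applied to the proper map of boundary triples induced by $D(f)$.

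Second, I would prove the ``normal-bundle'' compatibility
\[
g_*\circ r_{Z/X} \;=\; r_{Z'/X'}\circ N(f)_*.
\]
The transversality hypothesis makes the square
\[
\xymatrix{
N_ZX \ar[r]^{N(f)} \ar[d]_{\pi} & N_{Z'}X' \ar[d]^{\pi'} \\
Z \ar[r]_{g} & Z'
}
\]
cartesian, with $\pi,\pi'$ smooth and $g, N(f)$ proper. Proposition~\ref{Prop4.1}(3) then yields $\pi'^{!}\circ g_*=N(f)_*\circ\pi^{!}$, and since $r_{Z/X}=(\pi^{!})^{-1}$ and $r_{Z'/X'}=(\pi'^{!})^{-1}$ are genuine inverses thanks to Corollary~\ref{HomotopyInvarianceDeformedComplex}, the claim follows.

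Combining the two steps, one obtains
\[
i'^{!}\circ f_* \;=\; r_{Z'/X'}\circ J_{Z'/X'}\circ f_* \;=\; r_{Z'/X'}\circ N(f)_*\circ J_{Z/X} \;=\; g_*\circ r_{Z/X}\circ J_{Z/X} \;=\; g_*\circ i^{!}.
\]
The main obstacle is the consistent tracking of line-bundle twists: each of the five basic maps shifts twists by a specific canonical factor, and the hypothesis $N_{Z'}X'\simeq N_ZX\times_X X'$ is used precisely to match these twists on the two sides of each intermediate identity. A secondary point is verifying that $D(f)$ is proper with the claimed restrictions, which follows from the functoriality of the blowup construction underlying the deformation space.
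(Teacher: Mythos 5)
Your proof is correct and takes essentially the same approach as the paper: the paper writes out the same four-square commutative diagram for the factorization $r\circ\partial\circ[t]\circ q^!$ of the Gysin morphism and cites the same compatibilities (Proposition~\ref{Prop4.1}(3) for the two transversal squares, Proposition~\ref{Lem4.2} for multiplication by $[t]$, and Proposition~\ref{Prop4.4}(1) for the boundary map). Your only presentational difference is grouping the first three squares into a ``$J$-compatibility'' step and the last into an ``$r$-compatibility'' step rather than drawing one long diagram.
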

\begin{proof}
	It suffices to prove that the following diagram is commutative (recall the notation introduced in \ref{GeneralizedCorr}):
	\begin{center}
		
		$\xymatrixcolsep{4pc}\xymatrix{
			X \ar@{{*}->}[r]^-{q^!} \ar@{{*}->}[d]^{f_*} \ar@{}[rd]|-{(1)} 
			&
			X\times (\AA^1\setminus\{0\}) \ar@{{*}->}[r]^{[t]} \ar@{{*}->}[d]^{(f\times \Id)_*} \ar@{}[rd]|-{(2)} &
			X\times (\AA^1\setminus\{0\})  \ar@{{*}->}[r]^-{\partial} \ar@{{*}->}[d]^{(f\times \Id)_*} \ar@{}[rd]|-{(3)} 
			&
			\NN_ZX   \ar@{{*}->}[r]^-{\simeq} \ar@{{*}->}[d]^{h_*} \ar@{}[rd]|-{(4)} 
			&
			Z \ar@{{*}->}[d]^{g_*} \\
			X' \ar@{{*}->}[r]^-{q'^!} 
			&
			X'\times (\AA^1\setminus\{0\}) \ar@{{*}->}[r]^{[t]}  
			&    
			X'\times (\AA^1\setminus\{0\})  \ar@{{*}->}[r]^-{\partial'} & \NN_{Z'}X'   \ar@{{*}->}[r]^-{\simeq} & Z'
		}$
	\end{center}
	with obvious notations (see the definition of Gysin morphisms).
	The (cartesian) squares (1) and (4) commute by the base change theorem for essentially smooth morphisms (see Proposition \ref{Prop4.1}.3). The squares (2) and (3) commute by Proposition \ref{Lem4.2}.1 and Proposition \ref{Prop4.4}.1, respectively.
\end{proof}

\subsection{Gysin morphisms for lci projective morphisms}
\begin{paragr}

	We define Gysin morphisms for lci projective morphisms
	and prove functoriality theorems (see \cite[§5]{Deg08n2} for similar results in the classical oriented case). One could also define Gysin morphisms for morphisms between two essentially smooth schemes as in \cite[§12]{Rost96}.

\end{paragr}

\begin{lm}\label{GysinLem5.9}
	
	Consider a regular closed immersion $i:Z\to X$ and a natural number $n$. Consider the pullback square
	
	\begin{center}
		
		$\xymatrix{
			\PP^n_Z \ar[r]^l \ar[d]^q & \PP^n_X \ar[d]^p \\
			Z \ar[r]^i & X.
		}$
	\end{center}
	Then $l^!\circ p^!=q^!\circ i^!$ (up to the canonical isomorphism induced by $\cotgb_q-q^!\NN_i\simeq l^!\cotgb_p-\NN_l$).
\end{lm}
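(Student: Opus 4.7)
The plan is to unfold the definition of the Gysin morphisms on both sides and apply the key commutation Lemma~\ref{Lem11.3}. Recall that $i^! = r_{Z/X} \circ J(X,Z)$, where $r_{Z/X}$ is the inverse of the smooth pullback $\pi^!: \hC^*(Z,\hM_q,*) \to \hC^*(N_Z X, \hM_q, *)$ associated with the normal cone projection $\pi: N_Z X \to Z$, and similarly $l^! = r_{\PP^n_Z/\PP^n_X} \circ J(\PP^n_X, \PP^n_Z)$. Since $p: \PP^n_X \to X$ is essentially smooth and $\PP^n_X \times_X Z = \PP^n_Z$, Lemma~\ref{Lem11.3} gives directly
$$J(\PP^n_X, \PP^n_Z) \circ p^! = N(p)^! \circ J(X, Z),$$
where $N(p): N_{\PP^n_Z}(\PP^n_X) = N_Z(X) \times_X \PP^n_X \to N_Z(X)$ is the induced projection.

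The second step is to show that $q^! \circ r_{Z/X} = r_{\PP^n_Z/\PP^n_X} \circ N(p)^!$, which amounts to saying that the smooth pullbacks induced by the cartesian square
$$\xymatrix{ N_{\PP^n_Z}(\PP^n_X) \ar[r]^-{N(p)} \ar[d]_{\pi'} & N_Z(X) \ar[d]^{\pi} \\ \PP^n_Z \ar[r]^-q & Z }$$
are compatible with the homotopy inverses of $\pi^!$ and $\pi'^!$. By functoriality of smooth pullbacks (Proposition~\ref{Prop4.1}.2) we have $\pi'^! \circ q^! = (q \circ \pi')^! = (\pi \circ N(p))^! = N(p)^! \circ \pi^!$; applying this identity to $r_{Z/X}$ and using that $\pi^! \circ r_{Z/X} = \Id$ and $\pi'^! \circ r_{\PP^n_Z/\PP^n_X} = \Id$ yields the claim.

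Combining these two steps gives
$$l^! \circ p^! = r_{\PP^n_Z/\PP^n_X} \circ J(\PP^n_X, \PP^n_Z) \circ p^! = r_{\PP^n_Z/\PP^n_X} \circ N(p)^! \circ J(X,Z) = q^! \circ r_{Z/X} \circ J(X,Z) = q^! \circ i^!,$$
which is what we wanted.

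The main obstacle is not the formal computation, which is essentially automatic once one unwinds the definitions, but rather the bookkeeping of the twists by line bundles and virtual vector bundles. The Gysin map $i^!$ involves a twist by $N_i$ coming from the homotopy invariance isomorphism, and analogously $l^!$ involves $N_l$; similarly $p^!$ and $q^!$ introduce twists by $\cotgb_p$ and $\cotgb_q$ respectively. The canonical isomorphism $\cotgb_q - q^! N_i \simeq l^! \cotgb_p - N_l$ of virtual vector bundles over $\PP^n_Z$ mentioned in the statement is exactly what is needed to match the twists on both sides of the equation, and one has to check that the isomorphism used in Lemma~\ref{Lem11.3} together with the base change isomorphism for the normal cones combine to give precisely this identification.
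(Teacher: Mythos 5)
Your proof is correct and follows exactly the route the paper takes: the paper's proof of this lemma consists of the single sentence ``This follows from the definitions and Lemma~\ref{Lem11.3},'' and your argument is precisely the spelled-out version of that — unwinding $i^!=r_{Z/X}\circ J(X,Z)$ and $l^!=r_{\PP^n_Z/\PP^n_X}\circ J(\PP^n_X,\PP^n_Z)$, commuting $J$ past $p^!$ via Lemma~\ref{Lem11.3}, and commuting $r$ past $N(p)^!$ via Proposition~\ref{Prop4.1}.2 applied to the cartesian square of normal bundle projections.
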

\begin{proof}
	
	This follows from the definitions and Lemma \ref{Lem11.3}.
\end{proof}

\begin{lm}\label{GysinLem5.10}
	Consider a natural number $n$ and an essentially smooth scheme $X$. Let $p:\PP_X^n\to X$ be the canonical projection. Then for any section $s:X\to \PP^n_X$ of $p$, we have $s^!p^!=\Id$ (up to a canonical isomorphism).
\end{lm}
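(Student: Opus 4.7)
The plan is to reduce the statement directly to Lemma \ref{Lem11.4}. Since $p:\PP^n_X \to X$ is smooth and $s$ is a section of $p$, the section $s: X \to \PP^n_X$ is automatically a regular closed immersion (the normal sheaf is locally free of rank $n$, matching the relative dimension of $p$). Hence $s^!$ is defined as in Section \ref{GyMoRegEmbedding} by $s^! = r_{X/\PP^n_X} \circ J_{X/\PP^n_X}$, where $r_{X/\PP^n_X}$ is the inverse of the homotopy invariance isomorphism $\pi^!:\hC^*(X,\hM_q,*) \to \hC^*(N_s,\hM_q,*)$ provided by Corollary \ref{HomotopyInvarianceDeformedComplex}, with $\pi:N_s \to X$ the canonical vector bundle projection.

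I then apply Lemma \ref{Lem11.4} to the regular closed immersion $s: X \hookrightarrow \PP^n_X$ and the smooth morphism $p: \PP^n_X \to X$. The composite appearing in the hypothesis of that lemma is
\[
q: N_s \xrightarrow{\pi} X \xrightarrow{s} \PP^n_X \xrightarrow{p} X.
\]
Because $p \circ s = \Id_X$, this composite collapses to $q = \pi$, which is essentially smooth of relative dimension $n$, the same as the relative dimension of $p$. Thus Lemma \ref{Lem11.4} applies and yields $J_{X/\PP^n_X} \circ p^! = \pi^!$, up to the canonical isomorphism of virtual bundles $\tau_\pi \simeq \tau_p \times_{\PP^n_X} N_s$ identified via $p \circ s = \Id$. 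Composing with $r_{X/\PP^n_X}$ gives
\[
s^! \circ p^! \;=\; r_{X/\PP^n_X} \circ J_{X/\PP^n_X} \circ p^! \;=\; r_{X/\PP^n_X} \circ \pi^! \;=\; \Id,
\]
which is the desired identity.

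The only subtle point is bookkeeping of twists: both $p^!$ and $s^!$ shift the line-bundle twist by the determinants of $\tau_p$ and $-\tau_s$ respectively, so one must check that the canonical isomorphism in Lemma \ref{Lem11.4}, combined with the exact sequence $s^!\tau_p \simeq \tau_s \oplus \cO_X$ (the relative cotangent sequence for the section $s$ of the smooth morphism $p$), yields the canonical trivialization claimed in the statement. I do not expect this to present real difficulty, as it is entirely formal once the virtual-bundle conventions of \ref{df:virt} are unwound; the substantive content is the appeal to Lemma \ref{Lem11.4} and the homotopy invariance of $\hC$.
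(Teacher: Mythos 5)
Your proof is correct, and it takes a cleaner and more structural route than the paper's. The paper's proof is the terse remark ``We can assume that $X=\Spec k$, then apply rule \ref{itm:R3d'} (see also the proof of Lemma \ref{RostLem4.5})'' — a direct Rost-style computation over a field. You instead observe that the triple $(s:X\hookrightarrow \PP^n_X,\ p:\PP^n_X\to X,\ q=p\circ s\circ\pi=\pi)$ satisfies the hypotheses of Lemma~\ref{Lem11.4} on the nose, since $p\circ s=\Id$ collapses the composite to the normal-bundle projection $\pi$, which is smooth of the same relative dimension $n$ as $p$. Then $J(\PP^n_X,X)\circ p^!=\pi^!$, and composing with $r_{X/\PP^n_X}=(\pi^!)^{-1}$ gives $s^!p^!=\Id$ directly. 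This uses only already-established machinery, works over the base $S$ without any reduction to a field, and is in fact closer in spirit to the argument in Déglise's \cite[\S5]{Deg08n2}. One small inaccuracy at the end: the cotangent identity you quote, $s^!\cotgb_p\simeq\cotgb_s\oplus\cO_X$, is not quite right. From $p\circ s=\Id_X$ the distinguished triangle of cotangent complexes gives $L_s\simeq s^*L_p[1]$, hence $\cotgb_s+s^{-1}\cotgb_p=0$ in $\uK(X)$, equivalently $s^!\cotgb_p=0$ in the $\virtlci$ convention of Definition~\ref{df:virt}. This is exactly the trivialization implicit in ``$=\Id$'', and it is already packaged in the canonical isomorphism in the statement of Lemma~\ref{Lem11.4}, so the slip does not affect the validity of your argument; it only concerns the side remark about bookkeeping.
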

\begin{proof}
	We can assume that $X=\Spec k$, then apply rule \ref{itm:R3d'} (see also the proof of Lemma \ref{RostLem4.5}).
\end{proof}

\begin{lm} \label{GysinLem5.11}
	Consider the following commutative diagram:
	\begin{center}
		
		$\xymatrix{
			& \PP^n_X \ar[rd]^p & \\
			Y \ar[ru]^i \ar[rd]_{i'} & & X \\
			& \PP^m_X \ar[ru]_q & 
		}$
	\end{center}
	where $i,i'$ are regular closed immersions and $p,q$ are the canonical projection. Then $i^!\circ p^!={i'}^!\circ q^!$ (up to the canonical isomorphism induced by $\NN_{i'}-(i')^!\cotgb_q \simeq \NN_i - i^!\cotgb_p$).
\end{lm}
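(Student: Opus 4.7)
The plan is to reduce both compositions to a single Gysin morphism associated to a graph embedding, using a factorization of $\gamma=(i,i')$ through a projective bundle in two symmetric ways. Concretely, I would set $W:=\PP^n_X\times_X\PP^m_X$ with projections $\pi_1:W\to\PP^n_X$, $\pi_2:W\to\PP^m_X$, and write $\rho:=p\circ\pi_1=q\circ\pi_2:W\to X$. The pair $(i,i')$ determines a morphism $\gamma:Y\to W$ with $\pi_1\circ\gamma=i$ and $\pi_2\circ\gamma=i'$. Viewing $i'$ as a section $s_{i'}$ of the projection $q_Y:\PP^m_Y\to Y$, and writing $\PP^m_i:\PP^m_Y\to W$ for the base change of $i$ along $q:\PP^m_X\to X$, a direct check from the universal property of fibre products gives $\gamma=(\PP^m_i)\circ s_{i'}$; symmetrically, $\gamma=(\PP^n_{i'})\circ s_i$ with the obvious notation. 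Since $\PP^m_i$ and $\PP^n_{i'}$ are regular closed immersions (being base changes of $i$ and $i'$) and $s_{i'},s_i$ are sections of projective bundle projections, $\gamma$ is itself a regular closed immersion.

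Next I would compute $\gamma^!\circ\rho^!$ along both routes. Using the first factorization, Theorem~\ref{GysinFunctoriality} gives $\gamma^!=s_{i'}^!\circ(\PP^m_i)^!$; combined with Proposition~\ref{Prop4.1}(2) and the pullback square of $i$ along $q$, Lemma~\ref{GysinLem5.9} yields $(\PP^m_i)^!\circ\pi_1^!=q_Y^!\circ i^!$. Therefore
\begin{align*}
\gamma^!\circ\rho^!=s_{i'}^!\circ(\PP^m_i)^!\circ\pi_1^!\circ p^!=s_{i'}^!\circ q_Y^!\circ i^!\circ p^!=i^!\circ p^!,
\end{align*}
where the last step applies Lemma~\ref{GysinLem5.10} to the section $s_{i'}$ of the projective bundle projection $q_Y$. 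The symmetric computation via $\gamma=(\PP^n_{i'})\circ s_i$ gives $\gamma^!\circ\rho^!=(i')^!\circ q^!$, and comparing yields the claim.

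The main obstacle I anticipate is bookkeeping the canonical isomorphisms of virtual vector bundles. Each of the three ingredients used in the two-route computation, namely functoriality of $\gamma^!$ from Theorem~\ref{GysinFunctoriality}, the base-change identity of Lemma~\ref{GysinLem5.9}, and the section identity of Lemma~\ref{GysinLem5.10}, introduces a specified twist. Tracing these through, one obtains two expressions for $\NN_\gamma$ in terms of $\NN_i,i^!\cotgb_p$ and $\NN_{i'},(i')^!\cotgb_q$ respectively; after pushing down to $Y$, the resulting identification is precisely the canonical isomorphism $\NN_{i'}-(i')^!\cotgb_q\simeq\NN_i-i^!\cotgb_p$ featured in the statement. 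I would handle this by recording the virtual bundle identity produced at each step and checking that the composite canonical isomorphism in one factorization agrees, under this comparison, with the one coming from the other factorization. No new technique beyond those developed in this subsection should be needed.
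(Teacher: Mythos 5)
Your proof is correct and takes essentially the same route as the paper: both pass through the fibre product $W=\PP^n_X\times_X\PP^m_X$, factor the pair map $\gamma=\nu:Y\to W$ through a pulled-back projective bundle together with a section, and then invoke Lemma~\ref{GysinLem5.9}, Lemma~\ref{GysinLem5.10}, Theorem~\ref{Thm13.1} and Proposition~\ref{Prop4.1}(2). The only difference is presentational: the paper phrases it as a reduction to the case $m=0$, $q=\Id$ followed by a direct verification of that case, whereas you compute $\gamma^!\circ\rho^!$ symmetrically along both factorizations of $\gamma$; the underlying calculation and the lemmas used are identical.
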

\begin{proof} Let us introduce the following morphisms:
	\begin{center}
		
		$\xymatrix{
			&   &   \PP_X^n \ar[rd]^p&    \\
			Y \ar@/^/[rru]^i \ar[r]|-\nu \ar@/_/[rrd]_{i'} & \PP^n_X\times_X \PP^m_X \ar[ru]_{q'} \ar[rd]^{p'} & & X \\
			&   &   \PP^m_X \ar[ru]_q &   
		}$
	\end{center}
	Applying Proposition \ref{Prop4.1}.2, we are reduced to prove $i^!=\nu^!q'^!$ and ${i'}^!=\nu^!p'^!$. In other words, we are reduced to the case $m=0$ and $q=\Id_X$.
	\par In this case, we introduce the following morphisms:
	\begin{center}
		
		$\xymatrix{
			Y  \ar@/^/^i[rrd]  \ar[rd]^s \ar@2{-}[rdd] &      &        \\
			& \PP^n_Y \ar[r]_l \ar[d]^q & \PP^n_X  \ar[d]_p \\
			& Y     \ar[r]^{i'}  &  X.    
		}$
	\end{center}
	Then the lemma follows from Lemma \ref{GysinLem5.9}, Lemma \ref{GysinLem5.10} and Theorem \ref{Thm13.1}.
\end{proof}

\begin{df}
	Let $f:Y\to X$ be a projective lci morphism of smooth $S$-schemes. Consider a factorization 
	$\xymatrix{
		Y
		\ar[r]^i
		&
		\PP^n_X
		\ar[r]^p
		&
		X
	}$
	of $f$ into a regular closed immersion followed by the canonical projection.
	We define the Gysin morphism associated to $f$ as the morphism 
	\begin{center}
		
		$f^!= i^!\circ p^!:\hC^*(X,\hM_q,*)\to \hC^{*}(Y,\hM_q,*)$.
	\end{center}
\end{df}

\begin{prop}
	
	Consider projective morphisms $\xymatrix{Z \ar[r]^g & Y \ar[r]^f & X}$.
	Then (up to the canonical isomorphism induced by $\cotgb_{fg}\simeq \cotgb_g + g^!\cotgb_f$):
	\begin{center}
		$g^!\circ f^!=(f\circ g)^!$.
		
	\end{center} 
\end{prop}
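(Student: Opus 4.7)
The plan is to reduce $(fg)^{!}=g^{!}\circ f^{!}$ to the functoriality of Gysin maps for regular closed immersions (Theorem~\ref{GysinFunctoriality}) combined with the base-change formula of Lemma~\ref{GysinLem5.9}, after choosing a common factorization for $fg$ built from the given factorizations of $f$ and $g$. Pick factorizations
$f=p\circ i$, with $i:Y\hookrightarrow\PP^n_X$ a regular closed immersion and $p:\PP^n_X\to X$ the canonical projection, and $g=q\circ j$, with $j:Z\hookrightarrow\PP^m_Y$ regular and $q:\PP^m_Y\to Y$ the projection. Base-changing $i$ along the smooth map $\tilde q:\PP^m_{\PP^n_X}\to\PP^n_X$ produces a regular closed immersion $\tilde i:\PP^m_Y\hookrightarrow\PP^m_{\PP^n_X}$. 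Using the Segre embedding $\sigma:\PP^m_{\PP^n_X}\hookrightarrow\PP^N_X$ (with $N=(n+1)(m+1)-1$) and the projection $r:\PP^N_X\to X$, the composition $J:=\sigma\circ\tilde i\circ j:Z\hookrightarrow\PP^N_X$ is a regular closed immersion (composition of three regular closed immersions), and $fg=r\circ J$.

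By definition of the Gysin morphism for a projective lci map, followed by Theorem~\ref{GysinFunctoriality}, we obtain
$$(fg)^{!}=J^{!}\circ r^{!}=j^{!}\circ\tilde i^{!}\circ\sigma^{!}\circ r^{!}.$$
On the other hand, by definition $g^{!}\circ f^{!}=j^{!}\circ q^{!}\circ i^{!}\circ p^{!}$. Applying Lemma~\ref{GysinLem5.9} to the regular closed immersion $i:Y\to\PP^n_X$ with base change $\tilde i$ along the $\PP^m$-bundle, one gets $q^{!}\circ i^{!}=\tilde i^{!}\circ\tilde q^{!}$, so
$$g^{!}\circ f^{!}=j^{!}\circ\tilde i^{!}\circ\tilde q^{!}\circ p^{!}=j^{!}\circ\tilde i^{!}\circ(p\circ\tilde q)^{!},$$
the second equality by functoriality of essentially smooth pullback (Proposition~\ref{Prop4.1}.2).

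Since $r\circ\sigma=p\circ\tilde q$ as morphisms $\PP^m_{\PP^n_X}\to X$ (both are the canonical projection), it remains to show $\sigma^{!}\circ r^{!}=(p\circ\tilde q)^{!}$, i.e.\ that for the smooth projective morphism $\pi:=r\circ\sigma$, its Gysin computed through the Segre factorization agrees with the essentially smooth pullback. Set $V=\PP^m_{\PP^n_X}$ and let $\tilde\pi:N_V(\PP^N_X)\to V$ be the vector bundle projection. The composite $N_V(\PP^N_X)\to V\to\PP^N_X\to X$ is essentially smooth of relative dimension $N$ (equal to that of $r$), because $\dim V/X+\mathrm{codim}(\sigma)=(n+m)+nm=N$. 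Hence Lemma~\ref{Lem11.4} applies and yields $J(\PP^N_X,V)\circ r^{!}=\tilde\pi^{!}\circ\pi^{!}$. Since by construction $\sigma^{!}=r_{V/\PP^N_X}\circ J(\PP^N_X,V)$ with $r_{V/\PP^N_X}=(\tilde\pi^{!})^{-1}$ (the inverse supplied by the Strong Homotopy Invariance Corollary~\ref{HomotopyInvarianceDeformedComplex}), we conclude $\sigma^{!}\circ r^{!}=\pi^{!}=(p\circ\tilde q)^{!}$, and thus $(fg)^{!}=g^{!}\circ f^{!}$.

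\textbf{Main obstacle.} The routine part is the combinatorial assembly of the factorization of $fg$ and the invocation of Theorem~\ref{GysinFunctoriality} and Lemma~\ref{GysinLem5.9}. The real content is the last step: verifying that the Gysin map of a smooth projective morphism, defined via a regular closed immersion into a projective bundle, coincides with the essentially smooth pullback. This is the place where one must carefully track that the hypothesis of Lemma~\ref{Lem11.4} on relative dimensions is satisfied and recognize the inverse used in the definition of $\sigma^{!}$ as the inverse of $\tilde\pi^{!}$ provided by homotopy invariance for vector bundles; the canonical isomorphism on twists mentioned in the statement is accounted for by the identification $\cotgb_{fg}\simeq\cotgb_g+g^{!}\cotgb_f$ running through each of the compatibility steps.
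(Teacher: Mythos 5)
Your proof is correct, and the organization is genuinely different from the paper's. The paper factorizes $f$ and $fg$ separately (through $\PP^n_X$ and $\PP^m_X$ respectively), forms the fibre product $\PP^n_X\times_X\PP^m_X$, and compares the two resulting embeddings of $Z$ via Lemma~\ref{GysinLem5.11} (which is what handles the case of a regular closed immersion composed with a smooth projection). You instead factorize $f$ and $g$ separately, glue the factorizations into an explicit regular closed immersion $J=\sigma\circ\tilde i\circ j$ into $\PP^N_X$ via the Segre embedding, and then handle the residual smooth-projection step with Lemma~\ref{Lem11.4} applied to the Segre immersion. This is a clean alternative: the Segre trick makes the common refinement completely explicit, and your use of Lemma~\ref{Lem11.4} (checking that $N_V(\PP^N_X)\to X$ has relative dimension $nm+(n+m)=N$) is exactly the right way to see that $\sigma^!\circ r^!=(r\circ\sigma)^!$. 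One thing worth making explicit: since the Gysin morphism of a projective lci map is \emph{defined} via a choice of factorization, concluding $(fg)^!=g^!\circ f^!$ from your computation with the Segre factorization presupposes that $(fg)^!$ is independent of the factorization chosen; that independence is itself a consequence of Lemma~\ref{GysinLem5.11}, so you are not really avoiding it, only using it as a well-definedness statement rather than inline in the computation, whereas the paper invokes it directly in the diagram chase.
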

\begin{proof} We choose a factorization $\xymatrix{Y \ar[r]^i & \PP^n_X \ar[r]^p & X}$ 
	(resp. $\xymatrix{Z\ar[r]^j & \PP^m_X \ar[r]^q & X}$) of $f$ (resp. $fg$) and we introduce the diagram
	\begin{center}
		
		$\xymatrix{
			&     &    \PP^m_X  \ar@/^2pc/	[rrddd]^q &    &   \\
			&      &   \PP^n_X\times_X \PP^m_X \ar[u]|-{p'} \ar[rd]^{q'} &   &   \\
			&  \PP^m_Y \ar[rd]^{q''} \ar[ru]^{i'}&                 &   \PP^n_X \ar[rd]|-{p}&   \\
			Z \ar[rr]|-{g} \ar[ru]|-{k}  \ar@/^2pc/[rruuu]^j &  &      Y   \ar[ru]^i      \ar[rr]|-{f}    &        &  X
		}$ 
	\end{center}
	in which $p'$ is deduced from $p$ by base change, and so on for $q'$ and $q''$. Then, by using the factorization given in the preceding diagram, the proposition follows from \ref{GysinLem5.9}, \ref{Thm13.1}, \ref{GysinLem5.11} and \ref{Prop4.1}.2.
\end{proof}

\begin{rem}
	\label{rem_functoriality_lci}
	As a consequence of the previous results, we can define pullback for any morphisms between smooth schemes. Indeed, a map $f:Y \to X$ of smooth schemes can always be factorized as follows:
	\begin{center}
		$\xymatrix{Y \ar[r]^-i & Y \times X \ar[r]^-p & X}$ 
	\end{center}
	where $i(y)=(y,f(y))$ and $p(y,x)=x$. Thus $f$ is lci.
	
\end{rem}

\begin{prop}[Base change for lci morphisms]\label{MWmodBaseChangelci}
	
	Now consider a cartesian square of schemes
	\begin{center}
		
		$\xymatrix{
			X' \ar[r]^{f'} \ar[d]_{g'} & Y' \ar[d]^g \\
			X \ar[r]_f & Y
		}$
	\end{center}
	with $f$ proper 

	and $g$ lci.
	Assume that the square is \textit{transversal} in the sense that the canonical map $f^{'!}\cotgb_g
	\to 
	\cotgb_{g'}
	$ is an isomorphism.
	We have (up to the canonical isomorphism induced by the previous isomorphism):
	\begin{center}
		$f'_* \circ g'^! = g^!\circ f_*$
	\end{center}
	which is compatible with horizontal and
	vertical compositions of transversal squares.
\end{prop}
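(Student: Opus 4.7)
The plan is to reduce to the two building blocks from which lci Gysin morphisms were constructed: the regular closed immersion case, already treated in Proposition~\ref{MWmodBaseChangeRegular}, and the projection from projective space, which is essentially smooth and hence handled by Proposition~\ref{Prop4.1}.3 and Proposition~\ref{Prop4.4}.1. The transversality hypothesis is exactly what is needed to split these two reductions cleanly.

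First I would choose a factorization $g = p \circ i$ with $i : Y' \to \PP^n_Y$ a regular closed immersion and $p : \PP^n_Y \to Y$ the canonical projection. Pulling this factorization back along $f$ yields $g' = p' \circ i'$, where $p' : \PP^n_X \to X$ is again a canonical projection (essentially smooth) and $i' : X' \to \PP^n_X$ is the base change of $i$. Let $\bar f : \PP^n_X \to \PP^n_Y$ denote the base change of $f$, which remains proper. The cartesian square then decomposes as a vertical stack of two cartesian squares
\begin{center}
$\xymatrix{
X' \ar[r]^{f'} \ar[d]_{i'} & Y' \ar[d]^{i} \\
\PP^n_X \ar[r]^{\bar f} \ar[d]_{p'} & \PP^n_Y \ar[d]^p \\
X \ar[r]_f & Y,
}$
\end{center}
and by definition $g^! = i^! \circ p^!$, $g'^! = i'^! \circ p'^!$.

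Next I would verify that each of the two sub-squares is transversal in the appropriate sense. For the lower square this is automatic: $p$ is essentially smooth and so is $p'$, and the comparison of relative cotangent bundles is an isomorphism simply by flat base change. For the upper square, the assumption that $f'^! \cotgb_g \simeq \cotgb_{g'}$ together with the computation $\cotgb_{g'} \simeq \cotgb_{i'} + i'^! \cotgb_{p'}$ and $f'^! \cotgb_g \simeq f'^!(\cotgb_i + i^!\cotgb_p)$ forces the canonical map $f'^! N_i \to N_{i'}$ to be an isomorphism, which is precisely the hypothesis needed to apply Proposition~\ref{MWmodBaseChangeRegular}. Thus $\bar f_* \circ i'^! = i^! \circ \bar f_*$, while Proposition~\ref{Prop4.1}.3 (together with its proper pushforward counterpart Proposition~\ref{Prop4.4}.1, extended along the colimit defining $\widetilde C$ via \ref{Basic_maps_homotopy_RS_complex}) yields $p^! \circ f_* = \bar f_* \circ p'^!$. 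Composing these two identities proves $f'_* \circ g'^! = g^! \circ f_*$.

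The main obstacle I anticipate is tracking the canonical isomorphisms between virtual bundles that enter the various twists, in particular checking that the isomorphism used to identify the normal cones in the upper square is the same one induced by the global transversality hypothesis (so that the identification of Gysin maps is not merely existential but up to the stated canonical isomorphism). Once this is in hand, compatibility with horizontal and vertical composition of transversal squares is a bookkeeping exercise: horizontal composition reduces via the factorization above and the functoriality $g^! \circ h^! = (hg)^!$ proved for regular immersions in Theorem~\ref{Thm13.1} and for essentially smooth maps in Proposition~\ref{Prop4.1}.2, while vertical composition follows by stacking two transversal squares and invoking $f'_* \circ f''_* = (f' f'')_*$ from Proposition~\ref{Prop4.1}.1.
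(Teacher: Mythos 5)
Your proposal is correct and follows essentially the same route as the paper's own proof, which simply reduces to the two cases where $g$ is a projective-bundle projection (handled by Proposition~\ref{Prop4.1}) or a regular closed immersion (handled by Proposition~\ref{MWmodBaseChangeRegular}). You have usefully made explicit the reduction step — the vertical stacking of cartesian squares, the check that the transversality hypothesis passes down to both sub-squares, and in particular that the global isomorphism $f'^!\cotgb_g \simeq \cotgb_{g'}$ forces the isomorphism $f'^!N_i \to N_{i'}$ needed to invoke Proposition~\ref{MWmodBaseChangeRegular} — which the paper leaves to the reader.
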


\begin{proof}
	
	It suffices to consider the case where $g$ is the projection of a projective bundle or a regular closed immersion. It follows from Proposition \ref{Prop4.1} in the first case and from Proposition \ref{MWmodBaseChangeRegular} in the second.
\end{proof}

\begin{prop}[Localization]
	\label{MWmod-localization}
	For any closed immersion $i:Z\to X$ with complementary open immersion $j:U\to X$, one has a long
	exact sequence
	\begin{align*}
		&\hC^n(Z,\hM_q,*) \xrightarrow{i_*}
		\hC^n(X,\hM_q,*)
		\xrightarrow{j^!} 
		\hC^n(U,\hM_q,*)
		\xrightarrow{\partial_i} \hC^{n+1}(Z,\hM_q,*),
	\end{align*}
	which is functorial with respect to proper covariance and lci contravariance.

\end{prop}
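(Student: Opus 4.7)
The proof plan is to reduce the long exact sequence to a short exact sequence of complexes, exactly as one does classically for the Rost--Schmid complex (compare with \ref{LocalizationSequence}), and then to deduce functoriality from the base change statements already proved.

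First I would invoke the direct sum decomposition as abelian groups established in paragraph \ref{HomotopyComplexIsNisnevich}:
\begin{equation*}
\hC^p(X,\hM_q,*) = \hC^p(Z,\hM_q,*) \oplus \hC^p(U,\hM_q,*),
\end{equation*}
where the first factor is the image of $i_*$ (a sum of corestrictions) and the projection onto the second factor is $j^!$ (a sum of restrictions). The point is to promote this splitting at the level of graded abelian groups to a short exact sequence of complexes
\begin{equation*}
0 \to \hC^*(Z,\hM_q,*) \xrightarrow{i_*} \hC^*(X,\hM_q,*) \xrightarrow{j^!} \hC^*(U,\hM_q,*) \to 0.
\end{equation*}
Both $i_*$ and $j^!$ commute with the differentials by the compatibilities of \ref{Compatibilities} extended to $\hC$ (Proposition \ref{Prop4.6} and its enhancement in \ref{Basic_maps_homotopy_RS_complex}). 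Exactness in the middle is not automatic from the direct sum decomposition alone, because the differential $d_X$ of the middle term mixes the two summands via the boundary map; it is precisely this off-diagonal block that is the obstruction to $\hC^*(Z,\hM_q,*) \oplus \hC^*(U,\hM_q,*)$ being a direct sum of complexes, and which will furnish the connecting homomorphism.

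Next I would apply the standard snake lemma / zigzag construction to derive the long exact sequence on cohomology. The connecting morphism, a priori abstract, is identified with $\partial_i$ of \ref{BoundaryMaps} by unwinding its construction: for a class $\rho \in \hC^n(U,\hM_q,*)$, one picks a lift in $\hC^n(X,\hM_q,*)$, takes its differential, and reads off the $Z$-component; at each finite stage of the colimit defining $\hC$, this is exactly the off-diagonal block of $d_X$ in the Rost--Schmid decomposition, which by construction is the boundary map $\partial^U_Z$ of \ref{BoundaryMaps}. Since both sides of \ref{BoundaryMaps} are compatible with the smooth pullbacks $\pi^!_{X,n,n-1}$ defining the colimit, the identification passes to $\hC$.

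Finally, for functoriality under proper covariance, a proper map $f:X' \to X$ induces a commutative diagram of short exact sequences of complexes via Proposition \ref{Prop4.4}(1) extended to $\hC$, which gives a map of long exact sequences. For lci contravariance one uses Proposition \ref{MWmodBaseChangelci} (base change between proper pushforward and lci pullback) together with Proposition \ref{Prop4.4}(2) lifted to $\hC$ to produce the required compatibility squares. The main conceptual subtlety, in my view, is not any of the above but making sure the identification of the connecting map with $\partial_i$ is compatible with the colimit over $n$ in the definition of $\hC$; however this is automatic because the decompositions of \ref{HomotopyComplexIsNisnevich} are functorial in $n$ under the structural pullbacks $\pi^!_{X,n,n-1}$, and the boundary maps commute with these pullbacks by Proposition \ref{Prop4.4}(2).
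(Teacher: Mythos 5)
The paper states Proposition~\ref{MWmod-localization} without an explicit proof, treating it as a formal consequence of the degree-wise splitting established in~\ref{HomotopyComplexIsNisnevich} (exactly as~\ref{LocalizationSequence} handles the Rost--Schmid complex ${}^{\delta}C_*$). Your reconstruction is correct and matches the intended argument: $i_*$ and $j^!$ are morphisms of complexes by the compatibilities of~\ref{Compatibilities} lifted to $\hC$, the short exact sequence of complexes
$0 \to \hC^*(Z,\hM_q,*) \to \hC^*(X,\hM_q,*) \to \hC^*(U,\hM_q,*) \to 0$
follows, the connecting homomorphism is identified with $\partial^U_Z$ by unwinding the snake-lemma zigzag against the block-triangular shape of $d_X$, and the colimit over the structural pullbacks $\pi^!_{X,n,n-1}$ preserves all of this because of Proposition~\ref{Prop4.4}. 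Functoriality in the proper and lci directions comes from Proposition~\ref{Prop4.4}(1)--(2) and Proposition~\ref{MWmodBaseChangelci}, as you say.

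One small point of phrasing: you write that \emph{exactness in the middle is not automatic from the direct sum decomposition}, but in fact the short exact sequence of complexes \emph{is} automatic once one knows $i_*$ and $j^!$ are chain maps, since degree-wise it is split exact. What the off-diagonal block of $d_X$ obstructs is $\hC^*(Z)\oplus\hC^*(U)$ being a direct sum \emph{as complexes}, and what is not automatic (and is supplied by the snake lemma) is the long exact sequence in cohomology together with the identification of its connecting map with $\partial_i$. Your subsequent argument implements exactly this, so the issue is purely expository.
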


\section{Rost transform}

\label{sec:Rost_transform}

\subsection{Twisted Borel-Moore homology theories}

\begin{num}\label{num:bivariant}
	Let $S$ be a scheme and $\E$ be a spectrum $S$.
	For any $S$-scheme $X$ with structural morphism $p$,
	one puts $\E_X=p^*\E$, therefore producing a cartesian section of $\SH$ over the category
	of $S$-schemes.
	
	Recall from \cite{DJK} that for any morphism of finite type\footnote{Thanks
		to the work of A. Khan, we do not need the separatedness condition any more.} $f:Y \rightarrow X$,
	and pair $(n,v) \in \ZZ \times \uK(X)$, one defines the \emph{bivariant theory} of $Y/X$ in degree $n$ and twist $v$ with coefficients in $\E$ as:
	\begin{equation}\label{eq:bivariant}
		\E_n(Y/X,v)=[f_!(\Th(v))[n],\E_X].
	\end{equation}
	Actually, this is the $n$-th homotopy group of the space $\underline \E(Y/X,v)=\Map(f_!(\Th(v)),\E_X)$.
	Note the cohomology represented by $\E$ is then obtained from the particular case $Y=X$:
	$\E^n(X,v)=\E_{-n}(X/X,-v)$.
	
	Theses theories satisfies a \emph{twisted} variant of Fulton-MacPherson bivariant formalism,
	for which it is useful to introduce the following categories:
\end{num}
\begin{df}
	\label{df:virt}
	We define the \emph{fibred category of virtual schemes} $\virt$ as the category whose objects are pairs
	$(X,v)$ where $X$ is a scheme and $v$ a virtual vector bundle over $X$,
	and with morphisms $(Y,w) \rightarrow (X,v)$ the pairs $(f,\phi)$ 
	where $f:Y \rightarrow X$ is a morphism of schemes
	and $\phi:w \rightarrow f^{-1}(v)$ an isomorphism of virtual vector bundles over $Y$. The composition is defined in an obvious way.
	
	Moreover, let $(Y,w)$ and $(X,v)$ are two objects of $\virt$. A \emph{twisted morphisms} 
	\begin{center}
		$(Y,w) \rightarrow (X,v)$
	\end{center}
	is defined to be a pair $(f,\phi)$ where $f:Y \rightarrow X$ is an essentially lci morphism of schemes
	and $\phi:w \rightarrow \cotgb_f+f^{-1}(v)$ an isomorphism of virtual vector bundles over $Y$.
	The composition of two twisted morphisms
	\begin{center}
		$(Z,w) \xrightarrow{(g,\psi)} (Y,v) \xrightarrow{(f,\phi)} (X,u)$
	\end{center}
	is given by the composition of morphisms of schemes $fg=f \circ g$ and by the following composite isomorphism:
	$$
	w \xrightarrow{\psi} \cotgb_g+g^{-1}v \xrightarrow{Id+g^{-1}(\phi)} \cotgb_g+g^{-1}\cotgb_f+g^{-1}f^{-1}(u) \simeq \cotgb_{fg}+(fg)^{-1}(u)
	$$
	where the last isomorphism is induced by the canonical isomorphism of virtual vector bundles
	(coming from the distinguished triangles of cotangent complexes).
	This defines another category denoted by $\virtlci$. 
	
	Note that we will denote a morphism $(f,\phi)$ of one of the above two types simply by $f$
	when $\phi$ is clear.
\end{df}
Obviously, the category $\virt$ is fibred over the category of schemes.
We will see the application of these categories to bivariant formalism in \Cref{ex:hlg&coh}.

\begin{df}
	A \emph{transversal square} will be the data of four morphisms pictured as follows
	\begin{equation}\label{eq:transv_sq}
		\begin{split}
			\xymatrix@=14pt{
				(Y,v')\ar^{(g,\phi')}[r]\ar_{(q,\psi')}[d] & (T,u')\ar^{(p,\psi)}[d] \\
				(X,v)\ar^{(f,\phi)}[r] & (S,u)
			}
		\end{split}
	\end{equation}
	such that the horizontal maps are twisted and the vertical one are plain,
	the underlying square of morphisms of schemes is not only commutative but cartesian,
	the canonical map induced by this cartesian square
	$$
	\xi:q^{-1}\tau_f \rightarrow \tau_g
	$$
	is an isomorphism
	and the following diagram of isomorphism of virtual bundles is commutative:
	$$
	\xymatrix@R=12pt@C=24pt{
		v'\ar^{\psi}[rr]\ar_{\psi'}[d] && \cotgb_g+g^{-1}(u')\ar^{\xi^{-1}+q^{-1}(\psi)}[d] \\
		q^{-1}(v)\ar^-{q^{-1}(\phi)}[r] & q^{-1}\cotgb_f+q^{-1}f^{-1}(u)\ar^\sim[r] & q^{-1}\cotgb_f+g^{-1}p^{-1}(u).
	}
	$$
\end{df}

\begin{ex}
	Consider a cartesian square of schemes
	$$
	\xymatrix@=14pt{
		Y\ar^{g}[r]\ar_{q}[d]\ar@{}|\Delta[rd] & T\ar^{p}[d] \\
		X\ar_{f}[r] & S
	}
	$$
	such that $f$ and $g$ are lci. Recall one says this square is tor-independent, or rather $X$ and $T$ are tor-independent over $S$,
	if $\Tor_i^S(\mathcal O_X,\mathcal O_T)=0$ if $i>0$ (e.g. $f$ or $p$ flat).
	Note that if one assume that $f$ is \emph{syntomic} (\emph{i.e.} lci and flat), then $g$ is automatically lci (in fact syntomic), and the square if tor-independent.
	
	It follows from \cite[Cor. 2.2.3]{Illusie} that the canonical
	map of cotangent complexes $q^*\cotg_f \rightarrow \cotg_g$ is a quasi-isomorphism, and therefore, the induced map
	of the respective associated virtual vector bundles $\xi:q^{-1}\tau_f \rightarrow \tau_g$ is an isomorphism.
	One deduces that for any virtual vector bundle $u$ over $S$, the following square is transversal in the above sense:
	$$
	\xymatrix@R=12pt@C=18pt{
		(Y,\tau_g+u_Y)\ar^-{(g,\Id)}[r]\ar_{(q,\xi^{-1}+\Id)}[d]\ar@{}|{\tilde \Delta}[rd] & (T,u_T)\ar^{(p,\Id)}[d] \\
		(X,\tau_f+u_X)\ar_-{(f,\Id)}[r] & (S,u)
	}
	$$
	where $u_X$ and the like denotes the pullback of $u$ over $X$. This is the prototype of all our transversal squares.

\end{ex}

\begin{df}\label{df:twisted_(co)hlg}
	Let $\Sany$ be a full subcategory of the category of schemes
	(see the forthcoming example for illustration).
	We let $\virt\Sany$ (resp. $\virtlci\Sany$) be the full subcategory
	of $\virt$ (resp. $\virtlci$) made by pairs $(X,v)$ such that $X$ is in $\Sany$,
	and $\virt\Sany^{prop}$ (resp. $\virtlci\Sany^{prop}$) the corresponding subcategory
	with same objects and morphisms $(f,\phi):(Y,v) \rightarrow (X,v)$ such that $f$
	is proper.
	
	Let $\A$ be an abelian category (resp. $\T$ a stable $\infty$-category). Denote by $\A^\ZZ$ be the category of graded objects of $\A$ (see \cite[tag 09MF]{stacks_project}). 
	
	An \emph{abelian (resp. derived) twisted Borel-Moore homology theory} $H$ over $\Sany$ with coefficients $\A$
	(resp. $\T$) will be the data
	of two functors:
	\begin{align*}
		\text{(natural) } H_*:\virt\Sany^{prop} \rightarrow \A^\ZZ (\text{resp. } \T), \qquad
		\text{(exceptional) } H^!:{\virtlci\Sany}^{op} \rightarrow \A^\ZZ (\text{resp. } \T)
	\end{align*}
	which agree on objects $H_*(X,v)=H^!(X,v)=:H(X,v)$,
	and for an isomorphism $\varphi:v \rightarrow v$ of virtual bundles over $X$,
	$H^!(\Id_X,\varphi)=H_*(\Id_X,\varphi)^{-1}$,
	and such that the following properties hold:
	\begin{enumerate}
		\item \emph{Base change}. For any transversal square $\Delta$ as in \eqref{eq:transv_sq},
		whose underlying schemes are in $\Sany$ and vertical morphisms are proper,
		one has a base change formula 
		\begin{center}
			$(f,\phi)^!(p,\psi)_*=(p,\psi')_*(g,\phi')^!$
			(resp. an equivalence $Ex_*^!(\Delta))$
		\end{center}
		which is compatible with horizontal and
		vertical compositions of transversal squares.
		\item \emph{Localization}. For any morphisms $i:(Z,v_0) \rightarrow (X,v)$
		and $j:(U,v_\eta) \rightarrow (X,v)$ in $\virt\Sany$ such that $i$ is a closed immersion
		and $j$ is the complementary open immersion, one has a long (resp. homotopy) 
		exact sequence
		\begin{align*}
			&H_n(Z,v_0) \xrightarrow{i_*} H_n(X,v)
			\xrightarrow{j^!} H_n(U,v_\eta) \xrightarrow{\partial_i} H_{n-1}(Z,v_0), \\
			\text{resp. } &H(Z,v_0) \xrightarrow{i_*} H(X,v)
			\xrightarrow{j^!} H(U,v_\eta),
		\end{align*}
		which is functorial with respect to proper covariance and lci contravariance
		(resp. functoriality follows from property (1)). 
	\end{enumerate}

	\par If the coefficients are omitted, then $\A$ is the category
	of abelian group $\ab$ (resp. $\T$ is the $\infty$-category of spectra $\Sp$).
	We will usually refer to $H$ as $H_*$ and call it a BM-homology $H_*$ for short.
	\par A morphism between two twisted Borel-Moore homology theory $\alpha:(H_*,H^!) \to (H'_*, H'^!)$ is given by two natural transformations $\alpha_*:H_* \to H'_*$ and $\alpha^!:H^! \to H'^!$ which agree on objects $\alpha_*(X,v) = \alpha^!(X,v)$.
	\begin{center}
		
	\end{center} 
\end{df}

\begin{rem}
	\begin{enumerate}
		\item Of course, it is somewhat cumbersome to have to make explicit the isomorphisms
		of virtual bundles, that nevertheless rigorously always appear.
		Most of the time we will thus ignore them and simply denote by $f$ what should be $(f,\phi)$
		as the isomorphism is clear. Therefore, the functoriality of BM-homology
		will be denoted by $f_*$ and $p^!$.
		\item The derived version is stronger than the abelian one,
		as given any t-structure on $\T$, one recovers from a derived twisted BM-homology
		an abelian one after application of the associated homological functor
		$H_*$.\footnote{We are a bit fuzzy about the identifications appearing in the
			base change property but one can consider them as identities in practice.}
		\item Recall that in general,
		one can highlight four types of theories out of the six functors formalism of motivic $\infty$-categories
		(see \cite{DJK}),
		with special properties as follows:
		\begin{itemize} 
			\item cohomology (resp. homology) twisted theories are contravariant (resp. covariant)
			with respect to $\virt\Sany$ and covariant (resp. contravariant) with respect
			to $\virtlci\Sany^{prop}$. They satisfy the full base change formula as stated above,
			but localization must be restricted to \emph{special} regular closed immersions
			(such as closed immersions between smooth schemes over some base, or between regular schemes).
			\item Borel-Moore homology (resp. properly supported cohomology) twisted theories are covariant
			(resp. contravariant)
			with respect to $\virt\Sany^{prop}$ and contravariant (resp. covariant) with respect
			to $\virtlci\Sany$. They satisfy the full base change formula as stated above,
			and localization with respect to all closed immersions.\footnote{In particular,
				these theories only depend on the reduced scheme structure.}
		\end{itemize}
		\item Borel-Moore homology theories are a particular case of bivariant theories
		when one restrict the latter to morphisms with target a fixed (base) scheme.
		\item Cohomology of a scheme $X$ with support in a closed subscheme $Z$
		is a particular case of Borel-Moore homology: indeed, it is the BM-homology of $Z$, seen
		as an $X$-scheme (as remarked in \cite[1.2.5]{Deg16}). Therefore, the coniveau spectral
		sequences based on cohomology with support can always be reduced to the case
		of niveau spectral sequences based on Borel-Moore homology, up to re-indexing.
	\end{enumerate}
\end{rem}

\begin{ex}\label{ex:hlg&coh}
	Let $S$ be a fixed base scheme and $\E$ be a motivic spectrum over $S$
	(or more generally an object of $\T(S)$ for a triangulated motivic $\infty$-category $\T$).
	
	Consider the notation of \Cref{num:bivariant}.
	Then one deduces that $\underline \E(-/S,-)$ is a derived twisted BM-homology theory over $\sft_S$:
	the (natural) covariance and localization follow from the six functors formalism
	(see \cite[Section 2.1]{DJK}), the (exceptional) contravariance follows from the existence
	of trace maps (\emph{loc. cit.} Th. 4.2.1) and base change from \emph{loc. cit.} Prop. 4.2.2.
	
	As explained above, one deduces that $\E_*(-/S,-)$ is an abelian twisted BM-homology theory over $\sft_S$.

\end{ex}
\begin{rem}
	The cohomological case will be treated in Section \ref{sec:MWmodcoh}.	
\end{rem}

\begin{ex}\label{ex:hlg&MW-premodules}
	Let $S$ be a fixed base scheme and $M$ be a homological cycle MW-premodule over $S$ as in \Cref{defMWmodules}.
	\begin{enumerate}
		\item Then $(\spec E,v) \mapsto M(E,v)$ is in fact a twisted BM-homology over $\pts_S$
		with values in graded abelian groups. Note that this assertion actually corresponds
		to data \ref{itm:D1'}, \ref{itm:D2'} (resp. \ref{itm:D1}, \ref{itm:D2}) and relations \ref{itm:R1a'}, \ref{itm:R1b'}, \ref{itm:R1c'} (resp. \ref{itm:R1a}, \ref{itm:R1b}, \ref{itm:R1c}).
		\item Assume moreover that $(S,\delta)$ is a dimensional scheme and $M$ is a MW-module.
		Then the functor $(X,v) \mapsto A_*^\delta(X,v,M)$ is a twisted BM-homology
		over $\seft_S$ with values in graded abelian groups. Indeed, the base change property follows from Proposition \ref{MWmodBaseChangelci} and the localization property from Proposition \ref{MWmod-localization}.
		
	\end{enumerate}
\end{ex}

\begin{num}\label{num:action_cohtp_on_BMhlg}
	We will use a last structure on BM-homologies coming from an $S$-spectrum $\E$.
	We let $\cohtp^n(X,v)=[\un_X,\Th(v)[n]]_{\SH(X)}$ be the cohomology represented
	by the sphere spectrum --- called the \emph{stable $\AA^1$-cohomotopy theory}. The cup-product defines a $\ZZ \times \uK(X)$-graded ring
	structure on $\cohtp^*(X,*)$. Moreover, if $X/S$ is of finite type,
	there is a $\ZZ \times \uK(X)$-graded module structure on $\E_*(X/S,*)$ over this ring:
	$$
	\cohtp^n(X,v) \otimes \E_m(X/S,w) \rightarrow \E_{m-n}(X/S,w-v), x \otimes \rho \mapsto x.\rho.
	$$
	Note moreover that it follows easily from the six functors formalism that we have the following
	formulas:
	\begin{equation}\label{eq:action_cohtp_on_BMhlg}
		\begin{split}
			f^!(x.\rho)&=f^*(x).f^!(\rho) \\
			f_*\big(f^*(x).\rho\big)&=x.f_*(\rho) \\
			f_*(x.f^!(\rho))&=f_!(x).\rho
		\end{split}
	\end{equation}
	where on the third line, $f_!$ is the Gysin morphism on stable $\AA^1$-cohomotopy (\cite[4.3.3(iii)]{DJK}).
\end{num}

\subsection{Preparations}

We will give a series of interesting properties of BM-homology theories,
used in the proof of the main theorem of this section but also interesting on their own.
We start with the following definition (compare \cite[before Def. 2.6]{MorelLNM}, \cite[\S 4.3]{CD19}).
\begin{df}\label{df:continuity}
	Let $S$ be a scheme and $\hat H_*$ be an abelian homology theory over $\seft_S$.
	We say $\hat H_*$ is \emph{continuous} if for any pair $(X,v)$ in $\virt\seft_S$
	such that $(X,v)=\plim_{i \in I} (X_i,v_i)$, for any pro-object $(X_i,v_i)_{i \in I}$
	such that the transition maps $X_j \rightarrow X_i$ are affine \'etale,
	the canonical map induces an isomorphism:
	\begin{equation}\label{eq:continuity}
		\hat H_*(X,v) \simeq \ilim_{i \in I^{op}} \hat H_*(X_i,v_i).
	\end{equation}
\end{df}

The following proposition is classical (and will be used as in \cite[Def. 3.1.13]{BD1}).
\begin{prop}\label{prop:kan_extension_BMhlg}
	Let $\A$ be an cocomplete abelian category.
	Let $H_*$ be a BM-homology theory on $\sft_S$ with coefficients in $\A$.
	Then there exist a unique BM-homology theory $\hat H_*$ on $\seft_S$ such that
	\begin{center}
		$\hat H_*:(\virt\seft_S)^{prop} \rightarrow \A$ 
		\\ (resp. $\hat H^!:(\virtlci\seft_S)^{op} \rightarrow \A$)
	\end{center}
	is the right Kan extensions of  
	\begin{center}
		$\hat H_*:(\virt\sft_S)^{prop} \rightarrow \A$
		\\ (resp. $\hat H^!:(\virtlci\sft_S)^{op} \rightarrow \A$)
	\end{center}
	along the inclusion $\nu:\virt\sft_S \rightarrow \virt\seft_S$
	(resp. $\virtlci\sft_S \rightarrow \virtlci\seft_S$).\footnote{To make the convention clear, one has natural isomorphisms:
		$\hat H_* \circ \nu \xrightarrow \sim H_*, \hat H^! \circ \nu \xrightarrow \sim H^!$.
	}
	Moreover, the extension $\hat H_*$ is continuous.
\end{prop}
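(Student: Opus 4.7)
\emph{Proof proposal.}

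The plan is to make the construction explicit using the pro-systems defining essentially finite type morphisms, and then to reduce all the axioms to their finite-type counterparts by passing to filtered colimits. Concretely, for $(X,v)\in \virt\seft_S$, write $X=\plim_{i\in I} X_i$ as a cofiltered limit with affine \'etale transition maps and each $X_i\in \sft_S$; by standard spreading-out of quasi-coherent sheaves (cf. EGA IV.8), the virtual vector bundle $v$ descends to a virtual vector bundle $v_{i_0}$ on some $X_{i_0}$, giving pullbacks $v_i$ on all $X_i$ with $i\ge i_0$. I then set
\[
\hat H_*(X,v)\;:=\;\colim_{i\in I^{op}} H_*(X_i,v_i),
\]
and similarly for $\hat H^!$. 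The cocompleteness of $\A$ ensures this colimit exists; the formula agrees with the pointwise Kan extension formula thanks to the cofinality of the pro-system inside the relevant slice category (this is where the standard limit theorems of \cite{stacks_project} and EGA IV.8 come in).

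Next I would verify functoriality. Any proper (resp.\ lci) morphism $f\colon (Y,w)\to (X,v)$ between essentially finite type schemes lifts, by standard limit arguments, to a proper (resp.\ lci) morphism $f_i\colon (Y_i,w_i)\to (X_i,v_i)$ at some sufficiently large stage $i$ (compatibly with the isomorphism of virtual bundles, possibly after passing even further); moreover two such liftings coincide after increasing $i$. Applying $H_*$ (resp.\ $H^!$) and passing to the colimit yields the required $\hat f_*$ (resp.\ $\hat f^!$). The fact that $\hat H_*(\Id_X,\varphi)$ and $\hat H^!(\Id_X,\varphi)$ are inverse for an isomorphism $\varphi$ of virtual bundles is inherited stage by stage from $H_*,H^!$. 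Composition and agreement on objects follow similarly from the finite-type case.

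Base change and localization are then obtained by exactness of filtered colimits in $\A$. A transversal square in $\virt\seft_S$ with proper vertical maps and lci horizontal maps lifts to a cofinal system of transversal squares in $\virt\sft_S$ of the same shape (again using EGA IV.8 to spread out transversality, properness and lci-ness); passing to the colimit of the base change identities $(f_i,\phi_i)^!(p_i,\psi_i)_*=(p_i,\psi_i')_*(g_i,\phi_i')^!$ yields the corresponding identity for $\hat H_*$. For localization, a closed immersion $i\colon Z\hookrightarrow X$ with complement $j\colon U\hookrightarrow X$ in $\seft_S$ spreads out to a compatible system of closed immersions $i_\alpha\colon Z_\alpha\hookrightarrow X_\alpha$ with open complements $j_\alpha\colon U_\alpha\hookrightarrow X_\alpha$ cofinal in the respective pro-systems; the long exact sequences in $\A$ glue under filtered colimit (which is exact in $\A$) into the desired long exact sequence.

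Continuity is essentially tautological: if $(X,v)=\plim (X_i,v_i)$ with affine \'etale transition maps and each $X_i\in\seft_S$, then refining each $X_i$ by its own defining pro-system of finite-type schemes produces a combined cofiltered system whose colimit computes both sides of \eqref{eq:continuity}, so the canonical map is an isomorphism. Uniqueness of the extension is the universal property of the pointwise Kan extension, given that any functor extending $H$ and enjoying continuity in the above sense must agree with $\hat H$ on any essentially finite type scheme. The main obstacle I anticipate is purely bookkeeping: keeping track of the virtual-bundle isomorphisms through the various liftings and making sure that independence of choices and coherence of lifted compositions genuinely hold after passing to a large enough index, rather than on the nose; this is the sort of verification that is standard but tedious, and where one naturally cites EGA IV.8 and \cite[Def.~3.1.13]{BD1} to organize the argument.
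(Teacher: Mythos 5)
Your proof takes essentially the same approach as the paper's: define $\hat H_*$ by the colimit formula \eqref{eq:continuity} over the pro-system (using the \'etale pullbacks coming from the lci contravariance of $H$), and then reduce each axiom of a BM-homology theory to the finite-type case via spreading out and passage to filtered colimits. The paper's own proof is terse (it observes that \'etale contravariance is compatible with the proper covariance, lci contravariance and residues, takes the colimit formula as the definition, and asserts it yields the required Kan extension and continuity), and yours is a faithful expansion of that argument.
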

\begin{proof}
	Indeed, $H$ is contravariant with respect to \'etale morphisms
	and the \'etale contravariant functoriality is compatible both with the proper covariance, the lci contravariance and the residue morphisms of $H$.
	As any $(X,v)$ in $\virt\seft_S$ is a projective limit of a pro-object $(X_i,v_i)$ satisfying the condition in the previous definition,
	and such that $X_i/S$ is of finite type, one can take \eqref{eq:continuity} 
	as a definition of $\hat H_*$ and one can check that it gives the required right Kan extensions.
\end{proof}

\begin{rem}
	\label{Bivariant_groups}
	In the case of the BM-homology represented by a motivic spectrum $\E$,
	one can also derive the above proposition from the six functors formalism.
	In fact,  we have shown in \cite[App. B]{DJK} that the functoriality $f^!$ of $\SH$ can be extended 
	from $f:X \rightarrow S$ of finite type to the case where $f$ is essentially of finite type.
	Therefore, for $f$ essentially of finite type, on can extends formula \eqref{eq:bivariant} and define the \textit{bivariant groups}:
	\begin{equation}
		\E_{n}(X/S,v)=\Hom_{\T(X)}(\Th_X(v)[n],f^!\E).
	\end{equation}
	and this extends formula \eqref{eq:bivariant}.
	
	One can further show that the functor $f^!$ admits a left pro-adjoint\footnote{compare to the construction of Deligne in \cite{HartRD}}:
	$$
	f_!:\SH(X) \rightarrow \pro{\SH(S)},
	$$
	or more generally, the obvious functor $f^!:\pro{\SH(S)} \rightarrow \pro{\SH(X)}$ admits a left adjoint
	$\pro f_!$, and the above one is obtained after composing on the right with the constant pro-object functor.
\end{rem}

\begin{rem}\label{rem:ext_action_cohtp_on_BMhlg}
	By continuity of the stable homotopy category, the action of stable $\AA^1$-cohomotopy on 
	the BM-homology $\E_*$ on $\sft_S$ represented by a spectrum $\E$ over $S$
	(recalled in \Cref{num:action_cohtp_on_BMhlg}) naturally extends to the extension $\hat \E_*$
	on $\seft_S$ obtained in the previous proposition. By passing to the colimit,
	the same is true for the formulas \eqref{eq:action_cohtp_on_BMhlg}.
	
	Note in particular that when considering a point $x:\Spec(E) \rightarrow S$ in $\pts(S)$,
	we get a structure of $\cohtp^*(E,*)$-module on $\hat E_*(E/S,*)$ together with formulas
	\eqref{eq:action_cohtp_on_BMhlg}. In particular,
	according to Morel's theorem \cite[Cor. 1.25]{MorelLNM},
	we get an action of $K_n^{MW}(E)=\cohtp^{-n}(E,\tw n)$.
\end{rem}

\begin{num}
	Note that given a scheme $X$, while the group automorphism of the rank $0$ vector bundle over $X$ is just $0$,
	one has $\Aut_{\uK(X)}(0_X)=\GG(X)$. In particular, given any virtual bundle $v$ over $X$,
	and any invertible function $a \in \GG(X)$, one gets an automorphism
	$\tilde a:v\simeq v+0_X \rightarrow v+0_X \simeq v$.
\end{num}
\begin{df}\label{df:GW_action_BMhlg}
	Let $H_*$ be an $R$-linear homology theory on $\seft_S$.
	For any scheme $X/S$ essentially of finite type, and any $a \in \GG(X)$, one denote by
	$\gamma_{\ev a}:H_*(X,v) \rightarrow H_*(X,v)$ the map induced by $\tilde a$ using the covariant
	functoriality of $H_*$.
\end{df}
The following formulas are immediate from this definition:
\begin{equation}\label{eq:GW_PF}
	f_* \gamma_{\ev{f^*a}}=\gamma_{\ev{a}} \circ f_*, \ f^! \gamma_{\ev{a}}=\gamma_{\ev{f^*a}} \circ f^!.
\end{equation}
Moreover, if $Z \rightarrow X$ is a closed immersion and $U$ its associated open subscheme, letting $a_Z$ (resp. $a_U$) be the restriction of
$a$ to $Z$ (resp. $U$), one also get from the functoriality of localization long exact sequences,
and especially the anti-commutativity statement:

$$
\partial_{Z/X} \circ \gamma_{\ev{a_U}}=\gamma_{\ev{a_Z}} \circ \partial_{Z/X}.
$$

\begin{rem}\label{rem:compare_GW_actions}
	We have abused the notation here as the map $\gamma_{\ev a}$ a priori depends on the unit $a$,
	and not just on its class in $\GW(\GG(X))$.
	
	Consider however the case of the BM-homology $\E_*$ represented by a spectrum $\E$,
	and its extension $\hat \E_*$ to $\seft_S$ (\Cref{prop:kan_extension_BMhlg}).
	Then for any point $\Spec(E) \rightarrow S$, and any unit $a \in E^\times$,
	one can easily check the relation:
	$$
	\gamma_{\ev a}(x)=\ev a.x
	$$
	where on the right hand-side, $\ev a$ is the image of $a$ in $\cohtp^{-1}(X,\tw 1) \simeq \GW(E)$,
	and we have considered the action of stable $\AA^1$-cohomotopy on $\hat \E_*$ (\Cref{rem:ext_action_cohtp_on_BMhlg}).
	Indeed, after unraveling the definition, the main argument is \cite[6.1.3]{Morel_Pi}.
\end{rem}

\begin{num}
	Consider the assumptions and notations of the previous definition \ref{df:GW_action_BMhlg}.
	Let $X$ be a scheme essentially of finite over $S$, and $v$ a virtual bundle over $X$.
	Let $\mathcal L=\det(v)$, $r=\mathrm{rk}(v)$ be its determinant and rank. 
	A global non-zero section $l \in \mathcal L(X)-\{0\}=\mathcal L(X)^\times$ 
	of the line bundle $\mathcal L$
	uniquely corresponds to a trivialization: $\mathcal O_X \xrightarrow \sim \mathcal L$,
	that we still denote by $l$.
	Moreover, given $l, l' \in \mathcal L(X)^\times$, there exists a unique invertible function
	$\lambda \in \GG(X)$ such that $l'=\lambda.l$. In fact, $\GG(X)$
	acts faithfully on $\mathcal L(X)-\{0\}$.
\end{num}
\begin{prop}\label{prop:twists_hlg_semilocal}
	Consider the above notation and assume that $X$ is connected semi-local.
	Then any virtual bundle $v$ over $X$ has constant rank $r$
	and there exists an isomorphism $\varphi:v \rightarrow \tw r$ of virtual bundles.
	Choose such an isomorphism $\varphi$,
	and let $l_\varphi=\det(\varphi):\mathcal L \rightarrow \mathcal O_X$ be the induced isomorphism.
	We consider the map:
	$$
	(*):H_n(X,\tw r) \otimes_{\ZZ[\GG(X)]} \ZZ[\mathcal L(X)^\times] \rightarrow H_n(X,v),
	x \otimes l \mapsto \gamma_{\ev{l_\varphi \circ l}} \circ (Id_X,\varphi^{-1})_*(x)
	$$
	where $\GG(X)$ acts on $H_n(X,\tw r)$ according to \Cref{df:GW_action_BMhlg}
	and on $\mathcal L(X)^\times$ as indicated above, and the composition
	$l_\varphi \circ l:\mathcal O_X \xrightarrow \sim \mathcal O_X$ is seen as an invertible
	function on $X$.
	Then (*) is an isomorphism, independent of the choice of $\varphi$.
\end{prop}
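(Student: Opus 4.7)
The plan is to proceed in four steps: (i) produce the isomorphism $\varphi$, (ii) verify well-definedness of $(*)$ on the tensor product, (iii) prove bijectivity for a fixed $\varphi$, and (iv) establish independence of the choice of $\varphi$. The crux lies in step (iv), which reduces to a determinant identity for the action of arbitrary automorphisms of $\tw r$ on $H_n(X,\tw r)$.

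For (i), since $X$ is connected semi-local every finitely generated projective $\mathcal{O}_X$-module is free of constant rank, so any virtual bundle $v$ of rank $r$ over $X$ is isomorphic in $\uK(X)$ to $\tw r$. We fix such a $\varphi$, yielding the trivialization $l_\varphi=\det\varphi:\mathcal{L}\xrightarrow{\sim}\mathcal{O}_X$. For (ii), one checks that for every $a\in\GG(X)$ the images of $\gamma_{\ev a}(x)\otimes l$ and of $x\otimes (a\cdot l)$ coincide. Using the commutation $(\mathrm{Id}_X,\varphi^{-1})_*\circ\gamma_{\ev a}=\gamma_{\ev a}\circ(\mathrm{Id}_X,\varphi^{-1})_*$ (a case of \eqref{eq:GW_PF}, as $\varphi$ is the identity on $X$) together with the multiplicativity $\gamma_{\ev a}\circ\gamma_{\ev b}=\gamma_{\ev{ab}}$ (from $\tilde a\circ\tilde b=\widetilde{ab}$ and functoriality of $H_*$), both expressions equal $\gamma_{\ev{a\cdot(l_\varphi\circ l)}}\circ(\mathrm{Id}_X,\varphi^{-1})_*(x)$.

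For (iii), the group $\GG(X)$ acts freely and transitively on $\mathcal{L}(X)^\times$, so any basepoint $l_0\in\mathcal{L}(X)^\times$ induces an isomorphism $\ZZ[\mathcal{L}(X)^\times]\simeq \ZZ[\GG(X)]$ of $\ZZ[\GG(X)]$-modules, under which the tensor product is identified with $H_n(X,\tw r)$. The map $(*)$ then becomes the composite $\gamma_{\ev{l_\varphi\circ l_0}}\circ(\mathrm{Id}_X,\varphi^{-1})_*$ of two isomorphisms (the inverse of $\gamma_{\ev{l_\varphi\circ l_0}}$ being $\gamma_{\ev{(l_\varphi\circ l_0)^{-1}}}$), whence $(*)$ is an isomorphism.

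Finally for (iv), given another choice $\varphi'$, put $\alpha=\varphi'\circ\varphi^{-1}\in\operatorname{Aut}_{\uK(X)}(\tw r)$ and $\mu=\det\alpha\in\GG(X)$, so that $(\varphi')^{-1}=\varphi^{-1}\circ\alpha^{-1}$ and $l_{\varphi'}=\mu\cdot l_\varphi$. Moving all $\gamma$-operators past $(\mathrm{Id}_X,\varphi^{-1})_*$ using the commutation from step (ii) and canceling the isomorphism $\gamma_{\ev{l_\varphi\circ l}}$, the equality of the two expressions for $(*)$ is equivalent to the identity
\begin{equation*}
(\mathrm{Id}_X,\alpha)_*=\gamma_{\ev{\det\alpha}}\qquad\text{on } H_n(X,\tw r).
\end{equation*}
This \emph{determinant formula} — that the action of an arbitrary automorphism of $\tw r$ on BM-homology factors through its determinant — is the main obstacle. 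For BM-homology represented by a motivic spectrum it follows from the identification of self-maps of $\Th(\tw r)$ in $\SH(X)$ with $\GW(X)$ via the determinant (cf.\ \Cref{rem:compare_GW_actions}), itself a consequence of Morel's stability theorem. In the abstract setting it must be incorporated into the structural axioms, or derived from the decomposition $GL_r=\GG\cdot SL_r$ available over connected semi-local rings together with the $\AA^1$-triviality of elementary matrices in $SL_r$ and the homotopy invariance built into the bivariant formalism over $\virt$.
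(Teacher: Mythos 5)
Your steps (i)--(iii) are fine, and the reduction in step (iv) to the identity $(\Id_X,\alpha)_*=\gamma_{\ev{\det\alpha}}$ on $H_n(X,\tw r)$ is exactly the right thing to prove. However, you then treat this identity as an extra structural fact about the BM-homology theory, to be supplied either by Morel's theorem (in the representable case) or by an $\AA^1$-triviality argument for $SL_r$ (in general). That is not needed, and the second route would in any case not be available: a BM-homology theory in the sense of Definition~\ref{df:twisted_(co)hlg} carries no built-in $\AA^1$-homotopy invariance. The point you are missing is that $\gamma_{\ev{\det\alpha}}$ is by \emph{definition} $(\Id_X,\widetilde{\det\alpha})_*$, so the identity you want is equivalent to the equality $\alpha=\widetilde{\det\alpha}$ of automorphisms of $\tw r$ \emph{inside the groupoid $\uK(X)$}, before the BM-homology functor is ever applied. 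Since $\alpha$ and $\widetilde{\det\alpha}$ have the same determinant, this equality holds as soon as the determinant functor is faithful on $\Aut_{\uK(X)}(\tw r)$; that is exactly what the hypothesis ``$X$ connected semi-local'' buys (cf.\ the remark following the statement, where $\uK(X)\to\ZZ_X\times\uPic(X)$ is required to be an equivalence). The paper's proof runs the same way but stays one level upstream: it observes directly that both expressions for $(*)$ have the form $(\Id_X,\theta)_*$ for isomorphisms $\theta:\tw r\to v$ in $\uK(X)$, reduces to the commutativity of a diagram of such isomorphisms, and verifies it after applying $\det$. So the gap is not in your reductions but in attributing the determinant formula to a property of $H_*$ rather than to a property of $\uK(X)$ over a semi-local base.
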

\begin{proof}
	The rank of a virtual bundle on $X$ is a Zariski locally constant function on $X$,
	so it is constant as $X$ is connected. As $X$ is semi-local, any vector bundle
	of constant rank is free of the same rank. This implies the first assertion.
	
	It is clear that $(*)$ is an isomorphism. 
	It remains to prove it is independent of $\varphi$. Coming back to the definition
	we must compare two isomorphisms of virtual bundles over $X$, which is expressed
	by proving the commutativity of the following diagram:
	$$
	\xymatrix@R=8pt@C=20pt{
		v\ar@{=}[r]\ar@{=}[d] & v \otimes \cO_X\ar^{Id \otimes l}[r]
		& v \otimes \mathcal{L}
		\ar^-{Id \otimes l_\varphi}[r]
		& v \otimes \cO_X\ar@{=}[r] & v\ar^-{\varphi^{-1}}[r] & <r>\ar@{=}[d] \\
		v\ar@{=}[r] & v \otimes \cO_X\ar^{Id \otimes l}[r]
		& v \otimes \mathcal{L}
		\ar^-{Id \otimes l_\psi}[r]
		& v \otimes \cO_X\ar@{=}[r] & v\ar^-{\psi^{-1}}[r] & <r>
	}
	$$
	where $\phi$ and $\psi$ are two choices of isomorphisms.
	As $X$ is semi-local, and all the virtual bundles involved are of the same rank,
	it suffices to apply the determinant functor to check the commutativity
	of this diagram. Then it is obvious (by definition of $l_\varphi$ and $l_\psi$).
\end{proof}

\begin{rem}
	The normalization procedure described in the previous proposition is quite general,
	and applies to any scheme $X$
	such that the graded determinant functor $\uK(X) \rightarrow \ZZ_X \times \uPic(X)$
	is an equivalence of categories and any functor $F:\uK(X) \rightarrow \smod R$,
	giving a canonical isomorphism (functorial in $v$):
	$$
	F(\tw{\rk(v)}) \otimes_{R[\GG(X)]} R[\Gamma(X,\det(v))^\times] \rightarrow F(v).
	$$
\end{rem}

Given the preceding proposition, it is relevant to introduce
another notation which will be closed to the convention used for (co)homology theories
like Chow-Witt groups, higher Witt and Grothendieck-Witt groups.
\begin{df}\label{df:twists_hlg_semilocal}
	Consider the assumptions of the previous proposition,
	so $X$ is semi-local.
	For a line bundle $\mathcal L$ over $X$ and an integer $n \in \ZZ$,
	one puts:
	$$
	H_{n,i}(X)\Gtw{\mathcal L}:=H_{n-2i}(X,\tw i) \otimes_{\ZZ[\GG(X)]} \ZZ[\mathcal L^\times].
	$$
\end{df}
Thus, the preceding proposition can be rewritten as an isomorphism:
$$
H_{n+2\rk(v),\rk(v)}(X)\Gtw{\det v} \simeq H_n(X,v).
$$

\begin{num}\label{num:A1invariance&splitting}
	Let $S$ be a scheme and $H_*$ be an abelian BM-homology on $\seft_S$.
	We assume that $H_*$ satisfies the homotopy invariance property for any $X/S$ essentially of finite type,
	any virtual bundle $v$ over $X$, the Gysin map induced by the projection $p:\AA^1_X \rightarrow X$ 
	$$
	p^!:H_*(X,v) \xrightarrow \sim H_*(\AA^1_X,p^{-1}v+<1>)
	$$
	is an isomorphism. Combining this property with the localization long exact sequence
	induced by the zero section $s_0:X \rightarrow \AA^1_X$, we get a split short exact sequence:
	$$
	0 \rightarrow H_n(X,v-\tw 1) \xrightarrow{q^!} H_n(\GGx X,q^{-1}v) \xrightarrow{\partial_t} H_{n-1}(\GGx X,v) \rightarrow 0
	$$
	where $\partial_t$ is the residue map associated with $s_0$, $q:\GGx X \rightarrow X$ the canonical projection.
	
	A canonical splitting of this short exact sequence is given by the retraction $s_1^!$ of $q^!$,
	$s_1:X \rightarrow \GGx X$ being the unit section. We let $\sigma_t^X$ be the corresponding section of $\partial_t$.
	
	Then it follows formally from this definition and the properties of a BM-homology
	that for $f:Y \rightarrow X$ proper (resp. lci), one has:
	\begin{equation}\label{eq:splitting_functorial}
		\begin{split}
			(1_{\GG} \times f)_* \sigma_t^Y&=\sigma_t^X \circ f_* \\
			\text{resp. } (1_{\GG} \times f)^! \sigma_t^X&=\sigma_t^Y \circ f^!.
		\end{split}
	\end{equation}
\end{num}
\begin{df}
	Consider the previous hypothesis and notation. Let $u \in \GG(X)$ be a (global) unit on $X$,
	corresponding to the morphism of schemes $u_X:X \rightarrow \GGx X$.
	We then define an action of $u$ on $H_*(X,v)$ by the formula:
	$$
	\gamma_{[u]}:H_n(X,v) \xrightarrow{\sigma_t^X} H_{n+1}(\GGx X,q^{-1}v-\tw 1) \xrightarrow{u_X^!} H_{n+1}(X,v-\tw 1).
	$$
	This defines an action of $\GG(X)$ on $H_*(X,v+\tw *)$.
\end{df}
Note that $\gamma_{[u]}$ is uniquely determined by the equality: $\gamma_{[u]} \circ \partial_t=(1_X)^!-(u_X)^!$.
In particular, $\gamma_{[1]}=0$.

\begin{rem}\label{rem:compare_K1MW-actions}
	Beware that the notation $\gamma_{[u]}$ is a slightly abusive in the general context,
	as this morphism depends on $u$ and not only of the class of $u$ in $K_1^{MW}(\GG(X))$.
	
	However, this is not abusive in the case of the (extended) BM-homology $\hat E_*$ on $\seft_S$
	associated with a spectrum $\E$ over $S$ (see \Cref{ex:hlg&coh} and \Cref{prop:kan_extension_BMhlg}).
	
	First, one can apply the exact same construction to the cohomology theory $\E^*$,
	using the natural contravariant functoriality of $\E^*$ in place of the Gysin morphisms,
	and the localization long exact sequence for $s_0$ (which exists as it is a closed immersion
	between smooth $X$-schemes). One has only to be careful of different twists.
	For the stable $\AA^1$-cohomotopy $\cohtp^*$ we obtain, for any $S$-scheme $X$, a morphism:
	$$
	\gamma_{[u]}:\cohtp^n(X,v) \rightarrow \cohtp^{n-1}(X,v+\tw 1),
	$$
	and it is immediate, by definition of the element $[u] \in \cohtp^{1,1}(X)$, that
	for any $x \in \cohtp^n(X,v)$, $\gamma_{[u]}(x)=[u].x$.
	
	Second, by definition of the action of stable $\AA^1$-cohomotopy on $\hat \E_*$
	(\Cref{rem:ext_action_cohtp_on_BMhlg}), one similarly deduces
	for $X/S$ essentially of finite type, and any $\rho \in \hat E_*(X/S,v)$:
	\begin{equation}
		\gamma_{[u]}(\rho)=[u].\rho.
	\end{equation}
\end{rem}

The following proposition stands as a kind of universal computation of Gysin morphisms,
to be compared with the intersection with effective Cartier divisors appearing on different
homological context (singular homology, Chow groups,...)
\begin{prop}\label{prop:Gysin_divisor}
	Let $H_*$ be a BM-homology theory satisfying $\AA^1$-homotopy invariance
	as in \Cref{num:A1invariance&splitting}.
	
	Let $i:Z \rightarrow X$ be the closed immersion of a principal (effective Cartier) divisor,
	with a given parametrization $\pi:X \rightarrow \AA^1$ and $j:U \to X$ the complementary open immersion.
	We still denote by $\pi$ the unit on $U=X-Z$ obtained by restriction. 
	Then one gets the following relation:
	$$
	i^!=\partial_{Z/X} \circ \gamma_{[\pi]} \circ j^!.
	$$
\end{prop}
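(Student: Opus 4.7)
The plan is to reduce the identity to the universal setting where $\pi$ is the coordinate on $\AA^1$, by embedding the picture into $\AA^1\times X$ via the graph of $\pi$. Let $\Gamma_\pi\colon X\to \AA^1\times X$, $x\mapsto(\pi(x),x)$, be the graph of $\pi$; let $s_0^X\colon X\to \AA^1\times X$ denote the zero section with complementary open immersion $j'_X\colon \GG\times X\to \AA^1\times X$; and let $\tilde\pi_U\colon U\to \GG\times X$, $u\mapsto(\pi(u),j(u))$, be the restriction of $\Gamma_\pi$ to $U$, which factors through $\GG\times X$ because $\pi|_U$ is a unit. These data assemble into two transversal cartesian squares
$$
\xymatrix@R=14pt@C=26pt{
Z\ar[r]^{i}\ar[d]_{i} & X\ar[d]^{\Gamma_\pi} & U\ar[l]_{j}\ar[d]^{\tilde\pi_U}\\
X\ar[r]^{s_0^X} & \AA^1\times X & \GG\times X\ar[l]^{j'_X}
}
$$
in which all normal bundles are canonically trivial of rank one.

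The first key step is to rewrite $\gamma_{[\pi]}\circ j^!$ in terms of the universal splitting $\sigma_t^X$. By the very definition of $\gamma_{[\pi]}$ for the unit $\pi|_U$, with associated graph $\pi_U\colon U\to \GG\times U$, one has $\gamma_{[\pi]}=\pi_U^!\circ\sigma_t^U$. Applying the second formula in \eqref{eq:splitting_functorial} to the lci open immersion $j$ yields $\sigma_t^U\circ j^!=(1_{\GG}\times j)^!\circ\sigma_t^X$. Composing with $\pi_U^!$ and using the identity $(1_{\GG}\times j)\circ\pi_U=\tilde\pi_U$ gives
$$
\gamma_{[\pi]}\circ j^!\;=\;\tilde\pi_U^!\circ\sigma_t^X.
$$

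The second key step invokes the lci contravariant functoriality of the localization long exact sequence for the two cartesian squares above: this produces a morphism of LESs from the triple $(X,\AA^1\times X,\GG\times X)$ to the triple $(Z,X,U)$ with vertical components $(i^!,\Gamma_\pi^!,\tilde\pi_U^!,i^!)$, and the commutativity of its connecting-homomorphism square yields
$$
\partial_{Z/X}\circ\tilde\pi_U^!\;=\;i^!\circ\partial_t^X,
$$
where $\partial_t^X$ is the universal residue of \ref{num:A1invariance&splitting}, of which $\sigma_t^X$ is by definition a section. Combining with the previous step and the defining property $\partial_t^X\circ\sigma_t^X=\mathrm{id}$, we conclude
$$
\partial_{Z/X}\circ\gamma_{[\pi]}\circ j^!\;=\;\partial_{Z/X}\circ\tilde\pi_U^!\circ\sigma_t^X\;=\;i^!\circ\partial_t^X\circ\sigma_t^X\;=\;i^!,
$$
as desired. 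The main technical point is the residue compatibility used in the second step; it is a consequence of the base change axiom~(1) of Definition~\ref{df:twisted_(co)hlg} applied to the two transversal cartesian squares above, together with the fact that the Gysin pullbacks extracted from the six-functor formalism yield genuine morphisms of localization LESs through such squares.
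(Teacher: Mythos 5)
Your proof is correct and takes essentially the same route as the paper: both arguments hinge on the graph morphism $\Gamma_\pi\colon X\to\AA^1_X$, the functoriality of the localization long exact sequence with respect to the transversal square $s_0^X\circ i=\Gamma_\pi\circ i$, the splitting compatibility $\sigma_t^U\circ j^!=(1_\GG\times j)^!\circ\sigma_t^X$ from \eqref{eq:splitting_functorial}, and the relation $\partial_t\circ\sigma_t^X=\Id$. You simply run the chain of equalities in the opposite direction (building up from $\gamma_{[\pi]}\circ j^!$ rather than unwinding $i^!$), and your $\tilde\pi_U$ is the paper's composite $\phi$; the content is the same.
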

\begin{proof}
	Applying the functoriality of the localization long exact sequence
	with respect to the transversal square:
	$$
	\xymatrix@=10pt{
		Z\ar^i[r]\ar_i[d] & X\ar^{\pi_X}[d] \\
		X\ar^-{s_0}[r] & \AA^1_X
	}
	$$
	one obtains the following commutative diagram:
	$$
	\xymatrix@R=10pt@C=20pt{
		H_*(\GGx U,*)\ar^{\partial_t}[r]\ar_{\phi^!}[d] & H_*(X,*)\ar^{i^!}[d] \\
		H_*(U,*)\ar^-{\partial_{Z/X}}[r] & H_*(Z,*-\tw{N_{ZX}})
	}
	$$
	where $\phi:U \xrightarrow{\pi_U} \GGx U \xrightarrow{1_{\GG} \times j} \GGx X$.
	The relation then follows as:
	$$
	i^!=i^! \partial_t \sigma^X_t=\partial_{Z/X} \phi^! \sigma^X_t
	= \partial_{Z/X} (\pi_U)^! (1_{\GG} \times j)^! \sigma^X_t
	\stackrel{\eqref{eq:splitting_functorial}}= \partial_{Z/X} (\pi_U)^! \sigma^U_t j^! 
	=\partial_{Z/X} \gamma_{[u]} j^! 
	$$
\end{proof}

\begin{rem}
	This formula is a far reaching generalization of both \cite[Prop. 2.6.5]{Deg5}
	and \cite[Cor. 12.4]{Rost96}.
	One can actually use it to uniquely characterize the refined fundamental classes
	of \cite{DJK} associated with closed immersions.\footnote{With the notation
		of \emph{op. cit.}, using deformation of  the normal cone,
		one reduces to the case of the closed immersion of $N_ZX \rightarrow D_ZX$,
		which is a principal divisor parametrized by $t:D_ZX \rightarrow \AA^1$.}
	Note also that one can deduce the following normalization property:
	$$
	\partial_t \gamma_{[t]} q^!=\Id_{H_*(X,v)}
	$$
	where $t$ is the tautological unit on $\GGx X$ and $q:\GGx X \to X$ the canonical projection.
\end{rem}

\subsection{Homological Rost transform and Gersten complexes}

\begin{num}\label{num:delta_niveau_ssp}
	The following definitions were prepared in \cite[\S3.1]{BD1}
	and \cite[\S3]{ADN} (see also \cite[\S3]{Jin}).
	This is an elaboration of Grothendieck's theory of Cousin complexes
	and of Bloch-Ogus niveau spectral sequence in homology theories.
	
	Let $(S,\delta)$ be a dimensional scheme, and $\E$ be a motivic spectrum over $S$.
	We consider the continuous extension $\hat \E_*(-/S)$ of the abelian BM-homology over $\sft_S$
	associated with $\E_*(-/S)$ (\Cref{df:twisted_(co)hlg} and \Cref{prop:kan_extension_BMhlg}).
	
	Let now $X/S$ be essentially of finite type.
	In \cite[Def. 3.1.5]{BD1} and also \cite[Def. 3.2.3]{ADN},
	one has build the so-called \emph{$\delta$-niveau spectral sequence} which takes the following form:
	$$
	^{\delta}E^1_{p,q}(X,\E)=\bigoplus_{x \in X_{(\delta = p)}} \hat{\E}_{p+q}(x/S) \Rightarrow \E_{p+q}(X/S).
	$$
	If moreover, $v$ is a virtual bundle over $X$, with rank $r$ and determinant $\mathcal L$,
	applying the same construction to the spectrum $\E_X\tw v$
	and taking into account \Cref{prop:twists_hlg_semilocal} and \Cref{df:twists_hlg_semilocal},
	one gets a twisted form of the $\delta$-niveau spectral sequence:
	\begin{equation}\label{eq:twisted_niveau}
		^{\delta}E^1_{p,q}(X,\E\tw v)=\bigoplus_{x \in X_{(\delta = p)}} \hat{\E}_{p+q-2r,-r}(x/S)\Gtw{\mathcal L_x} \Rightarrow \E_{p+q}(X/S,v).
	\end{equation}
	The following definition is a twisted variant of \cite[3.2.4]{ADN}.
\end{num}
\begin{df}
	Consider the above notation and assumption.
	We define the \emph{twisted Gersten complex} of $(X,v)$ with coefficients in $\E$ as
	the line $q=0$ of the spectral sequence \eqref{eq:twisted_niveau}:
	$$
	{}^{\delta}C_*(X,\E\tw v):={}^{\delta}E^1_{*,0}(X,\E\tw v).
	$$
\end{df}

\begin{num}\label{num:fiberhlg}
	Consider the notation of \Cref{num:delta_niveau_ssp}, and fix an integer $p \in \ZZ$.
	Recall from \cite[Def. 3.2.3]{BD1} that one defines\footnote{Our definition avoids the use of \emph{$S$-models} in \cite[Definition 3.2.1]{BD1}.} the $p$-th fiber $\delta$-homology of $\E$
	evaluated at a pair $(x,n)$ where $x:\Spec(E) \rightarrow S$ is a point and $n \in \ZZ$ as:
	\begin{align}\label{eq:fiberhlg}
		\hat H_p^\delta(\E)(E,n)=\hat \E_{p+n}(x/S,\tw{\delta(x)-n})=\hat \E_{p+2\delta(x),\delta(x)-n}(x/S).
	\end{align}
	In \emph{op. cit.}, we did not discuss the properties of these groups. Actually, we will now equip them
	with a canonical MW-premodule structure: \Cref{defMWmodules}. First:
	\begin{itemize}
		\item covariant functoriality of the BM-homology $\hat \E_*$ with respect to finite morphisms induces \ref{itm:D2'},
		\item the action of stable $\AA^1$-cohomotopy (\Cref{num:action_cohtp_on_BMhlg}) together with Morel's theorem gives \ref{itm:D3'},
		\item the residue morphisms from the localization property of the BM-homology $\hat \E_*$ gives \ref{itm:D4'}.
	\end{itemize}
	Let now $v$ be a virtual bundle over $\Spec(E)$,
	with rank $r$ and determinant $\cL$.
	Then \Cref{prop:twists_hlg_semilocal} together with \Cref{rem:compare_GW_actions} gives
	the following canonical identification:
	$$
	\hat H_p^\delta(\E)(E,n,\cL)=\hat \E_{p+n}(E/S,v+\tw{\delta(E)-n-r}),
	$$
	where the left hand-side is defined according to formula \eqref{eq:df_twists_preMW}.
	Thus, contravariant lci functoriality with respect to morphisms of $S$-points
	of BM-homology gives data \ref{itm:D1'}.
\end{num}

\begin{thm}
	\label{thm_Rost_transform}
	Consider the above notation.
	
	Then, for any $p \in \ZZ$, $\hat H_p^\delta(\E)$ is a (homological) MW-cycle module over $S$, functorial in $\E$.
	In particular, we get a canonical functor, called the \emph{Rost transform}:
	\begin{align*}
		\dR: \SH(S) &\rightarrow \CatMW_S\\
		\E &\mapsto \hat \E=\hat H_0^\delta(\E).
	\end{align*}
	Finally, if $v$ is a virtual bundle over $X$, of rank $0$ and determinant $\cL$,
	one has a canonical isomorphism of graded complexes, natural in $\E$:
	\begin{equation}\label{eq:Gersten&RS-complexes}
		{}^{\delta}C_*(X,\E\tw v) \simeq 
		{}^{\delta}C_*(X,\hat  \E,\cL).
	\end{equation}
\end{thm}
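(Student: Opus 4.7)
The plan is to observe that the premodule data \ref{itm:D1'}--\ref{itm:D4'} on $\hat H_p^\delta(\E)$ have already been assembled in \ref{num:fiberhlg}, from the proper covariance of $\hat \E_*$ (giving \ref{itm:D2'}), from its lci contravariance combined with the twist identification of \Cref{prop:twists_hlg_semilocal} (giving \ref{itm:D1'}), from the action of stable $\AA^1$-cohomotopy via Morel's identification $K_n^{MW}(E)\simeq\cohtp^{-n}(E,\tw n)$ (giving \ref{itm:D3'}), and from the residue morphisms of the localization long exact sequence of \Cref{df:twisted_(co)hlg}(2) (giving \ref{itm:D4'}). Functoriality in $\E$ is automatic because all three operations on $\hat \E_*$ are. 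It then remains to verify the relations \ref{itm:R1a'}--\ref{itm:R3e'} and the module axioms \ref{itm:FD'}, \ref{itm:C'}.

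I would first dispatch the functoriality rules: \ref{itm:R1a'} and \ref{itm:R1b'} are literally functoriality of proper covariance and lci contravariance for $\hat \E_*$, while \ref{itm:R1c'} is the base change formula of \Cref{df:twisted_(co)hlg}(1) applied to the transversal square $\Spec(F) \times_{\Spec(E)} \Spec(L)$, decomposed into its irreducible components $\Spec(R/\mathfrak p)$; separability of $\psi$ is what makes the square tor-independent and ensures the cotangent twists on both sides cancel. The projection-formula rules \ref{itm:R2a'}, \ref{itm:R2b'}, \ref{itm:R2c'} are precisely the three identities of \eqref{eq:action_cohtp_on_BMhlg}, transported through \Cref{rem:compare_K1MW-actions} and \Cref{rem:compare_GW_actions}. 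For the residue rules, \ref{itm:R3a'} (unramified case) and \ref{itm:R3b'} are the two base change formulas between the residue and, respectively, lci pullback and finite pushforward; both follow from functoriality of the localization triangle with respect to the transversal square attached to the special point of a DVR, the finite decomposition in \ref{itm:R3b'} arising from the decomposition of the pullback DVR into its valuations above $v$. Rule \ref{itm:R3c'} holds because when $w$ restricts trivially to $E$, the morphism $\Spec(E)\to\Spec(\cO_w)$ factors through the open point, so composition with the residue vanishes by exactness of the localization sequence. Rule \ref{itm:R3d'} is a direct consequence of \Cref{prop:Gysin_divisor} applied to the effective Cartier divisor cut out by the uniformizer $\pi$, after reinterpreting the action of $[\pi]$ via \Cref{rem:compare_K1MW-actions}. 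Finally, \ref{itm:R3e'} is the anti-commutation of the residue with the actions of $[u]$ and $\eta$; since both actions are realized as multiplications by classes in $\cohtp^*(X,\tw *)$, this follows from the module structure together with the sign produced when an external cohomology class crosses the connecting map of the localization triangle.

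The module axioms are then essentially free: \ref{itm:FD'} is built into the definition of the $E^1$-terms of the $\delta$-niveau spectral sequence, which are \emph{a priori} direct sums over $X_{(\delta=p)}$ with only finitely many non-zero components on any given class (this being part of the construction recalled in \cite[\S 3.1]{BD1}); \ref{itm:C'} is exactly the identity $d^1\circ d^1=0$ for this spectral sequence, which unpacks on a two-dimensional local scheme into the stated sum of double residues. This yields the Rost transform $\dR$, functorial in $\E$. The final isomorphism \eqref{eq:Gersten&RS-complexes} is then just a matter of unwinding definitions: by construction ${}^{\delta}C_p(X,\E\tw v) = {}^{\delta}E^1_{p,0}(X,\E\tw v)$ is a direct sum of groups $\hat \E_{p-2r,-r}(x/S)\Gtw{\mathcal L_x}$, which by \Cref{prop:twists_hlg_semilocal} and \Cref{df:twists_hlg_semilocal} are exactly the components of ${}^{\delta}C_p(X,\hat \E,\cL)$ under convention \eqref{eq:df_twists_preMW}, and both differentials coincide as direct sums of the residue maps $\partial^x_y$.

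The step I expect to be the main obstacle is the careful bookkeeping of the virtual-bundle twists throughout \ref{itm:R3a'}--\ref{itm:R3d'}, since the cotangent term $\cotgb_v$ appearing in \ref{itm:R3a'} must be absorbed in a way compatible with the normalization of \Cref{df:twists_hlg_semilocal}; any discrepancy here would propagate both into the premodule structure and into the identification \eqref{eq:Gersten&RS-complexes}. Once the twist conventions are pinned down, the remaining verifications are diagrammatic and follow from the structural axioms of a twisted BM-homology theory.
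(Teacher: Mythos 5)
Your proposal follows essentially the same plan as the paper's proof: assemble the premodule data from proper covariance, lci contravariance, the cohomotopy module structure, and the localization residue; dispatch \ref{itm:R1a'}--\ref{itm:R1c'} via functoriality and base change for the twisted BM-homology restricted to points, \ref{itm:R2a'}--\ref{itm:R2c'} via the formulas \eqref{eq:action_cohtp_on_BMhlg}, \ref{itm:R3a'}--\ref{itm:R3c'} via functoriality of the localization triangle, \ref{itm:R3d'} via \Cref{prop:Gysin_divisor}, and then obtain \ref{itm:FD'} and \ref{itm:C'} as byproducts of the identification \eqref{eq:Gersten\&RS-complexes} with the $E^1$-line of the $\delta$-niveau spectral sequence. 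The only point where your attribution diverges slightly is \ref{itm:R3e'}: the paper derives it from the formulas \eqref{eq:GW_PF} together with \Cref{rem:compare_GW_actions}, whereas you invoke the sign arising when a cohomotopy class crosses the boundary map of the localization triangle; both are expressions of the same structural fact, and at the level of detail given neither is more rigorous than the other, so this is a cosmetic rather than a substantive difference.
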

\begin{proof}
	The proof is very technical, but it just consists in putting together the facts proved earlier.
	For the first point, it is sufficient to treat the case $p=0$ as $\hat H_0^\delta(\E[-p])=\hat H_p^\delta(\E)$.
	
	We have already seen the existence of the structure maps of a MW-cycle premodule
	(\Cref{defMWmodules}) before the proof.
	According to \Cref{ex:hlg&MW-premodules} applied to the twisted homology theory $\hat \E_*$
	restricted to $\pts_S$, we not only get data \ref{itm:D1'}, \ref{itm:D2'} but also relations
	\ref{itm:R1a'}, \ref{itm:R1b'}, \ref{itm:R1c'} (resp. \ref{itm:R1a}, \ref{itm:R1b}, \ref{itm:R1c}).
	Then \Cref{rem:ext_action_cohtp_on_BMhlg} not only gives \ref{itm:D3'} but also all relations \ref{itm:R2a}.
	As explained above,
	data \ref{itm:D4'} comes from the residue map in the localization sequence for $\hat \E_*$
	associated with the closed immersion $\Spec(\kappa) \rightarrow \Spec(\cO)$
	(of $S$-schemes essentially of finite type). Relation \ref{itm:R3c'} is immediate,
	while \ref{itm:R3b'} (resp. \ref{itm:R3a'}) follows from the functoriality of localization
	exact sequences with respect to covariant (resp. exceptional contravariant) functoriality.
	Finally, relation \ref{itm:R3d'} follows from \Cref{prop:Gysin_divisor}
	(taken into account \Cref{rem:compare_K1MW-actions})
	and relation \ref{itm:R3e'} from formulas \eqref{eq:GW_PF} (taken into account \Cref{rem:compare_GW_actions}).
	
	To prove that $\hat H_0^\delta(\E)$ is a MW-cycle module (the case $p=0$ is again sufficient),
	we need only to prove the identification \eqref{eq:Gersten&RS-complexes}, both for
	groups and for morphisms, as this will show that the right hand-side is indeed a complex whose
	differentials are well-defined (showing in particular \ref{itm:FD'} and \ref{itm:C'} for $\hat \E$,
	from \Cref{defMWmodules}).
	But this now follows readily from the previous construction,
	and the definition of the $\delta$-niveau exact couple which induces the  $\delta$-niveau spectral sequence.
	We refer the reader to \cite[3.2.6]{ADN} for more details.
\end{proof}

\begin{rem}
	Given a virtual bundle $v$ over $S$ of arbitrary rank $r$ and determinant $\cL$,
	the isomorphism \eqref{eq:Gersten&RS-complexes} extends to the following formula
	isomorphism of complexes:
	$$
	{}^{\delta}C_*(X,\E\tw v)_m \simeq {}^{\delta}C_{*-r}(X,\hat  \E,\cL)_{m+r}.
	$$
	which still corresponds to an isomorphism of graded complexes.
\end{rem}

\begin{num}
	Let $\T$ be a motivic $\infty$-category. As $\SH$ is the universal motivic $\infty$-category (see \cite[Cor. 2.39]{Rob15}),
	we get an adjunction of motivic $\infty$-category:
	$$
	\varphi^*:\SH \leftrightarrows \T:\varphi_*
	$$
	such that $\varphi_*$ commutes with $f_*$, $p^!$ and is weakly monoidal
	(see \cite[2.4.53]{CD19}).
	In particular, given an object $\E$ in $\T(S)$, the twisted homology theory
	associated with $\E$ in $\T(S)$ for $S$-schemes $f:X \rightarrow S$ (as in \Cref{num:bivariant}),
	coincides with the one associated with $\varphi_*(\E)$ in $\SH(S)$:
	\begin{align*}
		\E_*(X/S,v)&=\Hom_{\T(S)}(f_!(\Th(v,\T))[n],\E)
		\simeq \Hom_{\T(S)}(f_!(\varphi^*\Th(v))[n],\E) \\
		& \simeq \Hom_{\T(S)}(\varphi^*f_!(\Th(v))[n],\E) \simeq \Hom_{\SH(S)}(f_!(\Th(v))[n],\varphi_*\E).
	\end{align*}
	One can define the fiber $\delta$-homology associated with $\E$ internally inside $\T$,
	and then we get the formula:
	$$
	\hat H_p^\delta(\E) \simeq \hat H_p^\delta(\varphi_*\E).
	$$
\end{num}
\begin{cor}\label{cor:functor:T->KMW}
	Consider the previous assumptions.
	Then we get a canonical functor
	\begin{align*}
		\T(S) &\rightarrow \CatMW_S\\
		\E &\mapsto \hat \E
	\end{align*}
	such that $\hat \E=\widehat{\varphi_*(\E)}$. For any virtual vector bundle $v$ of rank $0$ and
	determinant $\cL$, there exists a canonical isomorphism of complexes, natural in $\E$:
	$$
	{}^{\delta}C_*(X,\E\tw v) \simeq 
	{}^{\delta}C_*(X,\hat  \E,\cL).
	$$ 
\end{cor}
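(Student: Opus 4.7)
The plan is to reduce everything to Theorem~\ref{thm_Rost_transform} by observing that the right adjoint $\varphi_*:\T(S)\rightarrow\SH(S)$ preserves exactly the data needed to compute Borel-Moore homology, twisted Gersten complexes and the Rost transform.

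First, I would define the functor on objects simply by composition: send $\E\in\T(S)$ to $\hat\E:=\widehat{\varphi_*(\E)}$, where the hat on the right is the Rost transform constructed in Theorem~\ref{thm_Rost_transform}. Functoriality in $\E$ is automatic, since $\varphi_*$ is a functor and $\widehat{(-)}$ has already been shown to be functorial. The computation preceding the statement shows, for any essentially finite-type $X/S$ and any virtual bundle $v$ with rank $r$ and determinant $\cL$, a canonical identification
\[
\E_*(X/S,v)\ \simeq\ (\varphi_*\E)_*(X/S,v),
\]
obtained by pushing $\varphi^*$ across the adjunction, using that $\varphi_*$ commutes with $f_!$ (because it commutes with $f_*$ and $p^!$, and $f_!$ is constructed from these in the six functors formalism) and that $\varphi^*$ is weakly monoidal (so $\varphi^*\Th(v)\simeq\Th(v,\T)$).

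Next, I would specialise this identification to $S$-fields to get the key equality at the level of fiber $\delta$-homologies. Using formula~\eqref{eq:fiberhlg}, for each point $x:\spec(E)\to S$, each $n\in\ZZ$ and each line bundle $\cL$ on $E$, the preceding natural isomorphism gives
\[
\hat H_p^\delta(\E)(E,n,\cL)\ \simeq\ \hat H_p^\delta(\varphi_*\E)(E,n,\cL),
\]
and this identification is compatible with all the structure maps \ref{itm:D1'}--\ref{itm:D4'} because each of those maps is defined purely in terms of proper pushforward, lci pullback, the action of stable $\AA^1$-cohomotopy (factoring through $\varphi^*$) and the residues of localisation triangles, all of which are preserved by the comparison above. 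Hence $\hat H_p^\delta(\E)$ is a MW-cycle module, and in fact coincides with $\hat H_p^\delta(\varphi_*\E)$ in $\CatMW_S$.

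Finally, the isomorphism of Gersten complexes is obtained by applying the same comparison level-wise in the $\delta$-niveau spectral sequence \eqref{eq:twisted_niveau} built from $\E_*(-/S,v)$ inside $\T$. Since this spectral sequence is constructed from the BM-homology groups and their residue maps, and both are preserved by $\varphi_*$, the line $q=0$ of the spectral sequence computed inside $\T$ agrees with the one computed inside $\SH$ applied to $\varphi_*\E$. Composing with the isomorphism~\eqref{eq:Gersten&RS-complexes} of Theorem~\ref{thm_Rost_transform} then yields the required natural isomorphism
\[
{}^{\delta}C_*(X,\E\tw v)\ \simeq\ {}^{\delta}C_*(X,\hat\E,\cL).
\]
The only real subtlety is checking that the comparison $\E_*(X/S,v)\simeq(\varphi_*\E)_*(X/S,v)$ is natural enough to intertwine the niveau exact couples, i.e.\ that it is compatible with localisation triangles and with the essentially lci contravariance used to extend BM-homology to $\seft_S$ via Proposition~\ref{prop:kan_extension_BMhlg}; this is where the assumption that $\varphi_*$ commutes with both $f_*$ and $p^!$ is essential, and I expect this compatibility to be the main bookkeeping obstacle.
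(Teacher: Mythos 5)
Your proposal is correct and is essentially the same argument the paper uses: the corollary is a direct consequence of the computation in the preceding paragraph (that $\varphi_*$ commutes with $f_*$ and $p^!$, hence the bivariant theory of $\E$ in $\T$ agrees with that of $\varphi_*\E$ in $\SH$) combined with Theorem~\ref{thm_Rost_transform}. The only cosmetic difference is that the paper phrases the key commutation in terms of $\varphi^*$ commuting with $f_!$ and $\Th(-)$ before applying adjunction, whereas you phrase it in terms of $\varphi_*$; these are equivalent by adjunction, so there is no substantive gap.
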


\begin{ex}
	\begin{enumerate}
		\item The above theorem applies to all (twisted) homology theory representable in $\SH$. 
		In particular, according to \cite{Jin2},
		for any scheme $S$ essentially of finite type over a regular scheme $\Sigma$,
		$p:S \rightarrow \Sigma$,
		the theorem can be applied to $p^!\KGL_\Sigma$ and we obtain (using the $\PP^1$-periodicity
		of K-theory) that the functor:
		$$
		(E,n) \mapsto K'_n(E)=K_n(E)
		$$
		given by Quillen algebraic K-theory, is a MW-cycle module over $S$, equal to $H_0^\delta(p^!\KGL_\Sigma)$.
		\item If $S$ is a $\ZZ[1/2]$-scheme, one can apply the previous theorem to the spectrum
		representing higher Grothendieck-Witt (resp. Witt) groups (we will give more details about that in a subsequent paper).
		
	\end{enumerate}
\end{ex}

\subsection{From cycle modules to homotopy derived complexes}

\begin{paragr}
	Let $\hM$ be a homological MW-cycle module over $S$. Then with the preceding convention,
	for a smooth $S$-scheme $X$, we define $\dH  \hM(X)$ as the $\ZZ$-graded complex,
	with $q$-th graded component and cohomological degree $p$ given by:
	$$
	\derR \Gamma(X,\dH  \hM_q)=\hC^{p}(X,\hM_{q},\cO_X).
	$$
\end{paragr}

	\begin{thm}\label{thm:functor_KMW->DA}
		Let $\hM$ be a homology MW-cycle modules.
		Then $\dH  \hM(X)$ is an $\AA^1$--local, Nisnevich fibrant, $\Omega$-fibrant element of $\DA(S)$. Moreover, we get a functor $$
		\dH:\CatMW_S \rightarrow \DA(S).
		$$

	\end{thm}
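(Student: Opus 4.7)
The plan is to check each of the three fibrancy properties in turn, and then derive the functoriality as a formal consequence. Throughout, fix a homological MW-cycle module $\hM$ and work with the presheaf $\hC^*(-,\hM_q,\cO_-)$ on $\Sm_S$.

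$\AA^1$-locality is immediate from Corollary~\ref{HomotopyInvarianceDeformedComplex}: applied to the trivial line bundle $\pi:\AA^1_X\to X$, the pullback $\pi^!:\hC^*(X,\hM_q,\cO_X)\to \hC^*(\AA^1_X,\hM_q,\cO_{\AA^1_X})$ is an isomorphism of complexes (not merely a quasi-isomorphism). Hence $\dH\hM_q$ is $\AA^1$-local as a presheaf of complexes.

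For Nisnevich fibrancy, I would extend the argument of paragraph~\ref{HomotopyComplexIsNisnevich} from the small Nisnevich site of a fixed $X$ to the big Nisnevich site of smooth $S$-schemes. Given a Nisnevich distinguished square with $j:U\hookrightarrow X$ an open immersion and $\phi:V\to X$ \'etale inducing an isomorphism on the closed complement $Z$, the Rost--Schmid complex admits a direct-sum decomposition along $Z$, and by taking colimits over $\AA^n$-pullbacks the same decomposition persists for $\hC^*$. This yields the required Mayer--Vietoris short exact sequence for $\hC^*$, which at the level of complexes is actually a split short exact sequence, showing $\dH\hM_q$ satisfies Nisnevich descent.

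The main obstacle, and the step that requires genuine work, is $\Omega$-fibrancy: I need to produce an equivalence $\dH\hM_q\simeq \derR\uHom(\GG_m,\dH\hM_{q+1})[1]$ in $\DA(S)$ --- or, equivalently, that for every smooth $X/S$ the localization sequence associated with the zero section $X\hookrightarrow \AA^1_X$ combined with the splitting given by the unit section $X\hookrightarrow \GGx X$ identifies the reduced complex $\hC^*(\GGx X,\hM_{q+1},\cO)/\hC^*(X,\hM_{q+1},\cO)$ with $\hC^*(X,\hM_q,\cO)[-1]$, with grading shifted correctly via the twist $\detcotgb$. The plan is to use the localization sequence of Proposition~\ref{MWmod-localization} applied to the pair $(X\hookrightarrow \AA^1_X,\, \GGx X)$, split it using the retraction of the unit section (yielding the section $\sigma_t^X$ as in~\ref{num:A1invariance&splitting}), and then identify the residue map with the multiplication-by-$[t]$ morphism, following Proposition~\ref{prop:Gysin_divisor}. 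Axioms \ref{itm:R3d'} and \ref{itm:R3e'} are precisely what is needed for this identification to land in $\hM_q$ with the correct twist by $\cotgb_{\AA^1_X/X}\cong\cO$. The compatibility with boundary maps proved in Proposition~\ref{Lem4.3} and Proposition~\ref{Prop4.6} ensures the resulting map is one of complexes.

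Functoriality is then formal: a morphism $\omega:\hM\to \hM'$ of MW-cycle modules commutes by Definition~\ref{DefMWmorphisms} with all data \ref{itm:D1'}--\ref{itm:D4'}, hence with the five basic maps of Section~\ref{FiveBasic}; it therefore induces a morphism of Rost--Schmid complexes, which in turn induces a morphism $\hC^*(X,\hM_q,*)\to \hC^*(X,\hM'_q,*)$ natural in $X$ by passage to the filtered colimit over $\AA^n$-pullbacks. The three fibrancy properties above being stable under this construction, we obtain the desired functor $\dH:\CatMW_S\rightarrow \DA(S)$.
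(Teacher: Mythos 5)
Your proposal follows the same route as the paper: $\AA^1$-locality from Corollary~\ref{HomotopyInvarianceDeformedComplex}, Nisnevich fibrancy from the sheaf decomposition of~\ref{HomotopyComplexIsNisnevich} promoted to the big site, presheaf structure and functoriality via the change-of-coefficients map~\ref{MWmod_change_of_coefficients} and the colimit over $\AA^n$-pullbacks, with Remark~\ref{rem_functoriality_lci} supplying contravariance along arbitrary morphisms of smooth schemes. Your $\Omega$-fibrancy sketch (localization sequence for the zero section of $\AA^1_X$, unit-section splitting, identification of the residue with multiplication by the parameter $t$ via~\ref{itm:R3d'} and~\ref{itm:R3e'}) spells out a step that the paper leaves implicit behind its citations, and is the correct mechanism.
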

	
	\begin{proof} The fact that the above definition yields an element $\dH  \hM(X)$ of $\DA(S)$ follows from the results in Section \ref{sec:Cycle_to_homotopy_modules} (in particular, see \ref{HomotopyComplexIsNisnevich}, \ref{HomotopyInvarianceDeformedComplex}, and \ref{rem_functoriality_lci}).
		\par To see that $\dH$ is a well-defined functor, we consider $\alpha: \hM \to \hM'$ a morphism of homological MW-cycle modules. In particular, for any field $E$ and any integer $n$, we have a map
		\begin{center}
			$\alpha_{E,n}: \hM_n(E) \to \hM'_n(E)$
		\end{center}
		which commutes with the data \ref{itm:D1'}, \ref{itm:D2'}, \ref{itm:D3'}, \ref{itm:D4'}. If $X$ is a (smooth) scheme, $p,q$ two integers, and $*$ a line bundle over $X$, then we have a morphism of $R$-modules
		\begin{center}
			$\alpha_{\#}: 
			{}^{\delta}C_p(X, \hM_q, *) \to
			{}^{\delta}C_p(X, \hM'_q, *)
			$ 
		\end{center}
		according to \ref{MWmod_change_of_coefficients}. This map is in fact a morphism of complexes because $\alpha$ commutes with \ref{itm:D4'}. Then, by definitions, this map is functorial in $X$ with respect to the basic maps Subsection \ref{FiveBasic} because $\alpha$ is compatible with the data \ref{itm:D1'}, \ref{itm:D2'}, \ref{itm:D3'}, \ref{itm:D4'}.Moreover, we can pass to the limit as in Definition \ref{df:functorial_complex} and obtain an induced map\begin{center}
			$\widetilde{\alpha}_{\#}: 
			\hC^p(X, \hM_q, *) \to
			\hC^p(X, \hM'_q, *).
			$ 
		\end{center}
		As above, this is again functorial in $X$ with respect to the basic maps \ref{Basic_maps_homotopy_RS_complex}. In particular, this commutes with the Gysin morphisms defined for lci maps (see \ref{rem_functoriality_lci}). Finally, we can conclude that we have a well-defined induced map
		\begin{center}
			$\dH (\alpha) : \dH \hM \to \dH \hM'$.
		\end{center}
		
	\end{proof}

	\begin{prop}
		\label{Prop_cohomology_coincide_for_smooth}

			Let $X$ be a smooth scheme over $S$ and $v \in \uK(X)$ a virtual vector bundle over $X$ of rank $r$. There is an isomorphism
			$$
			\phi_{X,v}
			:
			{}^{\delta}A^{p-r}(X,\hM_{q-r},\det(v)^\vee)
			\to
			H^{p,\{q\}}(\Th(v), \dH \hM)
			$$
			which commutes with proper pushforward maps and lci pullback maps.
		\end{prop}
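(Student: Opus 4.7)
The plan is to first identify $H^{p,\{q\}}(X_+, \dH\hM)$ with the homology ${}^{\delta}A^p(X, \hM_q, \cO_X)$ in the untwisted case $v=0$, and then use a Gysin argument along the zero section of a vector bundle realizing $v$ to deduce the general Thom-twisted statement.

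For the untwisted case, since $\dH\hM$ is $\AA^1$-local, Nisnevich fibrant and $\Omega$-fibrant in $\DA(S)$ by Theorem~\ref{thm:functor_KMW->DA}, morphisms from a smooth scheme $X$ can be computed as Nisnevich hypercohomology:
\[
  [X_+, f^*\dH\hM\Gtw{q}[p]]_{\DA(S)} \;=\; H^p\!\bigl(\dH\hM_q(X)\bigr) \;=\; H^p\!\bigl(\hC^*(X, \hM_q, \cO_X)\bigr).
\]
By Remark~\ref{homology_rost_schmid_coincide}, the $h$-data of Theorem~\ref{thm_h_data} identify this last group with ${}^{\delta}A^p(X, \hM_q, \cO_X)$, yielding $\phi_{X,0}$.

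To treat a general virtual bundle $v$ of rank $r$, I would first reduce to the case of an actual vector bundle $\pi\colon V\to X$ with $[V]=v$; this is harmless since $\Th(v)$ in $\DA(S)$ depends only on the class of $v$ in $\uK(X)$ and both sides of $\phi_{X,v}$ transform equivariantly under the resulting canonical isomorphisms of virtual bundles. From the cofiber sequence $V\setminus X\to V\to \Th(v)$ in $\SH(X)$, combined with the strong homotopy invariance for $\pi$ (Corollary~\ref{HomotopyInvarianceDeformedComplex}) and the Gysin morphism for the regular closed immersion $s\colon X\hookrightarrow V$ constructed in Section~\ref{GysinMorphisms}, I would produce the desired Thom isomorphism
\[
  H^{p,\{q\}}(\Th(v), \dH\hM) \;\simeq\; {}^{\delta}A^{p-r}(X, \hM_{q-r}, \det(v)^\vee),
\]
the twist by $\det(v)^\vee$ being forced by the conormal bundle of $s$ being $v^\vee$, together with the normalization of twists recorded in Definition~\ref{df:twists_hlg_semilocal}.

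The compatibility of $\phi_{X,v}$ with proper pushforward and lci pullback then follows from the matching functorialities on both sides: on the motivic side, from the six-functors formalism of $\DA(S)$ and the fact that $\dH$ commutes with the basic operations by construction; on the Rost--Schmid side, from Proposition~\ref{MWmodBaseChangelci} and the functoriality of projective lci Gysin morphisms established in Section~\ref{GysinMorphisms}. I expect the main obstacle to be the careful bookkeeping of the canonical isomorphisms of virtual bundles arising in the Thom/Gysin construction, so that they intertwine correctly with the line-bundle twists on the Rost--Schmid side --- a delicate but essentially formal verification, which the whole machinery of Section~\ref{sec:Cycle_to_homotopy_modules} has been designed to accommodate.
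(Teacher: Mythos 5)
Your proposal is correct and follows essentially the same route as the paper's own argument: both reduce to an actual vector bundle, exploit the Nisnevich-locality, $\AA^1$-locality and $\Omega$-fibrancy of $\dH\hM$ (from Theorem~\ref{thm:functor_KMW->DA}) to compute $H^{p,\{q\}}$ as cohomology of the homotopy Rost--Schmid complex via Remark~\ref{homology_rost_schmid_coincide}, and then track the Thom twist. The only difference is presentational: where the paper compresses the twist into the identification $H^{p,\{q\}}(\Th(v), \dH\hM)\simeq H^p(\derR\Gamma(X,\dH\hM_q)\{-v\})$ and declares the answer "by definition", you unfold it through the cofiber sequence $V\setminus X\to V\to\Th(v)$ together with strong homotopy invariance and the zero-section Gysin map of Section~\ref{GysinMorphisms}, which is a legitimate and slightly more explicit way to arrive at the same twisted Rost--Schmid group.
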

		
		\begin{proof}
		
			Without loss of generality, we may assume that $v$ is a vector bundle over $X$. Since $\dH\hM$ is $\Omega$-fibrant, it suffices to consider the morphism in $\DA^{\operatorname{eff}}(S)$. Thanks to the fact that $\dH  \hM$ is $\AA^1$--local and Nisnevich local, the group $H^{p,\{q\}}(\Th(v), \dH \hM)$ is naturally isomorphic to 
			\begin{center}
				$H^p( R\Gamma(X, \dH  \hM_q)\{ -v \})$
			\end{center}
			which is naturally isomorphic to the cohomology of the Rost-Schmid complex of $\hM$ according to \ref{homology_rost_schmid_coincide}, the latter being ${}^{\delta}A^{p-r}(X,\hM_{q-r},\det(v)^\vee)$ by definition. The functorialities follow from the definitions.

		\end{proof}

		\begin{rem}
			Keeping the previous notations, we have in particular:
			$$
			(\dH\hM)^p(X,v)
			=
			H^{p, \{ 0 \} }
			(\Th (-v), 
			\dH\hM)
			\simeq 
			{}^{\delta}A^{p-r}(X,\hM_{-r},\det(v)).
			$$
			
		\end{rem}

		\begin{prop}
			\label{prop_HM_SL_oriented}
			Let $\hM$ be a homological Milnor-Witt cycle module. The complex $\dH  \hM$ is $\textbf{SL}$-oriented.
		\end{prop}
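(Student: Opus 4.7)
The plan is to deduce the $\mathbf{SL}$-orientation directly from \Cref{Prop_cohomology_coincide_for_smooth}, which expresses the cohomology of a Thom space $\Th(v)$ with values in $\dH\hM$ in terms of a Rost--Schmid cohomology group twisted only by the determinant line bundle $\det(v)^\vee$. The key point is that this dependence only involves the rank and the determinant, so a trivialization of $\det(v)$ should produce a Thom isomorphism.

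More precisely, suppose $v$ is a rank $r$ vector bundle over a smooth $S$-scheme $X$ equipped with an isomorphism $\theta\colon\det(v)\xrightarrow{\sim}\cO_X$. Composing the isomorphism $\phi_{X,v}$ of \Cref{Prop_cohomology_coincide_for_smooth} with the untwisting map induced by $\theta^{\vee}$ through formula~\eqref{eq:df_twists_preMW}, I obtain a canonical identification
$$
(\dH\hM)^{p,\{q\}}(\Th(v)) \xrightarrow{\sim} {}^{\delta}A^{p-r}(X,\hM_{q-r},\cO_X).
$$
The Thom class $\mathrm{th}(v,\theta) \in (\dH\hM)^{r,\{r\}}(\Th(v))$ is then defined as the inverse image (for $p=q=r$) of the unit of ${}^{\delta}A^{0}(X,\hM_{0},\cO_X)$, obtained from the canonical ring map $\GW(X)\to {}^{\delta}A^{0}(X,\hM_0)$ provided by the $\KMW_0$-module structure \ref{itm:D3'} of $\hM$. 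By construction, cup product with $\mathrm{th}(v,\theta)$ realizes the above identification and is therefore an isomorphism, yielding the Thom isomorphism for the $\mathbf{SL}$-bundle $(v,\theta)$.

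To conclude I would verify the axioms of an $\mathbf{SL}$-orientation (for instance in the formulation of Panin--Walter or Ananyevskiy): normalization for the rank $0$ trivial bundle, compatibility with pullbacks along $\mathbf{SL}$-morphisms, and multiplicativity under direct sums of $\mathbf{SL}$-bundles. Normalization is immediate from the definition, while compatibility with $\mathbf{SL}$-pullbacks follows from the naturality of $\phi_{X,v}$ (the last assertion of \Cref{Prop_cohomology_coincide_for_smooth}) together with the functoriality of the twist construction \eqref{eq:df_twists_preMW} in the line bundle argument.

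The main obstacle will be the multiplicativity: for two $\mathbf{SL}$-bundles $(v,\theta_v)$ and $(w,\theta_w)$, one must identify $\mathrm{th}(v\oplus w,\theta_v\otimes \theta_w)$ with the exterior product of $\mathrm{th}(v,\theta_v)$ and $\mathrm{th}(w,\theta_w)$ through the canonical isomorphism $\det(v\oplus w)\simeq \det(v)\otimes\det(w)$. This reduces to checking that the Gysin morphisms for regular closed immersions built in \Cref{GysinMorphisms} are compatible with the $\KMW$-action underlying \ref{itm:D3'}; concretely, one traces through the deformation to the normal cone and uses the projection formula for multiplication by units \ref{Lem4.2} together with the compatibility of the boundary map with multiplication by $\eta$ and $[u]$ from \ref{Lem4.3}. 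Once this single compatibility is established, all three axioms of $\mathbf{SL}$-orientation are in place.
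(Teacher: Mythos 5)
Your proposal takes the same route as the paper: the paper's own proof is the single line ``follows from Proposition~\ref{Prop_cohomology_coincide_for_smooth}'', and your argument makes explicit why that proposition---which shows the cohomology of $\Th(v)$ with coefficients in $\dH\hM$ depends only on $\mathrm{rk}(v)$ and $\det(v)$---yields Thom isomorphisms once $\det(v)$ is trivialized, hence an $\mathbf{SL}$-orientation. The details you supply (the explicit Thom class and the verification of normalization, $\mathbf{SL}$-functoriality, and multiplicativity) are the content the paper leaves implicit.
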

		
		\begin{proof}
			The statement follows from Proposition \ref{Prop_cohomology_coincide_for_smooth}.
		\end{proof}

		\begin{prop}
			\label{Prop_homology_coincide}
			
			Let $X/S$ be separated of finite type, and $v \in \uK(X)$, of rank $r$, there is an isomorphism
			$$
			\psi_{X,v}:
			{}^{\delta}A_{p+r}(X,\hM_{q-r},\det(v)^\vee)
			\to
			H_{p,\{-q\}}^{BM}(\Th(v),\dH \hM)
			$$
			which induces a morphism of twisted Borel-Moore homology theories (see Definition \ref{df:twisted_(co)hlg}).

		\end{prop}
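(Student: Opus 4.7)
The approach is to compare both sides through the twisted $\delta$-niveau spectral sequence for the Borel--Moore homology of $\dH\hM$ in $\DA(S)$.

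\textbf{Step 1 (Identification of $\widehat{\dH\hM}$).} For any $S$-field $x:\Spec(E)\to S$, one needs to compute the fiber homologies $\hat H_p^\delta(\dH\hM)(E,*)$. By continuity (\Cref{prop:kan_extension_BMhlg}) one presents $\Spec(E)$ as a cofiltered limit of affine smooth finite type $S$-schemes with \'etale transition maps, and exchanges the limit with the BM-homology. On each smooth member, the smooth purity formula $f^!\simeq f^*\otimes \Th(\cotgb_f)$ from the six-functor formalism identifies BM-homology of $\dH\hM$ with its cohomology in the appropriate Thom twist, and \Cref{Prop_cohomology_coincide_for_smooth} equates the latter with the Rost--Schmid cohomology of $\hM$. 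Since the Rost--Schmid complex of $\hM$ at a field is concentrated in degree zero and equals $\hM_\bullet(E)$, one simultaneously obtains the canonical isomorphism $\widehat{\dH\hM}\simeq \hM$ of MW-cycle premodules and the vanishing $\hat H_p^\delta(\dH\hM)=0$ for $p\neq 0$.

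\textbf{Step 2 (Identification of the $E^1$-page).} Apply \Cref{thm_Rost_transform} (alternatively \Cref{cor:functor:T->KMW}) to $\E=\dH\hM$ together with Step~1. The $q=0$ row of the twisted $\delta$-niveau spectral sequence
\[
{}^{\delta}E^1_{p,q}(X,\dH\hM\tw v)\ \Longrightarrow\ (\dH\hM)_{p+q}(X/S,v)
\]
is canonically identified, as a graded complex, with the Rost--Schmid complex of $\hM$ in the twist prescribed by $\det(v)^\vee$, after the bookkeeping of the rank-$r$ shift recorded in \Cref{df:twists_hlg_semilocal} (this is exactly the source of the shifts $p\leadsto p+r$ and $q\leadsto q-r$ in the statement). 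The vanishing in Step~1 then forces the spectral sequence to be concentrated in the single row $q=0$, so it degenerates at $E^2$.

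\textbf{Step 3 (Definition and naturality of $\psi_{X,v}$).} Define $\psi_{X,v}$ as the resulting edge isomorphism from the homology of the Rost--Schmid complex to the abutment of the spectral sequence. Naturality with respect to proper covariance and lci exceptional contravariance follows from the corresponding naturality of the $\delta$-niveau spectral sequence (as in \cite[\S 3]{BD1}), combined with the compatibilities of the basic operations on the Rost--Schmid and homotopy cycle complexes established in Section~\ref{Compatibilities} and Section~\ref{GysinMorphisms}, together with \Cref{MWmodBaseChangelci}. This endows $\psi$ with the structure of a morphism of twisted Borel--Moore homology theories in the sense of \Cref{df:twisted_(co)hlg}.

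\textbf{Main obstacle.} The substantive work lies in Step~1: one has to carefully keep track of the cotangent twist appearing in the interchange of cohomology and BM-homology on smooth finite-type $S$-schemes, and verify that the passage to the limit is compatible with all the structural maps (residues, transfers, actions of units) of the MW-cycle module structure as axiomatized in \Cref{defMWmodules}. Once Step~1 is established, the degeneration of the spectral sequence is formal, and the remainder of the proof reduces to a naturality bookkeeping already carried out in the preceding sections.
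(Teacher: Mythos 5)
Your proof is a genuinely different route from the paper's. The paper proves the statement by d\'evissage: it first reduces to affine $X$ via Zariski hyper-covers, then embeds the affine $X$ as a closed subscheme of a smooth $S$-scheme $\Xi$, compares the localization long exact sequences on the Rost--Schmid side and the Borel--Moore side for the pair $(X, \Xi, \Xi - X)$, and obtains $\psi_{X,v}$ from the five lemma using the already-established smooth isomorphism (\Cref{Prop_cohomology_coincide_for_smooth}) for $\Xi$ and $\Xi - X$. You instead run the $\delta$-niveau spectral sequence for $\dH\hM$, identify its $E^1$-page with the Rost--Schmid complex via \Cref{thm_Rost_transform}, and deduce degeneration from the concentration of fiber homology in degree $0$. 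Both are legitimate, but they trade off differently: the paper's argument is elementary and stays within the localization formalism of \Cref{sec:Cycle_to_homotopy_modules}, whereas yours is more conceptual and yields the degeneration result for free. The cost of your approach is that Step~1 (the vanishing $\hat H_p^\delta(\dH\hM)=0$ for $p\neq 0$, plus the identification $\widehat{\dH\hM}\simeq\hM$ as premodules) is itself a substantive assertion that anticipates most of the content of \Cref{thm_main_adjunction_theorem}(1); you do correctly observe that it is not circular, since it can be established directly from continuity (\Cref{prop:kan_extension_BMhlg}), smooth purity, and \Cref{Prop_cohomology_coincide_for_smooth}. Two points deserve more care than your sketch gives: the identification $\widehat{\dH\hM}\simeq\hM$ has to be verified as an isomorphism of full MW-cycle premodules (all the data \ref{itm:D1'}--\ref{itm:D4'}, not just the underlying graded groups), which requires the compatibility-with-$f_*$ and $f^!$ part of \Cref{Prop_cohomology_coincide_for_smooth} together with the colimit passage; and the naturality of the $\delta$-niveau spectral sequence under \emph{lci} contravariance (as opposed to proper covariance and flat contravariance) is not immediate from \cite[\S 3]{BD1} and needs the Gysin-map compatibilities from Section~\ref{GysinMorphisms} and \Cref{MWmodBaseChangelci}, exactly as you note but a bit more is needed than a reference.
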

		
		\begin{proof}
		
			In order to prove the general case, we can use a Zariski hyper-cover of $X$ and thus it suffices to prove the case where $X$ is affine.
			\par If $X$ is an affine scheme, then it is smoothable in the sense that there exists $\Xi$ a smooth scheme over $S$ and $i:X \to \Xi$ a closed immersion (not necessarily regular) with open complementary map $j$. Denote by $d$ the dimension of $\Xi$. Without loss of generality, we may assume that $v$ is a vector bundle over $\Xi$. Recall that we denote by $\cotgb_{X/S}$ the cotangent complex of $X/S$ and by $\detcotgb_{X/S}$ its determinant.
			\par 
			We obtain the following commutative diagram:

			\begin{center}
				\resizebox{1.1\textwidth}{!}{
					$\xymatrix{
						{}^{\delta}A_{p+r}(X,\hM_{q-r},\det(v)^\vee)
						\ar[r]^{i_*}
						\ar@{-->}[ddd]_{\simeq}^{\exists !\psi_{X,v}}
						&
						{}^{\delta}A_{p+r}(\Xi,\hM_{q-r},\det(v)^\vee)
						\ar[r]^{j^*}
						\ar[d]^{=}
						&
						{}^{\delta}A_{p+r}(\Xi -X,\hM_{q-r},\det(v)^\vee)
						\ar[r]^-{\partial}
						\ar[d]^{=}
						&
						\dots 
						\\
						{}
						&
						{}^{\delta}A^{d-p-r}
						(\Xi, \hM_{q-r+d}, \det(v)^{\vee} \otimes \detcotgb_{\Xi/S})
						\ar[r]^-{j^*}
						\ar[d]^{\simeq \ref{Prop_cohomology_coincide_for_smooth} }
						&
						{}^{\delta}A^{d-p}
						(\Xi-X, \hM_{q-r+d},\det(v)^{\vee} \otimes \detcotgb_{\Xi - X/S})
						\ar[r]^-{\partial}
						\ar[d]^{\simeq \ref{Prop_cohomology_coincide_for_smooth} }
						&
						\dots
						\\
						{}
						&
						H^{-p, \{ q\} }
						(
						\Th(v-\cotgb_{\Xi/S}),
						\dH  \hM)
						\ar[r]^{j^*}
						\ar[d]^{\simeq}
						&
						H^{-p, \{ q\} }
						(
						\Th(v-\cotgb_{\Xi-X/S}),
						\dH  \hM)
						\ar[r]^-{\partial}
						\ar[d]^{\simeq}
						&
						\dots
						\\
						H_{p,\{-q\}}^{BM}(\Th(v),\dH \hM)
						\ar[r]^{i_*}
						&
						H_{p,\{-q\}}^{BM}(\Th(v),\dH \hM)
						\ar[r]^{j^*}
						&
						H_{p,\{-q\}}^{BM}(\Th(v_{|\Xi - X}),\dH \hM)
						\ar[r]^-{\partial}
						&
						\dots 		
					}$
				}
			\end{center}
			and thus there exists a unique dotted arrow $\psi_{X,v}$ which is functorial by construction.
		\end{proof}

		\begin{rem}
			
			As a consequence, with the conventions of \cite{DJK}, there is a natural isomorphism:
			$$
			(H^\delta\hM)_p(X,v)
			=
			H^{BM}_{p, \{ 0 \} }
			(\Th (v),
			\dH \hM) 
			\simeq 
			{}^{\delta}A_{p+r}(X,\hM_{-r},\det(v)^\vee).
			$$

		\end{rem}

\section{The adjunction theorem}

\label{sec:Equivalence}

\subsection{Statement of the main theorem}

Recall the following definition from \cite[2.1.1]{BD1}.
\begin{df}\label{df:perverse_htp_t}
	Let $\T$ be a motivic $\infty$-category, and $(S,\delta)$ a scheme with a dimension function.
	
	One defines the category $\T(S)_{t_\delta\geq0}$ of (homologically) non-$t_\delta$-negative objects of $\T(S)$
	as the closure under extensions, positive shifts and colimits of the subcategory with objects:
	$$
	f_!(\un_X)(n)[n+\delta(X)], f:X \rightarrow S \text{ separated of finite type}, n \in \ZZ.
	$$
	As $\T(S)$ is presentable, there exists a unique $t$-structure on $\T(S)$
	whose non-negative objects are $\T(S)_{t_\Delta\geq0}$. We call it
	the (perverse) $\delta$-homotopy $t$-structure, denoted by $t_\delta$.
\end{df}
Note in particular that an object $\E$ of $\T(S)$ is \emph{non-$t_\delta$-positive} if and only if
for any $S$-scheme $X$ of finite type,
$$
\forall (p,q)\in \ZZ^2, p>\delta(X) \Rightarrow H^{BM}_{p,\Gtw q}(X/S,\E)=0.
$$
Moreover, if this is the case, this statement holds when $X/S$ is essentially separated of finite type.

\begin{rem}
	Assuming $\T$ is defined over a category subcategory $\mathscr S$ of the category $\sft_S$ of separated $S$-schemes of finite type,
	for any scheme $X$ in $\mathscr S$, we let $\delta^X$ denote the unique dimension function on $X$ obtain by extension from
	the dimension function $\delta$ on $S$ and still denote by $t_\delta$ the $t$-structure on $\T(X)$ associated with $\delta^X$.
\end{rem}

\begin{ex}
	These definitions will be used for the motivic $\infty$-categories: $\SH$, $\DA$, $\DM$ and $\MGL-mod$.
\end{ex}

\begin{num}
	In \cite{BD1}, the $\delta$-homotopy $t$-structure on a motivic $\infty$-category $\T$ over
	a category of schemes $\mathscr S$ was studied, under two additional assumptions. 
	The first, called (Resol) and stated in \emph{loc. cit.} 2.4.1 concerns the category $\mathscr S$
	and ask for a suitable form of resolution of singularities. 
	The second one concerns the motivic $\infty$-category $\T$ and vanishing of certain Borel-Moore/bivariant
	group with coefficient in the unit object $\un_S$ of $\T$. One says that $\T$ is \emph{homotopically compatible}.
	
	These two assumptions are satisfied in the following cases.
	Let $R \subset \QQ$ be a subring, and $\T$ is the $R$-localization of one of the motivic $\infty$-categories $\SH$, $\DA$, $\DM$ and $\MGL-mod$.
	We assume one of the following hypothesis:
	\begin{enumerate}
		\item[(H1)] $R=\QQ$, $\mathscr S$ is the category of $S_0$-schemes essentially of finite type for $\dim(S_0) \leq 3$.
		\item[(H2)] $R=\ZZ[1/p]$ $\mathscr S$ is the category of $k$-schemes where $k$ is a field of characteristic exponent $p$.
	\end{enumerate}
	Then $\mathscr S$ satisfies (Resol) and $\T$ is homotopically compatible.
\end{num}

We need the following lemma.
\begin{lm}
	Let $(S,\delta)$ be a dimension scheme and $\hM$ be a homological $\KMW$-module.
	Then the object $\dH(\hM)$ of $\DA(S)$ is non $t_\delta$-positive.
\end{lm}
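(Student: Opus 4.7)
By the characterization of non-$t_\delta$-positive objects recalled immediately before the statement, it suffices to prove that for every $S$-scheme $f : X \to S$ essentially of finite type and every pair $(p, q) \in \ZZ^2$ with $p > \delta(X)$, the group $H^{BM}_{p, \Gtw{q}}(X/S, \dH \hM)$ vanishes. My plan is to translate this vanishing into a statement about the homology of the Rost--Schmid complex of $\hM$, and then conclude by an elementary support argument on the $\delta$-dimensions of points of $X$.

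The translation step is supplied by Proposition~\ref{Prop_homology_coincide}. Applied to the trivial virtual bundle $v = 0$ on $X$ (so that the rank is $r = 0$ and $\Th(v) = X_+$), the proposition yields, for every pair $(p,q)$, a canonical isomorphism
\[
{}^{\delta}A_p(X, \hM_{-q}, \cO_X) \;\xrightarrow{\ \sim\ }\; H^{BM}_{p, \{-q\}}(X, \dH \hM).
\]
Since the desired vanishing is asked for all $q \in \ZZ$, the sign on the MW-index is immaterial, and we are reduced to showing that ${}^{\delta}A_p(X, \hM_q, \cO_X) = 0$ whenever $p > \delta(X)$, for every $q \in \ZZ$ and every $X/S$ essentially of finite type.

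This last vanishing will be immediate from the very construction of the Rost--Schmid complex recalled in Subsection~\ref{FiveBasic}: by definition,
\[
{}^{\delta}C_p(X, \hM_q, \cO_X) \;=\; \bigoplus_{x \in X_{(\delta = p)}} \hM_{q+p}(\kappa(x), \cO_X),
\]
and the extended dimension function on $X$ satisfies $\delta(x) \leq \delta(X)$ at every point $x \in X$, so this direct sum is empty as soon as $p > \delta(X)$. Hence the Rost--Schmid complex is concentrated in degrees $\leq \delta(X)$, and its homology in any strictly larger degree vanishes \emph{a fortiori}, which is what we wanted.

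Given the machinery assembled in the previous sections, no step presents a genuine obstacle; the only point requiring a modicum of care is the bookkeeping of Tate and $\GG$-twists between the conventions of~\cite{DJK} used to define the BM-groups and those appearing in the Rost--Schmid formalism, but this has been handled once and for all in Proposition~\ref{Prop_homology_coincide}, so nothing further is needed here.
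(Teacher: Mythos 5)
Your argument is correct and follows the same route the paper sketches in its one-line proof: unwind the characterization of non-$t_\delta$-positive objects from \Cref{df:perverse_htp_t}, translate the Borel--Moore groups into Rost--Schmid homology via \Cref{Prop_homology_coincide} (with $v=0$), and observe that $\bigoplus_{x\in X_{(\delta=p)}}\hM_{q+p}(\kappa(x),\cO_X)$ is empty once $p>\delta(X)$, so the complex is concentrated in degrees $\leq\delta(X)$. (Minor bookkeeping nit: with $v=0$, $r=0$, the proposition gives ${}^{\delta}A_p(X,\hM_q,\cO_X)\simeq H^{BM}_{p,\{-q\}}(X,\dH\hM)$, so the MW-index should be $\hM_q$ rather than $\hM_{-q}$ in your displayed isomorphism — but as you note, $q$ ranges over all of $\ZZ$, so this has no bearing on the conclusion.)
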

\begin{proof}
	This follows from \Cref{df:perverse_htp_t}, the isomorphism of \Cref{Prop_homology_coincide}
	and the definition of Milnor-Witt homology with coefficients in $\hM$ (\Cref{DefinitionComplex}).
\end{proof}

In particular, we get a functor $\dH:\CatMW_S \rightarrow \DA(S)_{t_\delta \leq 0}$ from \Cref{thm:functor_KMW->DA}.
Moreover, we obtain from \Cref{cor:functor:T->KMW} a functor $\dR:\DA(S)_{t_\delta \leq 0} \rightarrow \CatMW_S$.
\begin{thm} 
	\label{thm_main_adjunction_theorem}
	Consider the above notation.
	\begin{enumerate}
		\item  There exists an additive functors:
		$$
		\dR:\DA(S,R) \rightarrow \CatMW_S
		$$
		which is a right inverse of $\dH$: $\dR \circ \dH \simeq \Id$.
		Thus the functor $\dH$ is faithful.
		\item If the category of $S$-schemes satisfies (Resol) and $\DA(-,R)$ is homotopically compatible,
		the previous adjunction induces an equivalence of (additive) categories:
		$$
		\dH:\CatMW_S \leftrightarrows \DA(S,R)^\heartsuit:\dR
		$$
		where the right hand-side is the heart for the perverse $\delta$-homotopy $t$-structure.
	\end{enumerate}
\end{thm}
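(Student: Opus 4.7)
My strategy is to handle Parts (1) and (2) in sequence. Part (1) should amount to an unfolding of definitions given the identification theorems of Sections \ref{sec:Rost_transform} and \ref{sec:Cycle_to_homotopy_modules}, while Part (2) is genuinely harder and will rely on the theory of sheaves with specialization only sketched here.

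For Part (1), given a homological MW-cycle module $\hM$, I would compute $\dR(\dH(\hM))$ and identify it canonically with $\hM$. Evaluating at a point $\Spec E \to S$ and integer $n$, formula \eqref{eq:fiberhlg} gives
\[
\dR(\dH(\hM))(E,n) = \widehat{(\dH \hM)}_n(\Spec E/S, \tw{\delta(E)-n}).
\]
Proposition \ref{Prop_homology_coincide} with $v = \tw{\delta(E)-n}$ identifies this bivariant group with a Rost-Schmid homology group on $\Spec E$. Since $\Spec E$ has a unique point, of $\delta$-dimension $\delta(E)$, the associated Rost-Schmid complex is concentrated in a single degree and the resulting group is just $\hM_n(E)$. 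I would then verify that the data \ref{itm:D1'}--\ref{itm:D4'} on both sides correspond: \ref{itm:D2'} as proper covariance, \ref{itm:D1'} as lci contravariance, \ref{itm:D3'} as the $\cohtp^*$-action (Remark \ref{rem:compare_K1MW-actions}), and \ref{itm:D4'} as the residue map of the localization long exact sequence. This verification is essentially bookkeeping, given how the MW-module structure on $\widehat{\dH \hM}$ was originally defined in the proof of Theorem \ref{thm_Rost_transform}. The isomorphism $\dR \circ \dH \simeq \Id$ follows, and faithfulness of $\dH$ is automatic.

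For Part (2), I would leverage the adjunction to reduce to essential surjectivity onto the heart: the counit $\epsilon \colon \dH(\dR(\E)) \to \E$ must be shown to be an isomorphism whenever $\E \in \DA(S,R)^\heartsuit$. Under (Resol) and homotopical compatibility, \cite{BD1} develops enough structure on the perverse $\delta$-homotopy $t$-structure for every object of the heart to admit a functorial Gersten-type resolution: a Cousin complex whose $p$-th term is a sum, over points $x$ of $\delta$-dimension $p$, of the groups $\dR(\E)$ evaluated at $x$, with differentials given by the residues \ref{itm:D4'}. Once this resolution is available and is known to compute $\E$ on the nose, comparison with the Rost-Schmid complex of $\dR(\E)$ -- which by construction represents $\dH(\dR(\E))$ -- shows $\epsilon$ is a quasi-isomorphism, hence an isomorphism in the heart.

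The main obstacle is precisely the theory of sheaves with specialization, whose full development the introduction defers to a sketch. I expect the difficulty to lie in two places: first, constructing the Cousin resolution functorially on the heart, which requires subtle compatibility between the coniveau filtration, the $\delta$-niveau spectral sequence of \ref{num:delta_niveau_ssp}, and the homotopical compatibility axiom; and second, verifying that this resolution actually recovers $\E$ rather than merely its associated graded for the coniveau filtration, which is where (Resol) and strict $\AA^1$-invariance play their decisive roles. Granting these inputs, the identification of the $E^1$-page of the $\delta$-niveau spectral sequence with the Rost-Schmid complex of $\dR(\E)$, already obtained in Theorem \ref{thm_Rost_transform} via the isomorphism \eqref{eq:Gersten&RS-complexes}, closes the argument.
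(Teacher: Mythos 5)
Your proposal follows the paper's strategy essentially step by step. For Part~(1), the paper likewise reduces to the identification of Proposition~\ref{Prop_homology_coincide} (together with formula \eqref{eq:fiberhlg} and the continuous extension of Proposition~\ref{prop:kan_extension_BMhlg}, needed since $\Spec E$ is only essentially of finite type over $S$), and the compatibility of the structure data is the same unwinding you describe. For Part~(2), the paper's precise mechanism for showing that the Cousin complex recovers $\E$ ``on the nose'' --- the subtlety you correctly flag --- is the filtered derived category formalism (Theorem~\ref{thm:BBD}, a generalization of Beilinson's heart equivalence, applied via Theorem~\ref{thm_Suominen} on Cohen--Macaulay/Cousin complexes); the sheaves-with-specialization category $\Smsp_S$ and the equivalence of Theorem~\ref{thm_equivalence_sheaves_smsp_and_sm} are precisely what make the $\delta$-coniveau filtration functorial enough to feed into that machinery (Theorem~\ref{thm_extended_filtration_functorial}), which is a construction specific to this paper rather than a direct consequence of \cite{BD1}. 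Since the paper itself presents Part~(2) only as a detailed sketch modulo the specialization theory, your outline is aligned in substance with the argument actually given.
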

\begin{proof}
	The main problem is to define the unit $\dH \circ \dR \rightarrow \Id$
	and counit $\Id \rightarrow \dR \circ \dH$ of the adjunction. The proof will be completed in \ref{end_of_proof_main_theorem}.
\end{proof}

\label{subsec_filtrations}
\subsection{Filtrations on complexes}

\begin{num}
	The references for these notions are \cite[Sec. I]{DelH2}, \cite{BBD} and \cite[Appendix A]{BeilPerv}, except that we will adopt other conventions for numbering filtrations.
	
	Let $\A$ be an abelian category. Recall that a decreasing (resp. increasing) filtration of an object $A$ of $\A$ is a sequence of monomorphisms:
	\begin{align*}
		... \rightarrow F^{p+1} \rightarrow F^{p} \rightarrow ... \\
		\text{resp. } ... \rightarrow F_{p-1} \rightarrow F_{p} \rightarrow ...
	\end{align*}
	in the full subcategory of $\mathscr A/A$ composed of objects whose structural maps are monomorphisms and is denoted by $F^*$ or simply $F$.
	There exists an equivalence between decreasing and increasing filtrations
	given by the functor $F_p \mapsto F^{-p}$; hence we consider by default the increasing ones.
	Morphisms of filtrations of a given object $A$ are obviously defined (see \cite[Tag 0121]{stacks_project}).
	
	One also says that $(A,F)$ is a filtered object of $\A$. Morphisms of filtered objects
	are also obviously defined (they are not required to be monomorphisms).
	
	The filtration $F^*$ is bounded above (resp. below) if $0 \rightarrow F^p$ (resp. $F^p \rightarrow A$) is an isomorphism for $p\gg 0$ (resp. $p \ll 0$).
	It is \textit{bounded} (or \textit{finite}, see \cite[Tag 0121]{stacks_project}) if it is bounded both below and above.
	Note that one says that $F^*$ is separated (resp. exhausted) if 
	$0$ is the colimit (resp. $A$ is the limit) of the diagram 
	$$
	\hdots \rightarrow F^p \rightarrow F^{p+1} \rightarrow \hdots
	$$
	More suggestively: $0 \rightarrow \colim_{p\to\infty} F^p$
	(resp. $\lim_{p \to -\infty} F^p \rightarrow A$) is an isomorphism.
	
	One defines the shift functor on filtrations of $A$ by the formula: $s(F^*)^p=F^{p-1}$
	(convention of \cite[A.2]{BeilPerv}). Note that $s$ is an equivalence of categories.
	
	Given a filtration $F^*$ of an object $A$ of $\A$, and $p \in \ZZ$, we define the $p$-th graded part as: $\gr^p_F=\gr^p_F(A)=F^p/F^{p+1}$.
	
	In particular: $\gr^p_{sF}=\gr^{p-1}_F$.
\end{num}

\begin{rem}\label{rem:ocatedron_ab}
	If one relaxes the monomorphism condition (this is something that we will do
	for $\infty$-categories), one can pass from decreasing to increasing filtrations of a given object $A$
	by considering diagrams of the form:
	$$
	\xymatrix@=10pt{
		& F_p\ar[rr]\ar[ld] && F_{p+1}\ar[rd]\ar[ld] & \\
		A\ar[rd] &\ar@{}|{*}[l] & F_{p+1}/F_p\ar[ld]\ar@{-->}[lu]\ar@{}|{*}[u]\ar@{}|{*}[d] &\ar@{}|{*}[r]& A\ar[ld] \\
		& F^p\ar[rr]\ar@{-->}[uu] && F^{p+1}\ar@{-->}[lu]\ar@{-->}[uu] & \\
	}
	$$
	where the diagram (*) indicates exact sequence and the dotted arrow must be thought as a boundary map
	(so does not exist in the case $A$ is in an abelian category), and the other part of the diagram are
	to be thought as commutative. This is nothing else than an octahedron diagram and will be used
	extensively below.
	
	Note that more precisely in the above diagram: $F_*$ is an increasing filtration by monomorphisms (subobjects),
	and $F^*$ is a decreasing "filtration" by epimorphisms (quotients) - one could say "cofiltration"!
	
	The important thing here is the graded object associated with the filtration $F_*$ coincides
	with the graded object associated with the cofiltration $F^*$.
\end{rem}

\begin{rem}
	The convention for exponent/underscore for decreasing/increasing filtrations does not coincide
	with the usual conventions of grading for cohomological/homological complexes.
	That is somewhat unfortunate. However, when we will consider $\infty$-categories,
	I propose to use notation $K(p)$, $K(+\infty)$ rather than exponent/underscore!
\end{rem}

\begin{rem}
	The notion of filtrations/filtered objects as above makes sense in any category with monomorphisms
	(as the category of schemes)! Of course to be able to define the associated graded object,
	one has to ask for cokernels.
\end{rem}

\begin{rem}
	A $\ZZ$-graded object of a category $\C$ is an object of $\C^\ZZ$,
	where $\ZZ$ is seen as a \textbf{discrete} category.
	If $\C$ admits coproducts, there is a realization of $\ZZ$-graded object in $\C$:
	$$
	\C^\ZZ \rightarrow \C, (A^n)_{n \in \ZZ} \mapsto \sum_{n \in \ZZ} A_n,
	$$
	but we prefer to work with the sequences rather than with their realizations.
\end{rem}

\begin{num}\label{num:canonical_filtrations}
	We will consider filtrations on the abelian category $\Comp(\A)$ of complexes of $\A$.
	
	Let $K=K^*$ be such a complex, with cohomological conventions.
	One define the \emph{canonical} filtration of $K^*$ as:
	$$
	F_p^{can}=\tau^{\leq p}(K)=\begin{cases}
		K^n & n<p \\
		\Ker(d^p_K) & n=p \\
		0 & n>p.
	\end{cases}
	$$
	Indeed, this defines an \emph{increasing} filtration of $K$
	$$
	\hdots \rightarrow \tau^{\leq p}(K) \rightarrow  \tau^{\leq p+1}(K) \rightarrow \hdots
	$$
	whose graded objects are $\gr^p_{can}(K)=H^p(K)[-p]$.
	To comply with the tradition, we will consider the opposite decreasing filtration
	$F^p_{can}(K)=\tau^{\leq -p}(K)$.
	
	One defines the \emph{naive} (or "bête") filtration as:
	$$
	F^p_{naiv}=\sigma^{\geq p}(K)=\begin{cases}
		0 & n<p \\
		K^n & n\geq p.
	\end{cases}
	$$
	This is a decreasing filtration of $K$:
	$$
	\hdots \rightarrow \sigma^{\geq p+1}(K) \rightarrow  \sigma^{\geq p}(K) \rightarrow \hdots
	$$
	whose graded objects are $\gr^p_{naive}(K)=K^p[-p]$.
\end{num}

\begin{rem}
	The notation for the canonical filtration is unfortunate...
	I am following \cite{DelH2}, except that I use cohomological notations for the truncations
	(which seems more appropriate!)
\end{rem}

\begin{rem} One can apply \Cref{rem:ocatedron_ab} to the short exact
	sequences of complexes:
	\begin{align*}
		0 \rightarrow \tau^{\leq p}(K) \rightarrow K \rightarrow \tau^{>p}(K) \rightarrow 0 \\
		0 \rightarrow \sigma^{\geq p}(K) \rightarrow K \rightarrow \sigma^{<p}(K) \rightarrow 0
	\end{align*}
	Where:
	\begin{align*}
		\tau^{>p}(K)^n&=K^n, n>p; \mathrm{coKer}(d^{p-1}:K^{p-1} \rightarrow K^p)), n=p; 0, n < p. \\
		(\sigma^{<p}(K))^n&=K^n, n<p; 0, n \geq p.
	\end{align*}
	Note there is currently a mistake in \cite[Tag 0118]{stacks_project} for the definition of $\tau^{>p}=\tau^{\geq p-1}$ ! 
	
\end{rem}

\begin{num}
	\label{notation_filtered_category}
	Following \cite[3.1]{BBD} and \cite[V.1]{Illusie},
	one can define the filtered derived category of $\A$.
	
	A filtered complex $(K,F)$ of $\A$ is a complex $K$ equipped with an increasing filtration $F$,
	\emph{i.e.} a filtered object of the abelian category $\Comp(\A)$.
	This is also a complex of the category $F\A$ !
	
	According to \cite{Schneiders}, the category $F\A$ of filtered objects
	of the abelian category $\A$ is only \emph{quasi-abelian} (in particular an additive exact category).
	One can however consider its derived category $\Der(F\A)$ according to \emph{loc. cit.}\footnote{Recall
		this category is equipped with both a left and a right t-structure.
		The heart of the left t-structure is an abelian category $\mathcal L F \A$ which is
		the left abelianization of $\A$ - there is a universal functor $\A\rightarrow \mathcal L F \A$
		which sends strict exact sequences to exact sequences (see \emph{loc. cit.}).
		The heart of the right t-structure is an abelian category which is the right abelianization
		of $\A$  - there is a universal functor $\mathcal R F \A\rightarrow \A$
		which sends exact sequences to strict exact sequences (use opposite categories, or see \emph{loc. cit.}).}
	
	The following statement can be found in \cite{Illusie}: for any integer $p$, the functor $\gr^p:F\A \rightarrow \A$
	extended to complexes preserves quasi-isomorphisms. In particular, it induces a functor
	$$
	\gr^p:\Der(F\A) \rightarrow \Der(\A)
	$$
\end{num}
\begin{df}
	Let $f:(K,F) \rightarrow (L,G)$ be a morphism of filtered complexes.
	One says $f$ is a \emph{graded quasi-isomorphism}
	(or  weak equivalence) if for any integer $p \in \ZZ$, the induced morphism of complexes
	$$
	\gr^p_F(K) \rightarrow \gr^p_G(L)
	$$
	is a quasi-isomorphism.
	
	The (big) filtered derived category $\DF(\A)$ associated to $\A$ is the localization of the category of filtered
	complexes with respect to graded quasi-isomorphisms.
	
	More directly, the category $\DF(\A)$ is the universal localization of $\Der(F\A)$ such that
	the functors $\gr^p$, $p \in \ZZ$ detects isomorphisms.
	We also consider the categories $\DF^\epsilon(\A)$ of bounded above, below, both, for $\epsilon=-, +, b$, respectively.
\end{df}
One can check that $\DF(\A)$ has the structure of an \emph{f-category} (\cite[App. A]{BeilPerv}).
The functor $s$ induces an exact functor $s:\DF(\A) \rightarrow \DF(\A)$
(which is not the shift functor!)

\begin{num}
	We now consider the canonical $t$-structure on $\Der(\A)$.
	Note however that an arbitrary $t$-structure would work as well
	(this case will be treated more generally for $\infty$-categories below).
	
	Let $(K,F)$ be a filtered complex. One associates to $(K,F)$ an exact couple 
	(with cohomological conventions) as:
	$$
	\xymatrix@=10pt{
		H^{p+q}(F^{p+1})\ar[rr] &&  H^{p+q}(F^p)\ar[ld] \\
		&H^{p+q}(\gr^p_F)\ar^{+1}[lu]&
	}
	$$
	and set $D^{p,q}=H^{p+q}(F^p)$, $E^{p,q}=H^{p+q}(\gr^p_F)$.
	Write $EC(K,F)$ this exact couple.
	This in turn defines a spectral sequence and a filtration
	on $H^*(K)$. For example, the spectral sequence converges if $(K,F)$ is bounded.
	Recall in particular that $E^{*,q}$ is a complex, and these
	complexes form the $E_1$-page of the previously mentioned spectral
	sequence.
	We will put $E^*(K,F)=E^{*,0}$.
	
	The exact couple $EC(K,F)$ is functorial in the filtered complex $(K,F)$,
	sends quasi-isomorphisms of filtered complexes to isomorphisms of exact couples,
	but \emph{does not} sends graded quasi-isomorphism to isomorphisms.
	On the other hand, for any $q \in \ZZ$,
	the complex $E^{*,q}$ is not only functorial,
	but also send graded quasi-isomorphisms to \emph{isomorphisms} of complexes.
	
	In particular, we get a well defined functor:
	$$
	E^*:\DF(\A) \rightarrow \Comp(\A).
	$$
	Note that one gets the relation $E^*\big((K,F)[i])\big)=E^{*,i}$.
\end{num}

Here is an important lemma (we will only use point (1)).
\begin{lm}\label{lm:filtrations_key}
	Let $K$ be a complex of $\A$, seen as an object of $\Comp(\A)$.
	We consider the filtrations of \Cref{num:canonical_filtrations}.
	\begin{enumerate}
		\item Let us consider the filtered complex $(K,F^*_{naive})$.
		Then $E^*\big((K,F^*_{naive})[i])\big)=0$ for $i \neq 0$:
		in other words, $(K,F^*_{naive})$ is in the heart of the filtered derived category
		$\DF(\A)$.
		
		Moreover, as recalled previously, one has a (very) canonical isomorphism
		$$
		\epsilon^p:K^p[-p] \xrightarrow \sim \gr^p_{naive}(K).
		$$ 
		These isomorphisms for $p \in \ZZ$ define an isomorphism of complexes:
		$$
		\epsilon:K^* \xrightarrow \sim E^*(K,F^*_{naive}).
		$$
		In other words, the differentials of the complex $E_1^{*,0}$,
		$0$-th line of the associated spectral sequence,
		are identified (through the isomorphism $\epsilon$) with the original differentials of the complex
		$K$.
		\item Let us consider the filtered complex $(K,F^{*}_{can})$.
		Again $E^*\big((K,F^*_{can})[i])\big)=0$ for $i \neq 0$:
		in other words, $(K,F^*_{can})$ is in the heart of the filtered derived category
		$\DF(\A)$.
		
		Moreover, the differential in the $E_1$-term:
		$H^p(\gr^p_{can}) \rightarrow H^{p+1}(\gr^{p+1}_{can})$ is the boundary map
		associated with the exact sequence of complexes:
		$$
		0 \rightarrow H^{-p}(K^{-p})[p] \rightarrow F_{-p}^{can}(K)/F_{-p-2}^{can}(K)
		\rightarrow H^{-p-1}(K^{-p-1})[p-1] \rightarrow 0.
		$$
	\end{enumerate}
\end{lm}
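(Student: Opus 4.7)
The plan is to treat both filtrations by the same strategy: compute the graded pieces explicitly, observe they are (quasi-isomorphic to) complexes concentrated in a single cohomological degree, and then identify the $E_1$-differential via the standard derivation of the exact couple.

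First, for the naive filtration, the computation is essentially tautological. By definition $\gr^p_{naive}(K)=K^p[-p]$ is a complex concentrated in cohomological degree $p$ with zero differential, so $H^n(\gr^p_{naive}K)=K^p$ if $n=p$ and $0$ otherwise. Consequently $E^{p,q}_1=H^{p+q}(\gr^p_{naive}K)$ is concentrated on the line $q=0$, which is exactly the statement that $(K,F^*_{naive})$ lies in the heart of $\DF(\A)$. The required isomorphism $\epsilon^p\colon K^p[-p]\xrightarrow{\sim}\gr^p_{naive}(K)$ is the defining identity.

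Next I would identify the $E_1$-differential $d_1\colon E^{p,0}_1\to E^{p+1,0}_1$ by unravelling the exact couple $d_1=j\circ k$, where $k\colon E^{p,q}_1\to D^{p+1,q}_1$ is the connecting map attached to the short exact sequence of complexes
\[
0\longrightarrow F^{p+1}_{naive}\longrightarrow F^p_{naive}\longrightarrow \gr^p_{naive}(K)\longrightarrow 0,
\]
and $j\colon D^{p+1,q}_1\to E^{p+1,q}_1$ is induced by the projection $F^{p+1}_{naive}\to\gr^{p+1}_{naive}(K)$. Equivalently, one can use the two-step truncation $F^p_{naive}/F^{p+2}_{naive}$, which is precisely the two-term complex $K^p\xrightarrow{d^p_K}K^{p+1}$ sitting in degrees $p,p+1$. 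The snake-lemma argument applied to this two-term complex immediately yields $d_1=d^p_K$ after identifying $E^{p,0}_1\simeq K^p$ via $\epsilon$. This proves (1).

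For the canonical filtration the strategy is identical. The graded piece $\gr^p_{can}(K)=\tau^{\leq -p}(K)/\tau^{\leq -p-1}(K)$ is quasi-isomorphic to the complex $H^{-p}(K)[p]$, which is concentrated in a single cohomological degree; evaluating $H^{p+q}$ then shows the $E_1$-page is supported on a single line, giving the heart statement. To identify the $E_1$-differential, I would again invoke the exact-couple description $d_1=j\circ k$, but this time realized through the two-step subquotient $F^{can}_{-p}/F^{can}_{-p-2}$, which fits into the short exact sequence of complexes stated in the lemma:
\[
0\longrightarrow H^{-p}(K^{-p})[p]\longrightarrow F^{can}_{-p}/F^{can}_{-p-2}\longrightarrow H^{-p-1}(K^{-p-1})[p-1]\longrightarrow 0.
\]
The connecting morphism of this short exact sequence is, by construction, the composition $j\circ k$; this is exactly the $E_1$-differential of the spectral sequence, establishing (2).

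The only genuine obstacle is purely notational: keeping track of the sign, shift and indexing conventions relating the increasing filtration $F^{can}_\bullet$ and its decreasing opposite $F^\bullet_{can}$, as well as the interaction between the shift functor on $\DF(\A)$ and the formula $E^*((K,F)[i])=E^{*,i}$. Once the conventions are fixed, both assertions reduce to the same two observations used in paragraph one: the graded pieces are cohomologically concentrated, and the connecting maps are read off a length-two subquotient of the filtration.
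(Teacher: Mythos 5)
Your proof is correct and takes essentially the same approach as the paper: compute the graded pieces to see they are cohomologically concentrated, then read off the $E_1$-differential as the snake-lemma boundary map of the length-two subquotient $F^p/F^{p+2}$. The paper compresses exactly this argument into a single remark citing the Stacks Project tag on the boundary map of an exact sequence of complexes.
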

This is an easy calculation, based on the fact the differential of the $E_1$-term
is obtained from the snake lemma (as \emph{the} "boundary map" associated with
an exact sequence of complexes) (see \cite[Tag 012N]{stacks_project}).

\begin{rem}
	At this point, the construction of the functor $E^*$ would make sense
	if we consider arbitrary (or bounded above/below) filtrations,
	provided one lands in the category of unbounded complexes (or the relevant one).
\end{rem}

\begin{df}
	One defines the canonical $t$-structure on the triangulated category
	underlying the $f$-category $\DF(\A)$ as the $t$-structure
	such that $(K,F)$ is in degree $\geq i$ (resp. $\leq i$) if for all
	$p \in \ZZ$,
	$\gr^p_F(K)$ is in degree $\geq p+i$ (resp. $\leq p+i$).
\end{df}
In other words, this is the universal $t$-structure on $\DF(\A)$ such that
for all $p\in \ZZ$, the functor $\gr^p_{-}(-)[-p]:\DF(\A) \rightarrow \Der(\A)$ is $t$-exact.

\begin{rem}
	\begin{enumerate}
		\item The above definition works if one replaces the canonical t-structure of $\Der(\A)$
		by an arbitrary one. The next theorem can also be extended to that context.
		\item The above definition can be translated in term of the 
		"complex" functor $E^*$ as follows:
		$(K,F)$ is in degree $\geq i$ (resp. $\leq i$) if for all
		$p<i$ (resp. $p>i$), $E^*\big((K,F)[p]\big)=0$.
		This strongly suggest the next theorem which generalizes \cite[3.1.8]{BBD}, or \cite[A.3]{BeilPerv}
		to the unbounded case.
	\end{enumerate}
	
\end{rem}

\begin{num}
	Let us introduce some notation.
	The naive filtration functor defines a well-defined functor
	$$
	\Comp(\A) \rightarrow \Comp(F\A).
	$$
	One readily checks that its essential image in $\DF(\A)$ lands into the category $\DF(\A)^{t=0}$
	for the above $t$-structure.
	We write:
	$$
	\pi_{nv}:\Comp(\A) \rightarrow \DF(\A)^{t=0}
	$$
	the corresponding functor.
\end{num}

\begin{thm}\label{thm:BBD}
	The functors $\pi_{nv}:\Comp(\A) \rightarrow \DF(\A)^{t=0}$ and the functor $E^*:\DF(\A)^{t=0} \rightarrow \Comp(\A)$ (induced by $E^*$) form an equivalence of categories.
	
	In particular, $\pi_{nv}$ is both a left and right adjoint to $E^*$ and we
	get natural isomorphisms:
	\begin{align*}
		Id & \xrightarrow \sim E^* \circ \pi_{nv} \\
		Id & \xrightarrow \sim \pi_{nv} \circ E^*.
	\end{align*}
	Finally, the composite functor $\DF(\A) \xrightarrow{E^*} \Comp(\A) \simeq \DF(\A)^{t=0}$
	is the canonical cohomology functor associated with the $t$-structure
	on $\DF(\A)$.
\end{thm}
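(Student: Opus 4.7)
The plan is to realize this equivalence as a concrete avatar of the classical BBD structure theorem \cite[3.1.8]{BBD} for bounded filtered complexes, with additional care to handle the unbounded setting.

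The easy direction $E^* \circ \pi_{nv} \simeq \Id$ is essentially the content of Lemma~\ref{lm:filtrations_key}(1): first, $\pi_{nv}(K) = (K, F^*_{naive})$ lies in the heart because $\gr^p_{naive}(K) = K^p[-p]$ is concentrated in cohomological degree $p$, so that $E^*\bigl((K,F^*_{naive})[i]\bigr) = 0$ for $i \neq 0$; second, the natural isomorphism $\epsilon_K: K \xrightarrow{\sim} E^*(\pi_{nv}(K))$ is given by \emph{loc. cit.} at the level of complexes, and this is already an isomorphism of differentials, not merely of graded groups. Because $\gr^p$ detects isomorphisms in $\DF(\A)$ and the restriction of $\epsilon$ to each $\gr^p$ is an isomorphism, this also shows that $\pi_{nv}$ is faithful.

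For the harder direction $\Id \xrightarrow{\sim} \pi_{nv} \circ E^*$ on the heart, let $(K,F) \in \DF(\A)^{t=0}$ and set $L^p := H^p(\gr^p_F K)$, so $E^*(K,F) = L^*$ as a complex. By definition of the $t$-structure, $\gr^p_F K$ is concentrated in cohomological degree $p$, hence there is a canonical isomorphism $\gr^p_F K \simeq L^p[-p]$ in $\Der(\A)$. Organizing the distinguished triangles $F^{p+1} \to F^p \to \gr^p_F \xrightarrow{+1}$ and their octahedra (in the spirit of Remark~\ref{rem:ocatedron_ab}), one identifies the connecting maps $\gr^p_F K[-1] \to \gr^{p+1}_F K$ with the differentials $L^p \to L^{p+1}$ of the complex $E^*(K,F)$. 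An induction on the length of the filtration then produces a natural isomorphism between each finite sub-quotient $(F^a/F^{b+1}, F)$ and the naive filtration on the truncated complex $\sigma^{\geq a}\sigma^{\leq b}(L^*)$, which is the content of the bounded BBD argument \cite[A.3]{BeilPerv}.

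The principal obstacle, and the reason this is not a mere restatement of BBD, is the passage from finite truncations to the unbounded case. Here the key observation is that in $\DF(\A)$, for each fixed $p$ the graded piece $\gr^p_F(K)$ only depends on the window $F^{p-1} \supset F^p \supset F^{p+1}$, so it is insensitive to truncation outside that window. The natural isomorphisms constructed on bounded truncations are therefore mutually compatible and assemble to a global natural isomorphism $(K,F) \simeq \pi_{nv}(E^*(K,F))$ which is an isomorphism on every $\gr^p$, hence in $\DF(\A)$. Naturality in $(K,F)$ is inherited from the bounded case by the same locality argument.

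For the final assertion, it suffices to check that the composite $\pi_{nv} \circ E^*: \DF(\A) \to \DF(\A)^{t=0}$ is the identity on the heart (already established) and vanishes on both $\DF(\A)^{t \geq 1}$ and $\DF(\A)^{t \leq -1}$: in either extremity the definition of the $t$-structure forces $\gr^p_F(K)$ to lie in cohomological degrees strictly greater than $p$, respectively strictly less than $p$, so $H^p(\gr^p_F K) = 0$ for every $p$, and hence $E^*(K,F) = 0$ term by term. These three properties characterize the cohomology functor ${}^pH^0$ for the $t$-structure by the standard truncation argument.
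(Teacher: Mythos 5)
Your approach diverges from the paper's at the crucial step. Where you try to build the natural isomorphism $\Id \xrightarrow{\sim} \pi_{nv}\circ E^*$ directly on objects (induction on the length of finite sub-quotients $F^a/F^{b+1}$, following \cite[A.3]{BeilPerv}, then ``assemble''), the paper instead proves that $E^*$ is \emph{fully faithful} by computing $\Hom_{\DF(\A)}(K,L)$ for $K,L$ in the heart via Illusie's spectral sequence $E_1^{p,q}=\prod_{i-j=p,\,p\geq 0}\Hom^{p+q}_{\Der(\A)}(\gr^i K,\gr^j L)\Rightarrow \Hom^{p+q}_{\DF(\A)}(K,L)$. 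Because $\gr^iK=A^i[i]$, $\gr^jL=B^j[j]$ with $A^i,B^j\in\A$, the first page sits in the quadrant $p\geq0,q\geq0$, so the sequence converges \emph{without any boundedness hypothesis on the filtrations}, and degenerates at $E_2^{0,0}=\Hom_{\Comp(\A)}(E^*K,E^*L)$. Together with $E^*\circ\pi_{nv}\simeq\Id$ from \Cref{lm:filtrations_key}, this gives the equivalence by the standard criterion. Your easy direction is identical to the paper's; the divergence is only in the hard direction.

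The gap in your argument is precisely the step you describe as ``the principal obstacle.'' The claim that the isomorphisms $(F^a/F^{b+1},F)\simeq \pi_{nv}\sigma^{\geq a}\sigma^{\leq b}(L^*)$ ``are mutually compatible and assemble to a global natural isomorphism'' is exactly what needs proof in the unbounded setting, and the locality observation (each $\gr^p$ depends only on a three-term window) does not supply it. An object of $\DF(\A)$ is not reconstructible from its finite truncations in a naive way: passing to a limit or colimit of isomorphisms in a derived category involves $\lim^1$-type obstructions and non-uniqueness of the resulting map, and ``compatible on all finite windows'' does not by itself yield a map in $\DF(\A)$, let alone a natural one. This is precisely why the paper works at the Hom-level rather than the object-level: the Illusie spectral sequence converges for heart-valued $K,L$ because the relevant vanishing is forced by the $t$-structure, not by finiteness of the filtration, so the unbounded case is handled uniformly. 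Also, your characterization of ${}^pH^0$ in the last paragraph (identity on the heart, zero on $t\geq 1$ and $t\leq -1$) is not a complete characterization of a functor on all of $\DF(\A)$ without additionally knowing that the functor is homological; the paper simply deduces the last assertion from $\Id\xrightarrow{\sim}\pi_{nv}\circ E^*$ together with the definition of the $t$-structure via $E^*$.
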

\begin{proof}
	According to \Cref{lm:filtrations_key}, we get for any complex $K$ a natural isomorphism:
	$$
	K \rightarrow E^*(K,F^*_{naive})=E^* \circ \pi_{nv}(K).
	$$
	This already proves that $\pi_{nv}$ is a right inverse to $E^*$.
	In particular, $E^*$ is essentially surjective (and $\pi_{nv}$ is faithful).
	
	Therefore, to get the first assertion,
	it suffices to prove that $E^*$ is fully faithful (in view of \cite[IV.4, Th. 1]{MacLane}).
	We use Illusie's spectral sequence \cite[3.1.3.5]{BBD}
	extended to unbounded complexes. For two given objects $K$ and $L$ of $\DF(\A)^{t=0}$, it has the form:
	$$
	E_1^{p,q}=\prod_{i-j=p, p\geq 0} \Hom_{\Der(\A)}^{p+q}(\gr^iK,\gr^jL)
	\Rightarrow \Hom_{\DF(\A)}^{p+q}(K,L)
	$$
	where $\Hom^{a}=H^a\derR \Hom$. Note already that, by definition of the spectral sequence,
	it is located in the region $p\geq0$ (as indicated in our notation).
	
	As $K$ and $L$ are in the heart, one gets:
	$$
	\gr^iK=A^i[i], \gr^jL=B^j[j]
	$$
	for any $i,j\in \ZZ$, where $A$ and $B$ are some fixed objects in $\A$.
	By construction, we have $A^i=E^i(K)$ and $B^j=E^j(L)$.
	In particular, for $p=i-j$, 
	$$
	\Hom_{\Der(\A)}^{p+q}(\gr^iK,\gr^jL)
	=\Hom_{\Der(\A)}^{p+q-i+j}(A^i,B^j)=\Hom_{\Der(\A)}^{q}(A^i,B^j).
	$$
	As $A^i$ and $B^j$ are in the heart of a $t$-structure, the spectral sequence
	is further located in the region $q\geq 0$.
	In particular, for any $(p,q) \in \NN \times \NN$, 
	$d_r^{p,q}:E_r^{p,q} \rightarrow E_r^{p+r,q-r+1}$
	vanishes for $r>q+1$, and the spectral sequence degenerates
	at $E_{q+1}^{p,q}$. Moreover, as $E_\infty^{p,q}$ is located
	in the region $p\geq0, q\geq0$, the filtration on its abutment
	is finite and in particular, it converges (without
	boundedness assumptions). (This argument is classical: it should be in \cite{McCleary01} !)
	
	Moreover, it degenerate at $E_2^{0,0}$ which is the kernel of a map:
	$$
	\prod_{i \in \ZZ} \Hom_{\Der(\A)}(A^i,B^i) \rightarrow \prod_{i \in \ZZ} \Hom_{\Der(\A)}(A^i,B^{i+1})
	$$
	where the map sends $(f^i)$ to $f^i \circ d_B^i-d^i_A \circ f^i$ (up to a sign), where we have denoted by $d^i_A$
	(resp. $d^i_B$) the $i$-th derivation of the complex $E^*(K)$
	(resp. $E^*(L)$), according
	to \cite[beginning of the proof of 3.1.8]{BBD}. 
	
	So $E_2^{0,0} \simeq \Hom_{\Comp(\A)}(E^*(K),E^*(L))$.
	As Illusie's spectral sequence is concentrated in the region $p,q \geq 0$,
	we finally get an edge isomorphism:
	$$
	\Hom_{\DF(\A)}(K,L)
	\xrightarrow \sim E_2^{0,0} \simeq \Hom_{\Comp(\A)}(E^*(K),E^*(L))
	$$
	as required (one checks it agrees with the map induced by the functorial
	structure of $E^*$).
	
	The second observation is an obvious consequence of the first. The last result follows from the isomorphism 
	\begin{align*}
		Id & \xrightarrow \sim \pi_{nv} \circ E^*.
	\end{align*}

\end{proof}

For us, the important corollary is as follows.
\begin{cor}
	Let $K$ be a filtered complex in the heart of the category $\DF(\A)$.
	Then there exists a canonical isomorphism:
	$$
	K \rightarrow \pi_{nv} \circ E^*(K)
	$$
	natural in $K$, and such that after applying $E^*$, the isomorphism
	$$
	E^*(K) \rightarrow E^*(\pi_{nv} \circ E^*(K))=E^* \circ \pi_{nv}(E^*(K)))
	$$
	is the canonical one defined in \Cref{lm:filtrations_key}(1).
\end{cor}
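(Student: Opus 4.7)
The plan is to invoke Theorem \ref{thm:BBD} almost directly; this corollary is essentially just the translation of the equivalence into a pointwise statement for objects of the heart. Since $K$ lies in $\DF(\A)^{t=0}$, the theorem furnishes the natural isomorphism $\eta : \Id \xrightarrow{\sim} \pi_{nv} \circ E^*$ (the unit of the equivalence restricted to the heart), and one defines the desired map $K \to \pi_{nv} \circ E^*(K)$ to be $\eta_K$. Naturality in $K$ is then automatic from the naturality of $\eta$ as a transformation of functors on $\DF(\A)^{t=0}$.

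For the compatibility claim, the idea is to exploit the triangle identities satisfied by any adjoint equivalence. Writing $\epsilon : E^* \circ \pi_{nv} \xrightarrow{\sim} \Id$ for the counit (whose value at a complex $L$ is the canonical isomorphism of Lemma \ref{lm:filtrations_key}(1), since $E^*\circ \pi_{nv}(L)$ is obtained by taking the $E_1$-line of the naive filtration, identified via $\epsilon^p$ with $L$), the triangle identity reads $\epsilon_{E^*(K)} \circ E^*(\eta_K) = \Id_{E^*(K)}$. Hence $E^*(\eta_K)$ is the inverse of $\epsilon_{E^*(K)}$, which is precisely the canonical isomorphism $E^*(K) \xrightarrow{\sim} E^*\circ \pi_{nv}(E^*(K))$ supplied by Lemma \ref{lm:filtrations_key}(1) applied to the complex $E^*(K)$.

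There is no genuine obstacle here, only a bookkeeping check; the one subtlety worth being careful about is the direction of the natural transformations and the correct interpretation of the equivalence as an adjoint pair, so that the triangle identity gives exactly the compatibility asserted. All the real work has been carried out in Theorem \ref{thm:BBD} (fullness and faithfulness via Illusie's spectral sequence) and in Lemma \ref{lm:filtrations_key}(1) (identification of $E^*$ applied to the naive filtration with the original complex).
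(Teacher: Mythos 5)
Your argument is correct and matches the expected formal deduction (the paper states this corollary without a proof of its own): the equivalence of Theorem \ref{thm:BBD} --- whose proof establishes full faithfulness of $E^*$ via Illusie's spectral sequence --- can be promoted to an adjoint equivalence in which one of the structure isomorphisms is the canonical one of Lemma \ref{lm:filtrations_key}(1), and the triangle identity then yields exactly the stated compatibility. The one small slip is a direction: the canonical isomorphism of Lemma \ref{lm:filtrations_key}(1) goes $L \to E^*\pi_{nv}(L)$, so the counit $\epsilon_L : E^*\pi_{nv}(L) \to L$ is its \emph{inverse}, not equal to it --- but you implicitly correct this at the end when you observe that $E^*(\eta_K)=\epsilon_{E^*(K)}^{-1}$ is the Lemma's canonical map.
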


\begin{rem}
	As suggested by \cite[3.1.8]{BBD},
	the heart of $\DF(\A)$ can be identified with the image in $\DF(\A)$
	of the complexes $K$ equipped with their naive filtration.

\end{rem}

\subsection{Cousin and Cohen-Macaulay complexes}

\begin{num}
	Let $(X,\delta)$ be a dimensional scheme.\footnote{Note that $X$ needs not be noetherian here, nor qcqs.}
	We let co$\delta=-\delta$ be associated codimension function.
	
	Codimensional (resp. dimensional) $\delta$-flags of $X$, also known as co$\delta$-flags of $X$ (resp. $\delta$-flags of $X$), are filtrations:
	\begin{align*}
		... \rightarrow Z^{p+1} \rightarrow Z^{p} \rightarrow ... \\
		\text{resp. } ... \rightarrow Z_{p-1} \rightarrow Z_{p} \rightarrow ...
	\end{align*}
	in the category of closed immersions into $X$ such that for any integer $p$, we have co$\delta(Z^p)\geq p$
	(resp. $\delta(Z_p) \leq p$).
	This defines a set ordered by inclusion denoted by $\Flag^\delta(X)$ (resp. $\Flag_\delta(X)$). This is a cofiltered (resp filtered) ordered set.
	Note that the usual change of indices $Z^p \mapsto Z_{-p}$
	gives a bijection $\Flag^\delta(X) \rightarrow \Flag_\delta(X)$.
	
	One defines the $\delta$-coniveau (resp. $\delta$-niveau)
	filtration of $X$ as the filtration of $X$ in the category of pro-schemes (resp. ind-schemes) given for any integer $p$ by:
	\begin{align*}
		F^p_\delta=\plim_{Z^* \in \Flag^\delta(X)} Z^p, \\
		\text{resp. } F_p^\delta=\ilim_{Z_* \in \Flag_\delta(X)} Z_p.
	\end{align*}
	The following notion is not usually considered but can be useful.
	We can define the complement of  the $\delta$-coniveau (resp. $\delta$-niveau)
	filtration of $X$ as the filtration of $X$ in the category of pro-schemes (resp. ind-schemes):
	\begin{align*}
		\bar F^p_\delta=\plim_{Z^* \in \Flag^\delta(X)} (X-Z^p), \\
		\text{resp. } \bar F_p^\delta=\ilim_{Z_* \in \Flag_\delta(X)} (X-Z_p).
	\end{align*}
\end{num}

\begin{num}\textit{Variance of categories of flags}\label{num:flags_variance}
	
	The category of co$\delta$-flags is contravariantly (resp. covariantly) functorial with respect
	to flat (resp. proper lci) morphisms. 
	The category of co$\delta$-flags is covariantly (resp. contravariantly) functorial with respect
	to proper (resp. flat and lci).
	
	Indeed, let $f:Y \rightarrow X$ be a morphism essentially of finite type. We let $\delta_Y$ or simply $\delta$, be the
	dimension function associated on $Y$ with $f$. Fix $Z \in \Flag^{\delta}(X)$ and $T \in \Flag_{\delta}(Y)$
	\begin{enumerate}
		\item If $f:Y \rightarrow X$ is flat, then $Z^p \subset X$, co$\delta(Z^p) \geq p$ implies $co\delta(f^{-1}(Z^p)) \geq p$ \emph{i.e.} $\delta(Z^p) \leq -p$ implies $\delta(f^{-1}Z^p) \leq -p$.
		\item If $f:Y \rightarrow X$ is proper, then $T_p \subset Y$, $\delta(T_p) \leq p$ implies $\delta(f(T_p)) \leq p$.
	\end{enumerate}
	The first property is a consequence of the going-up theorem. The second property is just the definition of the dimension function $\delta_Y$.
	
	For the exceptional variance, there will be a shift in the filtration; we need to have a notion
	of relative dimension $d$ of $f$, and the following:
	\begin{enumerate}
		\item If $f:Y \rightarrow X$ flat and lci, then $Z \subset X$, $\delta(Z) \leq p$ implies $\delta(f^{-1}(Z_p)) \leq p+d$
		\item If $f:Y \rightarrow X$ is proper (universally) equidimensional, then $T_p \subset Y$, $\delta(T_p) \leq p$ implies $\delta(f(T_p)) \leq p-d$.
	\end{enumerate}
	
	There is a special "phantom" functoriality: 
	assume $X$ is a universally catenary noetherian scheme (endowed with a dimension function $\delta$) and let $Z$ be a divisor on $X$ (therefore everywhere of codimension $1$). Let $t$ be a codimension $p$ point of $X-Z$, and let $z$ be a point of $Z$ which lies in the closure of $t$. Then $z$ is a specialization of $t$ and there exists a finite non-empty chain of immediate specializations relating $t$ to $z$, so we have $\delta(z)\leqslant\delta(t)-1$, and therefore $z$ has codimension at least $p$ in $Z$.
	As a consequence, if $T$ is a closed subscheme of $X-Z$ of codimension at least $p$, then the intersection $\overline{T}\cap Z$ of the Zariski closure of $T$ in $X$ with $Z$ has codimension at least $p$ in $Z$.
	
	So we get a morphism $\sigma_{X,Z}:\Flag^\delta(X-Z) \rightarrow \Flag^\delta(Z)$.

\end{num}

\begin{num}As a consequence of the previous paragraph, the co$\delta$-niveau spectral sequence at the level of pro-schemes is functorial with respect to flat morphisms.
	The $\delta$-niveau spectral sequence at the level of ind-schemes is functorial with respect to proper morphisms.
	
\end{num}

\begin{ex}
	For a co$\delta$-flag $Z^*$ over $X$,
	one can consider the commutative diagram of sheaves of sets over $X_\zar$:
	
	\begin{align} \label{Commutative_diagram_filtration}
		\xymatrix@=10pt{
			& X-Z^p\ar[rr]\ar[ld] && X-Z^{p+1}\ar[rd]\ar[ld] & \\
			X\ar[rd] && X-Z^{p+1}/X-Z^p\ar[ld]\ar@{-->}[lu] && X\ar[ld] \\
			& X/X-Z^p\ar[rr]\ar@{-->}[uu] && X/X-Z^{p+1}.\ar@{-->}[lu]\ar@{-->}[uu] & \\
		}	
	\end{align}
	The dotted arrow indicate that the triangle of object is defines is an exact sequence (of sheaves of sets).\footnote{This also corresponds to an octahedron diagram in the 
		corresponding $\infty$-category!}
	\par Assume that $F$ is a Zariski sheaf over $X$. 
	If we apply the functor $\uHom(-,F)$, we obtain the following diagram:
	$$
	\xymatrix@=10pt{
		& j^p_*F_{U^p}\ar@{-->}[rd]\ar@{-->}[dd] && j^{p+1}_*F_{U^{p+1}}\ar[ll]\ar@{-->}[dd] & \\
		F\ar[ru] && \uG_{Z^p/Z^{p+1}}(F)\ar@{-->}[rd]\ar[ru] && F\ar[lu] \\
		& \uG_{Z^p}(F)\ar[ru]\ar[lu] && \uG_{Z^{p+1}}(F)\ar[ll]\ar[ru] & \\
	}
	$$
	with the same convention as above for the dotted arrows, and with the notations
	of \cite[Chap. IV]{HartRD} for the sheaf of sections with support and so on.
	
	Note that the family of sheaves $(\uG_{Z^p}(F))_{p}$ form a \underline{decreasing} filtration of the sheaf $F$.
	This is the $\delta$-coniveau filtration of $F$, and is central in \cite[Chap. IV]{HartRD}.
	However, the notion of Cousin complex of $F$ (see Definition \ref{def_Cousin_complex}) already involves the \emph{derived} version
	of this filtration, introduced below.
\end{ex}

\begin{num}
	We now consider the diagram of the previous example, but taken
	in the small Zariski $\infty$-topos of $X$:
	$$
	\xymatrix@=10pt{
		& X-Z^p\ar[rr]\ar[ld] && X-Z^{p+1}\ar[rd]\ar[ld] & \\
		X\ar[rd] & \ar@{}|-{*}[l] & X-Z^{p+1}/X-Z^p\ar[ld]\ar@{-->}[lu]\ar@{}|-{*}[u]\ar@{}|-{*}[d] & \ar@{}|-{*}[r] & X\ar[ld] \\
		& X/X-Z^p\ar[rr]\ar@{-->}[uu] && X/X-Z^{p+1}\ar@{-->}[lu]\ar@{-->}[uu] & \\
	}
	$$
	Now the $*$-triangles indicate homotopy exact sequences, the dotted arrow being the boundary map,
	and the other triangles are commutative.
	
	The next step is to work within one of the following $\infty$-topos $\T$:
	\begin{enumerate}
		\item Presheaves over $X_\zar$, $X_\nis$ or $X_\et$ either of spaces, pointed spaces, or $R$-complexes.
		\item Sheaves over $X_\zar$, $X_\nis$ or $X_\et$  either of spaces, pointed spaces, or $R$-complexes.
	\end{enumerate}
	In fact, we only need a monoidal $\infty$-category $\T$ with internal Hom (e.g.,
	a presentable monoidal $\infty$-category), and with an $\infty$-functor:
	$$
	\iota:X_\zar \rightarrow \T.
	$$
	In particular, if $K$ is an object of $\T$ and we apply the functor $K(-)=\uHom_\T(\iota(-),K)$
	to the diagram \ref{Commutative_diagram_filtration}, we get:
	$$
	\xymatrix@=10pt{
		& K(X-Z^p)\ar@{-->}[rd]\ar@{-->}[dd] && K(X-Z^{p+1})\ar[ll]\ar@{-->}[dd] & \\
		K(X)\ar[ru] & \ar@{}|-{*}[l] & K(X-Z^{p+1}/X-Z^p)\ar@{-->}[rd]\ar[ru]\ar@{}|-{*}[u]\ar@{}|-{*}[d]
		& \ar@{}|-{*}[r] & K(X)\ar[lu] \\
		& K(X/X-Z^p)\ar[ru]\ar[lu] && K(X/X-Z^{p+1})\ar[ll]\ar[ru] & \\
	}
	$$
	with the same conventions as above. Note that in the cases (1) and (2), $K(X)=K$ as $X$ is the unit
	object of the monoidal structure.
	
	In any case, one gets a decreasing filtration $F^p_{X,\delta}(K)=K(X/X-Z^p)$ of the object $K(X)$,
	in the $\infty$-categorical sense.

	Let us now focus on the case where $\T=\Der(\Sh(X_t,R))$, for $t=\zar,\nis,\et$.
	We get a functor:
	$$
	F^*_{X,\delta}:\Der(\Sh(X_t,R)) \rightarrow \DF(\Sh(X_t,R))
	$$
	also denoted by $F^*_\delta$ when $X$ is clear.
	
	Warning: the object $F^*_{X,\delta}(K)$ is in fact a well-defined
	filtered object of $K$, in the $\infty$-categorical sense.
	One should be careful that $\DF(\Sh(X_t,R)) $ is in fact a localization
	of the category of filtered object of $K$ with respect to graded quasi-isomorphisms.

	One can now reformulate the definition of Cohen-Macaulay complexes
	from \cite[Def. p. 247, Prop. IV.3.1]{HartRD}.
\end{num}
\begin{df} 
	\label{def_Cohen_Macaulay_complex}
	($t=\zar,\nis, \et$)
	Let $K$ be an object of $\Der(\Sh(X_t,R))$.
	One says that $K$ is Cohen-Macaulay if the filtered complex
	$F^*_{X,\delta}(K)$ is in the heart of the $\DF(\Sh(X_t,R))$.
	\par We denote by $\Der(\Sh(X_t,R))_{CM}$ the full subcategory of $\Der(\Sh(X_t,R))$ consisting of Cohen-Macaulay complexes.
\end{df}
This implies that the spectral sequence attached to the filtered object $F^*_\delta(K)$
has a first page which is concentrated on the line $q=0$.
In particular, one gets a canonical edge isomorphism:
$$
H^p(E^*(F^*_{\delta}(K)) \xrightarrow \sim H^p(K).
$$
We want to upgrade this isomorphism into a "canonical"/"unique" quasi-isomorphism.

Recall also the following definition.
\begin{df} 
	\label{def_Cousin_complex}
	Let $t=\zar$ (resp. $\nis$). 
	Let $K$ be an object of $\Comp^b(Sh(X_t,R))$.
	One says that $K$ is Cousin (or a Cousin complex)  
	if for any $p$, the canonical maps
	$$
	K^p \leftarrow \uG_{Z^p}(K^p) \rightarrow \uG_{Z^p/Z^{p+1}}(K^p)
	$$
	are isomorphisms (one says $K^p$ is "concentrated on the $Z^p/Z^{p+1}$-skeleton").
	We denote by $\epsilon^p:K^p \rightarrow \uG_{Z^p/Z^{p+1}}(K^p)$ the corresponding isomorphism.
	\par We denote by $\Comp^b(\Sh(X_t,R))_{Cous}$ the full subcategory of $\Comp^b(Sh(X_t,R))$ consisting of Cousin complexes.
\end{df}
This condition also implies that for any $p \in \ZZ$, 
one has a canonical isomorphism:
$$
K^p \simeq \sum_{x \in X^{(p)}} i_{x*}(K^p_x).
$$
In particular, the complex $K$ is flasque (resp. satisfies the BG-property).

Moreover, it implies that the complex
$$
\uG_{Z^p}(K)
$$
(underived functor) is isomorphic to the naive $p$-filtrated part $\sigma^{\geq p}(K)$
of $K$. In other words, the $\delta$-coniveau filtration on a Cousin
complex is the naive filtration.

More generally, one gets:
\begin{thm}[Suominen]
	\label{thm_Suominen}
	Let $t=\zar,\nis$.
	The functor:
	$$
	E^*:\Der(\Sh(X_t,R))_{CM} \rightarrow \Comp^b(\Sh(X_t,R))_{Cous}
	$$
	where the left (resp. right) hand-side is the full subcategory made
	by Cohen-Macaulay (resp. Cousin) complexes, 
	is an equivalence of categories, with quasi-inverse induced by the natural functor
	$$
	\pi:\Comp^b(\Sh(X_t,R)) \rightarrow \Der^b(\Sh(X_t,R)).
	$$
\end{thm}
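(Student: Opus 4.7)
The plan is to deduce this theorem from the general equivalence of Theorem~\ref{thm:BBD} applied to the abelian category $\A = \Sh(X_t, R)$, simply by replacing $F^*_{naive}$ with the $\delta$-coniveau filtration $F^*_\delta$. Recall that Theorem~\ref{thm:BBD} provides an equivalence $E^*: \DF(\A)^{t=0} \simeq \Comp(\A)$ with quasi-inverse $\pi_{nv}$. It therefore suffices to show two matching statements: that $E^* \circ F^*_\delta$ sends Cohen-Macaulay complexes to Cousin complexes, and that $\pi$ sends Cousin complexes to Cohen-Macaulay complexes with $F^*_\delta(\pi(C))$ identified with the naive filtration $\pi_{nv}(C)$. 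Once both are established, the stated equivalence is a formal restriction of the BBD-type equivalence.

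First, I would check that for a Cohen-Macaulay complex $K$, the complex $E^*(F^*_\delta(K))$ is Cousin. By the Cohen-Macaulay hypothesis, $F^*_\delta(K)$ sits in the heart of $\DF(\Sh(X_t,R))$, so each successive quotient $R\uG_{Z^p/Z^{p+1}}(K)$ is concentrated in a single cohomological degree; the $p$-th term of $E^*(F^*_\delta(K))$ is precisely this cohomology sheaf. Such a sheaf is tautologically supported on codimension-$p$ points, hence decomposes as $\bigoplus_{x \in X^{(p)}} i_{x*}(-)$, which is the Cousin condition.

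Second, and this is where the main obstacle lies, I would show that when $C$ is a Cousin complex, the underived $\uG_{Z^p}(C)$ already computes $R\uG_{Z^p}(C)$ and coincides with the naive subcomplex $\sigma^{\geq p}(C)$. The key input is the observation recorded just before the statement: each $C^p$ is a direct sum of skyscrapers $i_{x*}(C^p_x)$ over $x \in X^{(p)}$, and such sheaves are flasque in the Zariski case and satisfy the BG-property in the Nisnevich case, so $R\uG_{Z^p}$ can be computed without derivation. This identifies $F^*_\delta(\pi(C))$ with $\pi_{nv}(C)$ in $\DF(\Sh(X_t,R))$, and by Lemma~\ref{lm:filtrations_key}(1) this lies in the heart, proving that $\pi(C)$ is Cohen-Macaulay.

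Finally, combining these two steps, the functor $E^* \circ F^*_\delta$ restricts to a functor $\Der(\Sh(X_t,R))_{CM} \to \Comp^b(\Sh(X_t,R))_{Cous}$, whose quasi-inverse is induced by $\pi$ after the identification $F^*_\delta \circ \pi \simeq \pi_{nv}$ of the second step. The unit and counit isomorphisms then come directly from those of Theorem~\ref{thm:BBD}. The principal technical obstacle is therefore the verification of the flasqueness/BG-property and its consequence that the $\delta$-coniveau filtration of a Cousin complex coincides on the nose with the naive filtration; once this is in place the equivalence reduces to the general BBD framework already developed.
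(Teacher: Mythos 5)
Your proposal is correct and matches the paper's argument in substance. Both hinge on the two observations that (i) the $\delta$-coniveau filtration on a Cousin complex coincides with the naive filtration (because the terms $C^p$ are direct sums of flasque/BG-local skyscrapers, so $\uG_{Z^p}(C)$ needs no derivation and equals $\sigma^{\geq p}(C)$), and (ii) the Beilinson-type equivalence $E^*\colon \DF(\A)^{t=0}\simeq\Comp(\A)$ of Theorem~\ref{thm:BBD}; the theorem then follows by restriction. You actually spell out the direction ``$E^*\circ F^*_\delta$ sends Cohen--Macaulay complexes to Cousin complexes'' (decomposition of the cohomology of $R\uG_{Z^p/Z^{p+1}}(K)$ into skyscrapers), which the paper passes over with ``by definition,'' so your write-up is if anything slightly more explicit, though the word ``tautologically'' is a little generous there. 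One small point both treatments leave implicit: to go from $F^*_\delta(\pi(C))\simeq F^*_\delta(K)$ back to $\pi(C)\simeq K$ one needs that the total object $K$ can be recovered from the filtered object $F^*_\delta(K)$ (e.g.\ via $F^{-\infty}_\delta$); this is true but worth a sentence.
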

In fact, one has a commutative diagram:
$$
\xymatrix@=20pt{
	\Der(\Sh(X_t,R))_{CM}\ar[r]\ar[d] & \Comp^b(\Sh(X_t,R))_{Cous}\ar[d] \\
	\DF(\Sh(X_t,R))^{t=0}\ar^-{E^*}[r] & \Comp^b(\Sh(X_t,R))
}
$$
where the horizontal maps are equivalences of categories.

Note finally that $\Comp^b(\Sh(X_t,R))_{Cous}$
is stable under kernels in the abelian category $\Comp^b(\Sh(X_t,R))$.
On the other hand, this does not seem to be true for cokernels or extensions.
\begin{proof}
	
	Indeed, as the $\delta$-coniveau filtration on a Cousin complex
	is the naive filtration , one gets the commutative diagram:
	$$
	\xymatrix@=20pt{
		\Comp^b(\Sh(X_t,R))_{Cous}\ar^{(F^{\prime*}_{X,\delta})}[r]\ar@{^(->}[d] & \DF(\Sh(X_t,R))^{t=0}\ar@{=}[d] \\
		\Comp(\Sh(X_t,R))\ar^-{\pi_{nv}}[r] & \DF(\Sh(X_t,R))^{t=0}\ar^-{E^*}[r] & \Comp(\Sh(X_t,R))
	}
	$$
	This implies that the upper horizontal arrow is fully faithful.
	By definition, the category $\Der(\Sh(X_t,R))$ is precisely the inverse image
	of the category $\DF(\Sh(X_t,R))^{t=0}$ under the functor
	$$
	F_{X,\delta}^*:\Der(\Sh(X_t,R)) \rightarrow \DF(\Sh(X_t,R)).
	$$
	So $\Der(\Sh(X_t,R))_{CM}$ can be identified with the essential image of $F^{\prime*}_{X,\delta}$.
	Therefore, the following map is an equivalence of categories
	$$
	F^{\prime*}_{X,\delta}:\Comp^b(\Sh(X_t,R))_{Cous} \rightarrow \Der(\Sh(X_t,R))_{CM}
	$$
	with quasi-inverse $E^*$, which by definition sends $\Der(\Sh(X_t,R))_{CM}$
	into $\Comp^b(\Sh(X_t,R))_{Cous}$.
\end{proof}

In particular, for any Cohen-Macaulay complex $K$, one gets a canonical quasi-isomorphism:
$$
K \rightarrow \pi \circ E^*(K).
$$
Unwinding the definitions, $E^*(K)$ is the Cousin complex associated with $K$.
As \Cref{thm:BBD} seems to be true for unbounded complexes,
the same proof seems to work when we will apply it to an object coming
from $\DA(S)^{t_\delta=0}$, once equipped with its "glued"/global
$\delta$-coniveau filtration=the coniveau filtration extended functorially
to the $S$-smooth site.

To make the formalism works, one should work with the abelian category
$\A=\Sp_\nis^{\GG}(S,\ZZ)$
of Nisnevich sheaves of $\GG$-spectra on $\Sm_S$, which are $\AA^1$-local
and $\Omega$-spectra.
I think we can both build the glued coniveau filtration and extend
the previous theorem to these particular filtered complexes !

\label{subsec_sheaves_with_specializations}

\subsection{Sheaves with specialization}

\begin{num}
	\label{def_cat_smsm}
	We let $\Smsm_S$ be the category of smooth affine $S$-schemes, with morphisms the smooth maps, equipped with the induced Nisnevich topology.
	We get an inclusion 
	$$
	\iota_{sm}:\Smsm_S \rightarrow \Sm_S
	$$ 
	(in fact, a continuous morphism of Nisnevich sites).
	\par 
	We let $\PSh^{sm}(S,R)$ (resp. $\Sh^{sm}(S,R)$) be the category of presheaves (resp. Nisnevich sheaves) over $\Smsm$.

\end{num}

\begin{df}
	\label{def_cat_smsp}
	We define, and denote by $\Smsp_S$, the free additive category whose objects are the smooth $S$-schemes, and whose set of morphisms is generated by:
	\begin{itemize}
		\item [\namedlabel{itm:sp1}{(Dsp1)}]a symbol $[f]:Y \to X$ for each smooth morphisms of smooth $S$-schemes $f:Y\to X$,
		\item [\namedlabel{itm:sp2}{(Dsp2)}] a symbol $[s_X^t]:Z \to X-Z$ for each smooth $S$-scheme $X$ and morphism $t:X \to \AA^1$ such that $Z=V(t)=t^{-1}( \{ 0 \} )$ is smooth over $S$,
	\end{itemize}
	modulo the relations:
	\begin{itemize}
		\item [\namedlabel{itm:Rsp1}{(Rsp1)}] For any $f,g$ composable smooth morphisms of smooth $S$-schemes, we have 
		\begin{center}
			$[f \circ g]=[f] \circ [g]$,
		\end{center}
		and, for any smooth $S$-scheme $X$, we have $[\Id_X]=\Id_X$.
		\item [\namedlabel{itm:Rsp2}{(Rsp2)}] let $(X,t)$ as in \ref{itm:sp2}, and let $f:Y\to X$ be a smooth morphism of smooth $S$-schemes. Denote by $u=t\circ f$ (hence $V(u)$ is smooth over $S$), and set $g:V(u) \to V(t)$ and $h:Y-V(u) \to X-V(t)$ the induced maps. We have
		\begin{center}
			$[s_X^t] \circ [g] = [h] \circ [s_Y^{u}]$.			
		\end{center}
		\item [\namedlabel{itm:Rsp3}{(Rsp3)}] For $(X,t)$ as in \ref{itm:sp2}, we assume that $i:Z \rightarrow X$ admits a retraction $p$. Let $q:(X-Z) \rightarrow Z$ be the restriction of $p$.		
		We add the relation:
		\begin{center}
			$[q] \circ [s_X^t] = \Id_Z$.
		\end{center}
	\end{itemize}	
\end{df}

We therefore have a canonical inclusion functor 
$$
{\iota^{sp}_{sm}}:\Smsm_S \rightarrow \Smsp_S.
$$

\begin{df}
	\label{def_sheaves_with_specializations}
	A presheaf with specializations will be a presheaf $F$ of $R$-modules on $\Smsp_S$ which commutes with finite coproducts.
	We say that $F$ is a Nisnevich sheaf with specializations if its restriction to $\Smsm_S$ is a sheaf for the Nisnevich
	topology.
	
	We let $\PSh^{sp}(S,R)$ (resp. $\Sh^{sp}(S,R)$) be the category of presheaves (resp. Nisnevich sheaves) with specializations.
	
\end{df}
\begin{num}
	Any presheaf with specializations $F^{sp}$ induces canonically a presheaf on $\Smsm$ denoted by $F=F^{sp} \circ \iota_{sm}^{sp}$. By abuse of notation, we will write $F$ instead of $F^{sp}$ and say that $F$ \textit{has a structure of presheaf with specializations} (or \textit{has specializations}).
\end{num}

\begin{df}
	Let $X$ be a smooth $S$-scheme. We denote by 
	$$
	\Lsp[X]= \Hom_{\Smsp_S}( - , X)
	$$ 
	the presheaf with specializations represented by $X$. Indeed, we have
	$$
	\Hom_{\Smsp_S}( Y \bigsqcup Y' , X) 
	=
	\Hom_{\Smsp_S}( Y , X)
	\oplus
	\Hom_{\Smsp_S}( Y' , X)
	$$
	for any smooth $S$-schemes $X,Y,Y'$.
\end{df}

\begin{num}
	If $F$ is a presheaf with specializations, the Yoneda lemma shows that
	$$
	F(X) \simeq \Hom_{\PSh^{sp}}( \Lsp[X], F)
	$$
	for any smooth $S$-scheme $X$.
\end{num}

By analogy with the theory of sheaves with transfers, we expect the following result. We only state this lemma as an expectation. In any case, one can always replace $\Lsp[X]$ with an appropriate Nisnevich sheafification.
\begin{lm}
	\label{lem_represented_presheaf_with_spec_is_Nisnevich}

	Let $Y$ be a smooth $S$-scheme. Then $\Lsp[Y]$, as a presheaf on $\Smsm$, is a Nisnevich sheaf. 
\end{lm}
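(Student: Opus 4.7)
The plan is to establish Nisnevich descent for $\Lsp[Y]$ by analyzing morphisms in $\Smsp_S$ generator-by-generator, paralleling Voevodsky's classical argument that presheaves represented by finite correspondences are Nisnevich sheaves.

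First, I would establish a normal form for morphisms $W \to Y$ in $\Smsp_S$. Using relations \ref{itm:Rsp1} and \ref{itm:Rsp2}, smooth morphisms should commute with specializations in a controlled way, so that an arbitrary composition of generators reduces to a canonical short zig-zag of the shape
$$W \xrightarrow{[g_1]} Z_1 \xrightarrow{[s_{X_1}^{t_1}]} X_1 - Z_1 \xrightarrow{[g_2]} \cdots \xrightarrow{[g_n]} Y.$$
An $R$-linear combination of such chains then represents any element of $\Hom_{\Smsp_S}(W,Y)$. Normalization should allow, via \ref{itm:Rsp3}, cancellation of redundant retraction-specializations, and potentially a bound on the length $n$.

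Second, given a Nisnevich square with cover $\{U \to X, V \to X\}$ in $\Smsm_S$ and sections $\alpha_U, \alpha_V$ of $\Lsp[Y]$ agreeing on $U \times_X V$, I would glue them to a section over $X$. For the smooth-morphism generators of type \ref{itm:sp1} in each chain, Nisnevich descent is automatic because the representable presheaf $\Hom_{\Sm_S}(-,Y)$ is a Nisnevich sheaf. For the specialization generators of type \ref{itm:sp2}, one needs to show that the data of a trivialized smooth divisor $Z = V(t) \hookrightarrow X$ descends under Nisnevich covers: locally, the divisor and its defining equation glue since both $Z$ and the parameter $t$ are described by étale-local data compatible with pullback along the cover.

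The main obstacle is the first step: proving that relations \ref{itm:Rsp1}–\ref{itm:Rsp3} generate all consequences one needs to put an arbitrary composition into a unique normal form. Without a faithful and manageable syntactic description of $\Hom_{\Smsp_S}(W,Y)$, both the existence and the uniqueness halves of the sheaf condition remain out of reach by a direct attack. A more promising alternative would be to embed $\Smsp_S$ faithfully into an ambient additive category with established sheafy properties — for instance the category $\Cortilde_S$ of finite Milnor-Witt correspondences, via a functor sending $[s_X^t]$ to the specialization correspondence induced by $t$ — and then deduce the sheaf property of $\Lsp[Y]$ from the corresponding property of its image. The authors' own remark, that one can always replace $\Lsp[X]$ by a Nisnevich sheafification, signals that this lemma is genuinely an expectation rather than a proved statement, and that the resolution probably requires further development of the $\Smsp_S$ formalism beyond the scope of the present excerpt.
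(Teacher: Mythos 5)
The paper gives no proof of this lemma: the sentence immediately preceding it reads ``We only state this lemma as an expectation. In any case, one can always replace $\Lsp[X]$ with an appropriate Nisnevich sheafification.'' Your closing paragraph correctly detects this, so there is nothing in the paper to compare your sketch against. That said, your diagnosis of where the difficulty lies is sound: $\Lsp[Y]$ is the free $R$-module on the hom-set of the freely generated category $\Smsp_S$, and without an explicit normal form for morphisms (analogous to the concrete description of elements of $\Cortilde_S(W,Y)$ or $\operatorname{Cor}_S(W,Y)$ as cycles on $W\times_S Y$), there is no handle on the sheaf axiom. Note also that your remark that Nisnevich descent is ``automatic'' for the $[f]$-generators because $\Hom_{\Sm_S}(-,Y)$ is a sheaf of sets is not quite on target: the lemma concerns the free $R$-module, and even in the classical transfers setting the passage from the sheaf of sets $\Hom_{\Sm}(-,Y)$ to the sheaf property of $\ZZ_{tr}(Y)$ is the content of a nontrivial theorem of Voevodsky, resting on a precise cycle-theoretic description of the hom-sets. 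Your proposed detour through a faithful functor $\Smsp_S\to\Cortilde_S$ is a reasonable strategy, but such a functor is also not constructed in this paper, so it inherits the same gap. In short: you correctly identified that this statement is unproved and that a proof would require additional structural input on $\Smsp_S$ not present here.
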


\begin{num}
	In the following, we want to prove that the forgetful functor $\Sh^{sp}(S,R) \to \PSh^{sp}(S,R)$ has a left adjoint functor (called \textit{associated sheaf with specializations}) which extend the Nisnevich sheafification functor $a_{Nis}: \PSh^{sm}(S,R) \to \Sh^{sm}(S,R)$. 
\end{num}

\begin{num}
	Let $X$ be a smooth $S$-scheme, $U$ an $X$-scheme, and $n$ a natural number. We denote by $U^n_X$ the $n-$th power of $U$ as an $X$-scheme.
	\par 
	For any natural numbers $n,i$ such that $0 \leq i \leq n $, denote by $\delta^{n,i}$ the degeneracy map
	$$
	U^{n}_X \to U^{n-1}_X
	$$
	defined as the canonical projection to each factor except the $i-$th.
	\par If $U/X$ is a Nisnevich covering, we set
	$$
	d^{n-1}
	=
	\sum_{i=0}^n
	(-1)^i \delta^{n,i}
	$$
	and obtain the associated augmented Cech complex:
	\begin{center}
		$
		\xymatrix{
			\dots 
			\ar[r]
			&
			U^n_X
			\ar[r]^{d^n}
			&
			\dots 
			\ar[r]
			&
			U^2_X
			\ar[r]^{d^1}
			&
			U
			\ar[r]^{d^0}
			&
			X.	
		}$
	\end{center}

\end{num}

By analogy with the theory of sheaves with transfers, we expect the following result.
\begin{prop}
	\label{prop_cech_complex_is_exact}
	
	Let $X$ be a smooth $S$-scheme and $p:U\to X$ a Nisnevich covering. Then the complex of $\Sh^{sp}(S,R)$ 
	\begin{center}
		$
		\xymatrix{
			\dots 
			\ar[r]
			&
			\Lsp[U^n_X]
			\ar[r]^{d^n}
			&
			\dots 
			\ar[r]
			&
			\Lsp[U^2_X]
			\ar[r]^{d^1}
			&
			\Lsp[U]
			\ar[r]^{d^0}
			&
			\Lsp[X].	
		}$
	\end{center}
\end{prop}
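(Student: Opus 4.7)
The statement asserts that the augmented \v{C}ech complex is exact in $\Sh^{sp}(S,R)$, and it is the analogue for sheaves with specializations of the classical result for sheaves with transfers (see \cite[Lemma 6.2]{MVW}). The strategy will mirror that proof: reduce to Nisnevich stalks, and construct an explicit simplicial contracting homotopy using a section produced by the Henselian lifting property.

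First I would reduce to checking exactness locally. By \Cref{def_sheaves_with_specializations}, a sheaf with specializations is a presheaf on $\Smsp_S$ whose restriction to $\Smsm_S$ is a Nisnevich sheaf; hence exactness of a sequence in $\Sh^{sp}(S,R)$ can be detected after evaluation at essentially smooth Henselian local $S$-schemes $V$, i.e.\ on the Nisnevich stalks of the underlying sheaves on $\Smsm_S$. It therefore suffices to prove that for every such $V$ the complex of $R$-modules
\[
\ldots \to \Lsp[U^n_X](V) \to \ldots \to \Lsp[U^2_X](V) \to \Lsp[U](V) \to \Lsp[X](V) \to 0
\]
is exact, where $\Lsp[Y](V) = \Hom_{\Smsp_S}(V,Y)$.

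Next I would produce a section of the cover after pullback to $V$. A generator of $\Hom_{\Smsp_S}(V,X)$ is (modulo \ref{itm:Rsp1}--\ref{itm:Rsp3}) a composition of smooth morphisms \ref{itm:sp1} and specialization symbols \ref{itm:sp2}. For a purely smooth generator $[f]:V\to X$ the base change $U\times_X V \to V$ is Nisnevich, and since $V$ is Henselian local it admits a section $\sigma_f: V \to U\times_X V$; composing with the projection to $U$ yields a smooth lift $V\to U$ of $f$. For a generator involving a specialization $[s^t_X]:Z\to X - V(t)$ (with $Z=V(t)$ smooth over $S$), pulling back $t$ along $U\to X$ gives a function $t_U$ with smooth vanishing locus, and \ref{itm:Rsp2} identifies the specialization attached to $(U,t_U)$ with that of $(X,t)$ after the smooth pullback. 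The same Henselian argument applied to $V(t)\hookrightarrow X$ and its pullback to $U$ produces a compatible specialization lift. The key compatibility between a Nisnevich section and the specialization data is enforced by \ref{itm:Rsp2}, together with \ref{itm:Rsp3} when a retraction is available.

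Finally, with a section $\sigma: V \to U_X := U\times_X V$ in hand, I would define the standard simplicial contracting homotopy $h_n:\Lsp[U^n_X](V) \to \Lsp[U^{n+1}_X](V)$ by inserting $\sigma$ in the first slot,
\[
h_n([\alpha_1,\ldots,\alpha_n]) = [\sigma,\alpha_1,\ldots,\alpha_n],
\]
and verify the relation $d\circ h + h\circ d = \Id$ at each level. Because the chosen sections at every $U^n_X\times_X V$ are compatible with smooth pullback and, by the previous step, with the specialization generators, the homotopy descends to the $R$-linearization of $\Hom_{\Smsp_S}(V,-)$ modulo the relations \ref{itm:Rsp1}--\ref{itm:Rsp3}. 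This yields exactness on stalks, hence in $\Sh^{sp}(S,R)$.

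The main obstacle will be the specialization case in the second step: ensuring that the sections $\sigma_n$ chosen on the iterated fiber products $U^n_X$ can be made simultaneously compatible with all specialization generators, so that the homotopy is well defined in $\Smsp_S$ and not merely in $\Smsm_S$. This is precisely what the relations \ref{itm:Rsp2} and \ref{itm:Rsp3} are tailored to guarantee, but writing down the homotopy in a manifestly well-defined way requires careful bookkeeping of the vanishing loci and the retractions appearing in \ref{itm:sp2}. The analogous difficulty for sheaves with transfers is handled by the fact that finite correspondences compose unconditionally; here the corresponding role is played by the defining relations of $\Smsp_S$, and verifying their preservation by $h_n$ is the heart of the argument.
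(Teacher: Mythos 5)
The paper does not prove \ref{prop_cech_complex_is_exact}: the statement is prefaced with ``\emph{By analogy with the theory of sheaves with transfers, we expect the following result}'' and is left unproven, so your attempt must be judged on its own merits rather than against a reference proof. Your reduction to Nisnevich stalks is fine: by \ref{cor_compatibility_sheaves_with_specializations} the restriction functor ${\iota^{sp}_{sm}}_*$ is exact and commutes with sheafification, so exactness in $\Sh^{sp}(S,R)$ is detected on essentially smooth Henselian local $S$-schemes $V$, and the Henselian lifting you invoke does produce a section of $U\times_X V \to V$.

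The gap is in the construction of the contracting homotopy. The formula
\[
h_n([\alpha_1,\ldots,\alpha_n]) = [\sigma,\alpha_1,\ldots,\alpha_n]
\]
implicitly treats an element of $\Lsp[U^n_X](V) = \Hom_{\Smsp_S}(V, U^n_X)$ as an $n$-tuple of $X$-morphisms $V \to U$ and inserts $\sigma$ in the first slot. That description is scheme-theoretic and does not transport to $\Smsp_S$. By \ref{def_cat_smsp}, a morphism $V \to U^n_X$ in $\Smsp_S$ is a formal linear combination of composites of the generators \ref{itm:sp1} and \ref{itm:sp2} modulo the relations \ref{itm:Rsp1}--\ref{itm:Rsp3}; the universal property ``a map to $U^n_X$ over $X$ is a compatible $n$-tuple of maps to $U$'' has no analogue in this free additive category, which has no fiber products. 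In particular, any generator whose composite passes through a specialization symbol $[s_W^t]: V(t) \to W - V(t)$ at an intermediate stage cannot be decomposed componentwise, and $(\sigma,-)$ is simply undefined on it. Your closing paragraph flags exactly this and defers it as ``careful bookkeeping'', but that bookkeeping is not a technicality to be postponed --- it \emph{is} the content of the proposition, which is presumably why the paper leaves the statement as an expectation. To close the argument you would need to define $(\sigma,-)$ on the generating morphisms of $\Hom_{\Smsp_S}(V, U^n_X)$, verify compatibility with \ref{itm:Rsp1}--\ref{itm:Rsp3}, and then check $d\circ h + h\circ d = \Id$ on the quotient; none of this is carried out, so the proof does not close.
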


\begin{lm}
	Let $F$ be a presheaf with specializations and denote by $F_{Nis}$ the associated Nisnevich sheaf on $\Smsm$ with structural map of presheaf $\theta : F \to F_{Nis}$. Then there exists a unique presheaf with specializations $F_{Nis}^{sp}$ such that
	\begin{enumerate}
		\item There exists a natural transformation $F \to F_{Nis}^{sp}$ as presheaves with specializations that extend $\theta$.
		\item We have $F_{Nis}^{sp} \circ \iota_{sm}^{sp} = F_{Nis}$.
	\end{enumerate} 
\end{lm}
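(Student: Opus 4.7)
The plan is to prove both parts by exploiting the Čech presentation of the Nisnevich sheafification, using Proposition~\ref{prop_cech_complex_is_exact} (or at least an analog of it), and transporting the specialization structure of $F$ through Nisnevich covers.

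For \textbf{uniqueness}, I would argue as follows. Suppose $(F^{sp}_{\text{Nis}})'$ is another extension. Its underlying Nisnevich sheaf is fixed to be $F_{\text{Nis}}$, so the specialization datum that could differ is only the value of $[s_X^t]^*$ on sections. Fix $(X,t)$ with $Z = V(t)$ smooth over $S$, and let $U = X - Z$. Given $\alpha \in F_{\text{Nis}}(U)$, choose a Nisnevich cover $W \to X$ together with a section $\beta \in F(W \times_X U)$ refining $\alpha|_{W \times_X U}$. The naturality of the transformation $F \to F^{sp}_{\text{Nis}}$ (resp.\ $F \to (F^{sp}_{\text{Nis}})'$) combined with the fact that smooth pullbacks agree with the sheaf structure forces both $[s_X^t]^*(\alpha)|_{W\times_X Z}$ and its analog for the other structure to equal the image of $[s_W^{t_W}]^*(\beta)$ under $\theta$, where $t_W = t \circ (W \to X)$. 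Since $W \times_X Z \to Z$ is a Nisnevich cover, the sheaf property forces the two putative values to coincide, proving uniqueness.

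For \textbf{existence}, I would use the above formula as a \emph{definition} and verify it is well posed. Given $\alpha \in F_{\text{Nis}}(U)$, pick $W \to X$ Nisnevich and $\beta \in F(W \times_X U)$ lifting $\alpha|_{W \times_X U}$; since $W \to X$ is étale and $Z \to S$ is smooth, $V(t_W) = W \times_X Z$ is smooth over $S$, so $[s_W^{t_W}]$ is a legitimate morphism in $\Smsp_S$ and we can form $\gamma = \theta([s_W^{t_W}]^*(\beta)) \in F_{\text{Nis}}(W \times_X Z)$. To check that $\gamma$ descends to a section of $F_{\text{Nis}}(Z)$, apply relation~\ref{itm:Rsp2} to the two projections $W \times_X W \to W$: this precisely says the two pullbacks of $[s_W^{t_W}]^*(\beta)$ to $(W \times_X W) \times_X Z$ coincide (as images of $[s_{W\times_X W}^{t \circ p}]^*$ applied to each pullback of $\beta$, which agree since $\beta$ comes from $\alpha$ over the Nisnevich cover). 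Independence of the choice of $(W,\beta)$ is seen by passing to a common refinement and invoking \ref{itm:Rsp2} once more. We then define $[s_X^t]^*(\alpha) \in F_{\text{Nis}}(Z)$ as the unique descent, and set $F^{sp}_{\text{Nis}}$ to be $F_{\text{Nis}}$ on smooth morphisms, extended by these specialization maps.

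It remains to verify the axioms \ref{itm:Rsp1}, \ref{itm:Rsp2}, \ref{itm:Rsp3}: in each case one pulls back to a suitably refined Nisnevich cover, reduces to the corresponding axiom already holding in $F$, and applies the sheaf property to conclude. Compatibility of the natural transformation $F \to F^{sp}_{\text{Nis}}$ with specializations is built into the definition. The main obstacle here, and the one deserving real care, is precisely the well-definedness: showing that the Nisnevich cover $W \to X$ and the lift $\beta$ can always be chosen compatibly (which uses that Nisnevich sheafification is computed by covers restricted to affines), and that relation~\ref{itm:Rsp2} suffices to handle both the cocycle condition on $W \times_X W$ and the change-of-cover comparison. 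Once this is established the construction is forced, which in turn gives the uniqueness clause and a second proof of it via the universal property.
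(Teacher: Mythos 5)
Your proposal is correct in substance and rests on the same pivotal idea as the paper: the specialization structure of $F$ is Nisnevich-local thanks to \ref{itm:Rsp2}, so it descends through covers to $F_{\mathrm{Nis}}$. Where you diverge is in the bookkeeping. You work directly with the generators $[s_X^t]$, define their action on $F_{\mathrm{Nis}}(U)$ by lifting to a cover $W \to X$, and explicitly verify the cocycle and change-of-cover conditions via \ref{itm:Rsp2}; you then have to separately re-check relations \ref{itm:Rsp1}--\ref{itm:Rsp3} for the extended structure. The paper instead encodes a presheaf with specializations as a natural transformation $\Phi:F \to \Hom_{\PSh^{sm}(S,R)}(\Lsp[-],F)$ over $\Smsm_S$ and extends $\Phi$ through the \v{C}ech functor $\CechH$; working with $\Hom_{\Smsp_S}(Y,X)$ directly means the relations are encoded in the morphism sets, so no re-verification is needed. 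The price the paper pays is dependence on Lemma \ref{lem_represented_presheaf_with_spec_is_Nisnevich} (stated there only as an expectation) for left-exactness of $\Hom(-,\CechH F)$; your approach uses the sheaf property of $F_{\mathrm{Nis}}$ on the target and is somewhat more self-contained in that respect. One small point to shore up: you assert you can lift $\alpha\in F_{\mathrm{Nis}}(U)$ to $\beta\in F(W\times_X U)$ over a single Nisnevich cover, which is where the paper invokes $F_{\mathrm{Nis}}\simeq\CechH\CechH(F)$; for the affine objects of $\Smsm_S$ this is fine (the paper also silently replaces sheafification by a single $\CechH$), but it deserves a word.
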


\begin{proof}
	If we denote by $\CechH$ the $0-$th Cech cohomology functor for the Nisnevich topology, then it is well-known that $F_{Nis} \simeq \CechH \CechH (F)$. Without loss of generality, we replace the functor $(-)_{Nis}$ by $\CechH$.
	\par 
	As $F$ is a presheaf with specializations, we have a canonical inclusion
	\begin{center}
		$F(X) \simeq 
		\Hom_{\PSh^{sp}(S,R)}
		(\Lsp[X], F)
		\subset 
		\Hom_{\PSh^{sm}(S,R)}(\Lsp[X], F)
		$
	\end{center}
	for any smooth $S$-scheme $X$.
	\par Reciprocally, a natural transformation of presheaves over $\Smsm$ of the form
	\begin{center}
		$\Phi: F 
		\to 
		\Hom_{\PSh^{sm}(S,R)}( \Lsp[ - ], F)$
	\end{center}
	is equivalent to the data of a structure of presheaf with specializations (in that case, $\Phi$ is given by the above natural transformation according to Yoneda's lemma), and we have
	\begin{center}
		$\forall \alpha \in \Hom_{\Smsp_S}(Y,X),
		a \in F(X),
		F(\alpha).a
		=
		\Phi_X(a)_Y.\alpha.
		$
	\end{center}
	
	\begin{itemize}
		\item Assume that $\CechH F^{sp}$ is defined.
		\par Let $X$ be a smooth $S$-scheme and $a\in \CechH F$
		
		By definition, we have
		\begin{center}
			$\CechH F(X)
			=
			\lim_{U\to X}
			\Ker (F(U) \to F(U \times_X U)).
			$
		\end{center}
		The limit being a filtered limit, there exists a Nisnevich covering $p:U \to X$ and an element $a_U \in F(U)$ such that $F(p).a=a_U$. Let $\alpha \in \Hom_{\Smsp_S}(Y,X)$, according to Lemma \ref{lem_represented_presheaf_with_spec_is_Nisnevich}, the map $\Lsp[U] \to \Lsp[X]$ is an epimorphism, hence there exists a covering $V \to Y$ such that $p \circ \alpha_U = \alpha_{|V}$. Thus, we have the following commutative diagram
		\begin{center}
			$\xymatrix{
				\CechH F(X)
				\ar[r]
				\ar[d]
				&
				\Hom_{\PSh(S,R)}(\Lsp[X], \CechH F)
				\ar[d]
				\\
				\CechH (U)
				\ar[r]
				&
				\Hom_{\PSh(S,R)}(\Lsp[U], \CechH F)
				\\
				F(U)
				\ar[u]^{\theta}
				\ar[r]
				&
				\Hom_{\PSh(S,R)}(\Lsp[U], F)
				\ar[u]
			}$
		\end{center}
		where the bottom square is commutative by assumption. In other words, we have
		\begin{center}
			$(\CechH F)^{sp}(\alpha_{|V}).a
			=
			\theta(F(\alpha_U).a_U)
			\in 
			F(V).
			$
			
		\end{center}

		\item Reciprocally, we prove that the previous equation defines $\CechH F$ independently of any choice of covering $U\to X$.
		\par Let $U\to X$ be a Nisnevich covering. According to Lemma \ref{lem_represented_presheaf_with_spec_is_Nisnevich}, the functor $H:= \Hom_{\PSh^{sm}(S,R)}(- , \CechH F)$ is left exact and thus the following sequence
		\begin{center}
			$0
			\to 
			H(\Lsp[X])
			\to 
			H(\Lsp[U])
			\to 
			H(\Lsp[U\times_X U])$
		\end{center}
		is exact. As before, we have a map
		\begin{center}
			$\xymatrix{
				F(X) 
				\ar[r]
				&
				\Hom_{\PSh^{sm}(S,R)}(\Lsp[X],F)
				\ar[r]^{\theta_X}
				&
				\Hom_{\PSh^{sm}(S,R)}(\Lsp[X],\CechH F)
			}$		
		\end{center}
		and thus we have a commutative diagram
		\begin{center}
			$\xymatrix
			{
				0
				\ar[r]
				&
				H(\Lsp[X])
				\ar[r]
				&
				H(\Lsp[U])
				\ar[r]
				&
				H(\Lsp[U\times_X U])
				\\
				0
				\ar[r]
				&
				\Ker_U
				\ar[r]
				\ar@{-->}[u]
				&
				F[U]
				\ar[r]
				\ar[u]
				&
				F[U\times_X U]
				\ar[u]
			}$
		\end{center}	
		which is natural with respect to the covering $U\to X$. We have constructed a natural transformation
		\begin{center}
			$\phi^{\CechH}_X:
			\CechH F(X)
			\simeq 
			\lim_{U\to X}
			(\Ker_U)
			\to
			H(\Lsp[X])
			=
			\Hom_{\PSh(S,R)}(\Lsp[X], \CechH F).$
		\end{center}
		
		Let $p:U\to X$ be a Nisnevich covering, $a\in \CechH F(X)$ and $\alpha \in \Hom_{\Smsp_S}(Y,X)$. As in the first point, we can consider a Nisnevich covering $V\to Y$ and lifts $a_U \in F(U), \alpha_U \in \Hom_{\Smsp_S}(V,U)$. By construction, the following diagram
		\begin{center}
			$\xymatrix{
				0
				\ar[r]
				&
				\Hom_{\PSh^{sm}(S,R)}(\Lsp[X], \CechH F)
				\ar[r]
				&
				\Hom_{\PSh^{sm}(S,R)}(\Lsp[U], \CechH F)
				\\
				{}
				&
				\CechH F(X)
				\ar[u]
				&
				\Hom_{\PSh^{sm}(S,R)}(\Lsp[U],F)
				\ar[u]
				\\
				0
				\ar[r]
				&
				\Ker_U
				\ar[r]
				\ar[u]
				&
				F(U)
				\ar[u]
				\ar[lu]
			}$
		\end{center}
		is commutative. In particular, we have the expected relation:
		\begin{center}
			${\CechH F}^{sp}(\alpha_{|V}).a
			=
			\theta_Y(F(\alpha_U).a_U$.
		\end{center}
		We can check that $\Phi^{\CechH}$ is compatible with the composition and extends $\Phi$, which conclude the proof.

	\end{itemize}

\end{proof}

By the previous construction, we see that $F \to F_{Nis}^{sp}$ is a natural transformation, thus we have the following Corollary.
\begin{cor}
	\label{cor_compatibility_sheaves_with_specializations}
	The forgetful functor  $\Sh^{sp}(S,R) \rightarrow \PSh^{sp}(S,R)$ admits a left adjoint $a^{sp}_\nis$ such that the following diagram commutes:
	$$
	\xymatrix{
		\PSh^{sp}(S,R)\ar^{a^{sp}_\nis}[r]\ar_{{\iota^{sp}_{sm}}_*}[d] & \Sh^{sp}(S,R)\ar^{{\iota^{sp}_{sm}}_*}[d] \\
		\PSh^{sm}(S,R)\ar^{a_\nis}[r] & \Sh^{sm}(S,R) \\
	}
	$$
	where the vertical maps are the restriction maps induced by ${\iota^{sp}_{sm}}$, and where $a_\nis$ is the Nisnevich sheafification functor.
	
	The restriction map $o_*$ admits a left adjoint ${\iota^{sp}_{sm}}^*:\Sh(\Smsm_S,R) \rightarrow \Sh^{sp}(S,R)$.
	Moreover, the abelian category $\Sh^{sp}(S,R)$ is compatible with the Nisnevich topology
	in the sense of \cite[4.2.20]{CD19}.

\end{cor}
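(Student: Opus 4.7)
The plan is to deduce the corollary directly from the preceding lemma. That lemma produces, for each presheaf with specializations $F$, a sheaf with specializations $F^{sp}_\nis$ together with a unit morphism $\eta_F: F \to F^{sp}_\nis$ in $\PSh^{sp}(S,R)$ whose underlying morphism on $\Smsm_S$ coincides with the standard Nisnevich sheafification map $\theta: F \to F_\nis$. Setting $a^{sp}_\nis(F) := F^{sp}_\nis$ immediately yields the commutativity of the displayed square. First I would establish the adjunction: given $G \in \Sh^{sp}(S,R)$ and $\varphi: F \to G$ in $\PSh^{sp}$, its restriction to $\Smsm_S$ is a morphism of presheaves into a Nisnevich sheaf, which factors uniquely through $\theta$. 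The uniqueness statement in the preceding lemma (characterizing the specialization structure on $F^{sp}_\nis$ by the formula ${F^{sp}_\nis}(\alpha_{|V}).a = \theta(F(\alpha_U).a_U)$) forces this factorization to respect specializations, giving the required unique $\tilde\varphi: F^{sp}_\nis \to G$.

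Next I would construct the left adjoint ${\iota^{sp}_{sm}}^*$ to the restriction functor. Since $\Smsp_S$ is generated as an additive category from $\Smsm_S$ by freely adjoining the morphisms \ref{itm:sp2} modulo the relations \ref{itm:Rsp1}–\ref{itm:Rsp3}, the natural approach is to take the additive left Kan extension along $\iota^{sp}_{sm}$ to a presheaf with specializations, and then to apply $a^{sp}_\nis$. The fully faithful inclusion $\iota^{sp}_{sm}$ and the sheafification adjunction combine to give the required adjoint pair. Equivalently, both $\Sh^{sp}(S,R)$ and $\Sh^{sm}(S,R)$ are Grothendieck abelian (hence presentable), and the restriction $o_*$ commutes with all limits and filtered colimits, so the adjoint functor theorem applies.

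Finally, to obtain compatibility with the Nisnevich topology in the sense of \cite[4.2.20]{CD19}, one has to verify: (a) $\Sh^{sp}(S,R)$ is Grothendieck abelian; (b) the sheafification $a^{sp}_\nis$ is exact; (c) the family of objects $\{a^{sp}_\nis \Lsp[X]\}_{X \in \Sm_S}$ is a generating family; and (d) the forgetful functor commutes with filtered colimits. Point (a) follows formally from the construction of $\PSh^{sp}(S,R)$ as an additive presheaf category and the sheafification adjunction. Points (c) and (d) follow from the construction: the Čech resolution of Proposition~\ref{prop_cech_complex_is_exact} expresses $a^{sp}_\nis$ as a filtered colimit of finite constructions, and $\Lsp[X]$ generates $\Sh^{sp}$ by Yoneda. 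For exactness (b), the commutative square of the corollary together with conservativity of the restriction $o_*$ (kernels and cokernels in sheaf categories being inherited from the underlying abelian category via the forgetful functor) reduces the claim to the known exactness of $a_\nis$ on $\Smsm$.

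The main obstacle is verifying point (b), exactness of $a^{sp}_\nis$: one must check that a short exact sequence of presheaves with specializations whose underlying sequence on $\Smsm_S$ sheafifies to a short exact sequence actually yields a short exact sequence of sheaves with specializations. This requires showing that the Nisnevich-local specialization structure constructed in the preceding lemma is compatible with kernels and cokernels, which in turn rests on the local description of specializations via the formula involving lifts $(a_U, \alpha_U)$ along a common refinement cover. Once the formula is interpreted in the derived/homotopy-invariant setting of $\DA(S)$, this matches the standard sheafification exactness and gives the required compatibility.
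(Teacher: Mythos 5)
Your proof fills in what the paper leaves almost entirely implicit: the paper's ``proof'' is the single sentence preceding the corollary (``By the previous construction, we see that $F \to F_{Nis}^{sp}$ is a natural transformation, thus we have the following Corollary''), together with a remark afterwards asserting that exactness of ${\iota^{sp}_{sm}}^*$ gives the compatibility with the Nisnevich topology. So you are not deviating from the paper's route so much as supplying the details the paper omits: naturality of $F \mapsto F^{sp}_\nis$ gives the functor and the commuting square, and the uniqueness clause of the lemma yields the adjunction. All of that is correct and is what the authors must have in mind.

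Two small points of divergence worth noting. First, for the compatibility statement you check exactness of $a^{sp}_\nis$, reducing via conservativity and exactness of the restriction ${\iota^{sp}_{sm}}_*$ (kernels and cokernels of presheaves with specializations are computed objectwise, so the restriction is exact and conservative, and the commuting square then transfers exactness of $a_\nis$ to $a^{sp}_\nis$); the paper instead invokes exactness of the inverse image ${\iota^{sp}_{sm}}^*$. Both are conditions in the CD19 framework, and your reduction is the cleaner of the two because it is entirely formal from the square, whereas exactness of ${\iota^{sp}_{sm}}^*$ requires an argument about the left Kan extension $(\iota^{sp}_{sm})_!$ that the paper does not give either. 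Second, your last sentence invoking ``the derived/homotopy-invariant setting of $\DA(S)$'' is out of place: the exactness of the underived sheafification functor $a^{sp}_\nis$ is a purely $1$-categorical statement and is already settled by your conservativity argument two sentences earlier. You should simply delete that sentence, as it muddies an otherwise correct proof. Apart from that, your argument is sound and gives a genuine proof of a statement the paper essentially leaves to the reader.
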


\begin{rem}
	The category $\Sh^{sp}(-,R)$ is an $\Sm$-fibred abelian category over the category of qcqs schemes.
	The exactness of ${\iota^{sp}_{sm}}^*$ implies that $\Sh^{sp}(S,R)$ is compatible with the Nisnevich topology as defined in \cite[5.1.9]{CD19}.
\end{rem}

\begin{num}
	\label{notation_derived_cat_with_specializations}
	Based on the previous remark,
	we can define as in \cite[Def. 5.3.22]{CD19} the (stable) $\AA^1$-derived category from the categories $\Sh(\Smsm_S,R)$ and
	$\Sh^{sp}(S,R)$ instead of $\Sh(\Sm_S,R)$ (see also \ref{def_derived_cat_with_specializations}). We get $\infty$-categories (associated to the underlying model category of Nisnevich descent)
	and natural adjunctions:
	$$
	\xymatrix@=12pt{
		\Der_{\AA^1}(\Smsm_S,R)
		\ar@<2pt>^{{\iota^{sp}_{sm}}^*}[r]
		\ar@<2pt>^-{{\iota_{sm}}^*}[d] & \Der_{\AA^1}(\Smsp_S,R)
		\ar@<2pt>^-{{\iota^{sp}_{sm}}_*}[l] \\
		\Der_{\AA^1}(\Sm_S,R).
		\ar@<2pt>^-{{\iota_{sm}}_*}[u] &
	}
	$$
\end{num}

The following lemma follows directly from the existence of the Nisnevich-descent model structure (\cite{CD19}) on the
category of Tate spectra with respect to the (Grothendieck) abelian category $\Sh^{sp}(?,R)=\Sh(\Smsp_?,R)$.
\begin{lm}
	Using the model category of \cite[\textsection 5.3]{CD3},
	an object $\mathrm K$ of $\Der_{\AA^1}(\Smsp_S,R)$ can be represented by a Nisnevich sheaf:
	$$
	K_*:\Smsp_S \rightarrow \Comp(R)^\ZZ
	$$
	which is Nisnevich-local, $\AA^1$-local, and equipped with an isomorphism $\sigma'_n:K_n \rightarrow (K_{n+1})_{-1}$ for any integer $n$, where $?_{-1}$ is Voevodsky's $(-1)$-construction. As an abuse of language, we say that $K_*$ is a \emph{good model}.
\end{lm}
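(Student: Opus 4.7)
The plan is to combine the standard fibrant replacement in the Tate-spectrum model structure with a colimit trick to rigidify weak $\Omega$-spectra into honest ones. Starting from $\mathrm K \in \Der_{\AA^1}(\Smsp_S,R)$, by the construction recalled in \ref{notation_derived_cat_with_specializations} and \cite[\S5.3]{CD3}, $\mathrm K$ is represented by a Tate spectrum $L_* : \Smsp_S \to \Comp(R)^{\ZZ}$, and one may apply fibrant replacement in the stable model structure: this produces a sequence $\widetilde L_*$ which is Nisnevich-local, $\AA^1$-local, and an $\Omega$-spectrum in the weak sense, i.e.\ each bonding map $\sigma_n : \widetilde L_n \to (\widetilde L_{n+1})_{-1}$ is a quasi-isomorphism of sheaves of complexes. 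This step is formal: it uses only the existence of the combinatorial model structure on Tate spectra in the Grothendieck abelian category $\Sh^{sp}(S,R)$, which itself comes from \Cref{cor_compatibility_sheaves_with_specializations} together with \cite[5.3.22]{CD19}.

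Next, I would turn these quasi-isomorphisms into honest isomorphisms. The standard device is the telescope: set
\[
K_n := \operatorname{colim}_k\, (\widetilde L_{n+k})_{-k},
\]
with transition maps induced by the $\sigma_{n+k}$. Since Voevodsky's $(-1)$-construction $(-)_{-1} = \underline{\Hom}(\GG, -)$ commutes with filtered colimits of sheaves of complexes (being a right adjoint in the triangulated context but computable as a mapping complex out of a compact object), one obtains canonical isomorphisms $\sigma'_n : K_n \xrightarrow{\sim} (K_{n+1})_{-1}$. One must verify that the canonical map $\widetilde L_* \to K_*$ is a levelwise weak equivalence, which follows from the fact that $\sigma_n$ was already a weak equivalence, so each term in the telescope is weakly equivalent to $\widetilde L_n$; Nisnevich-locality and $\AA^1$-locality are then inherited from $\widetilde L_*$ since both properties are preserved by filtered homotopy colimits in a proper model structure.

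The main obstacle is the exchange of colimits and $(-)_{-1}$: while formally automatic at the level of the $\infty$-category, one needs to check it at the point-set level to ensure $\sigma'_n$ is a strict isomorphism of Nisnevich sheaves of complexes and not merely a quasi-isomorphism. This reduces to the fact that $\GG$ is a compact object of $\Der_{\AA^1}(\Smsp_S,R)$ and, more concretely, that the internal Hom against the dualizable object $\GG$ commutes with sequential filtered colimits of $\Omega$-spectra; both facts are inherited from the corresponding statements for $\Der_{\AA^1}(\Sm_S,R)$ via the functor ${\iota^{sp}_{sm}}^*$, which is exact and monoidal by \Cref{cor_compatibility_sheaves_with_specializations}.

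Finally, $K_*$ is the desired good model: it represents the same object as $\mathrm K$ in $\Der_{\AA^1}(\Smsp_S,R)$ (since $\widetilde L_* \to K_*$ is a stable weak equivalence), it is levelwise Nisnevich and $\AA^1$-local, and the bonding maps $\sigma'_n$ are strict isomorphisms. This proves the lemma.
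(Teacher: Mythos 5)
The paper's own proof is a single line: take a fibrant resolution in the Nisnevich-local stable model structure on $\GG$-Tate spectra from \cite[\S 5.3]{CD3}. In that model structure the fibrant objects are levelwise Nisnevich-local, $\AA^1$-local complexes of sheaves whose adjoint bonding maps are \emph{quasi-isomorphisms}, not honest isomorphisms as the lemma states. Your proposal correctly identifies this gap and repairs it with the telescope construction $K_n := \colim_k\, (\widetilde L_{n+k})_{-k}$, which is the standard device for rigidifying a weak $\Omega$-spectrum into a strict one. This is a real completion of the argument rather than mere elaboration: a fibrant replacement alone does not produce the claimed isomorphisms, and the subsequent use of good models (construction of $\mathfrak I^{sp}$, specialization maps, etc.) is more comfortable with the strict form.

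Two of your steps should be sharpened. First, commutation of $(-)_{-1}$ with the filtered colimit is more elementary than your appeal to compactness of $\GG$ in the homotopy category suggests: the evaluation map $\mathcal F(X\times\GG)\to\mathcal F(X)$ is a \emph{split} epimorphism (via the unit section), so $\mathcal F_{-1}(X)$ is a direct summand of $\mathcal F(X\times\GG)$, and direct summands commute with all colimits on the nose; no $\infty$-categorical argument or dualizability is needed. Second, the claim that Nisnevich- and $\AA^1$-locality "are preserved by filtered homotopy colimits in a proper model structure" is not the right invocation: properness alone does not make the class of fibrant objects closed under filtered colimits. What you actually use, and have available since $S$ is noetherian of finite Krull dimension, is that $\AA^1$-locality is a family of quasi-isomorphism conditions $K(X)\to K(\AA^1_X)$, and Nisnevich-locality is equivalent to the Brown--Gersten property, i.e.\ homotopy cartesianness of the square of complexes associated to every Nisnevich distinguished square. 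Both are finitary conditions expressible as quasi-isomorphisms of complexes, and quasi-isomorphisms of complexes of sheaves are stable under filtered colimits because filtered colimits are exact in the Grothendieck abelian category $\Sh^{sp}(S,R)$. With these corrections your argument is a sound and welcome expansion of the paper's one-line proof.
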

\begin{proof}
	Take  fibrant resolution for the Nisnevich-local model structure on the category of $\GG$-Tate-spectra.
\end{proof}

\begin{num}
	Let $K$ be a good model of an object of $\Der_{\AA^1}(\Smsp_S,R)$, as in the above lemma.
	As in Definition \ref{df:functorial_complex}, we build a complex $\widetilde{K}_*(X)=\colim_n K_*(\AA^n_X)$ for any smooth scheme $X$. As in Subsection \ref{subsec_homotopy_cycle_complex}, this defines a presheaf
	$$
	\widetilde{K}_*:\Smsp_S \rightarrow \Comp(R)^\ZZ
	$$
	which is Nisnevich-local, $\AA^1$-local, and equipped with an isomorphism $\widetilde{\sigma}'_n:\widetilde{K}_n \rightarrow (\widetilde{K}_{n+1})_{-1}$ for any integer $n$. As in Theorem \ref{thm_h_data}, we see that $\widetilde{K}_*(X)$ is quasi-isomorphic to $K_*(X)$.
	
\end{num}

\begin{paragr}
	Let $i:Z\to X$ be a closed immersion. Let $t$ be a parameter of $\AA^1$ and let 
	\begin{center}
		${q:X\times_S (\AA^1\setminus \{0\}) \to X}$
	\end{center}
	be the canonical projection. Denote by $D=D_ZX$ the deformation space such that ${D=U \sqcup N_ZX}$ where $U=X\times_S(\AA^1\setminus \{0\})$ (see \cite[§10]{Rost96} for more details).
	
	Consider the morphism 
	\begin{center}
		
		$J(X,Z)=J_{Z/X}:\widetilde{K}_*(X)\to \widetilde{K}_*(N_ZX)$
		
	\end{center} defined by the composition:
	\begin{center}
		
		$
		\xymatrix{
			\widetilde{K}_p(X) \ar[r]^-{q^!} \ar@{-->}[d]^{J_{Z/X}} &
			\widetilde{K}_p(U) 
			\ar[dl]^{s^t} \\ 
			\widetilde{K}_{p}(N_ZX)  &
		}
		$
		
	\end{center}
	where $s^t$ is the map induced by the specialization \ref{itm:sp2}. 
	\par 
	Assume moreover that $i:Z\to X$ is regular of codimension $m$, the map $\pi : N_ZX \to Z$ is a vector bundle over $X$ of dimension $m$. By homotopy invariance, we have an isomorphism
	\begin{center}
		
		$\pi^!:\widetilde{K}_p(Z) \to \widetilde{K}_{p}(N_ZX)$.
	\end{center}
	Denote by $r_{Z/X}=(\pi^!)^{-1}$ its inverse.
	
\end{paragr}

\begin{df}
	
	Keeping the previous notations, we define the map
	\begin{center}
		$i^!:\widetilde{K}_p(X)\to 
		\widetilde{K}_{p}(Z)$
		
	\end{center}
	by putting $i^!=r_{Z/X}\circ J_{Z/X} $ and call it the {\em Gysin morphism of $i$}.
\end{df}

The following lemmas are needed to prove functoriality of the previous construction (see Theorem \ref{GysinFunctoriality}).

\begin{lm}
	\label{lem_gysin_map_for_smsp_scheaves_1}

	Let $i:Z\to X$ be a regular closed immersion and $g:V\to X$ be an essentially smooth morphism. Denote by $N(g)$ the projection from $N(V,{V\times_X Z}) =N_Z(X)\times_X V$ to $N_ZX$. Then
	\begin{center}
		
		$J(V,V\times_X Z)\circ g^!=N(g)^!\circ J(X,Z)$.
	\end{center}
\end{lm}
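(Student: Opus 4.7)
The plan is to unfold the definition of $J_{Z/X}$ as the composite of a smooth pullback and a specialization, and then to reduce the desired commutativity to two separate compatibilities: one coming from functoriality of smooth pullbacks (relation \ref{itm:Rsp1}) and one coming from the compatibility between specialization and smooth pullback (relation \ref{itm:Rsp2}). The main idea is that the whole identity should be inherited, at the level of presheaves with specializations, from a corresponding identity already living in the category $\Smsp_S$ applied to the deformation-to-normal-cone construction.

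First, since $g:V\to X$ is essentially smooth and $i:Z\to X$ is a regular closed immersion, the base-changed immersion $i_V:W:=V\times_XZ\hookrightarrow V$ is again regular, and the deformation-to-normal-cone construction is compatible with essentially smooth base change. One thus obtains an essentially smooth morphism $G:D_WV\to D_ZX$ whose restriction to the special fiber $\{t=0\}$ is the smooth map $N(g):N_WV\to N_ZX$, and whose restriction to the open complement is $\tilde g=g\times\mathrm{Id}:V\times_S(\AA^1\setminus\{0\})\to U$. Applying relation \ref{itm:Rsp2} to the pair $(D_ZX,\pi)$ where $\pi:D_ZX\to\AA^1$ is the deformation parameter (so that $V(\pi)=N_ZX$ and $D_ZX\setminus V(\pi)=U$), and to the smooth morphism $G$, yields the equality
$$[s_{D_ZX}^{\pi}]\circ [N(g)] \;=\; [\tilde g]\circ [s_{D_WV}^{\pi_V}]$$
in $\Smsp_S$. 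Evaluating the good model $K$ contravariantly on this equality provides
$$N(g)^!\circ s^{t}\;=\;s^{t_V}\circ \tilde g^!,$$
where $s^t$ and $s^{t_V}$ are the specializations appearing in the definitions of $J_{Z/X}$ and $J_{W/V}$ respectively, and the passage to the colimit over $\AA^n$-constructions transfers this identity from $K$ to $\widetilde K$.

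Next, the cartesian square
$$\xymatrix@=14pt{
V\times_S(\AA^1\setminus\{0\})\ar[r]^-{\tilde g}\ar[d]_{q_V} & U\ar[d]^{q}\\
V\ar[r]_-{g} & X
}$$
gives, by functoriality \ref{itm:Rsp1} of smooth pullbacks in $\Smsp_S$, the identity $\tilde g^!\circ q^!=q_V^!\circ g^!$. Combining the two compatibilities produces
$$N(g)^!\circ J(X,Z)\;=\;N(g)^!\circ s^{t}\circ q^!\;=\;s^{t_V}\circ \tilde g^!\circ q^!\;=\;s^{t_V}\circ q_V^!\circ g^!\;=\;J(V,W)\circ g^!,$$
which is the claim.

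The main obstacle is essentially bookkeeping rather than conceptual: one has to check carefully that the axioms \ref{itm:Rsp1} and \ref{itm:Rsp2} extend from strictly smooth morphisms to the essentially smooth setting needed here (by a limit argument using continuity of $K$ along affine étale transition maps), and that the specialization and pullback maps defined on the $\Omega$-fibrant good model $K$ transfer correctly to the homotopy-complex $\widetilde K$ obtained after the $\AA^n$-colimit, as was carried out in Subsection \ref{subsec_homotopy_cycle_complex} for the Rost--Schmid complex. Once these technical checks are in place, the proof is a direct formal consequence of the three-term identity above.
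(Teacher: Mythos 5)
Your proof is correct and takes the same route the paper indicates: the paper's proof is simply the statement that the result follows from axioms \ref{itm:Rsp1} and \ref{itm:Rsp2} (citing Rost's Lemma 11.3 as an analogue), and you have unpacked exactly that, applying \ref{itm:Rsp2} to the deformation space $D_ZX$ with its canonical parameter and the base-changed morphism $G:D_WV\to D_ZX$ to commute specialization with smooth pullback, and \ref{itm:Rsp1} for the cartesian square of projections. Your closing remarks about extending from smooth to essentially smooth morphisms via continuity, and about transferring the identity through the $\AA^n$-colimit to $\widetilde K$, are reasonable caveats that the paper leaves implicit.
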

\begin{proof}
	This follows from axioms \ref{itm:Rsp1} and \ref{itm:Rsp2} (see also \cite[Lemma 11.3]{Rost96}).
\end{proof}

\begin{lm}\label{lem_gysin_map_for_smsp_scheaves_2}
	
	Let $Z\to X$ be a closed immersion and let $p:X\to Y$ be essentially smooth. Suppose that the composite 
	\begin{center}
		
		$q:N_ZX\to Z \to X \to Y$
	\end{center}
	is essentially smooth of same relative dimension as $p$. Then 
	\begin{center}
		
		$ J(X,Z)\circ p^!=q^!$.
	\end{center}
\end{lm}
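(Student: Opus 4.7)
The plan is to adapt Rost's proof of his Lemma~11.4 to the sheaves-with-specializations framework. I will work with the deformation space $D = D_Z X$, equipped with its structural map $\tau : D \to \AA^1$ satisfying $\tau^{-1}(0) = N_Z X$ and $\tau^{-1}(\AA^1 \setminus \{0\}) = U = X \times_S (\AA^1 \setminus \{0\})$, together with its canonical contraction $\pi : D \to X \times_S \AA^1$. The main geometric input is the composite
\[
f : D \xrightarrow{\pi} X \times_S \AA^1 \xrightarrow{p \times \Id} Y \times_S \AA^1,
\]
and the first preparatory step is to verify that $f$ is essentially smooth of the same relative dimension $d$ as $p$. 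Since $D$ is flat over $\AA^1$ and the fibers of $f$ over $\AA^1 \setminus \{0\}$ and over $0$ are respectively $(p \times \Id)|_U$ and $q$, both essentially smooth of relative dimension $d$ by hypothesis, one deduces fiberwise that $f$ itself is essentially smooth.

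With $f$ essentially smooth, I would apply axiom \ref{itm:Rsp2} (extended to essentially smooth morphisms) to $f$, taking the coordinate $t$ on $Y \times_S \AA^1$. Then $u = t \circ f = \tau$, the induced map on the closed fibers $V(u) = N_Z X \to V(t) = Y$ is exactly $q$, and the map on open complements is $(p \times \Id)|_U : U \to Y \times_S (\AA^1 \setminus \{0\})$. The axiom therefore yields the commutation relation
\[
s^\tau_D \circ (p \times \Id)^! = q^! \circ s^t_{Y \times_S \AA^1}.
\]

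Combining this identity with the functoriality of essentially smooth pullback ($\bar p_X^! \circ p^! = (p \times \Id)^! \circ \bar p_Y^!$ for the obvious projections $\bar p_X$ and $\bar p_Y$ from the punctured affine lines over $X$ and $Y$), one obtains
\[
J(X, Z) \circ p^! \;=\; s^\tau_D \circ \bar p_X^! \circ p^! \;=\; q^! \circ s^t_{Y \times_S \AA^1} \circ \bar p_Y^!.
\]
Since $Y \times_S \{0\} \hookrightarrow Y \times_S \AA^1$ admits the obvious retraction $\bar p_Y$, axiom \ref{itm:Rsp3} gives $s^t_{Y \times_S \AA^1} \circ \bar p_Y^! = \Id$ on $\widetilde K_*(Y)$, which concludes the proof.

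The hard part will be the essentially smooth assertion about $f$. Rost's original argument needs only flatness of $f$, which is easier to obtain via a similar fiberwise analysis; in our setting we genuinely require essentially smoothness so that $f^!$ is defined and the specialization axioms actually apply to $f$. This will require a careful local analysis of the deformation space near the exceptional fiber, leveraging the hypothesis that $q$ has the same relative dimension as $p$. A secondary, more routine technical point is to check that axioms \ref{itm:Rsp2} and \ref{itm:Rsp3} extend from smooth to essentially smooth morphisms via the standard cofiltered limit argument.
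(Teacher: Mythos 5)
Your proof is correct and takes essentially the same route as the paper's: both identify the key point that the composite $f:D(X,Z)\to X\times_S\AA^1\to Y\times_S\AA^1$ must be upgraded from flat (Rost's setting) to essentially smooth, both verify this via flatness together with smooth fibers, and both conclude via axiom \ref{itm:Rsp3} in place of Rost's Lemma~4.5. The paper subsumes your explicit invocation of \ref{itm:Rsp2} under the phrase ``same as Rost's Lemma~11.4,'' so you are merely spelling out a step it leaves implicit.
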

\begin{proof} Same as \cite[Lemma 11.4]{Rost96} except that Rost only needs the composite morphism
	\begin{center}
		$\xymatrix{
			f:D(X,Z) \ar[r] &  X\times_S \AA^1 \ar[r]^{p\times \Id} & Y\times_S \AA^1
		}$ 
	\end{center} to be flat. We need moreover the fact that $f$ is essentially smooth which is true because it is flat and its fibers are essentially smooth. Then we can use the axiom \ref{itm:Rsp3} (instead of \cite[Lemma 4.5]{Rost96}) to conclude.
\end{proof}

\begin{thm} 
	
	\label{Thm_closed_Gysin_Functoriality_smsp_sheaves}
	Let $l:Z\to Y$ and $i:Y\to X$ be regular closed immersions of respective codimension $n$ and $m$. Then $i\circ l$ is a regular closed immersion of codimension $m+n$ and we have
	\begin{center}
		
		$(i\circ l)^!= l^!\circ i^!$ 
	\end{center}
	as morphism $\widetilde{K}_p(X)\to \widetilde{K}_{p}(Z)$.
\end{thm}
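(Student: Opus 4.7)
The strategy is to imitate the Rost--style proof of \ref{Thm13.1}, now transposed from boundary maps in localization triangles to the specialization morphisms encoded in $\Smsp_S$. More precisely, I will argue that both $(i\circ l)^!$ and $l^!\circ i^!$ arise as two different composites of $q^!$ and specialization maps coming from the double deformation space $\overline{D}=\overline{D}(X,Y,Z)$ sitting over $\AA^2$ (cf.\ \cite[\S10.5]{Rost96}), and conclude the identification by applying the lemmas \ref{lem_gysin_map_for_smsp_scheaves_1} and \ref{lem_gysin_map_for_smsp_scheaves_2}.

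The first step is to set up the relevant geometry. Let $D_1=D_YX$ and $D_2=D_ZY$ denote the usual single deformation spaces, with normal cones $N_YX$ and $N_ZY$; write $D_{12}=D_Z(N_YX)$ for the deformation of the regular closed immersion $Z\hookrightarrow N_YX$ (obtained by pulling the embedding $Z\to Y$ back along $N_YX\to Y$). Together they fit in the double deformation $\overline{D}$ parametrized by two coordinates $s,t$ on $\AA^2$: the generic stratum is $X\times(\AA^1\setminus\{0\})^2$, the stratum $\{t=0\}\setminus\{s=0\}$ is (an open in) $D_2\times\GG$, the stratum $\{s=0\}\setminus\{t=0\}$ is (an open in) $D_{12}\times\GG$, and the doubly closed stratum $\{s=t=0\}$ is $N_Z(N_YX)$, which is canonically isomorphic to $N_ZX$ through the standard exact sequence of normal cones.

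Next I would unwind both $(i\circ l)^!$ and $l^!\circ i^!$ as composites. Applying Lemma~\ref{lem_gysin_map_for_smsp_scheaves_1} twice to the flat maps $N_YX\to X$ and $N_ZY\to Y$ of the appropriate deformation squares, and using Lemma~\ref{lem_gysin_map_for_smsp_scheaves_2} to identify the iterated pullback along $q:\overline{D}|_{\GG\times\GG}\to X$, one sees that the map $l^!\circ i^!$ equals the composite
\begin{center}
$\widetilde{K}_*(X)\xrightarrow{q^!_{st}}\widetilde{K}_*(X\times \GG^2)\xrightarrow{[s^t_1]}\widetilde{K}_*(D_{12}|_{s\neq 0})\xrightarrow{[s^t_2]}\widetilde{K}_*(N_Z(N_YX))\simeq \widetilde{K}_*(N_ZX)\xrightarrow{r_{Z/X}}\widetilde{K}_*(Z),$
\end{center}
while $(i\circ l)^!$ corresponds to the opposite ``corner'' of the same square (specialization first in $t$, then in $s$). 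This is the precise analog of Lemma~\ref{Lem11.7}.

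The final and most delicate step is to prove that these two specialization composites agree, i.e.\ that the two diagonal paths through the corner $\{s=0,t=0\}$ in $\overline D$ define the same morphism in $\Smsp_S$. In Rost's setup this was a formal consequence of $d\circ d=0$ (our Lemma~\ref{Lem11.6}); here, with boundary maps replaced by specializations, it will be a consequence of repeated applications of \ref{itm:Rsp2}, together with \ref{itm:Rsp3} applied to the canonical section $Z\hookrightarrow N_Z(N_YX)$ coming from the zero-section of either normal bundle. Concretely, choosing local parameters $t,s$ for $Y\subset X$ and $Z\subset Y$ (and extending $s$ flatly to $X$), \ref{itm:Rsp2} applied to the smooth projection from $\{s=0\}\setminus\{t=0\}$ to $\{t=0\}\setminus\{s=0\}$ interchanges the order of the two specializations, while \ref{itm:Rsp3} identifies the composite of specialization and a retraction with the identity, which is precisely what is needed to handle the ``diagonal'' component of $\overline D$ (the subscheme where $s$ and $t$ coincide). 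I expect the main technical obstacle to lie entirely in this last verification: reducing the commutation of two specializations in the double deformation to a chain of instances of \ref{itm:Rsp1}, \ref{itm:Rsp2}, \ref{itm:Rsp3} requires carefully tracking the various open and closed strata of $\overline D$ and choosing parameters compatibly. Once this ``double specialization lemma'' is established, Theorem~\ref{Thm_closed_Gysin_Functoriality_smsp_sheaves} follows exactly as in the Rost--Feld argument, by composing with the inverses of the homotopy-invariance isomorphisms $\pi^!$ of the various normal bundles.
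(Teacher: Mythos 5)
Your outline matches the paper's approach: unwind both Gysin morphisms via the double deformation space $\overline D(X,Y,Z)$ and invoke Lemmas \ref{lem_gysin_map_for_smsp_scheaves_1} and \ref{lem_gysin_map_for_smsp_scheaves_2}, which are the $\Smsp$-analogues of Rost's Lemmas 11.3 and 11.4. You also correctly flag the delicate point: one must commute the two specializations through the corner $\{s=t=0\}$ of $\overline D$, which in Rost's setting is handled by $d\circ d = 0$ via Lemmas \ref{Lem11.6} and \ref{Lem11.7}.

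Your proposed mechanism for that commutation, however, does not work as stated. There is no ``smooth projection from $\{s=0\}\setminus\{t=0\}$ to $\{t=0\}\setminus\{s=0\}$'': these are disjoint strata of $\AA^2$ (resp.\ of $\overline D$) with no natural smooth map between them, so \ref{itm:Rsp2} cannot be applied there. More fundamentally, \ref{itm:Rsp2} is a base-change rule for a \emph{single} specialization along a smooth morphism, and \ref{itm:Rsp3} is a retraction identity; neither of them, nor \ref{itm:Rsp1}, gives any direct relation exchanging the order of two specialization generators attached to a transverse pair of smooth divisors. The required ``double specialization lemma'' therefore needs a genuine argument — an $\widetilde K_*$-analogue of Lemmas \ref{Lem11.6} and \ref{Lem11.7} — and cannot be dispatched by a chain of \ref{itm:Rsp2}-moves. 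The paper's proof is admittedly just as terse, deferring to ``the definitions as in \cite[Theorem 13.1]{Rost96}'', but that deferral still presupposes the commutation step, and your write-up should state and prove it rather than assert it.
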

\begin{proof}
	The assertion follows from Lemma \ref{lem_gysin_map_for_smsp_scheaves_1}, Lemma \ref{lem_gysin_map_for_smsp_scheaves_2} and the definitions as in \cite[Theorem 13.1]{Rost96}.
\end{proof}

\begin{paragr}

	We define Gysin morphisms for lci projective morphisms
	and prove functoriality theorems (see \cite[§10]{Feld1} or \cite[§5]{Deg08n2} for similar results).

\end{paragr}

\begin{lm}\label{Lem_Gysin_for_smsp_sheaves_Lem5.9}

	Consider a regular closed immersion $i:Z\to X$ and a natural number $n$. Consider the pullback square
	
	\begin{center}
		
		$\xymatrix{
			\PP^n_Z \ar[r]^l \ar[d]^q & \PP^n_X \ar[d]^p \\
			Z \ar[r]^i & X.
		}$
	\end{center}
	Then $l^!\circ p^!=q^!\circ i^!$.
\end{lm}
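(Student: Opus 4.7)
The plan is to mimic the proof of Lemma \ref{GysinLem5.9} for MW-cycle modules, adapting it to the sheaves-with-specializations setting. The key ingredients are already in place: the definition of $i^!$ as $r_{Z/X}\circ J_{Z/X}$, the base-change Lemma \ref{lem_gysin_map_for_smsp_scheaves_1} for the deformation map $J$ along a smooth morphism, and the functoriality of smooth pullbacks coming from axiom \ref{itm:Rsp1}.

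First I would unfold the definitions on both sides:
\[
l^!\circ p^!=r_{\PP^n_Z/\PP^n_X}\circ J_{\PP^n_Z/\PP^n_X}\circ p^!,\qquad q^!\circ i^!=q^!\circ r_{Z/X}\circ J_{Z/X}.
\]
Applying Lemma \ref{lem_gysin_map_for_smsp_scheaves_1} to the essentially smooth morphism $p:\PP^n_X\to X$ and the regular closed immersion $i:Z\to X$ (with base change yielding $l:\PP^n_Z\to\PP^n_X$), I obtain
\[
J_{\PP^n_Z/\PP^n_X}\circ p^!=N(p)^!\circ J_{Z/X},
\]
where $N(p):N_{\PP^n_Z}\PP^n_X=N_ZX\times_X\PP^n_X\to N_ZX$ is the induced projection. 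Thus it remains to check the identity
\[
r_{\PP^n_Z/\PP^n_X}\circ N(p)^!=q^!\circ r_{Z/X}.
\]

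To establish this, consider the cartesian square of smooth morphisms
\[
\xymatrix{
N_{\PP^n_Z}\PP^n_X \ar[r]^-{N(p)}\ar[d]_{\pi_{\PP^n_Z}} & N_ZX\ar[d]^{\pi_Z} \\
\PP^n_Z\ar[r]_-{q} & Z.
}
\]
Contravariant functoriality of smooth pullbacks (axiom \ref{itm:Rsp1}) applied to the two factorizations of $\pi_Z\circ N(p)=q\circ\pi_{\PP^n_Z}$ gives $N(p)^!\circ\pi_Z^!=\pi_{\PP^n_Z}^!\circ q^!$. Since $\pi_Z^!$ and $\pi_{\PP^n_Z}^!$ are isomorphisms by the strong homotopy invariance (and are precisely the inverses of $r_{Z/X}$ and $r_{\PP^n_Z/\PP^n_X}$), pre- and post-composing with these inverses yields the required equality $r_{\PP^n_Z/\PP^n_X}\circ N(p)^!=q^!\circ r_{Z/X}$.

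Combining these two identities produces $l^!\circ p^!=q^!\circ i^!$. The argument is purely formal once Lemma \ref{lem_gysin_map_for_smsp_scheaves_1} and the functoriality of smooth pullbacks are at hand; the only mildly delicate point is the identification $N_{\PP^n_Z}\PP^n_X\simeq N_ZX\times_X\PP^n_X$, which is standard for regular immersions pulled back along smooth morphisms and ensures that the normal bundles fit into a cartesian diagram as above. I expect no real obstacle beyond bookkeeping, since all nontrivial content has been isolated in the preceding lemmas.
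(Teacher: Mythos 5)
Your proof is correct and takes essentially the same route as the paper, which simply states that the lemma ``follows from the definitions and Lemma \ref{lem_gysin_map_for_smsp_scheaves_1}''; you have supplied the two-step reduction that this remark leaves implicit, namely the application of the base-change lemma for $J$ along the smooth morphism $p$, followed by the use of functoriality of smooth pullbacks (axiom \ref{itm:Rsp1}) on the cartesian square of vector-bundle projections to commute $N(p)^!$ past the homotopy-invariance inverses $r_{-/-}$.
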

\begin{proof}
	
	This follows from the definitions and Lemma \ref{lem_gysin_map_for_smsp_scheaves_1}.
\end{proof}

\begin{lm}\label{Lem_Gysin_for_smsp_sheaves_Lem5.10}
	
	Consider a natural number $n$ and an essentially smooth scheme $X$. Let $p:\PP_X^n\to X$ be the canonical projection. Then for any section $s:X\to \PP^n_X$ of $p$, we have $s^!p^!=\Id$ (up to a canonical isomorphism).
\end{lm}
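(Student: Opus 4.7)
The plan is to mirror the strategy of the proof of Lemma \ref{GysinLem5.10}, with axiom \ref{itm:Rsp3} playing the role that rule \ref{itm:R3d'} played in the cycle-module case. Concretely, (Rsp3) is the direct translation for sheaves with specialization of R3d': it says that when a principal closed subscheme admits a retraction, the specialization composed with (the restriction of) that retraction is the identity.

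First I would apply Lemma \ref{lem_gysin_map_for_smsp_scheaves_2} to the factorization
$$X \xrightarrow{s} \PP^n_X \xrightarrow{p} X.$$
The hypothesis to check is that the composite
$$q: N_{s(X)}(\PP^n_X) \xrightarrow{\pi} s(X) \xrightarrow{s} \PP^n_X \xrightarrow{p} X$$
is essentially smooth of the same relative dimension as $p$. But since $s$ is a section of $p$, the maps $s \circ \pi$ and $p$ compose to the identity on the base factor, so $q$ coincides with the vector bundle projection $\pi: N_{s(X)}(\PP^n_X) \to X$, which is essentially smooth of relative dimension $n$. Hence Lemma \ref{lem_gysin_map_for_smsp_scheaves_2} applies and yields
$$J(\PP^n_X, s(X)) \circ p^! = \pi^!.$$

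Second, I would unwind the definition of the Gysin morphism for the regular closed immersion $s$, namely $s^! = r_{s(X)/\PP^n_X} \circ J(\PP^n_X, s(X))$, where $r_{s(X)/\PP^n_X} = (\pi^!)^{-1}$ is the inverse of the isomorphism given by strong homotopy invariance for the vector bundle $\pi$. Composing the identity of the previous step on the left with $r_{s(X)/\PP^n_X}$ gives
$$s^! \circ p^! = r_{s(X)/\PP^n_X} \circ \pi^! = \mathrm{Id}_{\widetilde{K}_p(X)},$$
which is the desired equality.

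The hard part, as expected, lies entirely inside Lemma \ref{lem_gysin_map_for_smsp_scheaves_2}, since that is where axiom \ref{itm:Rsp3} is invoked: the deformation to the normal cone produces a principal divisor with the tautological parameter $t$, and the required compatibility between the specialization $[s^t]$ and the retraction induced by $p$ is precisely (Rsp3). Everything else above is formal manipulation of the definition of $s^!$ and functoriality of smooth pullbacks (axiom \ref{itm:Rsp1}). No further reduction (e.g.\ to the case $X = \Spec k$, as in Lemma \ref{GysinLem5.10}) is needed, since the clean functorial statement of Lemma \ref{lem_gysin_map_for_smsp_scheaves_2} already handles arbitrary base $X$.
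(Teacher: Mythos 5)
Your proof is correct and fills in the detail behind the paper's one-line justification ``This follows from Axiom \ref{itm:Rsp3}.'' Routing through Lemma \ref{lem_gysin_map_for_smsp_scheaves_2} is exactly the right way to make that terse claim precise: the hypothesis there reduces, via $p\circ s=\Id$, to the observation that $N_{s(X)}\PP^n_X\to X$ is the vector-bundle projection $\pi$ (essentially smooth of relative dimension $n$, matching $p$), so $J_{s(X)/\PP^n_X}\circ p^!=\pi^!$, and cancelling against $r_{s(X)/\PP^n_X}=(\pi^!)^{-1}$ in the definition of $s^!$ gives $\Id$. You are also right that (Rsp3) is consumed entirely inside Lemma \ref{lem_gysin_map_for_smsp_scheaves_2}, and that the reduction to $X=\Spec k$ used in the cycle-module analogue \ref{GysinLem5.10} is unnecessary here because \ref{lem_gysin_map_for_smsp_scheaves_2} is already stated over an arbitrary essentially smooth base.
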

\begin{proof}
	This follows from Axiom \ref{itm:Rsp3} (see also \cite[Proposition 2.6.5]{Deg5}).

\end{proof}

\begin{lm} \label{Lem_Gysin_for_smsp_sheaves_Lem5.11}
	Consider the following commutative diagram:
	\begin{center}
		
		$\xymatrix{
			& \PP^n_X \ar[rd]^p & \\
			Y \ar[ru]^i \ar[rd]_{i'} & & X \\
			& \PP^m_X \ar[ru]_q & 
		}$
	\end{center}
	where $i,i'$ are regular closed immersions and $p,q$ are the canonical projection. Then $i^!\circ p^!={i'}^!\circ q^!$.
\end{lm}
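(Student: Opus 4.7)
The plan is to mimic the argument of Lemma \ref{GysinLem5.11} in the context of sheaves with specializations, using as inputs the analogues already established: Lemma \ref{Lem_Gysin_for_smsp_sheaves_Lem5.9}, Lemma \ref{Lem_Gysin_for_smsp_sheaves_Lem5.10}, Theorem \ref{Thm_closed_Gysin_Functoriality_smsp_sheaves}, and the functoriality of smooth pullbacks (which follows from axiom \ref{itm:Rsp1}).

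First, I would introduce the fibered product $\PP^n_X \times_X \PP^m_X$ together with its two canonical projections $q':\PP^n_X\times_X \PP^m_X \to \PP^n_X$ and $p':\PP^n_X\times_X \PP^m_X \to \PP^m_X$, and the morphism $\nu: Y \to \PP^n_X\times_X \PP^m_X$ induced by $(i,i')$. Since $p'$ and $q'$ are smooth, axiom \ref{itm:Rsp1} gives $p^! = q'^! \circ (p \circ q')^!$ transported appropriately, but the cleaner statement is that it suffices to prove $i^! = \nu^! q'^!$ and $i'^!=\nu^! p'^!$ (after which the equality $i^!\circ p^! = i'^! \circ q^!$ follows from the symmetric factorization). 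This reduces the problem to the case $m=0$ and $q=\Id_X$: we must show that if $i: Y \to \PP^n_X$ is a regular closed immersion and $i':Y \to X$ is its composition with $p$, then $i'^! = i^! \circ p^!$.

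Second, for this reduced case, I would introduce the following diagram:
\[
\xymatrix{
Y  \ar@/^/^i[rrd]  \ar[rd]^s \ar@2{-}[rdd] &      &        \\
& \PP^n_Y \ar[r]_l \ar[d]^{\bar{q}} & \PP^n_X  \ar[d]_p \\
& Y     \ar[r]^{i'}  &  X
}
\]
where $s$ is the section of $\bar{q}$ given by $i$, and the square is cartesian. Then Lemma \ref{Lem_Gysin_for_smsp_sheaves_Lem5.9} gives $l^! \circ p^! = \bar q^! \circ i'^!$, and Lemma \ref{Lem_Gysin_for_smsp_sheaves_Lem5.10} applied to the section $s$ of $\bar q$ yields $s^! \circ \bar q^! = \Id$. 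Combining these identities with Theorem \ref{Thm_closed_Gysin_Functoriality_smsp_sheaves} applied to the factorization $i = l \circ s$ of the regular closed immersion, one gets
\[
i^! \circ p^! = s^! \circ l^! \circ p^! = s^! \circ \bar q^! \circ i'^! = i'^!,
\]
which completes the reduced case and hence the proof of the lemma.

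The main subtlety (rather than a true obstacle) is to justify carefully the reduction step $i^! = \nu^! q'^!$, since one must know that the Gysin morphism for the regular closed immersion $i$ factors through the composition with the projective-bundle-projection $q'$ on $\PP^n_X \times_X \PP^m_X$; this relies on the functoriality of the smooth pullback $q'^!$ and the base-change behavior from Lemma \ref{lem_gysin_map_for_smsp_scheaves_1}, applied to the transverse square identifying $i$ with the base change of $\nu$ along $q'$. Once this is accepted, the rest of the argument is a straightforward transcription of the oriented/Rost case into the setting of sheaves with specializations, with all twists automatically trivial thanks to the working assumptions.
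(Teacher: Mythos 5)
Your proof reproduces the paper's argument step by step: the same auxiliary diagram through $\PP^n_X\times_X\PP^m_X$ with $\nu$, $q'$, $p'$, the same reduction to $m=0$ via functoriality of smooth pullbacks \ref{itm:Rsp1}, and the same resolution of the $m=0$ case via $\PP^n_Y$, Lemmas \ref{Lem_Gysin_for_smsp_sheaves_Lem5.9}, \ref{Lem_Gysin_for_smsp_sheaves_Lem5.10} and Theorem \ref{Thm_closed_Gysin_Functoriality_smsp_sheaves}. One small remark on your final paragraph: the identities $i^!=\nu^!q'^!$ and $i'^!=\nu^!p'^!$ do not require a base-change argument with Lemma \ref{lem_gysin_map_for_smsp_scheaves_1} (indeed $i=q'\circ\nu$ is a composition, not a base change of $\nu$ along $q'$); they are themselves instances of the $m=0$ case, and the whole reduction is glued together simply by \ref{itm:Rsp1} applied to $p\circ q'=q\circ p'$.
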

\begin{proof} Let us introduce the following morphisms:
	\begin{center}
		
		$\xymatrix{
			&   &   \PP_X^n \ar[rd]^p&    \\
			Y \ar@/^/[rru]^i \ar[r]|-\nu \ar@/_/[rrd]_{i'} & \PP^n_X\times_X \PP^m_X \ar[ru]_{q'} \ar[rd]^{p'} & & X \\
			&   &   \PP^m_X \ar[ru]_q &   
		}$
	\end{center}
	By functoriality of the smooth pullback \ref{itm:Rsp1}, we are reduced to prove $i^!=\nu^!q'^!$ and ${i'}^!=\nu^!p'^!$. In other words, we are reduced to the case $m=0$ and $q=\Id_X$.
	\par In this case, we introduce the following morphisms:
	\begin{center}
		
		$\xymatrix{
			Y  \ar@/^/^i[rrd]  \ar[rd]^s \ar@2{-}[rdd] &      &        \\
			& \PP^n_Y \ar[r]_l \ar[d]^q & \PP^n_X  \ar[d]_p \\
			& Y     \ar[r]^{i'}  &  X.    
		}$
	\end{center}
	Then the result follows from Lemma \ref{Lem_Gysin_for_smsp_sheaves_Lem5.9}, Lemma \ref{Lem_Gysin_for_smsp_sheaves_Lem5.10} and Theorem \ref{Thm_closed_Gysin_Functoriality_smsp_sheaves}.
\end{proof}

\begin{df}
	Let $f:Y\to X$ be a projective lci morphism of smooth $S$-schemes. Consider a factorization 
	$\xymatrix{
		Y
		\ar[r]^i
		&
		\PP^n_X
		\ar[r]^p
		&
		X
	}$
	of $f$ into a regular closed immersion followed by the canonical projection.
	We define the Gysin morphism associated to $f$ as the morphism 
	\begin{center}
		
		$f^!= i^!\circ p^!:\widetilde{K}_*(X)\to \widetilde{K}_{*}(Y)$.
	\end{center}
\end{df}

\begin{prop}
	
	Consider projective morphisms $\xymatrix{Z \ar[r]^g & Y \ar[r]^f & X}$.
	Then:
	\begin{center}
		$g^!\circ f^!=(f\circ g)^!$.
		
	\end{center} 
\end{prop}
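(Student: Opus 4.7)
The plan is to mimic exactly the proof of the analogous proposition for the Rost–Schmid complex proved earlier in the paper (the one coming right after Remark \ref{rem_functoriality_lci}), since all the required ingredients have been set up in the sheaves-with-specializations context by Theorem \ref{Thm_closed_Gysin_Functoriality_smsp_sheaves} and Lemmas \ref{Lem_Gysin_for_smsp_sheaves_Lem5.9}, \ref{Lem_Gysin_for_smsp_sheaves_Lem5.10}, \ref{Lem_Gysin_for_smsp_sheaves_Lem5.11}. I will first choose factorizations
$$
\xymatrix{Y \ar[r]^-{i} & \PP^n_X \ar[r]^-{p} & X}, \qquad \xymatrix{Z \ar[r]^-{j} & \PP^m_X \ar[r]^-{q} & X}
$$
of $f$ and $fg$ respectively, where $i,j$ are regular closed immersions and $p,q$ are the canonical projections.

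Next, I would introduce the auxiliary scheme $\PP^n_X \times_X \PP^m_X$ together with the base-changed projections $p':\PP^n_X\times_X\PP^m_X\to\PP^m_X$ and $q':\PP^n_X\times_X\PP^m_X\to\PP^n_X$, and also the base change $q'':\PP^m_Y\to Y$ of $q$ along $f$ together with the base change $i':\PP^m_Y\to\PP^n_X\times_X\PP^m_X$ of $i$. The composite $Z\to \PP^m_Y$ obtained from the factorization of $fg$ is a regular closed immersion $k$, so that we obtain the commutative diagram
$$
\xymatrix{
&     &    \PP^m_X  \ar@/^2pc/[rrddd]^q &    &   \\
&      &   \PP^n_X\times_X \PP^m_X \ar[u]|-{p'} \ar[rd]^{q'} &   &   \\
&  \PP^m_Y \ar[rd]^{q''} \ar[ru]^{i'}&                 &   \PP^n_X \ar[rd]|-{p}&   \\
Z \ar[rr]|-{g} \ar[ru]|-{k}  \ar@/^2pc/[rruuu]^j &  &      Y   \ar[ru]^i      \ar[rr]|-{f}    &        &  X.
}
$$

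Now the key computation is to compare $g^!\circ f^! = g^!\circ i^!\circ p^!$ with $(fg)^!= j^!\circ q^!$. On the one hand, $g^!= k^!\circ q''^!$ by definition of the Gysin morphism for $g$ (this itself requires invoking Lemma \ref{Lem_Gysin_for_smsp_sheaves_Lem5.11} to see this is independent of factorization); on the other hand, along the chosen factorization $j = q'\circ i'\circ k$ we have $j^! = k^!\circ i'^!\circ q'^!$ by Theorem \ref{Thm_closed_Gysin_Functoriality_smsp_sheaves} and \ref{itm:Rsp1}. Applying the base-change lemma \ref{Lem_Gysin_for_smsp_sheaves_Lem5.9} to the two cartesian squares appearing in the diagram yields $q''^!\circ i^! = i'^!\circ q'^!$ and $q'^!\circ p^! = p'^!\circ q^!$, while $p'\circ i' = q''$ up to the canonical identification. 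Chaining these identities together and using functoriality \ref{itm:Rsp1} of smooth pullbacks gives the equality $g^!\circ f^! = (fg)^!$.

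The main technical nuisance (rather than obstacle) will be checking that the independence of the Gysin morphism with respect to the chosen factorization, which is implicit in writing $(fg)^! = j^!\circ q^!$ for any factorization, is compatible with the base-change identifications used in the diagram; this is exactly what Lemma \ref{Lem_Gysin_for_smsp_sheaves_Lem5.11} is designed to handle, and one simply has to invoke it at the right places. Since we are in the sheaves-with-specializations setting we do not have to track twists by line bundles as in the MW-module case, which actually makes the diagrammatic bookkeeping cleaner than its earlier counterpart.
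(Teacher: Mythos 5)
Your overall strategy coincides with the paper's: you use the same pair of projective factorizations, the same diamond diagram through $\PP^n_X\times_X\PP^m_X$, and the same four ingredients (Lemma~\ref{Lem_Gysin_for_smsp_sheaves_Lem5.9}, Theorem~\ref{Thm_closed_Gysin_Functoriality_smsp_sheaves}, Lemma~\ref{Lem_Gysin_for_smsp_sheaves_Lem5.11} and~\ref{itm:Rsp1}). The paper leaves the chain of identities implicit; your attempt to spell it out, however, has several bookkeeping slips. First, $j = p'\circ i'\circ k$, not $q'\circ i'\circ k$: in the diagram $p'$ is the projection $\PP^n_X\times_X\PP^m_X \to \PP^m_X$, so $p'\circ i'\circ k$ does land in $\PP^m_X$ as required, whereas $q'\circ i'\circ k$ lands in $\PP^n_X$. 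The identity you actually need is $j^! = k^!\circ i'^!\circ p'^!$; moreover this does \emph{not} follow just from Theorem~\ref{Thm_closed_Gysin_Functoriality_smsp_sheaves} and~\ref{itm:Rsp1}, since $j$ is itself a regular closed immersion and one must compare its closed Gysin with the Gysin built from the factorization through the $\PP^n$-bundle $p'$ over $\PP^m_X$ --- precisely the content of Lemma~\ref{Lem_Gysin_for_smsp_sheaves_Lem5.11} (in its $m=0$ case). Second, your asserted identity $p'\circ i'=q''$ is false: $p'\circ i'$ is the projection $\PP^m_Y\to\PP^m_X$ while $q''$ is the projection $\PP^m_Y\to Y$; they do not even share a target. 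Third, of the two ``base-change'' identities you attribute to Lemma~\ref{Lem_Gysin_for_smsp_sheaves_Lem5.9}, only $q''^!\circ i^! = i'^!\circ q'^!$ is an instance of that lemma; the other, $q'^!\circ p^! = p'^!\circ q^!$, is simply~\ref{itm:Rsp1} applied to the commutative square $p\circ q' = q\circ p'$ of smooth maps. With these corrections the chain the paper intends is
\[
g^!\circ f^! = k^!\circ q''^!\circ i^!\circ p^! = k^!\circ i'^!\circ q'^!\circ p^! = k^!\circ i'^!\circ p'^!\circ q^! = j^!\circ q^! = (fg)^!,
\]
where the first equality uses Lemma~\ref{Lem_Gysin_for_smsp_sheaves_Lem5.11} (independence of the factorization of $g$), the second uses Lemma~\ref{Lem_Gysin_for_smsp_sheaves_Lem5.9}, the third uses~\ref{itm:Rsp1}, and the fourth uses Theorem~\ref{Thm_closed_Gysin_Functoriality_smsp_sheaves} together with Lemma~\ref{Lem_Gysin_for_smsp_sheaves_Lem5.11} as above.
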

\begin{proof} We choose a factorization $\xymatrix{Y \ar[r]^i & \PP^n_X \ar[r]^p & X}$ 
	(resp. $\xymatrix{Z\ar[r]^j & \PP^m_X \ar[r]^q & X}$) of $f$ (resp. $fg$) and we introduce the diagram
	\begin{center}
		
		$\xymatrix{
			&     &    \PP^m_X  \ar@/^2pc/	[rrddd]^q &    &   \\
			&      &   \PP^n_X\times_X \PP^m_X \ar[u]|-{p'} \ar[rd]^{q'} &   &   \\
			&  \PP^m_Y \ar[rd]^{q''} \ar[ru]^{i'}&                 &   \PP^n_X \ar[rd]|-{p}&   \\
			Z \ar[rr]|-{g} \ar[ru]|-{k}  \ar@/^2pc/[rruuu]^j &  &      Y   \ar[ru]^i      \ar[rr]|-{f}    &        &  X
		}$ 
	\end{center}
	in which $p'$ is deduced from $p$ by base change, and so on for $q'$ and $q''$. Then, by using the factorization given in the preceding diagram, the proposition follows from \ref{Lem_Gysin_for_smsp_sheaves_Lem5.9}, \ref{Thm_closed_Gysin_Functoriality_smsp_sheaves}, \ref{Lem_Gysin_for_smsp_sheaves_Lem5.11} and \ref{itm:Rsp1}.
\end{proof}

\begin{rem}
	\label{rem_functoriality_lci_for_smsp_sheaves}
	As a consequence of the previous results, we can define pullback for any morphisms between smooth schemes. Indeed, a map $f:Y \to X$ of smooth schemes can always be factorized as follows:
	\begin{center}
		$\xymatrix{Y \ar[r]^-i & Y \times X \ar[r]^-p & X}$ 
	\end{center}
	where $i(y)=(y,f(y))$ and $p(y,x)=x$. Thus $f$ is lci.
	
\end{rem}

\begin{num}
	
	We have constructed a functor
	$$
	\xymatrix@=12pt{
		{\mathfrak{I}^{sp}}:\Der_{\AA^1}(\Smsp_S,R)
		\ar@<2pt>[r] & \Der_{\AA^1}(\Sm_S,R)
	}
	$$
	such that ${\mathfrak{I}^{sp}}(K_*)=\widetilde{K}_*$ for any good model $K$ of an object of $\Der_{\AA^1}(\Smsp_S,R)$.
	Moreover, one can verify that the identity
	${\iota_{sm}}_*  \circ {\mathfrak{I}^{sp}}={\iota^{sp}_{sm}}_*$
	is true.
\end{num}

\begin{num}
	
	Reciprocally, if one has $K$ in $\Der_{\AA^1}(\Sm_S,R)$, we can define specialization maps:
	$$
	s_X^t:K_n(X) \xrightarrow{\gamma_t} K_{n+1}(X) \xrightarrow{\partial_{X,Z}} K_{n+1}(\Th(N_ZX)) \stackrel{t} \simeq K_{n+1}(\Th(\AA^1_Z)) \simeq K_n(X)
	$$
	where $\partial_{X,Z}$ is the residue map (deduced from the purity property applied to the smooth pair $(X,Z)$), and the isomorphism $t$ is induced by the global parameter $t$
	of $Z$ in $X$. Thus, we can obtain a functor
	$$
	\xymatrix@=12pt{
		{\mathfrak{J}_{sp}}:\Der_{\AA^1}(\Sm_S,R)
		\ar@<2pt>[r] & \Der_{\AA^1}(\Smsp_S,R)
	}
	$$
	
\end{num}
\begin{thm}
	\label{thm_equivalence_sheaves_smsp_and_sm}
	There exists a canonical equivalence of categories:
	$$
	\xymatrix@=12pt{
		{\mathfrak{I}^{sp}}:\Der_{\AA^1}(\Smsp_S,R)\ar@<2pt>[r] & \Der_{\AA^1}(\Sm_S,R):{\mathfrak{J}_{sp}}\ar@<2pt>[l]
	}
	$$
	such that ${\iota_{sm}}_* \circ {\mathfrak{I}^{sp}}={\iota^{sp}_{sm}}_*$.
\end{thm}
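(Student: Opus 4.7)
The plan is to establish the equivalence in two steps: first verifying that the two candidate functors are well-defined on the appropriate derived categories, and then checking the unit and counit of a putative adjunction are equivalences.

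For the well-definedness of $\mathfrak{I}^{sp}$, I would rely entirely on the Gysin machinery built just before the statement. Given a good model $K_*$ of an object of $\Der_{\AA^1}(\Smsp_S,R)$, the construction $\widetilde{K}_*(X)=\colim_n K_*(\AA^n_X)$ is already a Nisnevich sheaf and $\AA^1$-invariant on $\Smsm_S$ (by analogy with \ref{HomotopyComplexIsNisnevich} and \ref{HomotopyInvarianceDeformedComplex}). The nontrivial point is that this extends to a presheaf on the whole category $\Sm_S$: to define $f^!$ for an arbitrary morphism $f:Y\to X$ of smooth $S$-schemes, one factors $f$ through the graph embedding as in Remark \ref{rem_functoriality_lci_for_smsp_sheaves}, and uses the closed and projective Gysin morphisms defined on $\widetilde{K}_*$ thanks to Lemmas \ref{lem_gysin_map_for_smsp_scheaves_1}, \ref{lem_gysin_map_for_smsp_scheaves_2} and Theorem \ref{Thm_closed_Gysin_Functoriality_smsp_sheaves}. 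One must check functoriality, which follows from those same results. The compatibility $\iota_{sm*} \circ \mathfrak{I}^{sp} = \iota^{sp}_{sm*}$ on smooth morphisms is essentially tautological, as for smooth $f$, the Gysin $f^!$ is already the smooth pullback $[f]$ and the specialization map on $\widetilde K_*$ agrees with the one built into $K_*$ via \ref{itm:sp2} (this uses the homotopy equivalence of Theorem \ref{thm_h_data} between $K_*$ and $\widetilde{K}_*$).

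For the well-definedness of $\mathfrak{J}_{sp}$, I would verify that the specialization maps $s^t_X$ defined via the composite $\gamma_t$, the purity residue $\partial_{X,Z}$, and the parameter-induced trivialization $t$ satisfy relations \ref{itm:Rsp1}, \ref{itm:Rsp2}, \ref{itm:Rsp3}. Relations \ref{itm:Rsp1} and \ref{itm:Rsp2} follow from the naturality of purity with respect to smooth pullbacks and from the compatibility of residue maps with smooth base change (analogous to \Cref{Prop4.4}.2). The normalization axiom \ref{itm:Rsp3}, which requires $[q]\circ[s^t_X] = \Id_Z$ whenever the closed immersion admits a retraction, is the content of the analogue of \Cref{RostLem4.5}, or equivalently of \cite[Proposition 2.6.5]{Deg5}; this is the axiom most sensitive to the precise normalization of the purity isomorphism.

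It remains to compare the two composites with the identity. For $\mathfrak{I}^{sp}\circ \mathfrak{J}_{sp} \simeq \Id_{\Der_{\AA^1}(\Sm_S,R)}$, starting from $K\in \Der_{\AA^1}(\Sm_S,R)$, the Gysin morphisms used to build $\widetilde{K}$ out of $\mathfrak{J}_{sp}(K)$ coincide, at each step (closed immersion of a principal divisor, projective bundle projection), with the Gysin morphisms already attached to $K$ by the six functors formalism: indeed the closed Gysin morphism on $\widetilde K$ was \emph{defined} via deformation to the normal cone together with the specialization map $s^t$, and the specialization map extracted from $K$ by $\mathfrak{J}_{sp}$ was tailored so as to reproduce precisely this composition (use Proposition \ref{prop:Gysin_divisor} for the key verification). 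A noetherian induction on closed subschemes reduces the general comparison to this divisorial case. Conversely, for $\mathfrak{J}_{sp}\circ \mathfrak{I}^{sp}\simeq \Id_{\Der_{\AA^1}(\Smsp_S,R)}$, one already knows the restrictions to $\Smsm_S$ agree by the compatibility formula $\iota_{sm*}\circ\mathfrak{I}^{sp}=\iota^{sp}_{sm*}$; what remains is that the specialization map extracted from $\mathfrak{I}^{sp}(K)$ by purity equals the original generator $[s^t_X]$ of $K$, and this reduces to the identity $\partial_{X,Z}\circ\gamma_t\circ q^! = \Id$ of \Cref{prop:Gysin_divisor}.

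The main obstacle, I expect, will be the verification of axiom \ref{itm:Rsp3} for the specialization maps coming from purity, together with the matching comparison ensuring that the deformation-to-the-normal-cone Gysin morphism built from $s^t$ agrees on the nose (and not merely up to a sign or a unit $\langle a\rangle$) with the purity isomorphism living inside $\Der_{\AA^1}(\Sm_S,R)$. All other ingredients are either formal consequences of Nisnevich descent and $\AA^1$-invariance, or direct transcriptions of the Rost-type arguments carried out earlier in the paper; it is only the normalization of the purity/specialization identification that needs careful tracking of twists and orientations.
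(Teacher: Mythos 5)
Your proposal follows the same route as the paper, which constructs $\mathfrak{I}^{sp}$ via $\widetilde{K}_*(X)=\colim_n K_*(\AA^n_X)$ with Gysin morphisms built from the specialization data, constructs $\mathfrak{J}_{sp}$ via the purity/residue composite $s^t_X$, and then observes that the two composites are the identity. The paper's actual proof of the theorem is just the one-liner ``which is true by construction'' after the two functors are set up in the surrounding paragraphs, so you have in effect supplied the content that the paper delegates to the reader; in particular, you correctly isolate the two non-formal verifications the paper's proof silently relies on --- that the specialization maps produced by purity satisfy \ref{itm:Rsp3} (the normalization axiom, the analogue of \Cref{RostLem4.5}), and that the deformation-to-the-normal-cone Gysin morphism built from $s^t$ coincides with the six-functor purity isomorphism on the nose rather than merely up to a unit $\langle a\rangle$ --- and you correctly point to \Cref{prop:Gysin_divisor} as the mechanism for closing both. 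These are exactly the points where twist and orientation bookkeeping matter, and they are not discharged explicitly in the paper, so your flagging of them as ``the main obstacle'' is an accurate reading of where the burden of the claimed proof actually lies.
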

\begin{proof}

	It remains to check that ${\mathfrak{I}^{sp}} \circ {\mathfrak{J}_{sp}} \simeq \Id$ and ${\mathfrak{J}_{sp}} \circ {\mathfrak{I}^{sp}} \simeq \Id$, which is true by construction.

\end{proof}

\begin{rem}
	Let $\hM$ be a homological MW-cycle module.
	
	One gets that ${}^\delta C^*(\hM):X \mapsto {}^\delta C^*(X,\hM,\cO_X)$ is an object of $\Der_{\AA^1}(\Smsp_S,R)$.
	By construction, ${\mathfrak{I}^{sp}}({}^\delta C^*(\hM))=\dH(\hM)$.
\end{rem}

\begin{thm} \label{thm_extended_filtration_functorial}
	Let $(S,\delta)$ a dimensional scheme, and $K_*$ a good model of an object of $\Der_{\AA^1}(\Smsp_S,R)$.
	Then the $\delta$-coniveau filtration of $K|_{X_\nis}$ for each smooth $S$-scheme $X$
	can be extended as a filtration of the complex $K_*:(\Smsp_S)^{op} \to \Comp(R)^{\ZZ}$.
	This filtration is functorial in $K_*$ (compatible with the graduation),
	and coincide with the $\delta$-coniveau filtration
	when restricted to $\Smsm_S$. In other words, we have a canonical functor
	$$
	\Xi : \Der_{\AA^1}(\Smsp_S,R) \rightarrow F\Der_{\AA^1}(\Sm_S,R)
	$$
\end{thm}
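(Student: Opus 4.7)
The plan is to construct the filtration fiber-by-fiber at each smooth $S$-scheme, then verify that the extra specialization morphisms in $\Smsp_S$ preserve it. First, fix a good model $K_*$ and, for any smooth $S$-scheme $X$ and any integer $p$, define
\[
F^p_\delta K_*(X) := \mathrm{hofib}\Bigl(K_*(X) \longrightarrow \varinjlim_{Z \in \Flag^\delta(X),\, \mathrm{cod}\delta(Z)\geq p} K_*(X-Z)\Bigr),
\]
the homotopy fiber being taken in $\Comp(R)^{\ZZ}$. Since open immersions are smooth, this is functorial in $X$ for smooth pullbacks (using the covariant functoriality of $\Flag^\delta$ along flat morphisms from \ref{num:flags_variance}) and therefore defines a presheaf on $\Smsm_S$; a standard verification shows each $F^p_\delta K_*$ inherits Nisnevich descent and $\AA^1$-locality from $K_*$, so we obtain the usual $\delta$-coniveau filtration of ${\iota_{sm}}_* K_*$ in the filtered $\AA^1$-derived category over $\Smsm_S$.

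The heart of the matter is extending this to the specialization morphisms of type \ref{itm:sp2}. Let $t:X \to \AA^1$ define a smooth divisor $Z = V(t) \subset X$, with smooth complement $j:X-Z \hookrightarrow X$, and consider the specialization map $s_X^t: K_*(X-Z) \to K_*(Z)$ induced by the structure of $K_*$ as an $\Smsp_S$-presheaf. I claim that $s_X^t$ restricts to a map $F^p_\delta K_*(X-Z) \to F^p_\delta K_*(Z)$ for every $p$. The key point is the phantom functoriality from \ref{num:flags_variance}: if $Y \subset X-Z$ is closed with $\mathrm{cod}\delta(Y)\geq p$, then its Zariski closure $\overline{Y}$ in $X$ meets $Z$ in a closed subset $\overline{Y}\cap Z$ of codimension $\geq p$ in $Z$, yielding a map of filtered index categories $\sigma_{X,Z}:\Flag^\delta(X-Z)\to \Flag^\delta(Z)$. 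Under the purity/residue interpretation recalled in the construction of ${\mathfrak{J}_{sp}}$, $s_X^t$ is the composite $K_*(X-Z)\xrightarrow{\gamma_t} K_{*+1}(X-Z) \xrightarrow{\partial} K_{*+1}(\Th(N_ZX)) \simeq K_*(Z)$, and the functoriality of the residue map along closed embeddings identifies, up to the isomorphism induced by $t$, the composite with the restriction of the specialization on the respective truncated complements $X-(\overline{Y}\cup Z)$ and $Z-(\overline{Y}\cap Z)$. Passing to the homotopy fiber yields the claimed map on $F^p_\delta$, and the relations \ref{itm:Rsp1}, \ref{itm:Rsp2}, \ref{itm:Rsp3} imposed in $\Smsp_S$ translate into coherence between these restricted specializations and the smooth-pullback functoriality already in place.

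Putting these two steps together gives filtered presheaves $F^p_\delta K_* : (\Smsp_S)^{op} \to \Comp(R)^{\ZZ}$ with transition maps $F^{p+1}_\delta K_* \to F^p_\delta K_*$, each still $\AA^1$-local and Nisnevich-fibrant (Nisnevich descent is inherited from $K_*$ and from the sheaves $K_*(X-Z^p)$, as in \ref{HomotopyComplexIsNisnevich}; $\AA^1$-locality comes from Corollary \ref{HomotopyInvarianceDeformedComplex} together with compatibility of $\Flag^\delta$ with $\AA^1$-invariance). The graded pieces identify, as in \ref{num:delta_niveau_ssp}, with Gersten-type complexes concentrated on the $p$-th $\delta$-skeleton, so the assignment $K_* \mapsto (F^p_\delta K_*)_{p \in \ZZ}$ lands in $F\Der_{\AA^1}(\Sm_S,R)$ via Theorem \ref{thm_equivalence_sheaves_smsp_and_sm}, and is visibly functorial in $K_*$. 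This defines the sought functor $\Xi$; by construction, composition with the forgetful functor to $\Der_{\AA^1}(\Sm_S,R)$ recovers ${\iota_{sm}}_* \circ {\mathfrak{I}^{sp}}$ equipped with its usual $\delta$-coniveau filtration.

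The main obstacle is the second step: verifying at the derived (rather than homotopy) level that the specialization morphisms of $\Smsp_S$ preserve the coniveau filtration, and that they do so coherently with the relations \ref{itm:Rsp1}--\ref{itm:Rsp3}. This is where the purity identification of $s_X^t$ with a residue, combined with the phantom map $\sigma_{X,Z}$ on flag categories, is essential.
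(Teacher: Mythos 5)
Your proposal is a faithful and substantially more detailed elaboration of the paper's one-sentence proof, which simply invokes the functoriality of $\delta$-flags with respect to smooth maps and specializations from \ref{num:flags_variance}. You identify the correct mechanism (the homotopy-fiber description of $F^p_\delta$, the flat functoriality of $\Flag^\delta$ for smooth pullbacks, and the phantom map $\sigma_{X,Z}$ for specializations), and you rightly locate the delicate point in checking coherence with \ref{itm:Rsp1}--\ref{itm:Rsp3} at the derived level; the paper does not spell this out either. One small correction to the paper you quietly made: the specialization composite $s_X^t$ in \ref{thm_equivalence_sheaves_smsp_and_sm}'s surrounding discussion should indeed have source $K_*(X-Z)$ rather than $K_*(X)$ (since $t$ is a unit only on $X-Z$), as you write.
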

\begin{proof}
	One uses the fact that $\delta$-flags are functorial with respect to smooth maps,
	and to specializations as explained in \Cref{num:flags_variance}.
\end{proof}

\begin{paragr}
	It follows from Theorem \ref{thm_equivalence_sheaves_smsp_and_sm} and Theorem \ref{thm_extended_filtration_functorial} that we have a canonical functor:
	$$
	\Xi \circ {\mathfrak{J}_{sp}} : \Der_{\AA^1}(\Sm_S,R) \rightarrow F\Der_{\AA^1}(\Sm_S,R).
	$$
	In particular, one can even produce a filtered complex $(K_*,F)$ of $\ZZ$-graded sheaves on $\Sm_S$
	whose restriction to $\Smsm_S$ is the usual $\delta$-coniveau filtration.
	
\end{paragr}

\begin{paragr}
	Since we consider that $\Der(\Sh(\Sm_S,R)^\ZZ)$ is equipped with its canonical $t$-structure, its associated filtered derived category $F\Der(\Sh(\Sm_S,R)^\ZZ)$ as a naturally
	associated $t$-structure, whose heart is equivalent to $\Comp(\Sh(\Sm_S,R)^\ZZ)$.
	The exact same result holds if one replaces $\Sm_S$ with $\Smsp_S$.
	
\end{paragr}

\begin{cor}
	Let $\mathrm K$ be an object of the heart of $\Der_{\AA^1}(\Sm_S,R)$.
	Then the filtered complex $(K_*,F)$ is in the heart of $F\Der(\Sh(\Smsp_S,R)^\ZZ)$.
	In particular, Beilinson's construction gives a functorial isomorphism:
	$$
	K_* \rightarrow E^*(K^*,F)
	$$
\end{cor}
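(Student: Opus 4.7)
The plan is to unpack what ``heart of $\Der_{\AA^1}(\Sm_S,R)$'' means in terms of the coniveau spectral sequence, identify the associated graded of $(K_*,F)$ with a Gersten-type complex, and then apply the filtered-derived-category machinery (Theorem \ref{thm:BBD}) to conclude.

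First, I would recall that the $\delta$-coniveau filtration $F$ on $K_*$, restricted to any smooth $S$-scheme $X$, is defined so that the associated spectral sequence of the filtered complex $F^\bullet K_*(X)$ coincides with the $\delta$-niveau spectral sequence
$$
{}^{\delta}E^1_{p,q}(X,K) = \bigoplus_{x \in X_{(\delta=p)}} \hat K_{p+q}(x/S) \;\Longrightarrow\; K_{p+q}(X/S),
$$
whose $E_1$-terms are the graded pieces $\gr^p_F K_*(X)$ (up to a shift, as in \Cref{num:delta_niveau_ssp}). By Theorem \ref{thm_Rost_transform}, the line $q=0$ of this spectral sequence is exactly the Rost--Schmid complex ${}^{\delta}C_*(X,\hat K,\cO_X)$ of the homological MW-cycle module $\hat K = \dR(K)$, while the lines $q \ne 0$ correspond to the fiber $\delta$-homology groups $\hat H^\delta_q(K)$.

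Second, I would translate the hypothesis that $K$ lies in the heart of the perverse $\delta$-homotopy $t$-structure. By \Cref{df:perverse_htp_t}, this means that for every $X/S$ essentially of finite type, every virtual bundle $v$ over $X$, and every $(p,q) \in \ZZ^2$, the twisted bivariant group $K_{p+q}(X/S,v)$ vanishes outside the diagonal $p = \delta(X)$; in particular $\hat K_{p+q}(x/S) = 0$ whenever $q \ne 0$ for $x$ a point. Consequently the $E_1$-page of the $\delta$-niveau spectral sequence is concentrated on the line $q=0$, so the graded piece $\gr^p_F K_*(X)$ has cohomology concentrated in degree $p$. By the very definition of the canonical $t$-structure on the filtered derived category (``$(K,F)$ lies in degree $i$ iff $\gr^p_F K$ lies in degree $p+i$ for all $p$''), this is the statement that $(K_*,F)$ belongs to the heart of $F\Der(\Sh(\Smsp_S,R)^{\ZZ})$.

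Third, having placed $(K_*,F)$ in the heart, I invoke Theorem \ref{thm:BBD}: the functors $\pi_{nv}$ and $E^*$ restrict to mutually inverse equivalences between $\Comp(\A)$ and the heart of the filtered derived category. Applied to $(K_*,F)$, this yields a canonical, functorial isomorphism
$$
K_* \;\xrightarrow{\ \sim\ }\; \pi_{nv} \circ E^*(K_*,F)
$$
in the filtered derived category, whose image under the forgetful functor is the required isomorphism $K_* \xrightarrow{\sim} E^*(K_*,F)$. Functoriality in $K$ follows from the functoriality of $\Xi \circ \mathfrak{J}_{sp}$ established in \Cref{thm_extended_filtration_functorial} combined with the naturality in Theorem \ref{thm:BBD}.

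The main obstacle is the second step: one has to verify carefully that the extended filtration $F$ on $K_*$, built out of the specialization structure via $\mathfrak{J}_{sp}$ and the functoriality of $\delta$-flags (\Cref{num:flags_variance}), really produces the $\delta$-niveau spectral sequence on each section $K_*(X)$, and moreover that the identification of its $E_1$-page with the Rost--Schmid complex is compatible with the Gersten/Cousin presentation used to define the heart of $\DF$. A subsidiary issue is that Theorem \ref{thm:BBD} as stated applies to bounded filtered complexes, and here $F$ is typically unbounded below (indexed by $-\delta(X) \leq p$ only when $X$ has finite $\delta$-dimension); one has to check that the convergence argument in the proof of \Cref{thm:BBD}, which relies on Illusie's spectral sequence being located in the region $p,q \geq 0$, still applies because the $E_1$-page is concentrated on a single line and thus all higher differentials vanish after at most one step.
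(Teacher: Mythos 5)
Your proposal reconstructs the intended argument correctly; the paper states this corollary without explicit proof, and the chain you spell out — (i) identify $\gr^p_F K_*(X)$, via the $\delta$-niveau spectral sequence, with a complex whose cohomology is the Rost--Schmid complex of $\dR(K)$; (ii) observe that membership of $K$ in the $t_\delta$-heart forces the $E_1$-page to be concentrated on the line $q=0$, which by the definition of the canonical $t$-structure on the filtered derived category is exactly the heart condition; (iii) invoke Theorem~\ref{thm:BBD} — is what the surrounding material is set up to deliver.

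Two small points worth tightening. First, your phrase that the $E_1$-terms ``are the graded pieces $\gr^p_F K_*(X)$'' should read that they are the \emph{cohomologies} of the graded pieces, $E_1^{p,q}=H^{p+q}(\gr^p_F)$; this is what one actually compares with the heart condition ``$\gr^p_F(K)$ concentrated in degree $p+i$.'' Second, the translation ``$K$ in the heart $\Rightarrow$ bivariant groups vanish outside $p=\delta(X)$'' is not read directly off Definition~\ref{df:perverse_htp_t}: non-$t_\delta$-positivity gives one vanishing direction, but non-$t_\delta$-negativity is a generation condition, and to convert the conjunction into ``$\hat H_q^\delta(K)=0$ for $q\neq 0$'' one needs the non-degeneracy statement for $t_\delta$ from \cite{BD1} (which is precisely why the hypotheses (Resol) and homotopical compatibility appear in Theorem~\ref{thm_main_adjunction_theorem}(2)). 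You reach the right conclusion, but it is worth flagging that this is a cited input, not a formal consequence of the definition. Your caution about the unboundedness of the filtration and Theorem~\ref{thm:BBD} is reasonable; it is defused because each $X$ has finite $\delta$-dimension, so the filtration on $K_*(X)$ is bounded, and the Illusie spectral sequence argument in the proof of Theorem~\ref{thm:BBD} is written to apply in that range.
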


\begin{paragr}
	\label{end_of_proof_main_theorem}
	\begin{proof}[End of proof of Theorem \ref{thm_main_adjunction_theorem}]
		\label{end_of_proof_main_theorem2}
		Keeping the previous notations, we see that $E^*(K^*,F)$ can be identified with ${}^\delta C^*(\dR(\mathrm K))$ (at least when restricted to $\Smsp_S$).
		Thus above map gives the desired adjunction map:
		$$
		\mathrm K \rightarrow \dH(\dR(\mathrm K)).
		$$
	\end{proof}	
\end{paragr}

\section{Cohomological MW-modules}

\label{sec:MWmodcoh}

The notion of \emph{Milnor-Witt cycle modules} is introduced by the second-named author in \cite{Feld1} over a perfect field which, after slight changes, can be generalized to more general base schemes (see \cite{BHP22} for the case of a regular base scheme). In the spirit of the general formalism of \emph{bivariant theories} (see \cite{Deg16}), we call these objects \emph{cohomological Milnor-Witt cycle modules}, to distinguish them from the notion introduced in Section~\ref{sec:homMW} above, considered as the \emph{homological} variants. Our main result on cohomological Milnor-Witt cycle modules will be a duality theorem relating these objects to their homological counterparts, which hold for possibly singular schemes, see Theorems~\ref{eq:cohdualori} and~\ref{thm:eqpin} below.

\subsection{Cohomological Milnor-Witt cycle premodules over a base}

\begin{df} 
	\label{def:cohMW}
	\begin{enumerate}

		\item
		If $S$ is a scheme, call an \textbf{$S$-field} the spectrum of a field essentially of finite type over $S$, and a \textbf{morphism of $S$-fields} an $S$-morphism between the underlying schemes. The collection of $S$-fields together with morphisms of $S$-fields defines a category which we denote by $\mathcal{F}_S$. We say that a morphism of $S$-fields is \textbf{finite} (resp. \textbf{separable}) if the underlying field extension is finite (resp. separable). 
		
		In what follows, we will denote for example $f:\Spec F\to\Spec E$ a morphism of $S$-fields, and $\phi:E\to F$ the underlying field extension.

		An \textbf{$S$-valuation} on an $S$-field $\operatorname{Spec}F$ is a discrete valuation $v$ on $F$ such that $\operatorname{Im}(\mathcal{O}(S)\to F)\subset\mathcal{O}_v$. We denote by $\kappa(v)$ the residue field, $\mathfrak{m}_v$ the valuation ideal and $N_v=\mathfrak{m}/\mathfrak{m}^2$.

		\item
		Let $S$ be a scheme and let $R$ be a commutative ring with unit. An \textbf{$R$-linear cohomological Milnor-Witt cycle premodule} over $S$ is a functor from $\mathcal{F}_S$ to the category of $\ZZ$-graded $R$-modules
		\begin{align}
			\begin{split}
				M:(\mathcal{F}_S)^{op}&\to \operatorname{Mod}_R^{\ZZ}\\
				\operatorname{Spec}E&\mapsto M(E)
			\end{split}
		\end{align}
		for which we denote by $M_n(E)$ the $n$-the graded piece, together with the following functorialities and relations:
		
		\noindent\textbf{Functorialities:}
		\begin{description}
			\item [\namedlabel{itm:D1}{(D1)}] 
			For a morphism of $S$-fields $f:\operatorname{Spec}F\to \operatorname{Spec}E$ or (equivalently) $\phi:E \to F$, a map of degree $0$
			\begin{align}
				\label{eq:D1}
				f^*=\phi_*:M(E)\to M(F);
			\end{align}
			
			\item [\namedlabel{itm:D3}{(D3)}] 
			For an $S$-field $\operatorname{Spec}E$ and an element $x\in \kMW_m(E)$, a map of degree $m$
			\begin{align}
				\gamma_x:M(E)\to M(E)
			\end{align}
			making $M(E)$ a left module over the lax monoidal functor $\kMW_?(E)$ (i.e. we have $\gamma_x\circ\gamma_y=\gamma_{x\cdot y}$ and $\gamma_1=\Id$).
		\end{description}
		
		Similar to the construction in~\eqref{eq:df_twists_preMW}, the axiom~\ref{itm:D3} allows us to define, for every $S$-field $\operatorname{Spec}E$ and every $1$-dimensional $E$-vector space $\cL$, a graded $R$-module
		\begin{align}
			\label{eq:cohmorel}
			M(E,\cL):=M(E)\otimes_{R[E^\times]}R[\cL^\times]
		\end{align}
		where $R[\cL^\times]$ is the free $R$-module generated by the nonzero elements of $\cL$, and the group algebra $R[E^\times]$ acts on $M(E)$ via $u\mapsto\langle u\rangle$ thanks to~\ref{itm:D3}. 
		\begin{description}
			\item [\namedlabel{itm:D2}{(D2)}] 
			For a finite morphism of $S$-fields $f:\operatorname{Spec}F\to \operatorname{Spec}E$ or $\phi:E \to F$, a map of degree $0$ 
			\begin{align}
				\label{eq:D2}
				f_!=\phi^!:M(F,\detcotgb_{F/E}^\vee)\to M(E);
			\end{align}
			
			\item [\namedlabel{itm:D4}{(D4)}] 
			For an $S$-field $\operatorname{Spec}E$ and an $S$-valuation $v$ on $E$, a map of degree $-1$
			\begin{align}
				\label{eq:D4}
				\partial_v:M(E)\to M(\kappa(v), N_v^\vee).
			\end{align}
			
		\end{description}
		
		\noindent\textbf{Relations:}
		\begin{description}
			\item [\namedlabel{itm:R1a}{(R1a)}] 
			The map $f^*$ is compatible with compositions.
			
			\item [\namedlabel{itm:R1b}{(R1b)}] 
			The map $f_!$ is compatible with compositions.
			
			\item [\namedlabel{itm:R1c}{(R1c)}] 
			Let $f:\operatorname{Spec}F\to \operatorname{Spec}E$ be a finite morphism of $S$-fields, let $g:\operatorname{Spec}L\to \operatorname{Spec}E$ be a separable morphism of $S$-fields, and let $R$ be the artinian ring $F\otimes_E L$. For each prime ideal $p\in \Spec R$, let $f_p:\operatorname{Spec}R/p\to \operatorname{Spec}L$ and $g_p:\operatorname{Spec}R/p\to \operatorname{Spec}F$ be the morphisms of $S$-fields induced by $f$ and $g$. Then
			\begin{align}
				g^*\circ f_!
				=
				\sum_{p\in \Spec R} (f_p)_!\circ (g_p)^*.
			\end{align}

			\item [\namedlabel{itm:R2}{(R2)}]

			Let $f:\operatorname{Spec}F\to \operatorname{Spec}E$ be a morphism of $S$-fields. Let $x\in \kMW_m(E)$ and $y\in \kMW_l(F)$. Then 
			\begin{description}
				\item [\namedlabel{itm:R2a}{(R2a)}] 
				We have
				\begin{align}
					f^*\circ\gamma_x
					=
					\gamma_{\phi_*(x)}\circ f^*.
				\end{align}
				\item [\namedlabel{itm:R2b}{(R2b)}] 
				If $f$ is finite, then 
				\begin{align}
					f_!\circ\gamma_{\phi_*(x)}
					=
					\gamma_x\circ f_!.
				\end{align}
				\item [\namedlabel{itm:R2c}{(R2c)}] 
				If $f$ is finite, then 
				\begin{align}
					f_!\circ\gamma_y\circ f^*
					=
					\gamma_{f_!(y)}.
				\end{align}
			\end{description}
			
			\item [\namedlabel{itm:R3a}{(R3a)}] 
			Let $f:\operatorname{Spec}F\to \operatorname{Spec}E$ be a morphism of $S$-fields. Let $w$ be an $S$-valuation on $F$ which restricts to a non-trivial valuation $v$ on $E$ of ramification index $1$, 
			and denote by $\overline{f}:\kappa(v)\to\kappa(w)$ the induced morphism of $S$-fields. Then 
			\begin{align}
				\partial_w\circ f^*=
				\overline{f}^*\circ\partial_v.
			\end{align}

			\item [\namedlabel{itm:R3b}{(R3b)}] 
			Let $f:\operatorname{Spec}F\to \operatorname{Spec}E$ be a finite morphism of $S$-fields, and let $v$ be an $S$-valuation on $E$. 
			For each extension $w$ of $v$, denote by $\phi_w:\kappa(w)\to\kappa(v)$ the induced morphism of $S$-fields. Then 
			\begin{align}
				\partial_v\circ f_!
				=
				\sum_w(f_w)_!\circ\partial_w.
			\end{align}
			
			\item [\namedlabel{itm:R3c}{(R3c)}] 
			Let $f:\operatorname{Spec}F\to \operatorname{Spec}E$ be a morphism of $S$-fields, and let $w$ be an $S$-valuation on $F$ which restricts to the trivial valuation on $E$. Then
			\begin{align}
				\partial_w\circ f^*=0.
			\end{align}
			
			\item [\namedlabel{itm:R3d}{(R3d)}] 
			Let $\phi$ and $w$ be as in~\ref{itm:R3c} and denote by $\overline{f}:\kappa(w)\to\kappa(v)$ be the induced morphism of $S$-fields. Then for any uniformizer $\pi$ of $w$, we have
			\begin{align}
				\partial_w\circ\gamma_{[\pi]}\circ f^*
				=
				\overline{f}^*.
			\end{align}
			
			\item [\namedlabel{itm:R3e}{(R3e)}] 
			Let $\operatorname{Spec}E$ be an $S$-field, $v$ be an $S$-valuation on $E$ and $u$ be a unit of $v$. Then
			\begin{align}
				\partial_v\circ\gamma_{[u]}=\gamma_{-[\bar{u}]}\circ\partial_v;
			\end{align}
			\begin{align}
				\partial_v\circ\gamma_{\eta}=\gamma_{-\eta}\circ\partial_v.
			\end{align}

		\end{description}
		
	\end{enumerate}

\end{df}

\begin{rem}
	\begin{enumerate}
		\item
		Note that our axioms are close to the ones in Section~\ref{sec:homMW} above and are different from the formalism in \cite{Feld1} and \cite{BHP22}, while remaining basically equivalent to the latter. The main point is that under the axiom~\ref{itm:D3}, twists by line bundles can be defined using the twist \emph{\`a la Morel} in~\eqref{eq:cohmorel} and are consequences of the other axioms.
		\item
		Note that we distinguish the notion of \emph{$S$-fields} from the one of \emph{$S$-points}: the latter refers to points on (the underlying topological space of) $S$, which are particular cases of $S$-fields.
		\item
		For simplicity, in what follows we omit the term ``$R$-linear'' in most cases.
	\end{enumerate}
\end{rem}

\begin{paragr}
	\label{num:difres}
	As in the homological case, we would like to promote \emph{MW-cycle premodules} into \emph{MW-cycle modules} (see Definition~\ref{df:homcycmod}). The main obstruction for doing so in the cohomological setting lies in the difference between the axioms for residue maps: indeed, the axiom~\ref{itm:D4} only requires residues for (discrete) valuation rings, while the axiom~\ref{itm:D4'} requires residues for all $1$-dimensional local domains. In some sense, this difficulty is similar to the problem of defining direct images in bivariant theory (see \cite{DJK}). Before dealing with the general case, we start with a simple case: for cohomological MW-cycle modules that are \emph{oriented}, the obstruction above disappears.
	
\end{paragr}

\subsection{Oriented Milnor-Witt cycle modules}

\begin{paragr}
	We start with the notion of \emph{oriented homological MW-cycle modules}:
\end{paragr}
\begin{df}

	Let $S$ be a scheme.
	We say that a homological MW-cycle module over $S$ is \textbf{oriented} if the action of $\eta \in \KMW_{-1}$ is trivial. Equivalently, an oriented homological MW-cycle module has the same axioms as a homological Milnor-Witt cycle module, except that the Milnor-Witt K-theory $\KMW$-action is replaced by a Milnor K-theory $\operatorname{K}^M$-action. We also call such an object a \textbf{homological Milnor cycle module} over $S$.
	
	We denote by $\CatM_S$ the category of homological Milnor cycle modules over $S$, considered as a full subcategory of the category of homological MW-cycle module $\CatMW_S$.

\end{df}

\begin{paragr}
	It turns out that property of being oriented is related to orientations in motivic homotopy (see \cite{DegliseOri}):
\end{paragr}
\begin{prop}
	\begin{enumerate}
		\item
		If $\hM$ is a homological Milnor cycle module, then the object $\dH(\hM)\in\DA(S)$ has a canonical $\operatorname{GL}$-orientation.
		\item
		Assume further that one of the following conditions is satisfied:
		\begin{itemize}
			\item
			the scheme $S$ has equal exponential characteristic $p$, and the coefficient ring $R$ is an $\mathbb{Z}[1/p]$-algebra;

			\item
			the coefficient ring $R$ is a $\mathbb{Q}$-algebra.
		\end{itemize}

		Then the object $\dH(\hM)\in\DA(S)$ lives in $\DM(S,R)$.
	\end{enumerate}
\end{prop}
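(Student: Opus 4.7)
The plan for part (1) is to promote the $\mathbf{SL}$-orientation of Proposition~\ref{prop_HM_SL_oriented} to a $\operatorname{GL}$-orientation by showing that, in the oriented setting, the line-bundle twists on $\hM$ are canonically trivial. The key computation uses Morel's relation $\langle u\rangle = 1+\eta\,[u]$ in $\KMW_0(E)$, which gives $\gamma_{\langle u\rangle}=\Id + \gamma_\eta\circ\gamma_{[u]}$; since $\gamma_\eta = 0$ by hypothesis, the action of $\langle u\rangle$ on $\hM(E)$ is trivial. Unwinding the definition~\eqref{eq:df_twists_preMW} of the twist $\hM_n(E,\cL)$, this means the map $m\mapsto m\otimes\ell$ from $\hM_n(E)$ to $\hM_n(E,\cL)$ is independent of the chosen nonzero vector $\ell\in\cL^\times$ and is a canonical isomorphism. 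Transporting this through the identification of Proposition~\ref{Prop_cohomology_coincide_for_smooth} yields canonical Thom isomorphisms depending only on the rank of the vector bundle and multiplicative with respect to direct sums, and hence a $\operatorname{GL}$-orientation on $\dH(\hM)$.

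\medskip

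For part (2), the plan is to invoke the known characterization of $\DM$ as the full subcategory of $\operatorname{GL}$-oriented (equivalently, $\MGL$-modular) objects of $\DA$, valid under either of the stated hypotheses. Rationally, this is Morel's decomposition $\DA(S,\QQ)\simeq\DM(S,\QQ)\oplus\DA(S,\QQ)^{-}$, the $\operatorname{GL}$-oriented part being exactly the plus part $\DM(S,\QQ)$. In the equicharacteristic case after inverting the exponential characteristic $p$, the same conclusion follows from the equivalence $\DM(S,\ZZ[1/p])\simeq\MGL_{\ZZ[1/p]}\text{-Mod}$ established in~\cite{CD5}, together with the fact that any $\operatorname{GL}$-oriented spectrum acquires a canonical $\MGL$-module structure. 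Applying this equivalence to $\dH(\hM)$, equipped with the $\operatorname{GL}$-orientation built in part (1), yields a canonical lift to an object of $\DM(S,R)$.

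\medskip

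The main obstacle will be to make the passage of part (2) fully intrinsic: one must verify that the $\MGL$-module (resp. rational Beilinson) structure on $\dH(\hM)$ produced from the orientation agrees with the structure implicit in the Rost-Schmid complex, so that the resulting $\DM$-object truly recovers ${}^\delta C^*(X,\hM,\cO_X)$ on cohomology. In practice, this should reduce to matching the Gysin and residue morphisms constructed in Sections~\ref{sec:Cycle_to_homotopy_modules} and~\ref{sec:Rost_transform} with those coming from the six-functor formalism on $\DM$, compatibility which is ultimately governed by the oriented forms of axioms~\ref{itm:R1c}, \ref{itm:R3b} and~\ref{itm:R3d} (where the cotangent line bundle twists disappear thanks to part (1)).
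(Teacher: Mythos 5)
The paper states this proposition without a proof, so there is no argument to compare against; I will evaluate yours on its merits.

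Part (1) is correct. Morel's identity $\langle u\rangle = 1 + \eta[u]$ in $\kMW_0(E)$, combined with $\gamma_\eta = 0$, gives $\gamma_{\langle u\rangle} = \Id$, so the $R[E^\times]$-action on $\hM_n(E)$ in the definition $\hM_n(E,\cL) = \hM_n(E)\otimes_{R[E^\times]}R[\cL^\times]$ is trivial and $m\mapsto m\otimes\ell$ is canonically independent of $\ell\in\cL^\times$; this is precisely the observation the paper itself records in the cohomological setting around~\eqref{eq:oritriv}. Transporting through Proposition~\ref{Prop_cohomology_coincide_for_smooth} then makes the Thom isomorphism depend only on the rank, upgrading the $\mathbf{SL}$-orientation of Proposition~\ref{prop_HM_SL_oriented} to a $\operatorname{GL}$-orientation.

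Part (2) is sound in outline, but two steps need tightening. First, a $\operatorname{GL}$-orientation is a \emph{structure}, not a \emph{property}, so "the full subcategory of $\operatorname{GL}$-oriented objects" is not well-defined as stated; the property you actually want rationally is that $\eta$ acts by zero on $\dH(\hM)$ — immediate from $\gamma_\eta = 0$ and Proposition~\ref{Prop_cohomology_coincide_for_smooth} — which forces $\langle -1\rangle = 1 + \eta[-1] = 1$ and places $\dH(\hM)$ in the plus part, identified with $\DM(S,\QQ)$. Second, the assertion that any $\operatorname{GL}$-oriented spectrum acquires a canonical $\MGL$-module structure is immediate only for $\mathbf{E}_\infty$-ring spectra (via the universal property of $\MGL$), and $\dH(\hM)$ is not one; for a non-ring spectrum one needs a genuine coherence argument, or a different characterization of the essential image of $\DM(S,\ZZ[1/p])\hookrightarrow\DA(S,\ZZ[1/p])$ (for instance effectivity plus $\eta$-vanishing). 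Finally, since that inclusion is fully faithful under the stated hypotheses, "living in $\DM$" is a property of the object $\dH(\hM)$ — once established, there is no extra data to reconcile with the Rost--Schmid complex — so your "main obstacle" paragraph overstates the difficulty.
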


\begin{paragr}
	We now turn to the cohomological case and begin with MW-cycle premodules.
	Similar to the homological case, we say that a cohomological MW-cycle premodule over $S$ is \textbf{oriented} if the action of $\eta \in \KMW_{-1}$ is trivial.
	
\end{paragr}

\begin{paragr}
	Here is an important property of oriented cohomological MW-cycle premodules, which also holds for the homological counterparts. Let $M$ be a cohomological MW-cycle premodule over $S$. If $\operatorname{Spec}E$ is an $S$-field and $\cL$ is a $1$-dimensional $E$-vector space, then any non-zero element $a\in \cL^\times$ induces an isomorphism $E^\times\simeq \cL^\times$, which induces an isomorphism
	\begin{align}
		\label{eq:oritriv}
		M(E)\simeq M(E,\cL)
	\end{align}
	which a priori depends on $a$. But if $M$ is further assumed oriented, since the action of the Milnor-Witt K-theory $\kMW_*$ factors through Milnor K-theory $\operatorname{K}^M_*$, the action of the group algebra $R[E^\times]$ on $M(E)$ factors through the action of $R$. That is, for any element $u\in E^\times$, the symbol $\langle u\rangle$ acts trivially on $M(E)$. It follows that the isomorphism~\eqref{eq:oritriv} \emph{does not depend} on the choice on the non-zero element $a\in \cL^\times$. This extra functoriality of the isomorphism~\eqref{eq:oritriv} gives a trivialization of all twists by line bundles in a way independent of the non-zero section, and enables us to rewrite two of the functorialities of $M$ as follows:
	\begin{description}
		\item [\namedlabel{itm:d2}{(d2)}] 
		For a finite morphism of $S$-fields $f:\operatorname{Spec}F\to \operatorname{Spec}E$, we have a map a map of degree $0$ 
		\begin{align}
			\label{eq:D2ori}
			f_!:M(F)\to M(E);
		\end{align}
		
		\item [\namedlabel{d4}{(d4)}] 
		For an $S$-field $\operatorname{Spec}E$ and an $S$-valuation $v$ on $E$, we have a map of degree $-1$
		\begin{align}
			\label{eq:D4ori}
			\partial_v:M(E)\to M(\kappa(v)).
		\end{align}
	\end{description}
	One can check that all the relations~\ref{itm:R1a} though~\ref{itm:R3e} continue to hold with the new functorialities~\eqref{eq:D2ori} and~\eqref{eq:D4ori}.

\end{paragr}

\begin{paragr}
	With the twists disappeared, we are now ready to define the residue maps. Let $S$ be an excellent scheme and let $M$ be an oriented cohomological MW-cycle premodule over $S$.
	\begin{enumerate}
		\item
		Let $\mathcal{O}$ be a $1$-dimensional local domain which is essentially of finite type over $S$, with fraction field $F$ and residue field $\kappa$. Then $\mathcal{O}$ is an excellent ring. Let $A$ be the integral closure of $\mathcal{O}$, which is regular, semi-local and finite over $\mathcal{O}$. Let $\{z_1,\cdots,z_n\}$ be the closed points of $A$, which correspond to valuations on $F$ with residue fields $\kappa(z_1),\cdots,\kappa(z_n)$ (indeed, the normalization morphism is finite because $S$ is excellent). Denote by $\phi_i:\operatorname{Spec}\kappa(z_i)\to\operatorname{Spec}\kappa$ the induced finite morphism of $S$-fields. We then define a map
		\begin{align}
			\label{eq:orires}
			\partial_{\mathcal{O}}:
			M_n(F)
			\underset{\eqref{eq:D4ori}}{\xrightarrow{(\partial_{z_i})_{i=1}^n}}
			\oplus_{i=1}^nM_{n-1}(\kappa(z_i)) 
			\underset{\eqref{eq:D2ori}}{\xrightarrow{\sum_{i=1}^n\phi_{i}^*}}
			M_{n-1}(\kappa).
		\end{align}
		
		\item
		Let $X$ be an $S$-scheme essentially of finite type, and let $x$ and $y$ be two points on $X$. We define a map
		\begin{align}
			\label{eq:oriress}
			\partial^x_y:M_*(x) \to M_{*-2}(y)
		\end{align}
		as follows: let $Z$ be the reduced Zariski closure of $x$ in $X$, and if $y\in Z^{(1)}$, then the map~\eqref{eq:oriress} is defined as the map $\partial_{\mathcal{O}_{Z,y}}$ in~\eqref{eq:orires} associated to the $1$-dimensional local domain $\mathcal{O}_{Z,y}$; otherwise we put $\partial^x_y=0$.
	\end{enumerate}

\end{paragr}

\begin{df}
	\label{def:cohori}
	Let $S$ be an excellent scheme. An \textbf{oriented cohomological Milnor-Witt cycle module} over $S$ is an oriented cohomological Milnor-Witt cycle premodule which in addition satisfies the conditions \ref{itm:fd} and \ref{itm:c}
	\begin{description}
		\item [\namedlabel{itm:fd}{(fd)}] {\sc Finite support of divisors.} Let $X$ be an irreducible normal $S$-scheme with generic point $\xi$ and let $\rho$ be an element of $M(\xi)$. Then $\partial^{\xi}_x(\rho)=0$ for all but finitely many $x\in X^{(1)}$.

		\item [\namedlabel{itm:c}{(c)}] {\sc Closedness.} Let $X$ be a local, integral $S$-scheme of dimension 2, with generic point $\xi$ and closed point $x_0$. Then
		\begin{align}
			0
			=
			\sum_{x\in X^{(1)}} \partial^x_{x_0} \circ \partial^{\xi}_x: 
			M_*(\xi)
			\to
			M_{*-1}(x_0).
		\end{align}

	\end{description}
	We also call such an object a \textbf{cohomological Milnor cycle module} over $S$. 
	We denote by $\CoCatM_S$ the category of cohomological Milnor cycle modules over $S$. 
	
\end{df}
Note that the axioms for cohomological Milnor cycle modules are quite close to Rost's original definition of cycle modules (see \cite{Rost96}).

\begin{ex}\label{ex:KM_cycle_module}
	According to \cite{Kato86}, Milnor-Witt K-theory defines
	a cohomological Milnor cycle module over any excellent scheme $S$.
\end{ex}

\begin{thm}
	\label{eq:cohdualori}
	Let $S$ be an excellent scheme. 
	Given a cohomological Milnor cycle module $M$ over $S$, for every $S$-field $\operatorname{Spec}E$, we let
	\begin{align}
		D(M)(E,n)=M_n(E).
	\end{align}
	Reciprocally, given a homological Milnor cycle module $\mathcal{M}$ over $S$, we let 
	\begin{align}
		\mathcal{D}(\mathcal{M})_n(E)=\mathcal{M}(E,n).
	\end{align}
	Then these two constructions establish two functors that are equivalences of categories inverse to each other
	\begin{align}
		D:
		\CoCatM_S 
		\simeq 
		\CatM_S:\mathcal{D}.
	\end{align}
\end{thm}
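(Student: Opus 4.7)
The plan is to show that once one restricts to oriented (Milnor) cycle modules, the functors $D$ and $\mathcal{D}$ become mutually inverse simply by reorganizing the grading, because in the oriented case all twists by line bundles trivialize canonically. More precisely, if $M$ is an oriented (homological or cohomological) cycle module, $\Spec E$ is an $S$-field and $\cL$ is a $1$-dimensional $E$-vector space, then the action of $\langle u\rangle$ for $u\in E^\times$ on $M(E)$ is trivial because $\KMW_*$ acts via $\operatorname{K}^M_*$. The isomorphism $M(E)\otimes_{R[E^\times]} R[\cL^\times]\simeq M(E)$ induced by any $a\in \cL^\times$ is therefore independent of $a$, giving a canonical identification $M(E,\cL)\simeq M(E)$. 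After this identification, the axioms \ref{itm:D1'}, \ref{itm:D2'}, \ref{itm:D3'} on the homological side and \ref{itm:D1}, \ref{itm:d2}, \ref{itm:D3} on the cohomological side match literally (with the same arrow for field extensions and the same arrow in the opposite direction for finite extensions), and the same holds for the relations \ref{itm:R1a'}--\ref{itm:R2c'} vs \ref{itm:R1a}--\ref{itm:R2c}.

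The substance of the proof therefore lies in the residue data \ref{itm:D4'} vs \ref{itm:D4}/\ref{d4}. A homological MW-cycle module requires residues $\partial_\cO$ for every $1$-dimensional local domain $\cO$ essentially of finite type over $S$, whereas a cohomological oriented MW-cycle module only requires residues for $S$-valuations. The passage from one to the other is given by the formula \eqref{eq:orires}: namely, using that $S$ is excellent, the integral closure $A$ of $\cO$ is finite over $\cO$ and regular semi-local, so writing its closed points $z_1,\dots,z_n$ one sets $\partial_\cO=\sum_i (\phi_i)_!\circ \partial_{z_i}$ where $\phi_i:\Spec\kappa(z_i)\to\Spec\kappa$ is finite. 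I would first show that for a cohomological Milnor cycle module, formula~\eqref{eq:orires} defines a residue map satisfying all the rules \ref{itm:R3a'}--\ref{itm:R3e'}. The rules \ref{itm:R3c'}, \ref{itm:R3d'}, \ref{itm:R3e'} are immediate from their cohomological analogues; rule \ref{itm:R3a'} (no ramification) follows from \ref{itm:R3a} applied at each $z_i$; the most delicate point, rule \ref{itm:R3b'} for finite extensions, follows from \ref{itm:R3b} together with functoriality of the integral closure under finite extensions and the relation \ref{itm:R1c} controlling fiber products, by an argument essentially identical to \cite[Lem.~12.5]{Rost96}. Conversely, restricting $\partial_\cO$ of a homological module to DVRs gives back the required $\partial_v$, and the identity \eqref{eq:orires} is then forced by \ref{itm:R3b'}.

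Next I would compare the closedness conditions. The axiom \ref{itm:fd} (resp.~\ref{itm:FD'}) for points of $X^{(1)}$ corresponds, via \eqref{eq:orires} applied to the local rings $\cO_{Z,y}$ of the closure $Z=\overline{\{x\}}$, to the analogous finiteness in the homological case; the excellent hypothesis again guarantees the normalization is finite so that only finitely many points of $Z^{(1)}$ intervene. For the axiom \ref{itm:c}/\ref{itm:C'}, working on an integral $2$-dimensional local scheme $X$ with generic point $\xi$ and closed point $x_0$, one decomposes each composite $\partial^x_{x_0}\circ \partial^\xi_x$ according to the normalization of the intermediate local rings. The sum $\sum_{x\in X^{(1)}}$ (cohomological version) rearranges, via \ref{itm:R3b} and the transitivity of integral closures, to the sum $\sum_{x\in X_{(\delta=1)}}$ of the homological version, using once more that $S$ is excellent. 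This identification is what finally produces the equivalence of the two closedness axioms.

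The main obstacle will be the compatibility argument just sketched in the second paragraph, namely verifying that the residue $\partial_\cO$ built from DVRs via \eqref{eq:orires} satisfies the full list of relations \ref{itm:R3a'}--\ref{itm:R3e'}, and in particular that the resulting closedness \ref{itm:C'} is genuinely equivalent to \ref{itm:c}. Once these compatibilities are established, the assertion $D\circ\mathcal{D}\simeq\Id$ and $\mathcal{D}\circ D\simeq\Id$ is tautological: both functors act as the identity on the underlying graded $R$-modules $M(E)$, only relabeling the grading, and all functorial data and relations have been shown to correspond under this identity.
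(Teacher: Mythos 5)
Your proposal tracks the paper's own (one-line) proof closely: trivialize the twists in the oriented case, match \ref{itm:D1'}--\ref{itm:D3'} with \ref{itm:D1}, \ref{itm:D3}, \ref{itm:d2} and the R-rules literally, build \ref{itm:D4'} from \ref{d4} via the normalization formula~\eqref{eq:orires} (excellence ensuring finiteness), and check \ref{itm:FD'}, \ref{itm:C'} against \ref{itm:fd}, \ref{itm:c}. That is exactly the route the paper gestures at.

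The one step I would press you on is the claim, needed for $D\circ\mathcal{D}\simeq\Id$, that ``the identity~\eqref{eq:orires} is then forced by \ref{itm:R3b'}.'' As stated, \ref{itm:R3b'} concerns residues across a \emph{finite extension of fraction fields} $E\to F$, whereas \eqref{eq:orires} concerns the residue for a single non-normal $1$-dimensional local domain $\mathcal{O}$ whose normalization $A$ has the \emph{same} fraction field $F$; taking $E=F$ in \ref{itm:R3b'} is vacuous, so the rule does not directly yield the decomposition $\partial_{\mathcal{O}}=\sum_i(\phi_i)_!\circ\partial_{z_i}$. The argument you want is rather that $\partial_{\mathcal{O}}$ and the sum in~\eqref{eq:orires} both compute the differential of the cycle complex of $\Spec\mathcal{O}$ (directly, resp.\ after pushforward along the finite proper normalization $\nu:\Spec A\to\Spec\mathcal{O}$), so that Proposition~\ref{Prop4.6}(1), $d\circ\nu_*=\nu_*\circ d$, gives the identity. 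But note that the paper takes \ref{itm:D4'} as free data for \emph{every} $1$-dimensional local domain, imposes the R-rules only on DVRs, and the proof of Proposition~\ref{Prop4.6}(1) is ``same as Rost'' --- and Rost's proof works precisely because in his setup the non-DVR residue is \emph{defined} by the normalization formula. So you should either invoke Proposition~\ref{Prop4.6}(1) explicitly and confirm its proof does not already presuppose~\eqref{eq:orires}, or state that the homological definition implicitly demands $\partial_{\mathcal{O}}$ be determined by its DVR restrictions. Without one of these, $D\circ\mathcal{D}\simeq\Id$ is not established. (This is a gap in the paper's exposition as much as in yours, but it is the genuine content of the theorem and needs to be said.)
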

\begin{proof}
	As all the twists have disappeared, the proof is straightforward with the functorialities~\eqref{eq:D2ori} and~\eqref{eq:orires}. We leave the details to the reader (see also the proof of Theorem \ref{thm:eqpin}).
	
\end{proof}

\subsection{Pinning and duality}

\begin{paragr}
	We now deal with cohomological MW-cycle modules that are not necessarily oriented, in which case we can no longer expect to exploit the isomorphism~\eqref{eq:oritriv} to reasonably define residue maps mentioned in~\ref{num:difres}. We circumvent the difficulty by introducing a new structure on schemes called a \emph{pinning}.
	
\end{paragr}

\begin{paragr}
	Let $S$ be a (possibly singular) scheme and consider $\mathcal{F}_S$ the category of $S$-fields.

	Similar to the pseudo-functor $\virt$ in Definition~\ref{df:virt}, we have a pseudo-functor
	\begin{align}
		\begin{split}
			\lb:(\mathcal{F}_S)^{op}&\to \operatorname{Spaces}\\
			x&\mapsto \lb(x)
		\end{split}
	\end{align}
	where $\lb(x)$ is the space of line bundles on $x$. However, instead of the usual pull-back functoriality of line bundles, we consider the exceptional functor between line bundles: if $f:\operatorname{Spec}F\to \operatorname{Spec}E$ is a morphism of $S$-fields, we define
	\begin{align}
		\begin{split}
			f^!:\lb(E)&\to\lb(F)\\
			v&\mapsto f^*v\otimes\detcotgb_{F/E}^\vee.
		\end{split}
	\end{align}

	With this new functoriality we obtain another pseudo-functor
	\begin{align}
		\begin{split}
			\lb^!:(\mathcal{F}_S)^{op}&\to \operatorname{Spaces}\\
			x&\mapsto \lb(x).
		\end{split}
	\end{align}

\end{paragr}
\begin{paragr}
	In order to deal with Milnor-Witt cycle modules, we need to further take into account discrete valuation rings in addition to fields. Recall Definition \ref{S_DVR}:
	
\end{paragr}
\begin{df}
	\begin{enumerate}
		\item
		We call an \textbf{$S$-DVR} the spectrum of a discrete valuation ring which is essentially of finite type over $S$. 
		\item
		We denote by $\mathcal{G}_S$ the full subcategory of the category of $S$-schemes essentially of finite type, whose
		objects consist of $S$-fields and $S$-DVRs. 
	\end{enumerate}
\end{df}
\begin{rem}
	The category $\mathcal{G}_S$ encodes many types of relations: field morphisms, embeddings of the generic or closed points into a DVR, extensions of valuations, etc.
\end{rem}

\begin{paragr}
	A morphism in $\mathcal{G}_S$ is in particular a morphism between regular schemes, and is therefore a perfect morphism (see \cite[tag 0685]{stacks_project}). For a morphism $f:Y\to X$ in $\mathcal{G}_S$ , we define
	\begin{align}
		\begin{split}
			f^!:\lb(X)&\to\lb(Y)\\
			v&\mapsto f^*v\otimes\detcotgb_f^\vee.
		\end{split}
	\end{align}

	We obtain an extension of the previous pseudo-functor
	\begin{align}
		\begin{split}
			\lb^!:(\mathcal{G}_S)^{op}&\to \operatorname{Spaces}\\
			X&\mapsto \lb(X).
		\end{split}
	\end{align}
	
\end{paragr}

\begin{df}
	\label{df:pinning}
	A \textbf{pinning} (or \textit{\'epinglage}) on a scheme $S$ is a Cartesian section of $\lb^!$. In other words, a pinning on $S$ is a natural transformation of pseudo-functors
	\begin{align}
		\label{eq:morpf}
		\lambda:*\to \lb^!
	\end{align}
	where $*$ denotes the constant functor $(\mathcal{G}_S)^{op}\to\operatorname{Spaces}$ sending every object in $\mathcal{G}_S$ to the one-point space.

\end{df}

\begin{paragr}
	\label{num:pincon}

	Concretely, a pinning $\lambda$ is the following data:
	\begin{enumerate}
		\item 
		for an $S$-field $\operatorname{Spec}E$, a line bundle $\lambda_E$ on $\operatorname{Spec}E$;
		\item
		for a morphism of $S$-fields $\phi:\operatorname{Spec}F\to \operatorname{Spec}E$, an isomorphism $\phi^*\lambda_E\otimes\detcotgb_{F/E}^\vee\simeq\lambda_F$;
		\item
		for an $S$-DVR $\operatorname{Spec}A$, a line bundle $\lambda_A$ on $\operatorname{Spec}A$, together with isomorphisms $ j^*\lambda_A\simeq\lambda_\eta$ and $i^*\lambda_A\otimes(\mathfrak{m}/\mathfrak{m}^2)\simeq\lambda_s$, where $\mathfrak{m}$ is the maximal ideal of $A$ and $i:s\to \operatorname{Spec}A$ and $j:\eta\to \operatorname{Spec}A$ are the inclusions of the closed point and the generic point;
		\item 
		for a morphism $\phi:\operatorname{Spec}A\to \operatorname{Spec}E$ where $\operatorname{Spec}A$ is an $S$-DVR and $\operatorname{Spec}E$ is an $S$-field, an isomorphism $\phi^*\lambda_E\otimes
		\detcotgb_{A/E}^\vee\simeq\lambda_A$;
		
		\item
		for an extension $\phi:\operatorname{Spec}B\to \operatorname{Spec}A$ of $S$-DVRs, an isomorphism $\phi^*\lambda_A\otimes \detcotgb_{B/A}^\vee\simeq \lambda_B$.

	\end{enumerate}
	There are natural compatibilities with the isomorphisms above, which we do not write down.

\end{paragr}

\begin{ex}
	\label{ex:dualpin}
	Let $\mathcal{K}$ be a (coherent) dualizing complex on $S$ (see \cite[Chapter V, Proposition 2.1]{HartRD}). For every object $f:X\to S$ in $\mathcal{G}_S$, the object $f^!\mathcal{K}$ is a dualizing complex on $S$, and since $X$ is a regular scheme, there is a line bundle $K_{(X)}$ on $X$ and an integer $\mu_K(X)$ such that
	\begin{align}
		f^!\mathcal{K}=K_{(X)}[-\mu_K(X)].
	\end{align}
	Then there is a pinning $\lambda$ on $S$ such that $\lambda_X=(K_{(X)}^\vee$. Note that when $X$ is a point of $S$, the map $X\mapsto\mu_K(X)$ defines a codimension function on $S$ (see \cite[Chapter V, §3.4, 7.1]{HartRD}).
	
	In particular, every regular scheme (or even Gorenstein scheme) has a canonical pinning since the structure sheaf is a dualizing complex. For example, if $S$ is regular, then the pinning on $S$ above is such that for every $S$-field $\operatorname{Spec}E$ we have
	\begin{align}
		\lambda_E=\detcotgb_{E/S}^\vee.
	\end{align}
\end{ex}

\begin{rem}
	
	Example~\ref{ex:dualpin} is indeed the prototype of a pinning, which incarnates the data of ``local orientations'' at each point of a scheme. In Theorem~\ref{thm:eqpin} below we will establish a duality of Milnor-Witt cycle modules for schemes with pinnings: such an idea of relating dualizing complexes to duality already appeared in \cite{Fangzhou1} in the special case of real \'etale sheaves.

	See also \cite[Exp. XVII]{TravauxGabber} for a construction of similar spirit in the context of \'etale sheaves.
	
	Also note that since any scheme essentially of finite type over a regular scheme (or even a Gorenstein scheme) has a dualizing complex, the existence of a pinning is a relatively weak condition and does not really impose any restriction on the singularities of the scheme.
	
\end{rem}

\begin{paragr}
	\label{num:pinres}
	If $\lambda$ is a pinning on $S$, then for every scheme $X$ essentially of finite type  over $S$, we define a pinning $\lambda^!_{|X}$ on $X$ by the restriction of $\lambda$ to $\mathcal{G}_X$.
	
\end{paragr}

\begin{paragr}
	Let $\lambda$ be a pinning on $S$ and fix $M$ a cohomological Milnor-Witt cycle premodule. We have the following operations:
	\begin{enumerate}
		\item
		If $\operatorname{Spec}A$ is an $S$-DVR with fraction field $F$, maximal ideal $\mathfrak{m}$ and residue field $\kappa$, we define the following a map using~\ref{itm:D4}
		\begin{align}
			\label{eq:pinres}
			M_n(F,\lambda_{F})
			\simeq
			M_n(F)\otimes_{R[A^\times]}R[\lambda_A^\times]
			\xrightarrow{\eqref{eq:D4}}
			M_{n-1}(\kappa, (\mathfrak{m}/\mathfrak{m}^2)^\vee)\otimes_{R[A^\times]}R[\lambda_A^\times]
			\simeq
			M_{n-1}(\kappa, \lambda_{\kappa}).
		\end{align}
		\item
		For a finite morphism of $S$-fields $f:\operatorname{Spec}F\to \operatorname{Spec}E$, we define the following a map using~\ref{itm:D2}
		\begin{align}
			\label{eq:pinfinpf}
			f_*:M_n(F,\lambda_{F})
			\simeq
			M_n(F,f^*\lambda_{E}\otimes\detcotgb_{F/E}^\vee)
			\xrightarrow{\eqref{eq:D2}}
			M_n(E,\lambda_{E}).
		\end{align}
	\end{enumerate}
	
\end{paragr}

\begin{paragr}
	
	\label{num:respin}
	Let $(S,\lambda)$ be an excellent scheme with a pinning and let $M$ be a cohomological Milnor-Witt cycle premodule over $S$. 
	\begin{enumerate}
		\item
		Let $\mathcal{O}$ be a $1$-dimensional local domain which is essentially of finite type over $S$, with fraction field $F$ and residue field $\kappa$. Then $\mathcal{O}$ is an excellent ring. Let $A$ be the integral closure of $\mathcal{O}$, which is regular, semi-local and finite over $\mathcal{O}$. Let $\{z_1,\cdots,z_n\}$ be the closed points of $A$, which correspond to valuations on $F$ with residue fields $\kappa(z_1),\cdots,\kappa(z_n)$. Denote by $f_i:\operatorname{Spec}\kappa(z_i)\to\operatorname{Spec}\kappa$ the induced finite morphism of $S$-fields. We then define a map
		\begin{align}
			\label{eq:normres}
			\partial_{\mathcal{O}}:
			M_n(F,\lambda_{F})
			\underset{\eqref{eq:pinres}}{\xrightarrow{(\partial_{z_i})_{i=1}^n}}
			\oplus_{i=1}^nM_{n-1}(\kappa(z_i),\lambda_{\kappa(z_i)}) 
			\underset{\eqref{eq:pinfinpf}}{\xrightarrow{\sum_{i=1}^nf_{i*}}}
			M_{n-1}(\kappa,\lambda_{\kappa}).
		\end{align}
		
		\item
		Let $X$ be an $S$-scheme essentially of finite type, and let $x$ and $y$ be two points on $X$. We define a map
		\begin{align}
			\label{eq:normress}
			\partial^x_y:M_*(x,\lambda_x) \to M_{*-1}(y,\lambda_y)
		\end{align}
		as follows: let $Z$ be the reduced Zariski closure of $x$ in $X$, and if $y\in Z^{(1)}$, then the map~\eqref{eq:normress} is defined as the map $\partial_{\mathcal{O}_{Z,y}}$ in~\eqref{eq:normres} associated to the $1$-dimensional local domain $\mathcal{O}_{Z,y}$; otherwise we put $\partial^x_y=0$.
	\end{enumerate}

\end{paragr}

\begin{df}
	\label{df:cohMW}
	Let $(S,\lambda)$ be an excellent scheme with a pinning.
	A \textbf{cohomological Milnor-Witt cycle module} over $S$ is a cohomological Milnor-Witt cycle premodule which in addition satisfies the conditions \ref{itm:FD} and \ref{itm:C}
	
	\begin{description}
		\item [\namedlabel{itm:FD}{(FD)}] {\sc Finite support of divisors.} Let $X$ be an $S$-scheme. Let $\xi$ be a point on $X$ and let $\rho$ be an element of $M(\xi)$. Then $\partial^{\xi}_x(\rho)=0$ for all but finitely many $x$. 

		\item [\namedlabel{itm:C}{(C)}] {\sc Closedness.} Let $X$ be a local, integral $S$-scheme of dimension 2, with generic point $\xi$ and closed point $x_0$. Then
		\begin{align}
			0
			=
			\sum_{x\in X^{(1)}} \partial^x_{x_0} \circ \partial^{\xi}_x: 
			M_*(\xi,\lambda_{\xi})
			\to
			M_{*-2}(x_0,\lambda_{x_0}).
		\end{align}

	\end{description}
\end{df}

\begin{paragr}
	
	The notion of a cohomological MW-cycle module in Definition~\ref{df:cohMW} depends a priori on the choice of a pinning. The following result shows that it actually does not.
	
\end{paragr}
\begin{prop}
	Let $\lambda$ and $\lambda'$ be two pinnings on a scheme $S$, then they define the same cohomological Milnor-Witt cycle modules.
\end{prop}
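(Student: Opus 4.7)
The key observation is that the underlying data of a cohomological Milnor-Witt cycle premodule is entirely independent of the pinning: the pinning enters the definition of a cohomological Milnor-Witt cycle module \emph{only} through the residue maps of \ref{num:respin} (hence through axioms \ref{itm:FD} and \ref{itm:C}) and through the finite pushforwards of \eqref{eq:pinfinpf}. The plan is therefore to show that the residue operations and pushforwards defined relative to $\lambda$ and $\lambda'$ can be identified, so the axioms are logically equivalent.

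First, I would form the ``ratio'' $\mu_X := \lambda_X \otimes (\lambda'_X)^{\vee}$ for each object $X$ of $\mathcal{G}_S$, and check that $\mu$ defines a Cartesian section of the pseudo-functor $\lb$ equipped with the \emph{ordinary} pullback (rather than the exceptional $\lb^{!}$). Indeed, for any morphism $f:Y \to X$ in $\mathcal{G}_S$, the pinning isomorphisms $f^{*}\lambda_X \otimes \detcotgb_f^{\vee} \simeq \lambda_Y$ and $f^{*}\lambda'_X \otimes \detcotgb_f^{\vee} \simeq \lambda'_Y$ combine into $f^{*}\mu_X \simeq \mu_Y$, since the cotangent twists cancel in the ratio. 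The compatibilities at DVRs, at generic/closed points, and across extensions of DVRs from \ref{num:pincon} translate similarly, with all $N_v$ and $\detcotgb$ terms cancelling out. Hence $\mu$ behaves as if the pinning were ``trivial'' for ordinary pullbacks.

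Second, for any cohomological MW-cycle premodule $M$ and any $X$ in $\mathcal{G}_S$, the tensor product structure $\lambda_X \simeq \lambda'_X \otimes \mu_X$ together with the definition \eqref{eq:cohmorel} yields canonical isomorphisms
\begin{align*}
M_n(X,\lambda_X) \simeq M_n(X,\lambda'_X) \otimes_{R[\cO(X)^{\times}]} R[\mu_X^{\times}].
\end{align*}
I would then verify that the residue map \eqref{eq:pinres} and the finite pushforward \eqref{eq:pinfinpf} relative to $\lambda$ correspond, under the identification above, to the corresponding maps relative to $\lambda'$ tensored with the identity on $R[\mu^{\times}]$. This is where the Cartesian section property of $\mu$ under ordinary pullback is decisive: for an $S$-DVR $A$ with fraction field $F$ and residue field $\kappa$, it provides compatible trivializations $\mu_A|_F \simeq \mu_F$ and $\mu_A|_\kappa \simeq \mu_\kappa$ \emph{without any} $N_v$ twist, so the $R[\mu^{\times}]$-factors on the source and target of the residue match up and pass through the tensor product. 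The same holds for finite morphisms of $S$-fields.

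Third, combining these two compatibilities, the global residue maps $\partial_{\mathcal{O}}$ of \eqref{eq:normres} and the composed residues $\partial^x_y$ of \eqref{eq:normress} defined via $\lambda$ and $\lambda'$ agree after identifying the twisted groups. Consequently, the vanishings required in \ref{itm:FD} and \ref{itm:C} for $(M,\lambda)$ translate to the same vanishings for $(M,\lambda')$, and the proposition follows.

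The main obstacle will be the careful bookkeeping in the second step: one must verify that the isomorphism $M_n(X,\lambda_X) \simeq M_n(X,\lambda'_X) \otimes R[\mu_X^{\times}]$ is natural with respect to the residue homomorphism \ref{itm:D4} and the finite pushforward \ref{itm:D2}, taking into account the $R[\cO^{\times}]$-module structures on both sides. Conceptually, however, this is forced by the fact that $\mu$ is a Cartesian section for \emph{ordinary} pullback, so the $\detcotgb$ and $N_v$ twists which make the two pinnings ``differ'' as Cartesian sections of $\lb^{!}$ are precisely the twists that appear on the target of \ref{itm:D2} and \ref{itm:D4}; they cancel out in the comparison and leave only the harmless tensoring by $R[\mu^{\times}]$.
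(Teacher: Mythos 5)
Your proof is correct, but it takes a genuinely different route from the paper's. The paper reduces to integral local schemes, where every line bundle is trivial: there the two pinnings differ by a twist $\cL\cdot\lambda$ for a (necessarily trivial) line bundle $\cL$, a trivialization being obtained by picking a section of $\lambda_\xi\otimes(\lambda'_\xi)^\vee$ at the generic point and spreading it out using the DVR compatibilities of the pinning. One then observes that twisting a pinning by a pulled-back line bundle visibly does not change the residue maps, and axioms \ref{itm:FD} and \ref{itm:C} are checked over $1$- and $2$-dimensional local domains respectively. You instead work globally: you form the ratio $\mu = \lambda\otimes(\lambda')^\vee$, note that the exceptional-pullback twists in the definition of a Cartesian section of $\lb^!$ cancel so that $\mu$ becomes a Cartesian section of $\lb$ under ordinary pullback (including the $\mathfrak{m}/\mathfrak{m}^2$ terms at closed points of DVRs), and then push the identification $M_n(E,\lambda_E)\simeq M_n(E,\lambda'_E)\otimes_{R[E^\times]}R[\mu_E^\times]$ through the residue and pushforward constructions. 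This avoids the paper's reduction to local schemes and the generic-point-plus-specialization argument, trading it for the naturality verification you flag at the end. That verification is real but formal, and the reason it works is exactly what you say: the constructions \eqref{eq:pinres} and \eqref{eq:pinfinpf} are tensor operations over the group ring of the DVR (resp. the base field), and $\mu$ descends coherently to these rings precisely because it is Cartesian under \emph{ordinary} pullback. One small imprecision in your phrasing: the $N_v$-twist in \ref{itm:D4} does not ``cancel between $\lambda$ and $\lambda'$''; rather it is absorbed by each pinning in the same way via the DVR isomorphism, and what cancels in the ratio $\mu$ is only the $\detcotgb$ contribution. Your conclusion is unaffected, and the paper's local-trivialization step and your Cartesian-section-of-$\lb$ step are essentially equivalent ways of packaging the same coherence supplied by the DVR data of the pinning.
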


\begin{proof}
	
	If $\lambda$ is a pinning on $S$ and $\cL$ is a line bundle on $S$, we define another pinning $\cL\cdot\lambda$ on $S$ such that for every object $f:X\to S$ in $\mathcal{G}_S$,
	\begin{align}
		(\cL\cdot\lambda)_X=\lambda_X\otimes f^*\cL.
	\end{align}
	It is straightforward that the two pinnings $\lambda$ and $\cL\cdot\lambda$ define the same cohomological MW-cycle modules.
	
	If $X$ is an integral local scheme and $\lambda,\lambda'$ are two pinnings on $X$, then there exists a line $\cL$ bundle on $X$ such that $\lambda'\simeq \cL\cdot\lambda$. Indeed, every line bundle over a local scheme is trivial, and a trivialization is uniquely determined by a global invertible section. Let $\xi$ be the generic point of $X$, and the isomorphism $\lambda_\xi\simeq\lambda'_\xi$ determines an invertible section $s$ at $\xi$.

	Then the set of points of $X$ at which $s$ is invertible is stable under immediate specializations (use the property that a pinning has sections over all discrete valuation rings and argue as in~\ref{num:respin}) and contains $\xi$, and therefore agrees with the whole $X$ since $X$ is integral. It follows that $s$ defines a global invertible section on $X$, which induces an isomorphism $\lambda'\simeq \cL\cdot\lambda$.

	Now we are ready to prove the proposition: for the axiom~\ref{itm:FD}, we may assume that $X$ is a $1$-dimensional local domain, and the result follows the fact that there is a line bundle $\cL$ on $X$ such that $\lambda'^!_{|X}\simeq \cL\cdot\lambda^!_{|X}$, where $\lambda^!_{|X}$ is defined as in~\ref{num:pinres}; the axiom~\ref{itm:C} follows from a similar argument.

\end{proof}

\begin{paragr}
	\label{CoCatMW}
	It follows that cohomological MW-cycle modules are well defined over any scheme with a pinning, and does not depend on the choice of a pinning. For a scheme $S$ with a pinning, we denote by $\CatMW_S$ (resp. $\CoCatMW_S$) the category of homological (resp. cohomological) MW-cycle modules over $S$. We now prove a general duality theorem for schemes with a pinning:

\end{paragr}

\begin{thm}
	\label{thm:eqpin}
	Let $(S,\lambda)$ be an excellent scheme with a pinning. Given a cohomological MW-cycle module $M$ over $S$, for every $S$-field $\operatorname{Spec}E$, we let
	\begin{align}
		D(M)_n(E)=M_n(E,\lambda_{E}).
	\end{align}
	Reciprocally given a homological MW-cycle module $\mathcal{M}$ over $S$, we let 
	\begin{align}
		\mathcal{D}(\mathcal{M})_n(E)=\mathcal{M}_n(E,\lambda_{E}^\vee).
	\end{align}
	Then these two constructions establish two functors that are equivalences of categories inverse to each other
	\begin{align}
		D:\CoCatMW_S\simeq\CatMW_S:\mathcal{D}.
	\end{align}
\end{thm}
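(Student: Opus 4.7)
The strategy is to define $D$ and $\mathcal{D}$ explicitly by twisting with $\lambda_E$ (resp.\ $\lambda_E^\vee$) and then to translate the axioms of cohomological MW-cycle modules into those of homological ones (and back) by systematically using the Cartesian data of the pinning described in~\ref{num:pincon}. The pinning supplies precisely the canonical isomorphisms needed to convert the twist $\detcotgb_{F/E}^\vee$ (appearing in the cohomological exceptional pushforward~\ref{itm:D2}) into the twist $\detcotgb_{F/E}$ (appearing in the homological pushforward~\ref{itm:D1'}), and dually for residue twists via $N_v^\vee$ versus $N_v$.

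First, I would construct the structural data of $D(M)$: axiom~\ref{itm:D3'} transfers verbatim from~\ref{itm:D3}; for~\ref{itm:D1'}, given a field extension $\phi:E\to F$, I compose the covariance~\ref{itm:D1} with the pinning isomorphism $\phi^*\lambda_E \simeq \lambda_F \otimes \detcotgb_{F/E}$ from~\ref{num:pincon}(2) to obtain $M_n(E,\lambda_E)\to M_n(F,\lambda_F\otimes\detcotgb_{F/E})$; for~\ref{itm:D2'}, I use the same pinning isomorphism to identify $M_n(F,\lambda_F)$ with $M_n(F,\phi^*\lambda_E\otimes\detcotgb_{F/E}^\vee)$ and then apply~\ref{itm:D2}; finally, for~\ref{itm:D4'}, I take the map~\eqref{eq:normres} already constructed in~\ref{num:respin} from the cohomological residues, which intrinsically uses the pinning data~\ref{num:pincon}(3)--(5) to combine~\ref{itm:D4} with the finite pushforward along points of the normalization.

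Second, I would verify the relations. Axioms~\ref{itm:R1a'},~\ref{itm:R1b'},~\ref{itm:R1c'} follow from~\ref{itm:R1a},~\ref{itm:R1b},~\ref{itm:R1c} together with the cocycle compatibility of the pinning isomorphisms (which is built into the notion of a Cartesian section of $\lb^!$ in Definition~\ref{df:pinning}). The three relations~\ref{itm:R2a'},~\ref{itm:R2b'},~\ref{itm:R2c'} follow from~\ref{itm:R2} because the twist does not interfere with the $\kMW$-action. Axioms~\ref{itm:R3c'},~\ref{itm:R3d'},~\ref{itm:R3e'} follow readily from their cohomological counterparts because in each case only one valuation is involved. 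Axioms FD' and C' for $D(M)$ are then automatic: the residue maps $\partial^x_y$ of $D(M)$ are by construction the same maps~\eqref{eq:normress} that enter the cohomological axioms~\ref{itm:FD} and~\ref{itm:C} for $M$, modulo the identification $M(x,\lambda_x)=D(M)(x)$.

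Third, the same argument applied with $\lambda^\vee$ defines the reciprocal functor $\mathcal{D}$; the canonical isomorphism $\lambda_E\otimes\lambda_E^\vee\simeq\cO_E$ gives functorial identifications $D\circ\mathcal{D}\simeq\operatorname{Id}$ and $\mathcal{D}\circ D\simeq\operatorname{Id}$, with compatibility along all structural maps reducing to the compatibility of the unit/counit of the duality of line bundles with the pinning's Cartesian structure.

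The main obstacle will be the careful verification of axioms~\ref{itm:R3a'} and~\ref{itm:R3b'}, which concern the interaction of residues with field extensions and with extensions of valuations. For~\ref{itm:R3a'} one must reconcile the pinning identification $\phi^*\lambda_v\simeq\lambda_w$ on the DVR level (data~\ref{num:pincon}(5)) with the induced identification on residue fields, whereas in the corresponding cohomological axiom~\ref{itm:R3a} the twist by $N_v^\vee$ is implicit. For~\ref{itm:R3b'}, the subtlety is that the right-hand sum ranges over extensions $w|v$, and one has to check that summing the cohomological $(f_w)_!\circ\partial_w$ and re-applying the pinning isomorphisms along each extension reassembles to $\partial_v\circ f_!$ with the correct homological twist; this amounts to a diagram-chase in the category $\mathcal{G}_S$ where each arrow is decorated with a pinning isomorphism, and uses exactly the Cartesian compatibility built into Definition~\ref{df:pinning}.
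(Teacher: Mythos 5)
Your proposal follows exactly the strategy of the paper's own proof: define the homological data $D1'$--$D4'$ by composing the cohomological data with the pinning isomorphisms of~\ref{num:pincon}, verify the relations axiom-by-axiom using the Cartesian compatibilities built into the pinning, derive $FD'$ and $C'$ directly from $(FD)$ and $(C)$ via the normalization residue~\eqref{eq:normres}, and use $\lambda\otimes\lambda^\vee\simeq\cO$ for the inverse identifications. The paper is terser and ``leaves the details to the reader'' where you flag the genuinely nontrivial checks (particularly the interaction of the pinning with~\ref{itm:R3a'} and~\ref{itm:R3b'} along valuation extensions), so your write-up is a fuller version of the same argument.
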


\proof
It is not hard to see that the maps define functors $D$ and $\mathcal{D}$  that are inverse to each other. 
Keeping the previous notations, it remains to check that $\mathcal{D}(\mathcal{M})$ lives in $\CoCatMW_S$ and $D(M)$ lives in $\CatMW_S$.
We now define the functorialities~\ref{itm:D1'} through~\ref{itm:D4'}.
\begin{description}
	\item [\namedlabel{}{(D1')}] 
	For a morphism of $S$-fields $f:\operatorname{Spec}F\to \operatorname{Spec}E$, we define the map 
	\begin{align}
		f^!:D(M)_n(E)
		=
		M_n(E,\lambda_{E})
		\xrightarrow{\eqref{eq:D1}}
		M_n(F,\phi^*\lambda_{E})
		\simeq 
		M_n(F,\lambda_{F}\otimes\detcotgb_{F/E})
		=
		D(M)_n(F,\detcotgb_{F/E})
	\end{align}
	thanks to
	~\ref{itm:D1}.
	
	\item [\namedlabel{}{(D2')}] 
	For a finite morphism of $S$-fields $f:\operatorname{Spec}F\to \operatorname{Spec}E$, we have a map 
	\begin{align}
		\label{eq:pinD2}
		f_*:D(M)_n(F)
		=
		M_n(F,\lambda_{F})
		\xrightarrow{\eqref{eq:pinfinpf}}
		M_n(E,\lambda_{E})
		= 
		D(M)_n(E).
	\end{align}
	
	\item [\namedlabel{}{(D3')}] 
	Completely analogous to~\ref{itm:D3}.

	\item [\namedlabel{}{(D4')}] 
	If $\mathcal{O}$ is a $1$-dimensional local domain which is essentially of finite type over $S$, with fraction field $F$ and residue field $\kappa$, then we have a map
	\begin{align}
		\partial_{\mathcal{O}}:
		D(M)_n(F)
		=
		M_n(F,\lambda_{F})
		\xrightarrow{\eqref{eq:normres}}
		M_{n-1}(\kappa,\lambda_{\kappa})
		=
		D(M))_{n-1}(\kappa).
	\end{align}

\end{description}
For the rest of the axioms it suffices to proceed line by line and check that the corresponding versions match with each other, which makes use of the compatibilities of the isomorphisms stated in~\ref{num:pincon}. The proof for $\mathcal{D}(\mathcal{M})$ is similar, and we leave the details to the reader.
\endproof

\begin{ex}\label{ex:KMW_hlg_MW-module}
	According to \cite{CHN}, we know that $\GW$ is representable in $\SH(S)$,
	for any (qcqs) scheme $S$. Therefore, according to \Cref{thm_Rost_transform}, Hermitian K-theory induces a homological cycle
	module over $S$. After inverting $\eta$, we get that Witt groups induces
	a homological cycle module over $S$, extending the classical
	Gersten-Witt complex for arbitrary $\ZZ$-scheme.
	
	Using \Cref{ex:KM_cycle_module}, \Cref{eq:cohdualori} and the method of \cite{FaselCHW},
	we therefore derive that Milnor-Witt K-theory form a homological Milnor-Witt
	cycle module over any excellent scheme $S$.
\end{ex}

\begin{paragr}
	We have seen in Definition~\ref{df:virt} the fibered category of virtual vector bundles
	\begin{align}
		\begin{split}
			\virt^!:(\mathcal{G}_S)^{op}&\to \operatorname{Spaces}\\
			X&\mapsto \virt(X).
		\end{split}
	\end{align}
	We call a \textbf{dimensional pinning} on a scheme $S$ is a Cartesian section of $\virt^!$.
	
	Since the determinant functor induces an equivalence between the Picard categories 
	$\virt(X)$ and the category of graded line bundles on $X$ (see \cite[§4]{delignedeterminant}), a dimensional pinning is nothing but a pinning together with a codimension function. Example~\ref{ex:dualpin} shows that any dualizing complex on a scheme induces a dimensional pinning.
	
\end{paragr}

\subsubsection{Poincaré duality}
\label{DefinitionCohomologicalComplex}
If $X$ is a scheme with a dimensional pinning, and $\cohM$ is a cohomological MW-cycle module on $X$, then we can form a (cohomological) Rost-Schmid cycle complex ${}^{\delta}C_*(X,M,*)$ such that for any integer $p,q \in \ZZ$, and any line bundle $*$ over $X$:
\begin{align}
	{}^{\delta}C_p(X,M_q,*):=\oplus_{\delta(x)=p}M_{p+q}(x,\lambda_x \otimes *)
\end{align}
as in Definition~\ref{DefinitionComplex}, and we have a canonical isomorphism of abelian groups
\begin{align}
	{}^{\delta}C_p(X,M_q,*) \simeq {}^{\delta}C_p(X,D(M)_q,*)
\end{align}
which is compatible with the differentials.

\subsection{Cohomological basic maps}

\begin{paragr}
	\label{Cohomological basic maps}
	Fix $\cohM$ a cohomological MW-cycle module and fix $(X,\delta,\lambda)$ an $S$-scheme with a dimensional pinning.
	As in \ref{FiveBasic}, we can define basic maps on the cohomological Rost-Schmid complex ${}^{\delta}C^n(X,\cohM, *)$ and prove that the usual functoriality properties (exactly like we did in \ref{Compatibilities}).
	
\end{paragr}

\begin{paragr}{\sc Pushforward}
	Let $f:Y\to X$ be a $S$-morphism of schemes. Define
	\begin{center}
		
		$f_*:{}^{\delta}C_p(Y,\cohM_q,*)\to {}^{\delta}C_{p}(X,\cohM_q, *)$
	\end{center}
	as follows. If $x=f(y)$ and if $\kappa(y)$ is finite over $\kappa(x)$, then $(f_*)^y_x=\cores_{\kappa(y)/\kappa(x)}$. Otherwise, $(f_*)^y_x=0$.
\end{paragr}

\begin{paragr}{\sc Pullback} \label{CohomologicalpullbackBasicMap}
	Let $f:Y\to X$ be an {\em essentially smooth} morphism of schemes of relative dimension $s$. Suppose $Y$ connected. Define
	\begin{center}
		$f^!:{}^{\delta}C_p(X,\cohM_q,*) \to {}^{\delta}C_{p+s}(Y,\cohM_{q-s},*\otimes \detcotgb_f^{\vee})$
	\end{center}
	as follows. If $f(y)=x$, then $(f^!)^x_y= \res_{\kappa(y)/\kappa(x)}$. Otherwise, $(f^!)^x_y=0$. If $Y$ is not connected, take the sum over each connected component.

\end{paragr}

\begin{paragr}{\sc Multiplication with units}
	Let $a_1,\dots, a_n$ be global units in $\mathcal{O}_X^*$. Define
	\begin{center}
		$[a_1,\dots, a_n]:
		{}^{\delta}C_p(X,\cohM_q,*) \to 
		{}^{\delta}C_p(X,\cohM_{q+n},*)$
	\end{center}
	as follows. Let $x$ be in $X_{(\delta= p)}$ and $\rho\in \hM(\kappa(x),*)$. We consider $[a_1(x),\dots, a_n(x)]$ as an element of ${\KMW (\kappa(x),*)}$.
	If $x=y$, then put $[a_1,\dots , a_n]^x_y(\rho)=[a_1(x),\dots , a_n(x)]\cdot \rho) $. Otherwise, put $[a_1,\dots , a_n]^x_y(\rho)=0$.

\end{paragr}

\begin{paragr}{\sc Multiplication with $\eta$}
	Define
	\begin{center}
		
		$\eta:
		{}^{\delta}C_p(X,\cohM_q,*)
		\to {}^{\delta}C_p(X,\cohM_{q-1},*)$
	\end{center}
	as follows. If $x=y$, 
	then $\eta^x_y(\rho)
	=\gamma_{\eta}(\rho)$. 
	Otherwise, $\eta^x_y(\rho)=0$.
	
\end{paragr}

\begin{paragr}{\sc Boundary maps} \label{CohomologicalBoundaryMaps}
	Let $X$ be a scheme of finite type over $k$, let $i:Z\to X$ be a closed immersion and let $j:U=X\setminus Z \to X$ be the inclusion of the open complement. We will refer to $(Z,i,X,j,U)$ as a boundary triple and define
	\begin{center}

		$\partial=\partial^U_Z:
		{}^{\delta}C_{p}(U,\cohM_q,*) 
		\to {}^{\delta}C_{p-1}(Z,\cohM_q,*)$
	\end{center}
	by taking $\partial^x_y$ to be as the definition in \ref{2.0.1} with respect to $X$. The map $\partial^U_Z$ is called the boundary map associated to the boundary triple, or just the boundary map for the closed immersion $i:Z\to X$.
	
\end{paragr}

\section{Appendix}
\label{sec:Computations}

\subsection{The Gersten property}

\begin{paragr}
	In this section, we consider $S$ an excellent regular scheme. Any regular scheme is endowed with the codimension function defined by the structure sheaf as in Example~\ref{ex:dualpin}. For any regular scheme $X$, we denote by $X^{(p)}$ the points of codimension $p$ in $X$.
	
\end{paragr}

\begin{paragr}
	Let $\hM$ be a (homological) Milnor-Witt cycle module over $S$. 
	As in Definition~\ref{DefinitionComplex}, for any $S$-scheme essentially of finite type $X$, we have a complex $C^*(X,\hM,*)$ (with cohomological conventions for degrees)
	\begin{align}
		\label{eq:cohcompl}
		\cdots\to C^{p}(X,\hM,*)\to C^{p+1}(X,\mathcal{M},*-1)\to\cdots
	\end{align}
	such that $C^p(X,\hM,*)=\oplus_{x\in X^{(p)}}\hM(x,*)$. As in Definition~\ref{def:AgpM}, the cohomology of this complex is denoted by $A^p(X,\hM,*)$. In particular, the group $A^0(X,\hM,*)$ is the kernel
	\begin{align}
		A^0(X,\hM,*)
		=
		\operatorname{ker}(
		\oplus_{x\in X^{(0)}}\hM(x,*)
		\xrightarrow{\eqref{eq:reshom}}
		\oplus_{x\in X^{(1)}}\hM(x,*-1))
	\end{align}
	and we have an augmented complex
	\begin{align}
		\label{eq:augcomp}
		0\to
		A^0(X,\hM,*)
		\to
		C^{0}(X,\hM,*)\to C^{1}(X,\hM,*-1)\to\cdots.
	\end{align}
	
\end{paragr}

\begin{df}

	We say that 
	$\hM$ 
	is \textbf{Gersten} if for every $X\in \Sm_S$ local, the group $A^p(X,M,*)$ 
	vanishes for every $p\neq0$. 
\end{df}

If $S$ is an excellent discrete valuation ring, this property is equivalent to \cite[Def. 2.13]{BHP22}.

\begin{ex}
	The main theorem of \cite{BHP22} shows that Milnor-Witt K-theory $\KMW$ is Gersten.
\end{ex}

\begin{paragr}
	The presheaf $X\mapsto A^0(X,\hM,*)$ for $X\in\Sm_S$ is a Nisnevich sheaf denoted by $\underline{A}^0(S,\hM,*)$, which is an \emph{unramified sheaf} (see \cite[\S2]{MorelLNM}). For $X\in\Sm_S$, the restriction of the sheaf $\underline{A}^0(S,\hM,*)$ to $X_{Nis}$ is denoted $\underline{A}^0(X,\hM,*)$.
	
	The map $U\in X_{Zar}\mapsto C^*(U,\hM,*)$ defines a Zariski sheaf of complexes on $X$, which we denote by $\underline{C}^*(X,\hM,*)$. Concretely, $\underline{C}^p(X,\hM,*)=\oplus_{x\in X^{(p)}}i_{x*}\hM(x,*)$, and therefore $\underline{C}^*(X,\hM,*)$ is a complex of flasque sheaves, together with a canonical map
	\begin{align}
		\label{eq:augcompl}
		\underline{A}^0(X,\hM,*)\to \underline{C}^*(X,\hM,*).
	\end{align}
\end{paragr}

\begin{lm}
	\label{lm:gerstenex}
	The Milnor-Witt cycle module $\hM$ is Gersten if and only if for every $X\in\Sm_S$, the augmented complex~\eqref{eq:augcompl} is exact.
	In other words, the complex $\underline{C}^*(X,\hM,*)$ is a flasque resolution of the sheaf $\underline{A}^0(X,\hM,*)$.
\end{lm}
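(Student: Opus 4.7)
The plan is to reduce everything to a computation of stalks. The exactness of a complex of sheaves on $X_{Zar}$ is equivalent to its exactness on stalks at every point $y \in X$, so it suffices to understand what the stalk of the augmented complex~\eqref{eq:augcompl} at $y$ looks like and compare it with the Rost-Schmid complex of the local scheme $\operatorname{Spec}\mathcal{O}_{X,y}$.

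First, I would record the stalk computation. For each $x \in X^{(p)}$, the skyscraper sheaf $i_{x*}\hM(x,*)$ has stalk $\hM(x,*)$ at every $y \in \overline{\{x\}}$ and $0$ elsewhere. Since the generization of $y$ in $X$ identifies with the points of $\operatorname{Spec}\mathcal{O}_{X,y}$, and since generizations of a codimension $p$ point of $X$ lying below $y$ are exactly the codimension $p$ points of $\operatorname{Spec}\mathcal{O}_{X,y}$ (because $X$ is regular, hence catenary), one gets a canonical identification
\[
\underline{C}^p(X,\hM,*)_y \;=\; \bigoplus_{x \in (\operatorname{Spec}\mathcal{O}_{X,y})^{(p)}} \hM(x,*) \;=\; C^p(\operatorname{Spec}\mathcal{O}_{X,y},\hM,*).
\]
Moreover, this identification is compatible with the differentials (which are given by the residues $\partial^x_y$ on each factor, and do not see the ambient scheme $X$). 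One then checks, using continuity of $\hM$ in the sense of Definition~\ref{df:continuity} applied to the pro-system of open neighborhoods of $y$, that $\underline{A}^0(X,\hM,*)_y \simeq A^0(\operatorname{Spec}\mathcal{O}_{X,y},\hM,*)$, since $A^0$ is defined as a kernel and the formation of $C^*$ commutes with filtered colimits along affine \'etale (here: localization) transitions.

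Given these identifications, the stalk of the augmented complex~\eqref{eq:augcompl} at $y$ is exactly the augmented Rost-Schmid complex of $\operatorname{Spec}\mathcal{O}_{X,y}$, whose cohomology in degree $p \geq 1$ equals $A^p(\operatorname{Spec}\mathcal{O}_{X,y},\hM,*)$, and which is exact in degree $0$ by definition of $A^0$. Thus exactness of~\eqref{eq:augcompl} at $y$ in positive degrees is tautologically equivalent to the vanishing of $A^p(\operatorname{Spec}\mathcal{O}_{X,y},\hM,*)$ for $p \geq 1$.

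From this the two implications follow immediately. If $\hM$ is Gersten, then for every smooth $X$ and every $y \in X$ the local scheme $\operatorname{Spec}\mathcal{O}_{X,y}$ (which is essentially smooth and local) has vanishing $A^p$ for $p > 0$, hence the stalks of~\eqref{eq:augcompl} are exact, hence~\eqref{eq:augcompl} is exact as a complex of sheaves; flasqueness of $\underline{C}^*(X,\hM,*)$ is clear since it is a direct sum of pushforwards from points. Conversely, if~\eqref{eq:augcompl} is exact for every $X \in \Sm_S$, then by taking stalks we get $A^p(\operatorname{Spec}\mathcal{O}_{X,y},\hM,*) = 0$ for $p \geq 1$ and every $y \in X \in \Sm_S$, which is exactly the Gersten property. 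The only minor subtlety to watch out for is the already mentioned identification of local essentially smooth $S$-schemes with local rings $\mathcal{O}_{X,y}$ of smooth $S$-schemes $X$, which is immediate since any essentially smooth local $S$-scheme is by definition a filtered limit of such localizations along affine \'etale maps and continuity takes care of the rest.
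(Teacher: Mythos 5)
Your proof is correct and takes the same route as the paper, namely checking exactness on Zariski stalks, where the stalk of the augmented complex at $y\in X$ identifies (by catenarity of the regular scheme $X$) with the augmented Rost-Schmid complex of $\operatorname{Spec}\mathcal{O}_{X,y}$. The paper's own argument is a one-line version of exactly this observation; you simply spell out the stalk computation in full, which is a welcome expansion rather than a different method.
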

\proof
To say that $\underline{C}^*(X,\hM,*)$ is a flasque resolution of $\underline{A}^0(X,\hM,*)$ amounts to say that the complex~\eqref{eq:augcomp} is exact for every $U\in\Sm_X$ local, which is precisely the Gersten property.
\endproof

\begin{paragr}
	Recall that a \textbf{homotopy module} on $S$ is a strictly $\mathbb{A}^1$-invariant Nisnevich sheaf of $\mathbb{Z}$-graded abelian groups $M_*$ on $\Sm_S$, together with isomorphisms 
	\begin{align}
		\label{eq:-1iso}
		(M_n)_{-1}\simeq M_{n-1}.
	\end{align}
	Here for $\mathcal{F}$ a sheaf of abelian groups, we denote by $\mathcal{F}_{-1}$ the sheaf $X\mapsto\operatorname{ker}(\mathcal{F}(X\times\mathbb{G}_m)\xrightarrow{1_X^*}\mathcal{F}(X))$, where $1_X:X\to X\times\mathbb{G}_m$ is the unit section (see \cite[Ch. 2]{MorelLNM}).
	
\end{paragr}

\begin{prop}
	Let $S$ be an excellent regular scheme.

	If $\hM$ is Gersten, then the unramified sheaf $\underline{A}^0(S,\hM,*)$ is a homotopy module on $S$.
\end{prop}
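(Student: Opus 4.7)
The plan is to use the Gersten resolution of Lemma \ref{lm:gerstenex} to identify Nisnevich cohomology of $\underline{A}^0(S,\hM,*)$ with the Rost--Schmid cohomology groups $A^p(X,\hM,*)$, and then extract strict $\AA^1$-invariance and the contraction isomorphism from the results established earlier in the paper (Theorem~\ref{HomotopyInvariance} and the localization sequence of Proposition~\ref{MWmod-localization}).

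First I would establish strict $\AA^1$-invariance. By Lemma~\ref{lm:gerstenex}, for any $X \in \Sm_S$ the complex $\underline{C}^*(X,\hM,*)$ is a resolution of $\underline{A}^0(X,\hM,*)$ by flasque sheaves, so
$$H^p_{\nis}(X,\underline{A}^0(S,\hM,*)) \;\simeq\; A^p(X,\hM,*).$$
Applying Theorem~\ref{HomotopyInvariance} to the projection $\pi:\AA^1_X \to X$ (translated from the dimensional to the codimensional convention via the relation $X^{(\delta=d-p)} = X_{(\delta=p)}$ recalled before Definition~\ref{def:AgpM}, which is legitimate since $S$ is excellent regular and $X/S$ is smooth) yields isomorphisms $A^p(X,\hM,*) \xrightarrow{\sim} A^p(\AA^1_X,\hM,*)$ for every $p$, which is precisely strict $\AA^1$-invariance of $\underline{A}^0(S,\hM,*)$.

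Next I would construct the contraction isomorphism $(M_n)_{-1} \simeq M_{n-1}$, where $M_n = \underline{A}^0(S,\hM_n,*)$. Applying the localization long exact sequence of Proposition~\ref{MWmod-localization} (or equivalently \ref{LocalizationSequence}) to the zero section $s_0:X \hookrightarrow \AA^1_X$ with complement $X \times \GG$, combined with $\AA^1$-homotopy invariance for the projection $\AA^1_X \to X$ and the splitting provided by the unit section $1_X:X \hookrightarrow X \times \GG$, produces a split short exact sequence at the $A^0$-level relating $A^0(X \times \GG, \hM_n,*)$ to $A^0(X,\hM_n,*)$ and to $A^0(X,\hM_{n-1},*)$ (where the twist by the conormal bundle of the trivial codimension-one inclusion is canonically trivialized, following the pattern used by Morel in \cite{MorelLNM} and mirrored in our setting by paragraph~\ref{num:A1invariance&splitting}). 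Sheafifying this split exact sequence on $X_{\nis}$ and taking kernels of $1_X^!$ gives a canonical isomorphism $(M_n)_{-1} \xrightarrow{\sim} M_{n-1}$ as required.

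The main obstacle I anticipate is not the strategy itself but bookkeeping the line-bundle twists consistently throughout: the homological MW-cycle module $\hM$ is equipped with data \ref{itm:D3'} that produces natural twisted groups $A^p(X,\hM,\cL)$, and both the application of Theorem~\ref{HomotopyInvariance} and the localization sequence for $s_0$ introduce twists by $\detcotgb_\pi$ and by the conormal of $X \hookrightarrow \AA^1_X$. These are canonically trivial in our case because the projection $\AA^1_X \to X$ and the zero section have trivial (co)normal bundles, so the two constructions produce the untwisted isomorphisms demanded by the definition of a homotopy module. Once these trivializations are made explicit and shown to be compatible with the graded structure on $\hM$, the verification that $\underline{A}^0(S,\hM,*)$ is a homotopy module is complete.
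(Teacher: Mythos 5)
Your argument is correct and follows essentially the same route as the paper: strict $\AA^1$-invariance comes from the Gersten flasque resolution of Lemma~\ref{lm:gerstenex} combined with Theorem~\ref{HomotopyInvariance}, and the contraction isomorphism comes from the boundary map of the boundary triple $X \xrightarrow{0_X} \AA^1_X \leftarrow X\times\GG$. The paper is terser --- for the $(-)_{-1}$-isomorphism it refers to the proof of \cite[Th. 4.1.7]{Feld2} instead of writing out the split short exact sequence and the identification of $\ker(1_X^!)$ with $A^0(X,\hM_{n-1},*)$ via $\partial$ --- but the mechanism is exactly the one you describe, so you have simply unwound the citation.
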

\proof

The strict $\mathbb{A}^1$-invariance of the unramified sheaf $\underline{A}^0(S,\hM,*)$ follows from Lemma~\ref{lm:gerstenex} and homotopy invariance (Theorem~\ref{HomotopyInvariance}) (see also \cite[Thm. 9.4]{Feld1} or \cite[\S4]{BHP22}).
The isomorphism~\eqref{eq:-1iso} for $\underline{A}^0(X,\mathcal{M},*)$ is induced by the boundary map $A^0(X\times\mathbb{G}_m,\hM,*)\xrightarrow{\partial}A^0(X,\hM,*-1)$ associated to the boundary triple $X\xrightarrow{0_X} \mathbb{A}^1_X\leftarrow X\times\mathbb{G}_m$, see the proof of \cite[Th. 4.1.7]{Feld2}.
\endproof

\subsection{Chow-Witt group of a number ring}

According to \Cref{ex:KMW_hlg_MW-module}, we can define
Chow-Witt groups of any excellent scheme, whose grading is obtained
after the choice of a dimension function. When $S$ is regular,
we take the cohomological version, and choose the opposite
of the codimension as a dimension function.

Let $K$ be a number field and $\cO_K$ its ring of integers. Let $X=\Spec{\cO_K}$.
We compute the first Chow-Witt group $\CHW^1(X)$.
\begin{num}\textit{GW-group of finite fields}. \label{GW_group_finite_fields}
	
	Given a field $K$, recall that $I(K)$ is the kernel of the rank map $\GW(K) \rightarrow \ZZ$.
	We let $Q(K)=K^\times/(K^\times)^2$ the quadratic residue class of units of $K$.
	Recall $I(K)/I(K)^2=Q(K)$.
	Consider a finite field $F=\FF_q$ with $q=p^r$ elements. Then from \cite[Lemma 1.5]{MH}, we get:
	$$
	I(F)^2=0 \rightarrow I(F)=I(F)/I(F)^2=Q(F)
	$$
	Observe finally that, as $F^\times$ is cyclic of order $q-1$, we get:
	$$
	Q(\FF_q)=\begin{cases}
		0 & q \text{ even,} \\
		\ZZ/2 & q \text{ odd.}
	\end{cases}
	$$
	Therefore, from the split exact sequence:
	\begin{equation}\label{eq:GW_finite}
		0 \rightarrow Q(\FF_q) \rightarrow \GW(\FF_q) \rightarrow \ZZ \rightarrow 0
	\end{equation}
	we get:
	$$
	\GW(\FF_q)=\begin{cases}
		\ZZ & q \text{ even,} \\
		\ZZ/2 \oplus \ZZ & q \text{ odd.}
	\end{cases}
	$$
	The projection to the second factor is given by the rank map,
	whereas for $q$ odd, $\GW(\FF_q) \rightarrow \ZZ/2$ maps $<a>-1$ to the quadratic residue class of $a \in \FF_q^\times$
	\emph{i.e.} $0$ if $a$ is a square in $\FF_q$, $1$ otherwise.
\end{num}

\begin{num}
	Let $x \in X_{(0)}$, in other words a prime of $\cO_K$. We choose a uniformizing parameter $\pi_x$ of $\cO_{X,x}$.
	We get a residue map
	$$
	\partial_x:\KMW_1(K) \rightarrow \GW(\kappa(x),\nu_x^\vee) \simeq \GW(\kappa(x)), [u] \mapsto n_\epsilon.<\overline{u.\pi_x^{-n}}>
	$$
	where $n$ is the order of $u$ at $x$ \emph{i.e.} $n=v_x(u)$ where $v_x$ is the valuation corresponding to the point $x$.
	
	Then $\CHW^1(X)$ is the cokernel of the map:
	$$
	\KMW_1(K) \xrightarrow{\tdiv=\sum_x \partial_x} \oplus_{x \in X_{(0)}} \GW(\kappa(x)).
	$$
	Now we consider the commutative diagram:
	$$
	\xymatrix@=20pt{
		& 
		K^\times 
		\ar[r]
		\ar[d] ^{\zdiv}
		& 
		\KMW_1(K)
		\ar[r]
		\ar_{\tdiv}[d] 
		& 
		I(K)
		\ar^{}[d]
		\ar[r] 
		&
		0
		\\
		0
		\ar[r]
		&
		Z_0(X)
		\ar[r]_{\times h}
		& 
		\oplus_x \GW(\kappa(x))
		\ar[r]_{\mod h} 
		& 
		\oplus_x I(\kappa(x))
		\ar[r]
		& 0.
	}
	$$
	where each lines are induced by multiplication by the hyperbolic map $h$ followed by projection modulo $h$ (hence are exact), and where $\zdiv$ is the classical divisor map.. The bottom left map denoted by $\times h$ is injective because the residue fields of a number ring are finite (see also §\ref{GW_group_finite_fields}).
	
	Recall we have an exact sequence:
	$$
	0 \rightarrow \cO_K^\times \rightarrow K^\times\xrightarrow{\zdiv} Z_0(X) \rightarrow \CH^1(X) \rightarrow 0.
	$$
	From \cite[IV.3.4]{MH}, we also have an exact sequence:
	$$
	0 \rightarrow I(\cO_K) 
	\xrightarrow{} 
	I(K)
	\xrightarrow{} 
	\oplus_x I(\kappa(x))
	\rightarrow \CH^1(X)/2 \rightarrow 0.
	$$
	Therefore the snake lemma gives a long exact sequence:
	$$
	\KMW_1(\cO_K) 
	\xrightarrow{{(*)}}
	I(\cO_K)
	\to
	\CH^1(X) 
	\rightarrow \CHW^1(\cO_K) 
	\rightarrow 
	\CH^1(\cO_K)/2 
	\rightarrow 0.
	$$
	Thus, we have obtained:
\end{num}

\begin{thm}
	\label{thm_Chow_Witt_number_ring}
	Let $\cO_K$ be a number ring. Then, one has the following exact sequence
	$$
	\CH^1(\cO_K) 
	\rightarrow 
	\CHW^1(\cO_K) 
	\rightarrow 
	\CH^1(\cO_K)/2 
	\rightarrow 0
	$$
	where the first map is induced by the hyperbolic map $h$ and the second one is induced by modding out by $h$.
	In particular, $\CHW^1(X)$ is finite.
\end{thm}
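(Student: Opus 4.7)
The plan is to follow the strategy sketched in the paragraphs preceding the theorem, organizing it as a snake lemma argument. Setting $X=\Spec \cO_K$, the first reduction is to identify $\CHW^1(X)$ with the cokernel of the twisted divisor map $\tdiv:\KMW_1(K)\to \bigoplus_{x\in X_{(0)}}\GW(\kappa(x))$ coming from the Rost--Schmid complex associated with the homological MW-cycle module $\KMW$ (\Cref{ex:KMW_hlg_MW-module}). The residue maps are the ones written in the excerpt, and the trivializations of the twists $\nu_x^\vee$ use the choice of uniformizers $\pi_x$; I would note that the resulting cokernel is independent of these choices.

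Next I would assemble the key commutative diagram with exact rows
\[
\xymatrix@=16pt{
& K^\times \ar[r]\ar[d]^{\zdiv} & \KMW_1(K)\ar[r]\ar^{\tdiv}[d] & I(K)\ar[r]\ar[d] & 0 \\
0\ar[r] & Z_0(X)\ar[r]_-{\times h} & \bigoplus_x \GW(\kappa(x))\ar[r]_-{\mathrm{mod}\,h} & \bigoplus_x I(\kappa(x))\ar[r] & 0
}
\]
The top row comes from the hyperbolic/forget-form exact sequence on Milnor--Witt K-theory $K^\times\cdot h\to \KMW_1(K)\to I(K)\to 0$, and the bottom row is its $\GW$-counterpart at each residue field. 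Injectivity of $\times h$ on the bottom uses that all residue fields are finite: by the computation recalled in \Cref{GW_group_finite_fields}, $\GW(\FF_q)$ has no $h$-torsion (indeed $\times h\colon \ZZ\to \GW(\FF_q)$ is injective in both parity cases). Commutativity is immediate from the explicit formula for $\partial_x$ and the fact that multiplication by $h$ corresponds to taking $n_\epsilon$ of an integer valuation.

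Then I would apply the snake lemma to this diagram. The kernel and cokernel of $\zdiv$ are controlled by the exact sequence $0\to \cO_K^\times\to K^\times\xrightarrow{\zdiv} Z_0(X)\to \CH^1(X)\to 0$, and the kernel/cokernel of the right vertical map is controlled by the classical sequence of Milnor--Husemoller \cite[IV.3.4]{MH}: $0\to I(\cO_K)\to I(K)\to \bigoplus_x I(\kappa(x))\to \CH^1(X)/2\to 0$. Matching terms, the snake lemma yields the long exact sequence
\[
\KMW_1(\cO_K)\to I(\cO_K)\to \CH^1(\cO_K)\to \CHW^1(\cO_K)\to \CH^1(\cO_K)/2\to 0,
\]
which in particular gives the stated four-term exact sequence. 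A small verification is that the connecting map $\CH^1(\cO_K)\to \CHW^1(\cO_K)$ of the snake lemma is indeed induced by multiplication by $h$, which is traced from the definition of the connecting homomorphism together with the bottom row identification.

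Finally, the finiteness assertion is the Dirichlet--Minkowski theorem: $\CH^1(\cO_K)$ is the ideal class group of $\cO_K$, hence finite. Since $\CHW^1(\cO_K)$ sits in the exact sequence between $\CH^1(\cO_K)$ and $\CH^1(\cO_K)/2$, it is a quotient-of-a-subquotient of a finite group, hence finite. The delicate step is the verification that the connecting map in the snake lemma identifies with the hyperbolic map; the bookkeeping of twists by $\nu_x^\vee$ in the residue maps and their trivialization by uniformizers is what requires care, but once the explicit formula for $\partial_x$ is unwound, all signs and twists match.
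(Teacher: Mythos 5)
Your proposal reproduces the paper's argument essentially verbatim: the same identification of $\CHW^1(\cO_K)$ as the cokernel of the twisted divisor map, the same commutative diagram comparing the hyperbolic filtration on $\KMW_1(K)$ with that on $\bigoplus_x \GW(\kappa(x))$, the same auxiliary exact sequences (the divisor sequence for $Z_0(X)$ and the Milnor--Husemoller sequence for $I$), and the same snake-lemma bookkeeping. Your additional remarks -- independence of the chosen uniformizers, injectivity of $\times h$ from the $\GW(\FF_q)$ computation, and identification of the connecting map with the hyperbolic map -- are all steps the paper either makes explicitly or leaves implicit, so the two proofs coincide.
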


\begin{ex}
	\begin{enumerate}
		\item As soon as the number ring $\cO_K$ is a principal ideal domain, we have
		$$
		\CHW^1(\cO_K)=0.
		$$
		This is the case for quadratic\footnote{Recall
			it is not known if there exists infinitely many real quadratic fields with class number $1$ (a question attributed
			to Gauss).} extensions 
		$$K=\QQ(\sqrt d), d=2,3,5,13,-1,-2,-3,-7, -8.
		$$
		\item As soon as the class number of $K$ is odd, the canonical map: $\CH^1(\cO_K) \xrightarrow{\times h} \CHW^1(\cO_K)$ is
		an isomorphism. Indeed, one can construct a retract by taking the modulo $\eta$ map multiplied by $\frac 1 2$. Examples: $K=\QQ(\sqrt{-23}), \QQ(\sqrt[3]{7})$.
	\end{enumerate}

\end{ex}

\subsection{Derived homotopy $\infty$-category} \label{SubsectionDerivedHomotopyCat}
\begin{num}
	Recall that presentable $\infty$-categories can be localized, a theory that parallels the notion of Bousfield localization of model categories (see \cite[5.5.4]{HTT}). We will need further a condition that ensure that the localization of a presentable $\infty$-category equipped with a monoidal structure admits a canonical monoidal structure. This is provided by \cite[Prop. 2.2.1.9]{HA}) as explained after the following definition.
	
\end{num}

\begin{df} \label{DefCompatible}
	Let $\CC$ be a presentable monoidal $\infty$-category, and $\mathfrak{S}$ a set of maps in $\CC$. We say that $\mathfrak{S}$ is $\otimes$-compatible if, for every $S$-equivalence $f$ and every object $M$, the map $f\otimes M$ is an $\mathfrak{S}$-equivalence.
\end{df}
Proposition 2.2.1.9 of \cite{HA} shows that in that case, there exists a canonical monoidal structure on the localized $\infty$-category $\CC[\mathfrak{S}^{-1}]$ such that the localization functor
\begin{center}
	$L_S:\CC\to \CC[\mathfrak{S}^{-1}]$
\end{center}
is compatible with the monoidal structure (i.e. is part of a morphism of operads over the commutative operad).
\par The $\AA^1$-derived $\infty$-category satisfies a universal property which was previously stated in terms of model categories before (see \cite[§5]{CD19}).
\begin{prop} Let $S$ be a scheme and $R$ a coefficient ring. There exists an initial $R$-linear $\infty$-category $\DA(S,R)$ equipped with a monoidal $\infty$-functor
	\begin{center}
		$M:\Sm_S \to \DA(S,R)$
	\end{center}
	such that:
	\begin{enumerate}
		\item the map $p_*:M(\AA^1_S)\to M(S)$ induced by the projection is an equivalence;
		\item the cokernel $\un_S(1)[2]$ of the split map $M(\{\infty\})\to M(\PP^1_S)$ is $\otimes$-invertible;
		\item for any Nisnevich hyper-cover $p:W_{\bullet}\to X$ of a smooth $S$-scheme $X$, the induced map
		\begin{center}
			$p_*:M(W_{\bullet})\to M(X)$
		\end{center}
		is an equivalence, where we have used the canonical left Kan extension of the functor $M$ to the $\infty$-category of simplicial schemes using the fact that $\DA(S,R)$ admits colimits.
	\end{enumerate}
\end{prop}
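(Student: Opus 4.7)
The plan is to construct $\DA(S,R)$ by iterated Bousfield-type localization of presheaves, applying Definition~\ref{DefCompatible} and \cite[Prop.~2.2.1.9]{HA} at each stage to ensure that the monoidal structure descends, and finally stabilizing with respect to the Tate object. This is essentially the $\infty$-categorical repackaging of the model-categorical construction recalled at the end of \cite[\S5]{CD19}.

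First, I would start with the presentable symmetric monoidal $R$-linear stable $\infty$-category $\C_0 = \PSh(\Sm_S, \Der(R))$ of $\Der(R)$-valued presheaves on $\Sm_S$, equipped with the Day convolution tensor product induced by the Cartesian product on $\Sm_S$. The universal $R$-linear symmetric monoidal $\infty$-functor $M_0:\Sm_S\to\C_0$ is given by the composite of the Yoneda embedding with free $R$-linearization, and has the well-known universal property: any symmetric monoidal $R$-linear functor from $\Sm_S$ into a presentable symmetric monoidal $R$-linear stable $\infty$-category $\CC$ factors uniquely (up to contractible choice) through $M_0$. Now I would successively localize $\C_0$ at the following three classes of maps:
\begin{enumerate}
\item $\mathfrak{S}_{\nis}$: the maps $M_0(W_\bullet)\to M_0(X)$ for every Nisnevich hyper-cover $W_\bullet\to X$;
\item $\mathfrak{S}_{\AA^1}$: the maps $M_0(\AA^1_X)\to M_0(X)$ for every $X\in\Sm_S$;
\item $\mathfrak{S}_{\tw 1}$: the class generated by making the cofibre $\un_S(1)[2]$ of the split map induced by $\{\infty\}\hookrightarrow\PP^1_S$ $\otimes$-invertible (equivalently, stabilization with respect to $\un_S(1)[2]$).
\end{enumerate}
At each stage the resulting localization is again a presentable $R$-linear stable $\infty$-category, and the composite localization functor applied to $M_0$ produces a symmetric monoidal functor $M:\Sm_S\to\DA(S,R)$ which by design satisfies (1), (2), (3).

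The key point, and the only real content beyond the formal theory of presentable $\infty$-categories, is to verify the $\otimes$-compatibility of each class in the sense of Definition~\ref{DefCompatible}, so that \cite[Prop.~2.2.1.9]{HA} applies and the quotient inherits a canonical presentable symmetric monoidal structure for which the localization is symmetric monoidal. For $\mathfrak{S}_{\nis}$ this is because the Cartesian product of a Nisnevich hyper-cover with a smooth $S$-scheme is again a Nisnevich hyper-cover; for $\mathfrak{S}_{\AA^1}$ it is immediate from $\AA^1_X\times_S Y\simeq \AA^1_{X\times_S Y}$; for stabilization by $\un_S(1)[2]$ one has to invoke the symmetric-monoidal stabilization criterion of Robalo \cite[Cor.~2.39]{Rob15}, whose hypothesis (triviality of the cyclic permutation on $\un_S(1)[2]^{\otimes 3}$ in the $\AA^1$-Nisnevich-local world) is a well-known consequence of Morel's computation of $[S^0,S^0]$ in $\SH(S)$ applied to the sign representation and the $R$-linear realization. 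Checking this last point is where I expect the main (only) subtle verification to lie.

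Finally, I would prove the claimed universal property by unwinding the construction: given any presentable $R$-linear symmetric monoidal stable $\infty$-category $\CC$ and a symmetric monoidal functor $F:\Sm_S\to\CC$ satisfying (1)--(3), the initial functor $F$ factors through $M_0$, then through the successive localizations because each of the classes $\mathfrak{S}_{\nis}$, $\mathfrak{S}_{\AA^1}$, $\mathfrak{S}_{\tw 1}$ is sent to equivalences by hypothesis; the resulting factorization $\bar F:\DA(S,R)\to\CC$ is symmetric monoidal, colimit-preserving, and unique up to contractible choice. This is exactly the statement of initiality in the $(\infty,1)$-category of $R$-linear symmetric monoidal presentable stable $\infty$-categories together with functors from $\Sm_S$ satisfying (1)--(3).
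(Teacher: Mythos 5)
Your construction follows the paper's approach: start from $\PSh(\Sm_S,\Der(R))$, invoke \cite[Prop.~2.2.1.9]{HA} via the $\otimes$-compatibility criterion of Definition~\ref{DefCompatible} to localize at the $\AA^1$- and Nisnevich-classes and obtain $\DA^{eff}(S,R)$, then pass to the formal inversion of the Tate object to obtain $\DA(S,R)$, with the universal property of the composite $M$ coming from Robalo's theory in \cite{Rob15}. Where you deviate, and where you are slightly off, is the claim that the last step requires verifying triviality of the cyclic permutation on $\un_S(1)[2]^{\otimes 3}$ and that this is a consequence of Morel's computation. This is imprecise on two counts. First, the formal inversion $\C[X^{-1}]$ of an object in a presentable symmetric monoidal $\infty$-category always exists and is canonically symmetric monoidal by its universal property (this is precisely what \cite[Cor.~4.25]{Rob15} gives); the cyclic permutation hypothesis only enters if one wants to identify $\C[X^{-1}]$ with the sequential colimit of tensoring by $X$, i.e.\ to compare with the classical $\PP^1$-spectrum model structure, which the paper records as a subsequent remark and which is not needed for the initiality claim. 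Second, the triviality of the cyclic permutation of $(\PP^1)^{\wedge 3}$ up to $\AA^1$-homotopy is an elementary geometric fact (an explicit $\AA^1$-path from a $3$-cycle to the identity in $\mathrm{GL}_3$), logically prior to Morel's computation of $[\un,\un]$; deducing it from Morel's theorem would be circular, since that theorem already presupposes the stable category. Neither point damages your route, but the proposition as stated is a pure universal-property claim and does not require the cyclic permutation check at all.
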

\begin{proof} The existence of the category follow from the classical model theoretic construction after Morel and Voevodsky (see \cite[§5]{CD19}) and the associated (monoidal) $\infty$-category. However, it is now easier to use the $\infty$-categorical construction, and necessary to get the universal property. The following material is folklore (see also \cite{Drew18}). We include it for completeness. We follow the approach of \cite{Rob15}. We start with the $\infty$-category of $\infty$-functors
	\begin{center}
		$\PSh(S,R):=\PSh(\Sm_S,D(R))$
	\end{center}
	equipped with its natural monoidal structure (see \cite[4.8.1.12, 4.8.1.13]{HA}). The Yoneda embedding gives us the functor:
	\begin{center}		$M:\Sm_S\to \PSh(S,R)$,
	\end{center}
	that we extend to simplicial smooth schemes
	\begin{center}
		$M:\Delta^{op}\Sm_S\to \PSh(S,R)$
	\end{center}
	using the existence of colimit in $\PSh(S,R)$. Then we localize the (presentable) monoidal $\infty$-category $PSh(S,R)$ with respect to the set of maps $\mathfrak{S}$ of the form:
	\begin{enumerate}
		\item $M(\AA^1_X)\to M(X)$;
		\item $M(W_{\bullet}) \to M(X)$ for any Nisnevich hyper-cover $W_{\bullet} \to X$ of a smooth $S$-scheme $X$.
	\end{enumerate}
	The set $\mathfrak{S}$ is $\otimes$-compatible (see \ref{DefCompatible}, as follows from the classical theory), so the resulting localized $\infty$-category admits a monoidal structure. We denote the latter by $\DA^{eff}(S,R)$ and call it the {\em effective localized $\AA^1$-derived Nisnevich $\infty$-category} with $R$-coefficients.
	\par Finally, we define $\DA(S,R)$ as the formal inversion of the Tate twist $\un_S(1)$ in the presentable monoidal $\infty$-category $\DA^{eff}(S,R)$ as defined in \cite[Cor. 4.25]{Rob15}. The universal property of the composite monoidal functor
	\begin{center}
		$M:\Sm_S\to \DA^{eff}(S,R) \to \DA(S,R)$
	\end{center}
	follows from the construction as in \cite[Cor. 5.11]{Rob13}.
\end{proof}

\begin{rem}
	By construction, the $\infty$-category $\DA^{eff}(S,R)$ is equivalent to the $\infty$-category associated with the $\AA^1$-localization of the Nisnevich-local model category structure on the category of complexes $C(\Sm_S,R)$ of (pre)sheaves of $R$-modules on $\Sm_S$ as constructed in \cite[§5]{CD3}.
	\par Then, taking into account \cite[Th. 4.29]{Rob13}, this implies that the $\infty$-category $\DA(S,R)$ is equivalent to the $\infty$-category associated with the $\PP^1$-stable, Nisnevich-local and $\AA^1$-local model structure on the category $C(\Sm_S,R)$.
\end{rem}

\begin{rem}
	In practice, we have $S=\Spec k$ and $R=\ZZ$, and denote by $\DA(k)=\DA(S,R)$.
\end{rem}

\begin{rem}
	\label{def_derived_cat_with_specializations}
	Replacing the category $\Sm_k$ of smooth schemes by the category $\Cortilde_k$ in the previous construction, we obtain an $\infty$-category which corresponds to the Milnor-Witt derived homotopy category $\widetilde{\mathbf{DM}}_k$ defined in \cite[Ch. 3]{BCDFO}. The same holds for the category of smooth schemes with specializations (see \ref{notation_derived_cat_with_specializations}).
\end{rem}

\bibliographystyle{amsalpha}
\bibliography{MW}

\end{document}